\documentclass[reqno]{amsart}
\usepackage[normalem]{ulem}
\usepackage{amsmath,amssymb,color}
\usepackage{amsfonts, amscd, epsfig, amsmath, amssymb,enumerate}
\usepackage{graphicx}
\usepackage{color}
\usepackage{mathrsfs}
\usepackage{mathptmx}       
\usepackage{helvet}         
\usepackage{courier}        

\usepackage[normalem]{ ulem }
\usepackage{soul}

\newtheorem{theorem}{Theorem}[section]
\newtheorem{lemma}[theorem]{Lemma}
\newtheorem{proposition}[theorem]{Proposition}
\newtheorem{corollary}[theorem]{Corollary}
\newtheorem{remark}[theorem]{Remark}
\newtheorem{definition}{Definition}[section]

\usepackage{tikz}
\usetikzlibrary{backgrounds}
\usetikzlibrary{patterns,fadings}
\usetikzlibrary{arrows,decorations.pathmorphing}
\usetikzlibrary{decorations}
\usetikzlibrary{calc}
\usetikzlibrary{shapes.misc}

\definecolor{light-gray}{gray}{0.95}
\usepackage{float}
\usepackage[colorlinks=true,linkcolor=blue,citecolor=magenta]{hyperref}
\def\centerarc[#1](#2)(#3:#4:#5){\draw[#1] ($(#2)+({#5*cos(#3)},{#5*sin(#3)})$) arc (#3:#4:#5);}

\newcommand{\Clock}[5][shift={(0,0)}]%
{\begin{scope}[#1,scale={#2/5},transform shape]
\fill[color=cyan!30] (0,0) circle (5);
\foreach \a in {1,2,...,60}
   \draw[black] ($(6*\a:4.5)$) -- ($(6*\a:5)$);
\foreach \a in {5,10,...,60} \draw[black,fill,line width={#2*.4pt}]
      ($(6*\a:3.5)$) circle (2pt) -- ($(6*\a:5)$);
\foreach \a in {1,2,...,12}
   \node[text=black,fill=blue!50]
        at ($(90-\a*30:4.3)$) {\Large \bf \a};
\draw[->,>=stealth',line width={#2*1.3pt}]
   (0,0) -- ($(450-30*#3-0.5*#4-0.0083*#5:3.25)$);
\draw[line width={#2*1.1pt},gray]
   (0,0) -- ($(450-6*#4-0.1*#5:4.2)$);
\draw[->,>=stealth',line width={#2*.7pt}]
   (0,0) -- ($(450-6*#4-0.1*#5:4.45)$);
\draw[>->,>=stealth',line width={#2*.6pt},color=gray!80!blue]
   ($(270-6*#5:1)$) -- ($(450-6*#5:4.4)$);
\draw[black,line width={#2*.2pt},fill=gray] (0,0) circle (.2);
\draw[black,fill] (0,0) circle (.1);
\draw[black,line width={#2*.6pt}] (0,0) circle (5);
\end{scope}}

\numberwithin{equation}{section}
\numberwithin{figure}{section}
\newcommand{\mc}[1]{{\mathcal #1}}

\newcommand{\bb}[1]{{\mathbb #1}}

\renewcommand{\epsilon}{\varepsilon}

\newcommand{\R}{\mathbb R}

\newcommand{\Z}{\mathbb Z}
\newcommand{\N}{\mathbb N}
\renewcommand{\P}{\mathbb P}
\newcommand{\T}{\mathbb T}
\newcommand{\E}{\mathbb E}
\newcommand{\Q}{\mathbb Q}

\renewcommand{\bar}{\overline}
\renewcommand{\tilde}{\widetilde}
\renewcommand{\hat}{\widehat}

\newcommand{\abs}[1]{\;\left\vert\;#1 \;\right\vert\;}
\newcommand{\abss}[1]{ \vert \,#1 \,\vert}

\newcommand{\normm}[1]{{\left\vert\kern-0.1ex\left\vert\kern-0.1ex\left\vert\; #1 \; \right\vert\kern-0.1ex\right\vert\kern-0.1ex\right\vert}}    
\newcommand{\cro}[1]{\left[#1\right]}
\newcommand{\pa}[1]{\left(#1\right)}
\newcommand{\tm}{\widetilde{\mu}_N}
\newcommand{\gene}{\mathcal{L}}
\newcommand{\ccl}[1]{{#1}}

\renewcommand{\leq}{\leqslant}
\renewcommand{\geq}{\geqslant}
\renewcommand{\le}{\leqslant}
\renewcommand{\ge}{\geqslant}

\allowdisplaybreaks 

\title{Hydrodynamic limit for a \ccl{facilitated exclusion process}}
\author{Oriane Blondel}
\address{Univ Lyon, CNRS, Universit\'e Claude Bernard Lyon 1, UMR5208, Institut Camille Jordan, F-69622 Villeurbanne, France}
\email{blondel@math.univ-lyon1.fr}
\author{Cl\'ement Erignoux}
\address{Equipe PARADYSE, Bureau B211
Centre INRIA Lille Nord-Europe
Park Plaza, Parc scientifique de la Haute-Borne, 40 Avenue Halley B\^atiment B, 59650 Villeneuve-d'Ascq
France}
\email{clement.erignoux@inria.fr}
\author{Makiko Sasada}
\address{Graduate School of Mathematical Sciences, University of Tokyo, 3-8-1, Komaba, Meguro-ku, Tokyo, 153--8914, Japan}
\email{sasada@ms.u-tokyo.ac.jp}
\author{Marielle Simon}
\address{Inria, Univ. Lille, CNRS, UMR 8524 - Laboratoire Paul Painlev\'e, F-59000 Lille}
\email{marielle.simon@inria.fr}

\thanks{We thank Patr\'icia Gon\c calves, Claudio Landim, Cristina Toninelli and Augusto Teixeira for helpful discussions. 
O.B.\ and M.S.\ are grateful to the University of Tokyo for its hospitality. O.B.\ acknowledges support from ANR-15-CE40-0020-03 grant LSD and ANR-16-CE93-0003 grant MALIN. M.S.\ thank Labex CEMPI (ANR-11-LABX-0007-01) and acknowledges support from the project EDNHS ANR-14-CE25-0011 of the French National Research Agency (ANR). O.B.~and M.S.~thank INSMI (CNRS) for its support through the PEPS project ``D\'erivation et \'Etude Math\'ematique de l'\'Equation des Milieux Poreux'' (2016). M.S. was supported by JSPS Grant-in-Aid for Scientific Research (B) (Generative Research Fields) No.16KT0021. The research leading to the present results benefited from the financial support of the seventh Framework Program of the European Union (7ePC/2007-2013), grant agreement n\textsuperscript{o}266638. This project has received funding from the European Research Council (ERC) under the European Union's Horizon 2020 research and innovative programme (grant agreement  n\textsuperscript{o}715734).   Part of this work was done during the authors' stay at the Institut Henri Poincare - Centre Emile Borel during the trimester ``Stochastic Dynamics Out  of Equilibrium''. The authors thank this institution for its hospitality and support.}

\begin{document}

\begin{abstract}
We study the hydrodynamic limit for a periodic $1$-dimensional exclusion process with a dynamical constraint, which prevents a particle at site $x$ from jumping to site $x\pm1$ unless site $x\mp1$ is occupied.
This process with degenerate jump rates admits transient states, which it eventually leaves to reach an ergodic component, assuming that the initial macroscopic density is larger than $\frac 12$, or one of its absorbing states if this is not the case. It belongs to the class of conserved lattice
gases (CLG) which have been introduced in the physics literature as systems with active-absorbing phase transition in the presence of a conserved field. We show that, for initial profiles smooth enough and uniformly larger than the critical density $\frac{1}{2}$, the macroscopic density
profile for our dynamics evolves under the diffusive time scaling according to a fast diffusion equation (FDE). The first step in the proof is to show that the system typically reaches an ergodic component in subdiffusive time.
\end{abstract}

\maketitle

\section{Introduction}

The procedure of deriving the partial differential equation (PDE) ruling the macroscopic density profile of a microscopic stochastic dynamics under some suitable 
space-time rescaling is referred to in the probability community as \emph{hydrodynamic limit} (cf. \cite{KL}). 
There is a vast literature on the hydrodynamic limit for exclusion processes, namely systems of particles interacting on a 
lattice, where only one particle per site is allowed. The subject has been receiving much attention for decades since 
such models are   simple enough for rigorous mathematical study but also  rich enough to describe many interesting phenomena. 
In this article, we derive the hydrodynamic limit for a $1$-dimensional exclusion model with dynamical constraints, 
which we refer to as \ccl{\emph{facilitated exclusion process} (\ccl{FEP}) following the previously established terminology} \cite{BBCS, GKR} \footnote{\ccl{This system is also called \emph{restricted} \cite{BM} in the literature. It further relates with} activated random walks (which display a similar active-absorbing transition) rather than facilitated or kinetically constrained spin models, in which the constraint is typically chosen to make the dynamics reversible.}, in which a particle present at site $x$ jumps at rate one to site 
$x\pm 1$ assuming that site $x\mp1 $ is occupied by a particle.

This model was originally introduced as a system with active-absorbing phase transition in the presence of a conserved field \cite{RPV}. 
Such models are referred to as conserved lattice gases (CLG) in the context of the study of non-equilibrium phase transition. 
They usually exhibit a phase transition from an absorbing phase to an active state. In the \ccl{FEP} for instance, if the density is bigger than $\frac{1}{2}$ the system is in the active state with a unique invariant measure, while all the invariant measures are superpositions of atoms on absorbing states if the density is less than $\frac{1}{2}$. The critical particle density is therefore $\frac{1}{2}$. The critical behavior of CLG has been studied numerically and analytically in the physics literature \cite{L,O,BM}, in order to identify the universality classes of models displaying an active-absorbing phase transition. In particular, the \ccl{FEP} is not in the same universality class as directed percolation. Let us mention that, recently, the \ccl{FEP} appeared in its totally asymmetric version in \cite{BBCS}, where the authors compare the behavior of its first particles with what happens in the totally asymmetric exclusion process (TASEP).

Our main result is that, in the active phase, the macroscopic behavior of this microscopic dynamics, under periodic 
boundary conditions, is ruled by a non-linear diffusion equation.

\medskip

The derivation of a non-linear diffusion equation from general reversible exclusion processes was first established by Funaki et al. \cite{FHU} for gradient models, and by Varadhan and Yau \cite{VY} for non-gradient models. In both works, the irreducibility of the process on the hyperplanes with fixed number of particles is one of the essential assumptions. Results are few without this irreducibility. For instance, degenerate dynamics have been studied as microscopic models of the $1$-dimensional porous medium equation (PME) \begin{equation}\label{eq:PME}
\partial_t \rho  =\partial_u (\rho^{m-1} \partial_u \rho) \quad (m\in\bb N, m>1) 
\end{equation}
in \cite{GLT}, \cite{S} (for a generalization of the PME) and more recently in \cite{BCSS}. In these models, the problem comes mostly from the fact that regions with trivial density do not dissolve instantaneously, which calls for some adaptation and new techniques to apply the classical entropy method or relative entropy method. The obstacles caused by the lack of irreducibility that we address in this paper are of a different nature.

First, in our dynamics the state space is divided into transient states, absorbing states and ergodic states. 
Depending on the initial number of particles in the system, some of configurations will lead to the ergodic component 
(we call such states \lq\lq transient good") and the others will be absorbed to an inactive state (resp. \lq\lq transient bad"). 
Although the behavior of the dynamics started from a transient bad configuration is an interesting matter in its own right, in this article, we focus on the transient good case where the ergodic component is ultimately reached.
To apply the entropy method, the existence of transient good states is also troublesome since, although invariant measures are 
supported only on the ergodic components, the process may stay in 
the transient good states for some macroscopic time with positive probability. To guarantee this is not the case, 
we show Theorem \ref{thm:transience}. More precisely: assume that the initial density is initially larger than $\frac 12$  and look at the microscopic system of size $N$  at a macroscopic time $(\log N)^\alpha/N^2$ for some $\alpha>0$; then with high probability it has already reached the ergodic component.
This is the first main novelty of this paper.

Secondly, the Gibbs measures (grand canonical measures) of our process are not product, while they are Bernoulli product measures in \cite{GLT}. In particular, when the density is close to $\frac 12$, these measures exhibit \ccl{spatial} correlations, and adapting the entropy method for these non-product  grand canonical measures requires significant
extra technical work. We note that both papers \cite{FHU, VY} also cover cases with Gibbs measures which are not necessarily product, although for non-degenerate dynamics. Besides exclusion processes, microscopic models with non-product Gibbs measures for which the hydrodynamic limit is rigorously established are very few. 

As the nature of microscopic dynamics is different from \cite{GLT}, the macroscopic equation is also different from the PME, and we obtain the following macroscopic equation 
\[\partial_t \rho =\Delta (-\rho^{-1})= \partial_u (\rho^{-2} \partial_u \rho). \] 
The equation is in the class of fast diffusion equation (FDE), which corresponds to the case with $m <1$ in \eqref{eq:PME}. 
Though the equation has a singularity at $\rho=0$, it is not relevant for us since we always assume that the initial density is 
bigger than $\frac{1}{2}$ at any spatial point. Because of the phase transition described above, for density profiles below $\frac{1}{2}$ the system cannot be governed by this macroscopic equation, since it does not reflect the absorption phenomenon. In fact, for general density profiles, one expects that the macroscopic evolution should be the solution to a Stefan problem, with a free boundary between the active regions (with density higher than $1/2$) and the frozen ones. This property 
remains out of reach. It would require understanding the interplay between frozen regions with density lower than $\frac12$ and active regions with higher density. The Stefan problem has been derived from microscopic dynamics in a few, less degenerate contexts \cite{LV,GQ,Fu}.
We note that the same FDE was derived recently from a non-degenerate zero-range process under a proper high density limit \cite{HJV}.

We now give some remarks on other aspects of our model. As discussed in the next section, the model is reversible and gradient. Moreover, it can be \textit{formally} understood as the specific case of Examples 1 or 2 in \cite[Section 5]{FHU}, specifically the case with parameters $b(1)=-\infty$, $b(k) =0$ for $k \ge 2$ in Example 1, or $\alpha=\infty$ and $\beta=0$ in Example 2 under proper normalization. In particular, the exclusion processes in the class described in Example 1 of \cite{FHU} 
can be mapped to zero-range processes by a simple but non-linear transformation of configurations, which we discuss precisely in Section \ref{sec:Bijection}. This relation has been used in the literature 
(cf. \cite{FS, BM, J}) and plays an essential role in our estimation of the transience time. Note 
that since the corresponding zero-range process is also degenerate, we cannot apply classical results (cf. \cite[Section 5]{KL}) 
to derive its hydrodynamic limit, therefore our main theorem is not proved that way. It is also not straightforward to deduce a hydrodynamic limit for the \ccl{FEP} from the zero-range through the relation mentioned above.

\bigskip

Here follows an outline of the paper. We start in Section \ref{sec:model} by introducing notations, defining the model and stating our main results. Section \ref{sec:Bijection} is devoted to mapping our exclusion process to a zero-range process. In Section \ref{sec:ergodic} we use this transformation and we prove Theorem \ref{thm:transience}, which states that the system reaches its ergodic component in a sub-diffusive time scale. In Section \ref{sec:hydroErgo} we give the rigorous proof of the hydrodynamic limit (Theorem \ref{theo:hydro}) via a non-trivial adaptation of the entropy method, which involves two main ingredients: first, the \emph{equivalence of ensembles}, proved in Section \ref{sec:invariant} (which also introduces the different reference measures we consider), and second, the \emph{Replacement Lemma}, proved in Section \ref{app:replacement}.

\section{Model and results} \label{sec:model}
\subsection{Notations and conventions}
Let us introduce notations and conventions that we use throughout the paper.
\begin{itemize}
\item $N$ is an integer which plays the role of a scaling parameter and will go to infinity. 
\item For any finite set $\Lambda$ we denote by $|\Lambda|$ its cardinality and by $\Lambda^c$ its complement. 
\item We let $\T_N:=\Z/N\Z=\{1,\dots,N\}$ be the discrete torus of size $N$, and  $\T=[0,1)$ be the $1$-dimensional continuous torus.
\item For any $\ell \in \bb N$ we set $B_\ell:=\{-\ell,\dots,\ell\}$ the centered symmetric box of size $2\ell+1$, which can be seen as either a subset of $\T_N$ (if $2\ell+1\leqslant N$), or a subset of $\bb Z$. Similarly, we set $\Lambda_\ell:=\{1,\ldots,\ell\}$.
\item We will consider particle systems on different state spaces $\{0,1\}^E$, with $E$ either the full discrete line $\Z$, or the discrete torus $\T_N$, or a finite box in $\Z$. To avoid any confusion, the elements of $\{0,1\}^E$, which are named \emph{configurations}, will be denoted: by $\xi$ if $E=\Z$, by $\eta$ if $E=\T_N$, and by $\sigma,\varsigma$ if $E$ is a finite box. We will use the letters $\omega,\chi,\zeta,\Upsilon$ to denote configurations in yet different spaces.
\item For any $x\in\T_N$ and configuration $\eta\in\{0,1\}^{\T_N}$, we denote by $\eta(x)\in\{0,1\}$ the occupation variable at site $x$, namely: $\eta(x)=1$ if there is a particle at site $x$, and $0$ otherwise. We treat similarly the configurations in different state spaces.
 \item For any measurable function $f:\{0,1\}^{\T_N} \to \R$, and $x\in\T_N$, we denote by $\tau_x f$ the  function obtained by translation as follows: $\tau_x f(\eta):=f(\tau_x \eta)$, where $(\tau_x\eta)(y) = \eta(x+y),$ for $y\in\bb T_N$. We treat similarly the other state spaces.
\item For any $\Lambda \subset \T_N$ or $\Lambda\subset \bb Z$ and for any  probability measure $\pi$ on $\{0,1\}^{\Lambda}$, we adopt the following notations:
\begin{enumerate}[(i)]
 \item the configuration {$\eta \in \{0,1\}^{\T_N}$ (resp. $\xi \in \{0,1\}^{\Z}$)} restricted to $\Lambda$ is denoted by {$\eta_{|\Lambda}$ \big(resp. $\xi_{|\Lambda}$\big)},
\item if $f:\{0,1\}^\Lambda\to\bb R$ is a measurable function,  $\pi(f)$ denotes the expectation of $f$ w.r.t.~the measure $\pi$,
\item if $A \subset \{0,1\}^{\Lambda}$ is a subset of all possible configurations, $\pi(\eta \in A)$ equivalently means $\pi(\mathbf{1}_{\{\eta \in A\}}):=\pi(A)$.
\end{enumerate}
\item For any sequence $(u_k)_{k\in \N}$, possibly depending on other parameters than the index $k$, we will denote $\mc O_k(u_k)$ (resp. $o_k(u_k)$) an arbitrary sequence $(v_k)_{k\in \N}$ such that there exists a constant $C>0$ (resp. a vanishing sequence $(\varepsilon_k)_{k\in \N}$) -- possibly depending on the other parameters -- such that 
\[v_k\leq C u_k\quad (\mbox{resp.}\quad v_k\leq u_k\varepsilon_k) \quad \forall\; k\in \N.\]
\end{itemize}

\subsection{The microscopic dynamics}
\label{sec:Model}

 The dynamics is as follows: on the periodic domain $\T_N$, we associate with each site a random Poissonian clock. When the clock at site $x$ rings, if there is a particle sitting at this site $x$, that particle jumps to $x-1$ or $x+1$ if some local constraint is satisfied: a particle can jump to the right (resp.\@ left) only if it has a particle to its left (resp.\@ right).

 More precisely, the infinitesimal generator ruling the evolution in time of this Markov process is given by $\mathcal{L}_N$, which acts on  functions $f:\{0,1\}^{\T_N} \to \R$ as 
\begin{equation}
\label{eq:DefLN}
\mathcal{L}_Nf(\eta):=\sum_{x\in\T_N}c_{x,x+1}(\eta)\big(f(\eta^{x,x+1})-f(\eta)\big),
\end{equation}
where the constraint and the exclusion rule are encoded in the rates $c_{x,x+1}$ as 
\begin{equation}\label{e:constraint}
c_{x,x+1}(\eta)= \eta(x-1)\eta(x)(1-\eta(x+1))+\eta(x+2)\eta(x+1)(1-\eta(x)),
\end{equation}
and $\eta^{x,y}$ denotes the configuration obtained from $\eta$ by exchanging the states of sites $x,y$, namely $\eta^{x,y}(x)=\eta(y)$, $\eta^{x,y}(y)=\eta(x)$ and $\eta^{x,y}(z)=\eta(z)$ if $z\neq x,y$.
 Figure \ref{fig:jumps} below shows examples of jumps. Note that the dynamics conserves the total number of particles $\sum_{x\in\T_N}\eta(x)$.

 \begin{figure}[h]
\centering
\includegraphics[width=8cm]{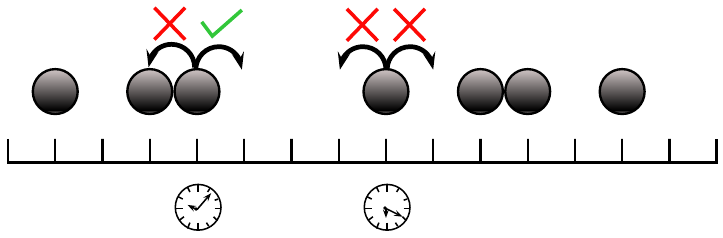}
\caption{Allowed jumps are denoted by {\color{green}$\checkmark$}. Forbidden jumps are denoted by {\color{red}X}.}
\label{fig:jumps}
\end{figure}

This \ccl{facilitated} exclusion dynamics is \emph{degenerate},  \emph{gradient}, and \emph{reversible}, as explained in the following three paragraphs.
\subsubsection{Degenerate dynamics}\label{sec:deg}

Because the jump rates can vanish, the dynamics will be referred to as \emph{degenerate.} For this reason, it is convenient to refine the classification of the configurations into transient/recurrent states as follows (this classification is fully justified in Section~\ref{sec:Bijection}). Let $\mc H_N^k$ be the hyperplane of configurations with $k$ particles, with $k \in\{0,...,N\}$, namely:
\[\mc H_N^k:= \Big\{ \eta \in \{0,1\}^{\T_N} \; :\; \sum_{x\in\T_N} \eta(x)=k\Big\}.\]
Then, \begin{enumerate}
\item if $k \leqslant \frac N 2$, some configurations in $\mc H_N^k$ are \emph{blocked}, in the sense that no particle can jump because the constraint \eqref{e:constraint} is satisfied nowhere (they are \emph{absorbing states} for the dynamics). Those are exactly the configurations in which all particles are isolated. 

The other configurations in $\mc H_N^k$ are not blocked but, starting from them, with probability one the process will arrive at a blocked configuration in a finite number of steps. We call them \emph{transient bad} configurations. See Figure \ref{fig:examples} below.
\item if $k > \frac N 2$, the process will never reach an absorbing state (except in the trivial case $k=N$). In $\mc H_N^k$, there are configurations which are in the ergodic component (the recurrent states for the process, which in this case form an irreducible component); we call them \emph{ergodic} configurations. They are the configurations in which empty sites are isolated. 

Starting from the other configurations, which are called \emph{transient good} configurations, the process enters the ergodic component after a finite number of steps a.s.  See Figure \ref{fig:examples} below.

\end{enumerate}

\begin{figure}[H]
\centering
\includegraphics[width=8cm]{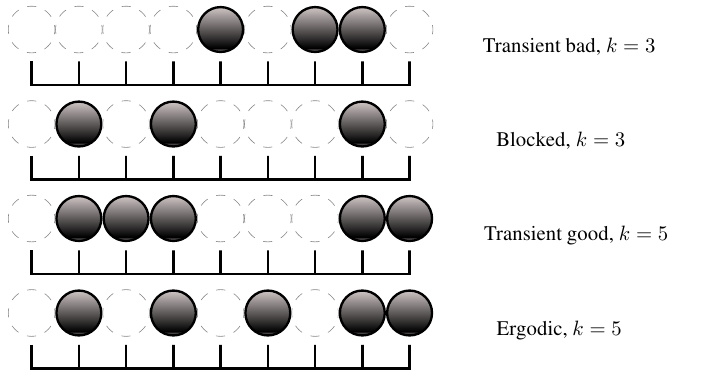}
\bigskip
\caption{Four examples of configurations which belong to the different classes, for $N=9$ sites.}
\label{fig:examples}
\end{figure}
\newcommand{\ERGt}{\widehat{\mathcal{E}}}
\newcommand{\ERG}{\mathcal{E}}

Those observations (and similar ones for configurations in finite boxes) lead to the following definitions.
\begin{definition}
We denote by $\mathcal{E}_N \subset \{0,1\}^{\T_N}$ the set of \emph{ergodic configurations on $\T_N$}, namely
\begin{equation}
\label{eq:DefERGN}
\mathcal{E}_N:=\bigg\{\eta \in \{0,1\}^{\T_N} \; : \; \forall\; x \in \T_N, \;  \big(\eta(x),\eta(x+1)\big) \neq (0,0)\text{ and }\sum_{x\in\T_N}\eta(x)> \frac N 2\bigg\},
\end{equation}
and by $\widehat{\mathcal{E}}_\Lambda$ the set of \emph{ergodic local configurations} (which are actually restrictions of ergodic configurations) on a (finite) connected set $\Lambda\subset\Z$, namely
\begin{equation}
\label{eq:DefERGlt}
\hat{\mathcal{E}}_{\Lambda}:=\bigg\{\sigma \in \{0,1\}^{\Lambda} \; : \; \forall\; (x,x+1) \in \Lambda^2, \;  \big(\sigma(x),\sigma(x+1)\big) \neq (0,0)\bigg\}.
\end{equation}
For $k>\frac N 2$ we also let
\begin{equation}
\label{def:omegaNk}
\Omega_N^k:=\mathcal{H}_N^k\cap\mathcal{E}_N,
\end{equation}
be the set of ergodic configurations on $\T_N$ which contain exactly $k$ particles.
\end{definition}

We conclude this paragraph with the following result:
 
\begin{lemma}\label{lem:irreducible}
For any $N\geq 1$ and any $k>\frac N 2$, $\Omega_N^k$ is an irreducible component for our Markov process.
\end{lemma}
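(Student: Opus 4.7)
The plan is in two parts: first show that $\Omega_N^k$ is closed under the dynamics, and then establish irreducibility within it.

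Closedness is a direct local check. Any exchange preserves the particle number, so it suffices to verify that the ergodic constraint (no two adjacent empty sites) is preserved. If $c_{x,x+1}(\eta)>0$ for some $\eta\in\mathcal{E}_N$, then one of the two summands in \eqref{e:constraint} forces the local pattern $(1,1,0)$ at $(x-1,x,x+1)$ or the mirror pattern $(0,1,1)$ at $(x,x+1,x+2)$. After the exchange, the only pairs whose values may have changed are $(x-1,x)$, $(x,x+1)$ and $(x+1,x+2)$; a direct inspection of both cases shows that each of these pairs retains at least one occupied site, hence $\eta^{x,x+1}\in\mathcal{E}_N$.

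For irreducibility, the cleanest route is via the zero-range encoding developed in Section~\ref{sec:Bijection}. Label the empty sites of $\eta\in\Omega_N^k$ cyclically as $y_1,\dots,y_{N-k}$, and set $\omega_i:=y_{i+1}-y_i-2\geq 0$, the number of particles strictly between $y_i$ and $y_{i+1}$ minus one. Then $\sum_i\omega_i=2k-N$, strictly positive because $k>N/2$, and the pair $(y_1,\omega)$ determines $\eta$ uniquely. Each allowed exclusion exchange corresponds to transferring one particle between two adjacent boxes in this zero-range picture, subject only to the source box being non-empty. The resulting zero-range dynamics on the cycle of $N-k$ boxes with total mass $2k-N>0$ is irreducible on $\{\omega:\sum_i\omega_i=2k-N\}$, by the standard argument that a non-empty box is always available and particles can be shuttled step by step to reach any configuration of the same mass.

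It remains to control the absolute hole positions, which the zero-range data alone does not fix. The key observation is that transporting a single particle once around the entire cycle of $N-k$ boxes (via $N-k$ consecutive elementary moves) restores $\omega$ to its initial value while shifting the reference position $y_1$ by one unit. Combining arbitrary rearrangements of $\omega$ with these rotational loops yields any target $(y_1',\omega')$, hence any target configuration in $\Omega_N^k$. The main subtlety is precisely that each elementary exclusion move simultaneously affects $\omega$ and the hole positions; the rotational loop construction is what cleanly decouples these two degrees of freedom.
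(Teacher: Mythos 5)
Your proof is correct but follows a genuinely different route from the paper's. The paper argues directly in the exclusion picture: it observes that any hole may jump as long as it stays at distance at least two from the others, that each such jump is reversible, and that pulling the holes one by one as far left as possible lands every configuration of $\Omega_N^k$ on the canonical alternating configuration $\circ\bullet\circ\bullet\cdots\bullet\bullet$ (up to a single shift, which is then corrected by iterating). You instead first check closedness of $\Omega_N^k$ by a local inspection of the rates, then invoke the zero-range correspondence of Section~\ref{sec:Bijection} -- shifted by one so as to land on the \emph{unconstrained} constant-rate zero-range process on $\T_{N-k}$ with mass $2k-N>0$ -- whose irreducibility on fixed-mass hyperplanes is standard. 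You then correctly identify, and address, the one degree of freedom that the zero-range data discards: the absolute hole positions. The rotational-loop argument (shuttle one particle once around the cycle of $N-k$ piles, which leaves $\omega$ fixed but shifts the tagged hole $y_1$ by one) is exactly what decouples the mass profile from the anchoring position, and combined with reversibility of the elementary moves it does give full irreducibility. The paper's argument is shorter and entirely self-contained; yours is slightly more machinery but has the merit of making the link to the zero-range picture -- which drives the rest of the paper -- explicit, and of clearly isolating the anchoring degree of freedom that is implicitly present whenever one passes between the exclusion and zero-range descriptions.
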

\begin{proof}
Let us change the point of view and move the zeros around instead of the particles. Then it is clear that in a configuration in $\mathcal{E}_N$, a zero can jump as long as it remains at distance at least two from the others, and every allowed jump is reversible. Consequently, it is enough to show that from every configuration in $\Omega_N^k$, one can reach the configuration $\circ\bullet\circ\bullet\cdots\bullet\bullet$ where the $N-k$ zeros start alternating with particles until there are none left. Let $\eta\in\Omega_N^k$ and number its zeros from left to right (the ``left-most'' site being $1\in\T_N$). In that order, pull each of them as much to the left as possible. That way we reach either the desired configuration, or the same shifted one step to the right (in case $\eta(N)=0$): $\bullet\circ\bullet\circ\bullet\cdots\bullet$. In the second case, iterating the process brings us back to the desired configuration.
\end{proof}

\subsubsection{Gradient system}

\label{ssec:current}

 We introduce the \emph{instantaneous currents}, which are defined for any configuration $\eta$ and any site $x$ as \[j_{x,x+1}(\eta)=c_{x,x+1}(\eta)(\eta(x+1)-\eta(x)),\] and satisfy $\mc L_N(\eta(x))=j_{x-1,x}(\eta)-j_{x,x+1}(\eta)$. One can easily check that our model is \emph{gradient}, which is to say that these currents can be written as  discrete gradients\[ j_{x,x+1}(\eta) = \tau_{x} h(\eta)-\tau_{x+1}h(\eta), \qquad \text{for any } x \in \T_N,\] with the function $h$ given by \begin{equation}\label{eq:h} h(\eta):=\eta(-1)\eta(0)+\eta(0)\eta(1)-\eta(-1)\eta(0)\eta(1).\end{equation}
This function $h$ plays a fundamental role in the derivation of the hydrodynamic limit of our process.

\subsubsection{Reversible measures}

The uniform measures on $\Omega_N^k$, denoted below by $\pi_N^k$ (with $k\in \{0,...,N\}$)  are invariant for the Markov process  induced by the infinitesimal generator $\mc L_N$ and  satisfy  the \emph{detailed balance condition}, as detailed in \eqref{eq:detailed}.
When $k/N\rightarrow\rho\in (\frac12,1)$, these measures locally converge to an infinite volume grand canonical measure $\pi_\rho$ on $\{0,1\}^\Z$ {for which an explicit formula can be derived}. The measures $\pi_\rho$ are not product, and all relevant canonical and grand canonical measures will be thoroughly investigated in Section \ref{sec:invariant}.

\subsection{Main results} 

We are now ready to state the main result of this article. Fix an initial smooth profile $\rho_0:\bb T\to (\frac12,1]$, and consider the non-homogeneous product measure on $\{0,1\}^{\bb T_N}$ {fitting $\rho_0$}, defined as 
\begin{equation}
\label{eq:DefmuN}
\mu_N(\eta):=\prod_{k\in \bb T_N}\Big(\rho_0\big( \tfrac k N\big)\eta(k)+\big(1-\rho_0\big(\tfrac k N\big)\big)(1-\eta(k))\Big).
\end{equation}
Let $\{\eta_t \; : \; t\geq 0\}$ denote the Markov process driven by the \emph{accelerated} infinitesimal generator $N^2\mathcal{L}_N$ (cf. \eqref{eq:DefLN}) starting from the initial measure $\mu_N$.
Fix $T>0$ and denote by $\P_{\mu_N}$ the probability measure on the Skorokhod path space $\mc D([0,T],\{0,1\}^{\T_N})$ corresponding to this dynamics. We denote by $\E_{\mu_N}$ the corresponding expectation.  Note that, even though it is not explicit in the notation, $\P$, $\E$ and $\eta_t$ strongly depend  on $N$: through the size of the state space but also through the diffusive time scaling.

\begin{theorem}[Hydrodynamic limit]\label{theo:hydro}

For any $t \in [0,T]$, any $\delta >0$  and any continuous test function $\varphi:\T\to\R$, we have
\begin{equation}\label{eq:limith} \lim_{N\to\infty} \P_{\mu_N}\bigg[\bigg|\frac1N\sum_{x\in\T_N}\varphi\Big(\frac x N\Big)\eta_{t}(x) - \int_{\T}\varphi(u)\rho(t,u)du\bigg|>\delta\bigg]=0\end{equation}
where $\rho(t,u)$ is the unique smooth solution of the hydrodynamic equation 
\begin{equation}\label{eq:hydro}
\partial_t \rho = \Delta\Big(\frac{2\rho-1}{\rho}\Big), \qquad \rho(0,\cdot)=\rho_0(\cdot): \T \to (\tfrac12,1]. 
\end{equation}
\end{theorem}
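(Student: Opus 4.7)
The plan is to follow the classical entropy method of Guo--Papanicolaou--Varadhan, with three nontrivial adaptations: handling the initial configuration's possibly transient support, controlling the nonlinear current under non-product grand canonical measures, and absorbing the degeneracy of the rates. As a first reduction, note that since $\rho_0>1/2$ uniformly on $\T$, the number of particles exceeds $N/2$ with $\mu_N$-probability tending to $1$ by a standard concentration bound, so $\mu_N$ is asymptotically supported on transient good configurations. Theorem \ref{thm:transience} then shows the process enters $\mathcal{E}_N$ within time $(\log N)^\alpha/N^2=o(1)$ under the diffusive scaling, and a Markov restart reduces the problem to an initial distribution supported on $\bigcup_{k>N/2}\Omega_N^k$ and close to $\mu_N$ in total variation.

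Next, exploit the gradient structure of Section \ref{ssec:current}. For $\varphi\in C^2(\T)$, Dynkin's formula gives a martingale
\[
M_t^{N,\varphi} = \frac{1}{N}\sum_{x\in\T_N}\varphi(x/N)\eta_t(x) - \frac{1}{N}\sum_{x\in\T_N}\varphi(x/N)\eta_0(x) - \int_0^t\frac{1}{N}\sum_{x\in\T_N}\Delta_N\varphi(x/N)\,\tau_xh(\eta_s)\,ds,
\]
where $\Delta_N$ is the discrete Laplacian arising from summation by parts on the gradient $\tau_xh-\tau_{x+1}h$, and $h$ is as in \eqref{eq:h}. A direct computation of the quadratic variation yields $\E_{\mu_N}[(M_T^{N,\varphi})^2]=\mathcal{O}(N^{-1})$, so the martingale terms vanish in $L^2(\P_{\mu_N})$. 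Tightness of the laws of the empirical measures $\pi_t^N:=N^{-1}\sum_{x\in\T_N}\eta_t(x)\delta_{x/N}$ in $\mathcal{D}([0,T],\mathcal{M}(\T))$ then follows from an Aldous-type argument using the boundedness of $h$.

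The heart of the proof is the Replacement Lemma of Section \ref{app:replacement}, which asserts that for any $\varphi\in C^2(\T)$,
\[
\lim_{\epsilon\to 0}\limsup_{N\to\infty}\E_{\mu_N}\Bigg[\Bigg|\int_0^T\frac{1}{N}\sum_{x\in\T_N}\Delta_N\varphi(x/N)\Big(\tau_xh(\eta_s)-\hat h\big(\rho_{\epsilon N}(x,\eta_s)\big)\Big)ds\Bigg|\Bigg]=0,
\]
with $\rho_\ell(x,\eta):=(2\ell+1)^{-1}\sum_{|y-x|\le\ell}\eta(y)$ and $\hat h(\rho):=\pi_\rho(h)$. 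I would establish it via the standard one-block/two-block decomposition: the one-block step replaces $\tau_xh(\eta_s)$ by its mesoscopic box average, which via a quantitative equivalence of ensembles can be identified with its canonical expectation, and the two-block step matches the mesoscopic average with the truly macroscopic local density. An explicit computation exploiting that the relation $\eta(x)\eta(x+1)=\eta(x)+\eta(x+1)-1$ holds $\pi_\rho$-a.s.\ on $\mathcal{E}_N$ (isolation of zeros) gives $\pi_\rho(\eta(0)\eta(1))=2\rho-1$; combined with the three-point correlation $\pi_\rho(\eta(-1)\eta(0)\eta(1))$, accessible via the zero-range bijection of Section \ref{sec:Bijection}, this yields the key identity $\hat h(\rho)=(2\rho-1)/\rho$.

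The principal obstacle will be precisely this replacement lemma: the Dirichlet form is degenerate and offers no control on typical fluctuations near the boundary of the ergodic component, while the non-product nature of $\pi_\rho$ forbids a direct application of the classical GPV estimates. Overcoming these requires quantitative equivalence of ensembles for the non-product measures (to be developed in Section \ref{sec:invariant}) and local exchange moves constructed from the irreducibility of Lemma \ref{lem:irreducible}. Once the replacement is in hand, any subsequential limit of $\pi_t^N$ has a density $\rho(t,u)$ satisfying \eqref{eq:hydro} weakly with $\rho(0,\cdot)=\rho_0$. Since $\rho\mapsto(2\rho-1)/\rho$ is smooth with derivative $1/\rho^2$, the equation is uniformly parabolic on $[1/2+\delta,1]$, and a maximum-principle argument keeps $\rho(t,\cdot)$ in this range for all $t\in[0,T]$; uniqueness of the smooth solution then identifies the limit and promotes subsequential convergence to the full limit \eqref{eq:limith}.
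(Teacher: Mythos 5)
Your plan follows the same route as the paper — transience estimate, Markov restart at time $t_N$, gradient structure with Dynkin's formula, Replacement Lemma via one-block/two-block estimates and equivalence of ensembles for the non-product grand canonical measures, and finally identification of the limit via uniform parabolicity. But there is one concrete error in the reduction step that, as written, breaks the argument: you assert that the Markov restart yields an initial distribution that is ``close to $\mu_N$ in total variation.'' This is false. The restarted measure $\widetilde\mu_N$ is supported on the ergodic component $\mathcal E_N$, whereas $\mu_N(\mathcal E_N)\to 0$ as $N\to\infty$ (under the product measure there are order $N$ independent chances of finding two consecutive empty sites, so the probability of \emph{no} such pair vanishes). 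Hence $\widetilde\mu_N$ and $\mu_N$ are asymptotically mutually singular. What one must show instead is the weaker but sufficient statement that the \emph{macroscopic empirical measure} is unchanged between times $0$ and $t_N$, because in time $t_N=(\log N)^{32}/N^2$ no particle can travel a macroscopic distance; this is precisely the content of Lemma~\ref{lem:MacroTransience} and Corollary~\ref{cor:MacroTransience} in the paper, and it is what makes the reduction to Proposition~\ref{lem:HydroErgo} close. Without this replacement of your TV statement by an empirical-measure statement, the chain of inequalities connecting \eqref{eq:limith} to the ergodic-initial-data version does not go through.

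Two smaller remarks. First, after the restart your Dynkin/martingale estimate should be taken under the restarted measure $\widetilde\mu_N$, not $\mu_N$; this is also needed because the density $d\widetilde\mu_N/d\nu_{\rho,N}$ used in the entropy and Dirichlet-form bounds only exists on the ergodic component. Second, the identity $\pi_\rho(h)=(2\rho-1)/\rho$ follows directly from the explicit formula \eqref{pirho2} for $\pi_\rho$ (Remark~\ref{rem:h3}); invoking the zero-range bijection to compute $\pi_\rho(\xi(-1)\xi(0)\xi(1))$ is possible but unnecessary — and the bijection as defined in Section~\ref{sec:Bijection} does not directly transport the stationary measure $\pi_\rho$ in a form that makes this computation easier.
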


\begin{remark} \label{rem:solution}
The fact that there is a unique smooth solution to \eqref{eq:hydro}, provided that the initial profile satisfies $\rho_0 >\frac12$, is quite standard in PDE theory, and can be found for instance in \cite[Section 3.1]{vaz}. The equation \eqref{eq:hydro} belongs to the family of \emph{quasilinear parabolic problems} of the form 
\[\partial_t \rho = \partial_u \big( A(\rho,\partial_u\rho) \big),\] where, in our case, $A(p,q):=q/p^2$. The classical theory, which gives smoothness of solutions and maximum principles, cannot be used for equation \eqref{eq:hydro}, because the latter is not uniformly parabolic\footnote{We say that $A$ is uniformly parabolic if there exist constants $0<c_1<c_2<\infty$ such that $c_1 \leqslant \frac{\partial A}{\partial q} (p,q) \leqslant c_2$, uniformly in $(p,q)$.}. However, the classical results can be used if the initial condition is \emph{non-degenerate}. For instance, if $\rho_0$ satisfies $\frac12+\varepsilon \leqslant \rho_0 \leqslant 1$, then one can apply the usual theory with such data, choosing $A(p,q)=q/p^2$ if $\frac12+\varepsilon \leqslant p \leqslant 1$, and extending the function $A$ as a linear function of $p$ and $q$ for $p$ near {$\frac12$ or $1$}, making a smooth connection at the point {$p=\frac12+\varepsilon$}. With this choice, the degeneracy has been eliminated, and therefore there exists a unique classical (smooth) solution, which satisfies the same bounds $\frac12+\varepsilon \leqslant \rho \leqslant 1$. This means that $\rho$ actually never takes values in the critical region, and therefore one gets a classical solution for \eqref{eq:hydro}.
\end{remark}
There are two main difficulties to prove Theorem \ref{theo:hydro}. The first one  lies in the fact that the dynamics is \emph{a priori} not ergodic. We now state our second main result, which will be proved in Section \ref{sec:ergodic}. It states  that the accelerated system {reaches its ergodic component} at a macroscopic time $t_N$ of order $(\log N)^\alpha/N^2$ for some $\alpha >0$. Therefore, for any macroscopic time $t>0$ and for any $N$ large enough such that 
${t_N}<t$, the configuration $\eta_t$ belongs to the ergodic component with very high probability. 
This is the one of the main novelties of this work.
\begin{theorem}[Transience time for the exclusion process with absorption]\label{thm:transience}
Letting  $\ell_N=(\log N)^8$ and ${t_N}=\ell_N^4/N^2$, we have 
\[\lim_{N\to\infty}\P_{\mu_N}\Big(\eta_{{t_N}}\notin \mathcal{E}_N\Big) = 0.\]
\end{theorem}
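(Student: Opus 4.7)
\emph{Proof strategy.} The plan is to apply the bijection of Section~\ref{sec:Bijection} to translate the dynamics into a zero-range type process $(\omega_t)$ on (approximately) $\T_k$, where $k=\sum_x \eta_0(x)$ is the conserved number of particles. Each $\omega_t(i)\in\bb N$ records the size of the $i$-th gap of empty sites in $\eta_t$, the event $\{\eta_t\in\mathcal{E}_N\}$ is equivalent to $\{\max_i \omega_t(i)\le 1\}$, and a unit of mass moves from a site $i$ with $\omega_t(i)\ge 1$ to a neighbour $j=i\pm 1$ with $\omega_t(j)=0$ at rate one. A direct check shows that the total excess $\mathcal{X}_t:=\sum_i (\omega_t(i)-1)_+$ is non-increasing along the trajectory and strictly decreases by one whenever a jump originates at a site with $\omega\ge 2$. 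So the question reduces to showing that, with high probability, every initial excess unit has been annihilated by microscopic time $\ell_N^4$.

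\medskip

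\emph{Initial concentration.} Under $\mu_N$ the variables $(\omega_0(i))_i$ are asymptotically independent with geometric-like tails of parameter uniformly bounded away from $\tfrac12$, since $\rho_0>\tfrac12$. Standard Chernoff estimates combined with a union bound over all length-$\ell_N$ windows yield, with probability $1-o(1)$, simultaneously (a) $\max_i \omega_0(i)\le C\log N$ and (b) in every window $I$ of size $\ell_N$, $|\{i\in I:\omega_0(i)=0\}|-\sum_{i\in I} (\omega_0(i)-1)_+\ge \delta \ell_N$ for some $\delta>0$ independent of $N$. In particular, each window already contains more local holes than excess units, so that absorption can be achieved without mass crossing the window boundaries.

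\medskip

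\emph{Core estimate: single-file absorption.} Partition the zero-range torus into disjoint windows $I_1,\ldots,I_m$ of size $\ell_N$. Within each window, the zero-range dynamics drives a system of holes performing single-file nearest-neighbour random walks through the $\{\omega=1\}$ sites (two adjacent holes cannot interchange), which get absorbed upon contact with an excess site $\omega\ge 2$. The classical scaling for the displacement of a tagged particle in a 1D single-file system of uniformly positive density is $T^{1/4}$; hence by microscopic time $T=\ell_N^4$ each hole explores a region of linear size $\ell_N$, i.e.\ the whole window. A quantitative version of this (with Gaussian-type tail estimates on the tagged displacement), combined with (a)--(b) to bound both the number of excess units to be annihilated and the number of hits needed per site, shows that every excess unit inside the window is visited and thus annihilated by time $\ell_N^4$ with probability at least $1-N^{-A}$ for arbitrarily large $A$. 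A final union bound over the $m\sim N/\ell_N$ windows concludes, modulo handling the few jumps across window boundaries — either by a monotone truncation exploiting the non-increase of $\mathcal{X}_t$, or by a second pass with shifted windows so that every pair of neighbouring sites in $\T_N$ lies inside some window.

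\medskip

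\emph{Main obstacle.} The main difficulty is making the $T^{1/4}$ single-file heuristic rigorous in this degenerate, absorbing setting. Unlike for the free single-file random walk, holes here interact with excess sites via annihilation rather than reflection, and the rate at which a hole moves through a $\{\omega=1\}$ segment can degenerate near excess clusters. A natural approach is to dominate the absorption time by the cover time of a coupled system of non-interacting (or reflecting) random walks with rates bounded below by the local density, for which the $T^{1/4}$ scaling is standard; alternatively, one may directly lower-bound the instantaneous decrease rate of $\mathcal{X}_t$ by a multiple of a local Dirichlet form supported on the boundaries of excess clusters. The generous choice $\ell_N=(\log N)^8$ provides ample margin to absorb polylogarithmic losses in such coupling and concentration estimates.
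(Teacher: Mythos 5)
Your high-level scaffolding --- map to the zero-range picture, show via Chernoff bounds and a union bound that $\mu_N$ produces locally ``surplus'' initial data on every window of size $\ell_N$, reduce the theorem to a single-window estimate, and conclude with a union bound over $\mc O(N)$ windows --- follows the same contours as the paper's (Lemma~\ref{lemma:probass} plus Proposition~\ref{prop:transience}). Where you diverge, and where the proposal has a genuine gap, is in the core estimate: why a window of size $\ell_N$ with local surplus becomes locally ergodic by microscopic time $\ell_N^4$. You reduce this to a single-file cover-time heuristic (tagged displacement $\sim T^{1/4}$, so every hole sweeps the whole window by time $\ell_N^4$), and then, in your ``Main obstacle'' paragraph, you acknowledge that you have not made this rigorous in the degenerate absorbing setting, offering only two possible routes in outline. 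That acknowledged obstacle \emph{is} the theorem; the argument is not complete.

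The paper's route avoids cover-time estimates entirely and is worth contrasting. It freezes a window with a ``stuck'' zero-range auxiliary process $\chi$ with cemetery boundaries, shows by coupling with $(1+\delta)\ell$ independent random walks of rate $1/((1+\delta)\ell)$ that every one of the initial particles either gets stuck in $\Lambda_\ell$ or escapes by time $\ell^4$ (Lemmas~\ref{lemma:Txi1}--\ref{lemma:Txi}), bounds the total number of jumps by $\ell^{3+1/2}$ (equation~\eqref{eq:MajLxi}), and then observes that if the left endpoint $0$ is never visited, at least $\delta\ell$ particles must have escaped to the right. That event is controlled by a clean martingale argument on the weighted sum $Z(t,\chi)=\sum_y y\,\chi_t(y)$: each jump moves $Z$ by $\pm 1$ with equal probability, so by Doob's inequality on $e^{\lambda X_k}$ for the underlying simple random walk $X_k$, a net drift of order $\ell^2$ within $\mc O(\ell^{3+1/2})$ steps has probability $\exp(-c\sqrt{\ell})$ (Lemmas~\ref{lemma:ExitProba} and~\ref{lemma:Exitxi}). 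This is robust to exactly the degeneracies you flag --- it never needs a tagged hole to move, only that the \emph{aggregate} of all jumps is unbiased.

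A related, smaller gap: your initial-regularity condition~(b) controls only the number of surplus units per window, whereas the paper's set $A_\ell$ in~\eqref{eq:DefAl} also requires the weighted count $Z_\ell(\omega)=\sum_y y\,\omega(y)$ to be not much larger than its value for evenly spread particles. This second clause is not cosmetic: it is exactly what makes the center-of-mass argument bite at the left endpoint. If the surplus may sit all the way at the right boundary, the martingale has nothing to contradict when all particles leave to the right. So even if you adopt the paper's core estimate, your initial-data lemma needs the corresponding $Z_\ell$ bound (which, as the paper shows via cropped geometric variables and Azuma--Hoeffding, does hold under $\mu_N$).
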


The second main difficulty to prove Theorem \ref{theo:hydro} comes from the nature of the invariant measures 
of the process: we investigate in Section \ref{sec:invariant} the canonical and grand canonical measures for the 
generator $\mathcal{L}_N$, which only charge the ergodic component $\mathcal{E}_N$ where all empty sites are isolated. 
These measures are therefore not product, and significant work is required to show the various properties that are needed to apply 
the classical \emph{entropy method}.

\medskip

The rest of the article is organised as follows. 
We describe in Section \ref{sec:Bijection} a correspondence between our \ccl{facilitated} exclusion dynamics 
and a particular zero-range dynamics, which will be crucial to prove Theorem \ref{thm:transience} in 
Section \ref{sec:ergodic}. We then give in Section \ref{sec:hydroErgo} a general outline of the classical strategy 
to prove Theorem \ref{theo:hydro}. 
We thoroughly study in Section \ref{sec:invariant} the invariant measures for the \ccl{facilitated} exclusion dynamics,  
and use the properties we derive to prove the Replacement Lemma in Section \ref{app:replacement}.

\section{Correspondence with the zero-range model}
\label{sec:Bijection}
Following \cite{BM}, our exclusion-type dynamics can be mapped to a \emph{zero-range model} in a way which we describe below.

First, we map the initial configuration of the exclusion dynamics (EX) to a zero-range (ZR) configuration on a reduced torus: with $\eta\in \mc H_N^k$, $k<N$, we associate a configuration $\omega\in \N^{\T_{N-k}}$ as follows. Look for the first empty site to the left of or at site $1$ and label it $1$. Moving to the right, label all empty sites in $\eta$ from $1$ to $N-k$. Then define $\omega(i)$ as the number of particles between the $i$--th and $(i+1)$--th (with $N-k+1$ identified with $1$) empty sites in $\eta$ (see Figure \ref{fig:bij}).
For any exclusion configuration $\eta$, let us denote by $\Pi(\eta)$ the corresponding zero-range configuration. Note that this mapping is not one-to-one. For instance, in the example considered in Figure~\ref{fig:bij}, shifting $\eta$ one site to the left does not change $\omega$. 

 \begin{figure}[h]
\centering
\includegraphics[width=12cm]{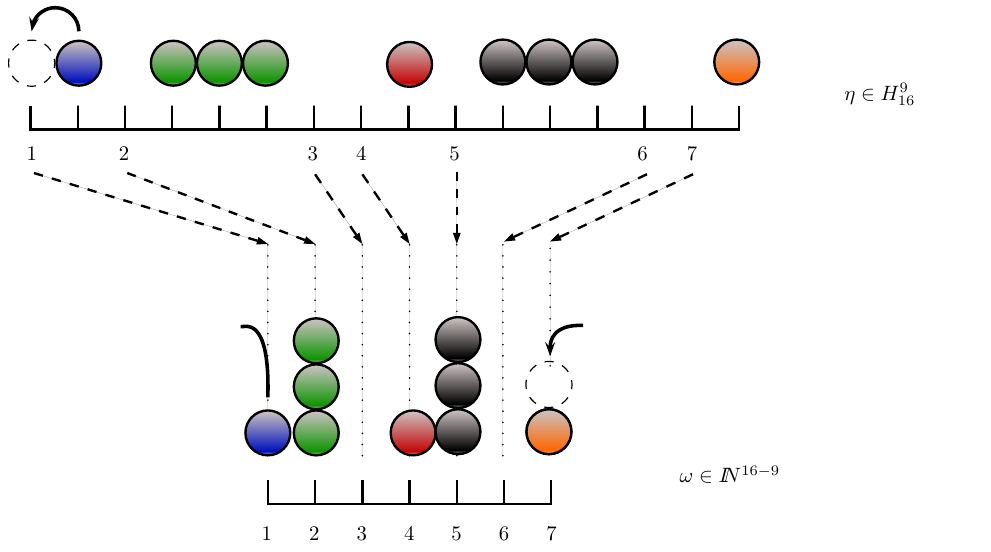}
\caption{\textsc{Top}: configuration in the exclusion dynamics, with $N=16$ sites and $k=9$ particles. \textsc{Bottom}: the associated zero-range configuration.
The arrows depict a jump in the EX process together with its effect on the ZR process; the dashed circles represent the positions of the blue particles after the jump.}
\label{fig:bij}
\end{figure} 

Now we couple the EX process $\{\eta_t \; : \; t\geq 0\}$ (generated by $N^2\mc L_N$) with a ZR process $\{\omega_t \; : \; t\geq 0\}$. 
With $\eta_0$, let us associate $\omega_0:=\Pi(\eta_0)$ in the way described above. Then whenever a particle jumps in the process $\eta_t$, a particle in the corresponding pile in $\omega_t$ jumps in the same direction (as shown in Figure~\ref{fig:bij})\footnote{Note that, in order to make this informal description rigorous, we need to keep track of the correspondence between blocks of particles in the EX process and piles in the ZR process: it is not true that $\omega_t$ is obtained from $\eta_t$ through the same mapping $\Pi$ as $\omega_0$ from $\eta_0$ (this can be checked in the example of Figure~\ref{fig:bij}). One way of understanding the process is to tag the empty site labeled $1$ (\emph{i.e.\@} when a particle jumps there, the label moves to the site that particle previously occupied). Then the mapping from $\eta_t$ to $\omega_t$ is the one described above, with the exception that the empty site with label $1$ need not be the first to the left of the origin.}. 
In particular, a jump of a particle to an empty site is allowed in the EX process iff the corresponding pile in the ZR process has at least two particles. Then, one can easily check that $\{\omega_t \; : \; t\geq 0\}$ is a Markov process with infinitesimal generator $N^2\mc L_{N-k}^{ZR}$, with $\mc L_{N-k}^{ZR}$ acting on functions $f:\N^{\T_{N-k}}\to \bb R$ as follows:
\begin{equation} \label{eq:LZR}
\mathcal{L}_{N-k}^{ZR}f(\omega):=\sum_{x\in\T_{N-k}}\sum_{z=\pm 1}\mathbf{1}_{\{\omega(x) \geqslant 2\}}\big(f(\omega^{x,x+z})-f(\omega)\big),
\end{equation}
where
\[\omega^{x,y}(z):= \begin{cases}
\omega(x)-1; & z=x, \\
\omega(y)+1; & z=y, \\
\omega(z); & z \notin\{x,y\}. \end{cases}\]
Let us classify the possible states of this ZR process as we have already done in Section~\ref{sec:deg} for the EX process. In this setting, it is fairly clear that for $\omega\in \N^{\T_{N-k}}$ with $k$ particles,
\begin{enumerate}
\item if $k\leqslant N-k$ (there are fewer particles than sites), either $\omega(x)\leqslant 1$ for all $x$ (in which case the configuration is blocked), or this situation will be reached after a finite number of jumps a.s.
\item if $k> N-k$, by the pigeonhole principle there will always be at least one particle allowed to jump. The ergodic configurations  are those where $\omega(x)\geqslant 1$ for all $x$, and the transient good the other ones, where there exists a site $x$ such that $\omega(x)=0$.
\end{enumerate}
This translates immediately into the classification we claimed for the EX configurations in Section~\ref{sec:deg}\footnote{Note that, even though the reverse mapping from ZR to EX is only defined up to the position of the empty site with label $1$, since the properties we consider are translation invariant we can safely transfer them from one setting to the other.}. In particular we will use in the proof of Theorem~\ref{thm:transience} the fact that for all $t\geq 0$, $\eta_t$ is in its ergodic component iff $\omega_t$ is in its ergodic component.

\section{Proof of Theorem \ref{thm:transience}: Transience time to reach the ergodic component}
\label{sec:ergodic}
In this section, we prove that after a time of order $(\log N)^\alpha/N^2$, with high probability, the exclusion process has reached the ergodic component. 
This result relies strongly on the correspondence between our \ccl{facilitated} exclusion process and the zero-range dynamics presented in Section \ref{sec:Bijection}. It uses arguments in the spirit of \cite{Andjel}.
In Section \ref{sec:erg1}, we state the main result of this section (Proposition \ref{prop:transience}), which is an estimate of the transience time for the zero-range process assuming that one starts from what we call a \emph{regular} configuration, and use it to prove Theorem \ref{thm:transience}. 
Section \ref{ssec:estimation} is dedicated to proving Proposition \ref{prop:transience}. 
Finally, in Section \ref{sec:probass}, we prove a technical lemma that states that, starting the exclusion process from the smooth product measure \eqref{eq:DefmuN}, the probability for the corresponding zero-range configuration to be regular is close to $1$.

\newcommand{\Qzr}{\Q_{\omega_0^{\ell}}^{\ell}}
\newcommand{\Lbar}{\overline{\Lambda}_\ell}
\subsection{Ergodic component of the zero-range dynamics}
\label{sec:erg1}
Let us fix an integer $\ell \geq 0$, and set $\Lbar{:=\Lambda_\ell\cup\{0, \ell+1\}}=\{0,\dots,\ell+1\}$. For any given integer $K>\ell$, any zero-range configuration $\omega\in\N^{\T_K}$ on $\T_K$, and any $x\in \T_K$, we define the zero-range configuration $\bar{\omega}^{\ell,x}\in \N^{\Lbar}$ as
\begin{equation}
\label{eq:DefOlxtilde}
\bar{\omega}^{\ell,x}(y):=\begin{cases}\omega(x+y)&\mbox{ if }1\leq y \leq \ell,\\
                        0 &\mbox{ if }y=0, \; \ell+1.
                       \end{cases}
\end{equation}
which is the configuration in the box of size $\ell$ to the right of $x$ in $\T_K$, with empty sites at the boundaries. We also let 
\begin{equation}\label{eq:Defnl}n_\ell(\omega):=\sum_{y=1}^{\ell}\omega(y)\end{equation}
the initial number of particles in $\Lambda_\ell$, and let 
\begin{equation}\label{eq:DefSl}Z_{\ell}(\omega):=\sum_{y=1}^{\ell}y\omega(y)\end{equation}
the cumulated distance of the particles in $\Lbar$ to the first site in the configuration $\omega$.

 Fix $\delta\in (0,1)$.  Throughout the proof, we will when convenient omit the integer part $\lfloor.\rfloor$, and for example simply write $(1+\delta)\ell$ instead of $\lfloor(1+\delta)\ell\rfloor$.
 
 For any configuration $\omega\in\N^{\T_K}$ and $x \in\T_K$, we denote by $\omega^{\ell,x}$ the configuration obtained from $\bar{\omega}^{\ell,x}$ by keeping at most the $(1+\delta)\ell$ left-most particles and destroying all the other particles. More precisely, let us denote by $1\leq x_1\leq x_2\leq\dots\leq x_{n_\ell(\bar{\omega}^{\ell,x})}\leq \ell$ the positions of the particles in $\bar{\omega}^{\ell,x}$, we then have 
 \[\bar{\omega}^{\ell,x}(y)=\sum_{k=1}^{n_\ell(\bar{\omega}^{\ell,x})}{\bf 1}_{\{x_k=y\}},\] 
 and we let 
\begin{equation} 
\label{eq:DefOlx}
\omega^{\ell,x}(y)=\sum_{k=1}^{\min\{(1+\delta)\ell \; , \; n_\ell(\bar{\omega}^{\ell,x})\}}{\bf 1}_{\{x_k=y\}}.
\end{equation}
We denote by
\begin{multline}
\label{eq:DefAl}
A_\ell:=A_\ell(\delta)=\Bigg\{\omega\in \N^{\Lbar}\; \colon \; \omega(0)=\omega(\ell+1)=0, \\
n_\ell(\omega)= (1+\delta)\ell \quad\mbox{ and }\quad Z_\ell(\omega)\leq \Big(1+\frac \delta 2\Big)(1+\delta)\frac{\ell(\ell+1)}{2}\Bigg\},
\end{multline}
the set of configurations on $\Lbar$ for which  $Z_\ell$ is not much bigger than it would be if the $(1+\delta)\ell$ particles were spread evenly across $\Lambda_\ell$.
We also set, for any $ K\geq\ell$
\begin{equation}
\label{eq:DefBlK}
B_{K,\ell}:=B_{K,\ell}(\delta)=\Big\{\omega\in \N^{\T_K}\; \colon\;\omega^{\ell,x}\in A_\ell \quad \forall\; x\in \T_K \Big\}.
 \end{equation}
We  call \emph{${\delta-}$regular configurations} the elements of $B_{K,\ell}{(\delta)}.$ In other words, since in ${\omega}^{\ell,x}$ we delete all but $(1+\delta)\ell$ particles, $B_{K,\ell}$ is the set of zero-range configurations on $\T_K$ such that in each box $\{x+1,\dots, x+\ell\}$, there are at least $(1+\delta)\ell$ particles, and those particles are not placed abnormally to the right of $\{x+1,\dots,x+\ell\}$.

\medskip

For a given initial configuration $\omega_0\in \N^{\T_K}$, we denote by $\Q^{ZR}_{K,\omega_0}$ the distribution of the process $\{ \omega_t \; : \; t\geq 0\}$ on the space of trajectories $\mc D([0,T],\N^{\T_K})$, starting from $\omega_0$ and driven by the generator $N^2\gene^{ZR}_{K}$ introduced in \eqref{eq:LZR}. 
We start by stating that, assuming that the initial configuration $\omega_0$ is ${\delta-}$regular (i.e.~is in $B_{K,\ell}{(\delta)}$), then the probability that the zero-range process starting from $\omega_0$  has not reached the ergodic component at a time of order $\ell^4$  is exponentially small.

\begin{proposition}[Transience time for the zero-range process started from $B_{K,\ell}$]
\label{prop:transience} For any integer $\ell \geq 0$, we define $T_\ell=\ell^4$. 
For any $\varepsilon<\frac12$, and any constant $\delta>0$,  there exists $\bar \ell=\bar\ell(\varepsilon,\delta)$, such that for any integer $K>\ell\geq \bar \ell$, and any $N \in \bb N$
\[\sup_{\omega_0\in B_{K,\ell}(\delta)}\Q^{ZR}_{K,\omega_0}\Big(\exists\; x\in \T_K, \; \omega_{T_\ell/N^2}(x)=0\Big)\leq K \exp\big(-\ell^{\varepsilon}\big).\]
\end{proposition}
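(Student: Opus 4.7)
The plan is to combine a union bound over the $K$ sites with a coupling to an auxiliary process on $\Lbar$ with absorbing boundaries, followed by a martingale estimate on the moment and a renewal-type iteration to extract the exponential decay. First, by union bound, it suffices to prove that for each fixed $x_0\in\T_K$ and every $\omega_0\in B_{K,\ell}(\delta)$,
\[
\Q^{ZR}_{K,\omega_0}\bigl(\omega_{T_\ell/N^2}(x_0)=0\bigr)\leq e^{-\ell^\epsilon}.
\]
A key remark is that the ZR constraint $\mathbf 1_{\omega(y)\ge 2}$ prevents a singleton pile from emitting, so once $x_0$ contains at least one particle it stays occupied; hence one only needs to show that by time $T_\ell/N^2$, equivalently time $T_\ell=\ell^4$ in the non-accelerated ZR process, at least one particle jumps into $x_0$. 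To localize, I would focus on the $(1+\delta)\ell$ particles placed by $\omega^{\ell,x_0}\in A_\ell(\delta)$ on $\Lbar$ and construct an auxiliary ZR process $\omega^{aux}$ on $\Lbar$ using the same Poisson clocks as the real dynamics, with absorbing boundaries: particles reaching $\ell+1$ are destroyed, and those reaching $0$ count as a success. Since the admissibility condition $\omega(y)\ge 2$ is monotone in $\omega$, the basic coupling maintains $\omega^{aux}_t(y)\leq\omega_t(x_0+y)$ for all $y\in\Lbar$ and $t\ge 0$, so any success for $\omega^{aux}$ entails $\omega_{T_\ell}(x_0)\ge 1$ in the real process.

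Next, to analyze $\omega^{aux}$, I would study the moment $Z^{aux}_t:=\sum_{y\in\Lbar}y\,\omega^{aux}_t(y)$, where absorbed particles are kept at their absorption site. Every admissible jump changes $Z^{aux}$ by $\pm 1$ with equal rate, so it is a martingale with $\E Z^{aux}_t=Z_\ell(\omega^{\ell,x_0})\leq(1+\delta/2)(1+\delta)\ell(\ell+1)/2$. Letting $T_\infty$ denote the random absorption time of $\omega^{aux}$ (after which all interior piles have size $\le 1$), on the failure event $\{N^L_{T_\infty}=0\}$ the $(1+\delta)\ell$ particles split between right-absorbed ones (contributing $(\ell+1)N^R_{T_\infty}$ to $Z^{aux}_{T_\infty}$) and isolated interior particles at distinct sites in $\{1,\dots,\ell\}$ (contributing at least $1+2+\dots+N^I_{T_\infty}$). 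Minimizing the resulting lower bound on $Z^{aux}_{T_\infty}$ over $N^I_{T_\infty}\in[0,\ell]$ subject to $N^R_{T_\infty}+N^I_{T_\infty}=(1+\delta)\ell$ yields $Z^{aux}_{T_\infty}\geq(1/2+\delta)\ell(\ell+1)$, which strictly exceeds the martingale mean for small $\delta$. Markov's inequality, combined with a control of $\P(T_\infty>T_\ell)$, should then provide a constant-in-$\ell$ upper bound $q<1$ on the failure probability over a single diffusive window of length $\ell^2$, starting from any initial configuration in $A_\ell(\delta')$ for a suitable $\delta'>0$.

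Finally, to upgrade this to the required exponential bound $e^{-\ell^\epsilon}$, I would partition $[0,T_\ell]=[0,\ell^4]$ into $\ell^2$ sub-intervals of length $\ell^2$ each, and iterate the preceding estimate via the strong Markov property. Provided the configuration on $\Lbar$ stays in $A_\ell(\delta')$ at the beginning of every sub-interval, the overall failure probability is bounded by $q^{\ell^2}\leq e^{-c\ell^2}$, much smaller than $e^{-\ell^\epsilon}$ for any $\epsilon<2$. The main obstacle will be precisely the propagation of this regularity hypothesis through the iteration: the degeneracy of the ZR dynamics can create locally frozen regions (piles of height $\le 1$) that slowly deplete the mass available for further absorption attempts, and the quantitative bound on $\P(T_\infty>T_\ell)$ implicit in the martingale step must also be established. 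Overcoming these difficulties will likely require combining the moment martingale with finer spatial control on the distribution of particles in $\Lbar$ (for instance via comparisons with symmetric random walks, in the spirit of Andjel's arguments), and possibly letting $\delta'$ decrease slightly across sub-intervals as long as it stays bounded away from $0$.
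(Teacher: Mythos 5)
Your setup matches the paper's: union bound over sites, a coupled auxiliary ``stuck'' zero-range process on $\bar\Lambda_\ell$ with absorbing boundaries (the paper's $\chi$ with generator $\mathcal L_\ell^{SZR}$), the moment functional $Z(t,\chi)=\sum_y y\,\chi_t(y)$ which is the martingale you call $Z^{aux}_t$, and the key deterministic observation that on the failure event $\{\chi_{T_\chi}(0)=0\}$ one must have $Z(T_\chi,\chi)\geq(\tfrac12+\delta)\ell(\ell+1)$, hence $Z(T_\chi,\chi)-Z(0,\chi)\geq C(\delta)\ell^2$ when $\chi_0\in A_\ell(\delta)$. This is precisely the content of the paper's Lemma \ref{lemma:Exitxi}. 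However, the quantitative step you use to extract the exponential bound is different from the paper's, and it has a genuine gap.

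You apply Markov's inequality to the martingale mean, which only yields a constant failure probability $q<1$, and then propose to iterate over $\ell^2$ sub-intervals of length $\ell^2$ via the strong Markov property. This iteration does not close: restarting the argument at each time $k\ell^2$ requires the real zero-range configuration restricted to $\{x_0+1,\dots,x_0+\ell\}$ to belong again to some $A_\ell(\delta')$, and there is no control on this. The assumption $\omega_0\in B_{K,\ell}(\delta)$ is only at time zero; over diffusive times the dynamics relaxes the local excess density, the cumulated distance $Z_\ell$ may degrade, and frozen regions appear. The auxiliary process is absorbed in one window, so at each restart you would need a fresh auxiliary process seeded from an uncontrolled configuration. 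You correctly flag this as the ``main obstacle,'' but it is an essential difficulty, not a technicality; the estimate one obtains without closing this loop is just the single-window bound $q<1$, which is not enough. (A hint that something is off: your scheme would yield $e^{-c\ell^2}$, far stronger than the $e^{-\ell^\epsilon}$, $\epsilon<\tfrac12$, that the paper proves.)

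The paper avoids iteration altogether and goes straight for a large-deviations bound on the discrete embedded walk. Writing $Z(T_\chi,\chi)-Z(0,\chi)=X_{L(\chi)}$, where $X_k$ is a simple symmetric random walk and $L(\chi)$ is the number of particle jumps up to absorption, the key steps are: (i) prove $\Q^{SZR}_{\chi_0}(L(\chi)>\ell^{3+1/2})\leq e^{-\ell^\epsilon}$ by coupling $\chi$ with the free zero-range process $\zeta$ and with independent random walks (Lemmas \ref{lemma:Tzeta}, \ref{lemma:Txi}, \ref{lemma:Tsigma}), and (ii) apply Doob's maximal inequality to the positive submartingale $e^{\lambda X_k}$ with $\lambda=C\ell^{-3/2}$ to get $\P^0(\sup_{k\leq\ell^{3+1/2}}X_k\geq C\ell^2)\leq e^{-C'\sqrt{\ell}}$, which gives the claimed $\exp(-\ell^\epsilon)$ for any $\epsilon<\tfrac12$ in a single shot. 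Your proposal also leaves the bound on $\P(T_\infty>T_\ell)$ unproved; the paper handles this via the same chain of couplings $T_\chi\leq T_\zeta\leq T_\Upsilon$ and a standard random-walk exit-time estimate. In short: the moment/failure-event analysis is right, but replace Markov-plus-iteration by a control on the number of jumps together with an exponential-submartingale (Doob) estimate.
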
 

We now state that, while pairing the exclusion process and the zero-range process, if $\eta_0$ is started close to a smooth product measure, then with high probability the associated zero-range configuration $ \omega_0$ is in $B_{K,\ell}$ for some well chosen $K, \ell$. 
Recall that $\{\eta_t\; : \; t \geq 0\}$ denotes the Markov process driven by the infinitesimal generator $N^2\mathcal{L}_N$, whose law is denoted by $\bb P_{\mu_N}$, where $\mu_N$ is the initial measure for the process defined in \eqref{eq:DefmuN}. 
Further recall that for any exclusion configuration $\eta$, $\Pi(\eta)$ denotes the corresponding zero-range configuration. We also denote by $K(\eta)=\sum_{x\in\T_N}(1-\eta(x))$ the number of empty sites in $\eta$, i.e.~the size of the associated zero-range configuration $\Pi(\eta)$ (as explained in Section \ref{sec:Bijection}).

\begin{lemma}
\label{lemma:probass} 
Assume that the initial density profile $\rho_0$ is not identically equal to $1$. Then, letting $\ell_N=(\log N)^{8},$ there exists a constant $\delta>0$ such that
\begin{equation*}
\label{eq:EXtoZR} 
\liminf_{N\to\infty}\mu_N\Big(K(\eta)>\ell_N \quad \mathrm{and}\quad \Pi(\eta)\in B_{K(\eta),\ell_N}(\delta) \Big)=1.
\end{equation*}
\end{lemma}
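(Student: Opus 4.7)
The plan is the following. Since $\rho_0\colon\bb T\to(\tfrac12,1]$ is continuous on the compact torus, it attains its infimum $\rho_{\min}=\tfrac12+\epsilon_0$ with $\epsilon_0>0$; and because $\rho_0\not\equiv 1$, one has $\int_\bb T(1-\rho_0)(u)\,du>0$. I will fix $\delta>0$ small enough that both
\[
(2+\delta)\big(\tfrac12-\epsilon_0\big)<1 \qquad\text{and}\qquad \frac{1+\delta}{\rho_{\min}}<2+\frac{3\delta}{2}
\]
hold strictly; such $\delta$ exists because both inequalities are strict at $\delta=0$ thanks to $\rho_{\min}>\tfrac12$, and they will provide respectively the Hoeffding-scale slack for the two conditions appearing in \eqref{eq:DefAl}. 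The bound $K(\eta)>\ell_N$ with probability $1-o(1)$ is immediate: $K(\eta)=\sum_{x\in\bb T_N}(1-\eta(x))$ is a sum of independent Bernoullis with mean of order $N\int_\bb T(1-\rho_0)>0$, and Hoeffding yields $K(\eta)\geq cN\gg(\log N)^8=\ell_N$ with probability $1-e^{-c'N}$.

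For the regularity claim, the core idea is to translate the two conditions defining $\omega^{\ell_N,x}\in A_{\ell_N}$ into events on the EX configuration $\eta$, and then apply Hoeffding plus a union bound over all starting positions in $\bb T_N$ (which contains $\bb T_{K(\eta)}$). Let $q_0<q_1<\cdots<q_{K-1}$ be the positions of the empty sites of $\eta$ in a lift to $\bb Z$, so that the $y$-th ZR pile equals $\omega(y)=q_{y+1}-q_y-1$. The condition $n_{\ell_N}(\omega^{\ell_N,x})=(1+\delta)\ell_N$ is equivalent to $q_{x+\ell_N+1}-q_x\geq(2+\delta)\ell_N+1$, which in turn is implied by the EX event that the window $\{q_x+1,\ldots,q_x+(2+\delta)\ell_N\}$ contains at most $\ell_N$ empty sites. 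For the condition on $Z_{\ell_N}$, denote by $p_1<\cdots<p_{(1+\delta)\ell_N}$ the positions of the leftmost $(1+\delta)\ell_N$ EX particles strictly to the right of $q_x$; a direct count (there being $i-1$ particles strictly between $q_x$ and $p_i$) gives $y_i=p_i-q_x-i$, hence
\[
Z_{\ell_N}(\omega^{\ell_N,x})=\sum_{i=1}^{(1+\delta)\ell_N}(p_i-q_x)-\frac{(1+\delta)\ell_N\big((1+\delta)\ell_N+1\big)}{2},
\]
and the desired bound on $Z_{\ell_N}$ becomes $\sum_i(p_i-q_x)\leq\tfrac{1}{2}(1+\delta)(2+\tfrac{3\delta}{2})\ell_N^2+O(\ell_N)$. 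Both sufficient events depend only on $\eta$ restricted to a window of length $O(\ell_N)$ starting at $q_x$ (or $q_x+1$), so it suffices to establish each uniformly over starting positions $y\in\bb T_N$.

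For the first event, the number of empty sites in any window of length $(2+\delta)\ell_N$ starting at $y\in\bb T_N$ is a sum of independent Bernoullis of mean at most $(2+\delta)(\tfrac12-\epsilon_0)\ell_N=(1-c)\ell_N$; Hoeffding gives probability $\leq e^{-C\ell_N}$ of exceeding $\ell_N$, and the union bound yields $Ne^{-C(\log N)^8}=o(1)$. For the second event I will proceed in two steps. First, by a Hoeffding+union bound, with probability $1-o(1)$ the position $T(y)$ of the $(1+\delta)\ell_N$-th particle after $y$ satisfies $T(y)\leq T_0:=\tfrac{1+\delta}{\rho_{\min}}(1+\epsilon')\ell_N$ uniformly in $y\in\bb T_N$, for some small $\epsilon'>0$. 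Second, on this event one has the deterministic domination
\[
\sum_{i=1}^{(1+\delta)\ell_N}(p_i-y)=\sum_{k=1}^{T(y)}k\,\eta(y+k)\leq\sum_{k=1}^{T_0}k\,\eta(y+k),
\]
a weighted sum of independent Bernoullis of mean at most $\tfrac{(1+\epsilon')^2(1+\delta)^2}{2\rho_{\min}}\ell_N^2$; the second displayed inequality above makes this strictly less than the required upper bound, and Hoeffding for weighted Bernoullis (with $\sum_{k\leq T_0}k^2=O(\ell_N^3)$) controls deviations at scale $o(\ell_N^2)$, so a final union bound over $y\in\bb T_N$ concludes.

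The main obstacle will be the condition on $Z_{\ell_N}$: unlike the plain count characterizing the condition on $n_{\ell_N}$, it is a weighted sum whose range of summation is itself random. The two-step decoupling above (first bound $T(y)$, then apply weighted Hoeffding on a window of deterministic length) disentangles this, but the numerical constants have to be balanced carefully. The essential quantitative input is the strict inequality $\rho_{\min}>\tfrac12$ — equivalently $1/\rho_{\min}<2$ — which is precisely what leaves enough room to pick $\delta>0$ simultaneously validating both displayed inequalities above while leaving Hoeffding-scale slack for the fluctuations.
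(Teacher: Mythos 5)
Your route is genuinely different from the paper's: you work directly with the non-constant profile via $\rho_{\min}$, while the paper first proves the estimate for a \emph{constant} density $\bar\rho\in(\tfrac12,\tfrac23\wedge\min\rho_0)$ by reducing the ZR piles to i.i.d.\ geometric variables (Lemma~\ref{lem:nlY}), and then extends to arbitrary smooth $\rho_0$ via a monotone coupling (Corollary~\ref{cor:IncrProfile}) exploiting that adding a particle in $\eta$ only compresses the ZR configuration and so preserves membership in $B_{K,\ell}(\delta)$. That coupling step is precisely what lets the paper avoid the spatial inhomogeneity you are trying to handle head-on.

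There is, however, a genuine gap in your treatment of the $Z_{\ell_N}$ condition. Having bounded $T(y)\leq T_0:=\tfrac{1+\delta}{\rho_{\min}}(1+\epsilon')\ell_N$ uniformly, you assert that $\sum_{k\leq T_0}k\,\eta(y+k)$ has mean at most $\tfrac{(1+\epsilon')^2(1+\delta)^2}{2\rho_{\min}}\ell_N^2=\rho_{\min}\tfrac{T_0^2}{2}(1+o(1))$. But $\E[\eta(y+k)]=\rho_0((y+k)/N)\geq\rho_{\min}$, so $\rho_{\min}\tfrac{T_0^2}{2}$ is a \emph{lower} bound on that mean; the valid uniform upper bound (from $\rho_0\leq 1$) is $\tfrac{T_0^2}{2}=\tfrac{(1+\epsilon')^2(1+\delta)^2}{2\rho_{\min}^2}\ell_N^2$, and requiring that to lie below $\tfrac12(1+\delta)(2+\tfrac{3\delta}{2})\ell_N^2$ forces $\rho_{\min}>1/\sqrt{2}$ rather than just $\rho_{\min}>\tfrac12$. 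Concretely, if $\rho_0$ is close to $\tfrac12$ somewhere (so $T_0\approx 2\ell_N$) but close to $1$ near $y$, then the true $T(y)\approx(1+\delta)\ell_N$ and the true sum is $\approx\tfrac{(1+\delta)^2}{2}\ell_N^2$ (fine), whereas your substitute $\sum_{k\leq T_0}k\,\eta(y+k)\approx\tfrac{T_0^2}{2}\approx 2\ell_N^2$ overshoots the target. The decoupling throws away exactly the compensation (larger local density $\Rightarrow$ smaller $T(y)$) that makes the bound work. The gap is fixable within your framework by using a $y$-dependent cutoff $T_0(y):=\tfrac{(1+\delta)(1+\epsilon')}{\rho_0(y/N)}\ell_N$, which is legitimate since $\rho_0$ is smooth and $T_0(y)=O(\ell_N)=o(N)$; then the mean of $\sum_{k\leq T_0(y)}k\,\eta(y+k)$ is $\approx\tfrac{(1+\delta)^2(1+\epsilon')^2}{2\rho_0(y/N)}\ell_N^2\leq\tfrac{(1+\delta)^2(1+\epsilon')^2}{2\rho_{\min}}\ell_N^2$, recovering the inequality you need. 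As written, though, the step does not go through.
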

We prove this last lemma in Section \ref{sec:probass}. Before proving both Proposition \ref{prop:transience} and Lemma \ref{lemma:probass}, however, we conclude the proof of Theorem \ref{thm:transience}.

\begin{proof}[Proof of Theorem \ref{thm:transience}]
This theorem is a consequence of Proposition \ref{prop:transience} and Lemma \ref{lemma:probass}. First recall (from Section~\ref{sec:Bijection}) that
\[\eta_t\notin \mathcal{E}_N \; \Leftrightarrow \;  \exists\; x\in \T_{K(\eta_0)},\; \Pi(\eta_{0})_t(x)=0.\]
Letting $\ell_N=(\log N)^8$, and $t_N=\ell_N^4/N^2$ as in Theorem \ref{thm:transience}, we now write according to Lemma \ref{lemma:probass}, 
\begin{align*}
&\bb P_{\mu_N}\Big(\eta_{t_N}\notin \mathcal{E}_N\Big)\\
=&\;\bb P_{\mu_N}\Big( \exists\; x\in \T_{ K(\eta_{0})},\; {\Pi(\eta_0)_{t_N}}(x)=0\Big)\\
=&\;\bb P_{\mu_N}\Big( \exists\; x\in \T_{ K(\eta_{0})},\; {\Pi(\eta_0)_{t_N}}(x)=0\; \Big| \;K(\eta_0)>\ell_N \; \mbox{ and }\; \Pi(\eta_0)\in B_{K(\eta_0),\ell_N}\Big)+o_N(1).
\end{align*} 
According to Proposition \ref{prop:transience}, for any $\varepsilon<\frac12$, and any $\ell_N\geq \bar \ell(\varepsilon,\delta)$, the probability above is less than
\[ \E_{\mu_N}\Big[K(\eta_0)\; \Big| \;K(\eta_0)>\ell_N \; \mbox{ and } \; \Pi(\eta_0)\in B_{K(\eta_0),\ell_N}\Big]\exp(-\ell_N^{\varepsilon }).\]
Since $K(\eta)$ is the number of empty sites in $\eta$, it is less than $N$, and we now obtain for any $\varepsilon <\frac12$, and any $N$ large enough such that $\ell_N>\bar \ell(\varepsilon,\delta)$,
\[\bb P_{\mu_N}\Big(\eta_{t_N}\notin \mathcal{E}_N\Big)\leq N\exp(-\ell_N^{\varepsilon})+o_N(1).\]
Since $\ell_N =(\log N)^8$, fixing $\varepsilon\in(\frac18,\frac12)$ concludes the proof.
\end{proof}

\subsection{Estimation of the transience time} \label{ssec:estimation}

We now prove Proposition \ref{prop:transience}.  Throughout this subsection, we assume that  $\delta>0$ is a fixed constant and $\ell$ is a fixed positive integer. Moreover, in order to avoid heavy notations, we work throughout this section  with \emph{unrescaled processes}. Namely, let $\{\tilde \omega_t \; : \; t \geq 0\}$ be the Markov process started from $\omega_0$ and generated by $\mc L_K^{ZR}$. A version of this process is given by $\tilde \omega_t = \omega_{t/N^2}$, and therefore, in order to prove Proposition \ref{prop:transience}, we are reduced to prove that: 
\[\sup_{\omega_0\in B_{K,\ell}(\delta)}\Q^{ZR}_{K,\omega_0}\Big(\exists\; x\in \T_K, \;  \tilde\omega_{T_\ell}(x)=0\Big)\leq K \exp\big(-\ell^{\varepsilon}\big).\]
For the sake of clarity, the proof will be divided into several lemmas which we prove at the end of this section. The principle of the proof is the following: fix $x\in\T_K$. By waiting a long time (of order $\ell^4$), we make sure that with high probability each particle initially present in $\{x+1,\dots,x+\ell\}$ will either 
\begin{itemize}
\item have encountered an empty site in the box $\{x+1,\dots,x+\ell\}$, and gotten stuck there forever,
\item or have left the box $\{x+1,\dots,x+\ell\}$ at some point.
\end{itemize}
At most $\ell$ particles fall into the first case, therefore, if the box initially contains at least  $(1+\delta)\ell $ particles, $\delta\ell$ particles or more will have left the box $\Lambda^{\ell,x}:=\{x+1,\dots,x+\ell\}$ at least once before $T_\ell=\ell^4$. In order for the site $x$ to remain empty up until $T_\ell$, all of these particles must  have left by the other boundary $ x+\ell$, and therefore, particles in $\Lambda^{\ell,x}$ would have performed an abnormally large number of steps to the right. The probability of the last case occurring decays faster than $\exp\pa{-\ell^{\varepsilon}}$ for any $\varepsilon<\frac12$.

\bigskip

Our purpose is now to make this argument rigorous.
Let us first write that 
\begin{equation}
\label{eq:QZRK}
\Q^{ZR}_{K,\omega_0}\pa{\exists\; x\in \T_K, \; \tilde\omega_{T_\ell}(x)=0}\leq K \max_{x\in\T_K}\Q^{ZR}_{K,\omega_0}\pa{\tilde\omega_{T_\ell}(x)=0}.
\end{equation}
We are going to prove that the probability on the right hand side is, for any $x$, for any $\varepsilon<\frac12$ and any $\ell$ large enough, smaller than $\exp(-\ell^\varepsilon)$.

\medskip

Recall that we introduced the domain $\overline{\Lambda}_\ell=\{0,\dots,\ell+1\}$, and recall definitions \eqref{eq:DefAl} and \eqref{eq:DefBlK} of $A_\ell$ and $B_{K,\ell}$, respectively. 
Let us fix $x\in \T_K$ and $\omega_0\in B_{K,\ell}$.  We introduce an auxiliary process $\{(\chi_t(y))_{y\in\Lbar}\; : \; t\geq 0\}$ (which will depend on $\ell$ and $x$).  This process is started from $\omega_0^{\ell,x}$ (defined above in \eqref{eq:DefOlx}), and therefore, since $\omega_0\in B_{K,\ell}$, we know that $\chi_0=\omega_0^{\ell,x}\in A_\ell$ contains exactly $(1+\delta)\ell$ particles located in $\Lambda_\ell$.
The process $\chi_t$ is driven by the (stuck) zero-range generator of $\tilde\omega_t$ restricted to $\overline{\Lambda}_\ell$, with the exception that particles reaching either $y=0$ or $y=\ell+1$ never leave, namely the generator $\mc L_\ell^{SZR}$ defined as
\begin{align*}\mathcal{L}_{\ell}^{SZR}f(\chi):=\mathbf{1}_{\{\chi(\ell) \geqslant 2\}}\big(f(\chi^{\ell, \ell+1})-f(\chi)\big)&+\mathbf{1}_{\{\chi(1) \geqslant 2\}}\big(f(\chi^{1,0})-f(\chi)\big) \vphantom{\bigg(}\\
&+\sum_{y,z\in\Lambda_\ell \atop |y-z|=1}\mathbf{1}_{\{\chi(y) \geqslant 2\}}\big(f(\chi^{y,z})-f(\chi)\big),\end{align*} for any $f:\bb N^{\bar\Lambda_\ell}\to\bb R$. 
Both sites $y=0$ and $y=\ell+1$ are initially empty and play the role of cemetery states where particles become stuck.

Let $\Q^{SZR}_{\chi_0}$ be the distribution of the process $\chi_t$ started from $\chi_0=\omega_0^{\ell,x}$ and driven by $\mathcal{L}_{\ell}^{SZR}$.
We denote  by 
\begin{equation}\label{eq:DefTxi}T_\chi=\inf\big\{t\geq 0 \; \colon \; \chi_t(y)\leq 1, \ \forall \; y\in \Lambda_\ell\big\}\end{equation} 
the time at which all the particles have either  left $\Lambda_\ell$ or been stuck in a hole. Then, by monotonicity of the event $\{\tilde\omega_{T_\ell}(x)=0\}$ w.r.t.~the configurations, we have for any $x\in \T_K$
\begin{equation*}
\Q_{K,\omega_0}^{ZR}\Big(\tilde\omega_{T_\ell}(x)=0\Big)\leq\Q_{\chi_0}^{SZR}\pa{\chi_{T_\ell}(0)=0}.
\end{equation*} 
Since $\chi_0=\omega_0^{\ell,x}\in A_\ell$, 
 Lemma \ref{lemma:Transxi} stated below yields, for any $\varepsilon <\frac12$ and any $\ell\geq \bar \ell(\varepsilon, \delta)$, that
\[\Q_{K,\omega_0}^{ZR}\Big(\tilde\omega_{T_\ell}(x)=0\Big)\leq \exp\pa{-\ell^{\varepsilon}}\]
as wanted. Equation \eqref{eq:QZRK} then concludes the proof.

\begin{lemma}
\label{lemma:Transxi}
For any $\varepsilon<\frac12$, there exists $\bar \ell :=\bar \ell(\varepsilon,\delta)$, such that for any $\ell\geq \bar \ell$,
\begin{equation}
\label{eq:Transxi}
\sup_{\chi_0\in A_\ell}\Q^{SZR}_{\chi_0}\Big(\chi_{T_\ell}(0)=0\Big)\leq \exp(-\ell^{\varepsilon}).\end{equation}
\end{lemma}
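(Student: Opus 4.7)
The plan is to decompose the event $\{\chi_{T_\ell}(0)=0\}$ via the stopping time $T_\chi$:
\begin{equation*}
\Q^{SZR}_{\chi_0}\pa{\chi_{T_\ell}(0)=0} \leq \Q^{SZR}_{\chi_0}\pa{T_\chi > T_\ell} + \Q^{SZR}_{\chi_0}\pa{T_\chi \leq T_\ell,\ \chi_{T_\ell}(0)=0},
\end{equation*}
and to show that each summand is at most $\tfrac12 \exp(-\ell^{\varepsilon})$ once $\ell\geq \bar\ell(\varepsilon,\delta)$.

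For the first (``non-relaxation'') term I would use standard hitting-time estimates for symmetric random walk. As long as $t<T_\chi$ there exists at least one site in $\Lambda_\ell$ carrying a pile of size $\geq 2$, whose ``top'' particle performs symmetric $\pm 1$ increments until it either hits the absorbing boundary $\{0,\ell+1\}$ or lands on an empty site and becomes isolated. For a simple random walk on an interval of length $\ell+1$, the probability of failing to hit a given absorbing set within time $C\ell^2 \log\ell$ is at most $\exp(-c\log^2\ell)$; concatenating at most $(1+\delta)\ell$ such excursions via the strong Markov property and a union bound gives $\Q^{SZR}_{\chi_0}(T_\chi > C\ell^3\log\ell)\leq \exp(-c\log\ell)$. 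Since $T_\ell=\ell^4$ dominates $\ell^3\log\ell$ by an ample polynomial margin, this term is negligible.

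For the second (``deviation'') term I observe that on $\{T_\chi \leq T_\ell,\ \chi_{T_\ell}(0)=0\}$ every particle is either isolated in $\Lambda_\ell$ (at most one per site, hence at most $\ell$ trapped) or absorbed at $\ell+1$, so at least $\delta\ell$ of the $(1+\delta)\ell$ particles have exited through the right and none through the left. I would label the particles by their initial positions $y_1,\dots,y_n$ (with $n=(1+\delta)\ell$) and build a graphical coupling of the SZR dynamics with independent symmetric random walks on $\Lbar$ absorbed at $\{0,\ell+1\}$, so that each tagged particle's trajectory is a time-changed and possibly trap-stopped version of its companion walker. The key structural fact is that the cumulated position $Y_t:=\sum_{y\in\Lbar} y\,\chi_t(y)$ is a martingale with $\pm 1$ increments: the initial bound $Y_0=Z_\ell(\chi_0)\leq (1+\delta/2)(1+\delta)\ell(\ell+1)/2$, together with the fact that on the event $Y_{T_\chi}\geq \delta\ell(\ell+1)$ (just from the $\geq \delta\ell$ rightward absorptions), forces the signed boundary flux $R-L$ to deviate atypically far in the positive direction. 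A Chernoff-type inequality applied to $Y_t$, using the $\pm 1$ structure and the fact that on the complement of the non-relaxation event the number of jumps up to $T_\chi$ is $O(\ell^4)$, yields the stretched-exponential decay $\exp(-c\ell^{\varepsilon})$ for any $\varepsilon<\tfrac12$.

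The main obstacle is precisely the deviation step. The naive ``$n$ independent walkers, each independently tossing a fair coin'' picture fails because SZR jumps are only fired at piles of size $\geq 2$, which simultaneously freezes isolated particles and correlates the trajectories of neighbouring ones; in particular the trap outcome is a genuinely new stopping mechanism absent from the independent picture. Making the coupling rigorous therefore requires either a careful graphical construction on a common Poisson clock lattice (so that each SZR particle inherits symmetric $\pm 1$ moves from a companion walker together with a stopping time for the trap event), or bypassing the coupling altogether via an Azuma-type inequality applied directly to the signed boundary flux $R-L$ using the controlled quadratic variation of $Y_t$. The stretched rather than genuinely exponential rate $\exp(-\ell^{\varepsilon})$ with $\varepsilon<\tfrac12$ is unavoidable in this approach: the fluctuation scale of a martingale with $O(\ell^4)$ symmetric $\pm 1$ jumps is of order $\ell^2$, exactly the scale of the required deviation $\delta\ell(\ell+1)$, so one recovers the critical exponent only up to sub-Gaussian logarithmic losses, which forces $\varepsilon$ strictly below $\tfrac12$.
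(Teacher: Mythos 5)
Your high-level decomposition
\[
\Q^{SZR}_{\chi_0}\pa{\chi_{T_\ell}(0)=0}\leq \Q^{SZR}_{\chi_0}\pa{T_\chi>T_\ell}+\Q^{SZR}_{\chi_0}\pa{T_\chi\leq T_\ell,\ \chi_{T_\ell}(0)=0}
\]
is the one used in the paper (Lemmas~\ref{lemma:Txi1} and~\ref{lemma:ExitProba}), and your idea of tracking the cumulated position $Y_t=\sum_y y\,\chi_t(y)$ (the paper's $Z(t,\chi)$) and treating it as a $\pm 1$ martingale with a Doob/Chernoff bound is exactly the paper's mechanism. However, there are genuine gaps in how you feed inputs into that martingale argument, and one of them is the step where the whole exponent $\varepsilon<\tfrac12$ actually comes from.

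First, the bound you plug into the deviation estimate --- ``the number of jumps up to $T_\chi$ is $O(\ell^4)$'' --- is neither justified nor sufficient. It is not justified because $T_\chi\leq T_\ell=\ell^4$ controls \emph{time}, not the number of jump events: with up to $(1+\delta)\ell$ simultaneously active piles at rate $\mc O(1)$ each, the count of jumps in time $\ell^4$ could be as large as $\ell^5$. And it is not sufficient because if the number of $\pm 1$ martingale increments were really $\ell^4$, the fluctuation scale would be $\ell^2$, i.e.\ precisely the scale of the deviation you need to rule out, and you would obtain only an $\mc O(1)$ probability bound, not a stretched exponential. The paper instead proves \emph{separately} (equation~\eqref{eq:MajLxi}, using the per-particle exit-length tail from Lemma~\ref{lemma:ExitTimeRW}) that the total number of jumps $L(\chi)$ is at most $\ell^{3+1/2}$ with probability $1-\exp(-c\ell^{1/2})$; $\ell^{3.5}$ jumps give fluctuations of order $\ell^{7/4}$, so the deviation $\ell^2$ is $\ell^{1/4}$ standard deviations away and Doob yields $\exp(-c\sqrt{\ell})$. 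This is where the restriction $\varepsilon<\tfrac12$ genuinely originates; it is not a ``sub-Gaussian logarithmic loss'' but the balance between the union bound for the $L(\chi)$ estimate and the deviation estimate.

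Second, your lower bound $Y_{T_\chi}\geq \delta\ell(\ell+1)$ (counting only the rightward absorptions) is too weak. You must also account for the $\approx\ell$ particles trapped at distinct sites of $\Lambda_\ell$, which contribute at least $\ell(\ell+1)/2$, giving the paper's bound $Y_{T_\chi}\geq(1+2\delta)\ell(\ell+1)/2$ in~\eqref{eq:ZTxi}. With only $\delta\ell(\ell+1)$ against the initial bound $Y_0\leq(1+\delta/2)(1+\delta)\ell(\ell+1)/2$ of~\eqref{eq:Z0}, the deviation $Y_{T_\chi}-Y_0$ is \emph{negative} for any $\delta\in(0,1)$, so the argument collapses; the trapped contribution is essential. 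Finally, your estimate for the first term, $\exp(-c\log\ell)$, is only polynomial in $\ell$ and therefore does not beat the target $\exp(-\ell^\varepsilon)$; the correct single-walker tail $P(T^X_\ell>\ell^4)\leq\exp(-C\ell)$ (Lemma~\ref{lemma:ExitTimeRW}) together with a union bound over the $(1+\delta)\ell$ particles (as in the paper's coupling chain $T_\chi\le T_\zeta\le T_\Upsilon$, Lemmas~\ref{lemma:Tsigma}--\ref{lemma:Txi}) gives the required exponential bound; this part is a repairable slip rather than a structural issue.
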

\begin{proof}[Proof of Lemma \ref{lemma:Transxi}] This result immediately follows from Lemmas \ref{lemma:Txi1} and \ref{lemma:ExitProba} below.
\begin{lemma}\label{lemma:Txi1}
For any $\varepsilon<1$, there exists $\bar \ell :=\bar \ell(\varepsilon,\delta)$ such that for any $ \ell\geq\bar \ell$,
\[ \sup_{\chi_0\in A_\ell}\Q^{SZR}_{\chi_0} \Big(T_\chi> T_\ell\Big)\leq \exp\pa{-\ell^{\varepsilon}}.\]
\end{lemma}
\begin{lemma}

\label{lemma:ExitProba} 
For any $\varepsilon<\frac12$, there exists $\bar \ell :=\bar \ell(\varepsilon,\delta)$ such that for any $ \ell\geq\bar \ell$,
\[\sup_{\chi_0\in A_\ell}\Q^{SZR}_{\chi_0} \pa{\chi_{T_\chi}(0)=0}\leq \exp\pa{-\ell^{\varepsilon}}.\]
\end{lemma}

In the next two paragraphs we prove Lemmas \ref{lemma:Txi1} and \ref{lemma:ExitProba}.\end{proof}

\subsubsection{Proof of Lemma \ref{lemma:Txi1}}
To prove Lemma \ref{lemma:Txi1}, we couple our zero-range process $\chi_t$ on $\Lbar$ with two auxiliary processes: $\{(\zeta_t(y))_{y\in \overline{\Lambda}_\ell}\; : \; t\geq 0\}$ and $\{(\Upsilon_t(y))_{y\in \overline{\Lambda}_\ell}\; : \; t\geq 0\}$, both
starting from the same initial state as $\chi_t$. Let us describe the dynamics of these processes. 

\medskip

The auxiliary process $\zeta_t$ is also a zero-range process, but with rate 1: no particle gets stuck inside $\Lambda_\ell$, instead each particle moves freely until it reaches sites $y=0$ or $y=\ell+1$ where it remains stuck. Its generator $\mc L_\ell^{FZR}$ is given by
\[\mathcal{L}_{\ell}^{FZR}f(\zeta):=f(\zeta^{\ell,\ell+1})-f(\zeta)+f(\zeta^{1,0})-f(\zeta)
+\sum_{y,z\in\Lambda_\ell \atop |y-z|=1}\big(f(\zeta^{y,z})-f(\zeta)\big).\]
We denote by  $\Q^{FZR}_{\zeta_0}$ the distribution of the process $\zeta_t$ started from $\zeta_0=\chi_0$ and driven by the generator $\mathcal{L}_{\ell}^{FZR}$ above and we denote by
\[T_\zeta=\inf\big\{t\geq 0\; \colon \; \zeta_t(y)=0, \ \forall\; y\in \Lambda_\ell\big\}\] 
the time at which all the particles have left $\Lambda_\ell$.

\medskip

 Recall that, since $\chi_0\in A_\ell$, it contains $(1+\delta)\ell$ particles in $\Lambda_\ell$. In the second auxiliary process $\Upsilon_t$ the particles in $\Lambda_\ell$ perform independent simple random walks with jumps occurring at rate $1/((1+\delta)\ell)$, and the particles get stuck forever once they have left $\Lambda_\ell$. Its generator $\mc L_\ell^{IRW}$ is given by
\begin{align*}\mathcal{L}_{\ell}^{IRW}f(\Upsilon):=\frac{\Upsilon(\ell)}{(1+\delta)\ell}\big(f(\Upsilon^{\ell,\ell+1})-f(\Upsilon)\big)&+\frac{\Upsilon(1)}{(1+\delta)\ell}\big(f(\Upsilon^{1,0})-f(\Upsilon)\big)\\
&+\sum_{y,z\in\Lambda_\ell \atop |y-z|=1}\frac{\Upsilon(y)}{(1+\delta)\ell}\big(f(\Upsilon^{y,z})-f(\Upsilon)\big).\end{align*}
We denote by $\Q^{IRW}_{\Upsilon_0}$ the distribution of the process $\Upsilon_t$ started from $\Upsilon_0=\chi_0$ and driven by the generator $\mathcal{L}_{\ell}^{IRW}$ above, and we denote by
\[T_\Upsilon=\inf\big\{t\geq 0\; \colon  \; \Upsilon_t(y)=0,\  \forall\; y\in \Lambda_\ell\big\}\] 
the time at which all the particles  have left $\Lambda_\ell$. 

\medskip

Given these two new processes, we state the following three lemmas, which put together prove Lemma \ref{lemma:Txi1}.
\begin{lemma}
\label{lemma:Tsigma}
For any $\varepsilon<1$, there exists $\bar \ell :=\bar \ell(\varepsilon,\delta)$, such that for any $ \ell\geq\bar \ell$,
\[ \sup_{\Upsilon_0\in A_\ell}\Q^{SZR}_{\Upsilon_0} \Big(T_\Upsilon> T_\ell\Big)\leq \exp\pa{-\ell^{\varepsilon}}.\]
\end{lemma}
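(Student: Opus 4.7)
The plan is to exploit the defining feature of the process $\Upsilon_t$: its particles move as \emph{independent} continuous-time simple random walks on $\overline{\Lambda}_\ell$, each with total jump rate $2/((1+\delta)\ell)$ and each absorbed upon reaching $\{0,\ell+1\}$. Since $\Upsilon_0\in A_\ell$ contains exactly $(1+\delta)\ell$ particles, all located in $\Lambda_\ell$, the event $\{T_\Upsilon>T_\ell\}$ means that at least one of these $(1+\delta)\ell$ independent walks has failed to leave $\Lambda_\ell$ by time $T_\ell=\ell^4$. A union bound over particles therefore reduces the problem to an exit-time tail estimate for a single walk.

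For a single particle I would first perform a time change by $(1+\delta)\ell$, turning each walk into a standard continuous-time SRW with unit rate in each direction. Denoting by $\tau'$ the exit time from $\{1,\dots,\ell\}$ of such a walk starting at some $y_0\in\Lambda_\ell$, the classical exponential tail bound
\[\P_{y_0}\pa{\tau'>s}\leq C\exp\pa{-cs/\ell^2}\]
holds for absolute constants $c,C>0$. It can be proved in a standard way: partitioning time into blocks of length of order $\ell^2$, on each of which the walk exits $\{1,\dots,\ell\}$ with probability bounded below (e.g.\ by Donsker's theorem or a direct hitting time computation), then iterating using the strong Markov property. Undoing the time change, the original one-particle exit time satisfies $\P_{y_0}(\tau>t)\leq C\exp\pa{-ct/((1+\delta)\ell^3)}$, and specializing to $t=T_\ell=\ell^4$ yields
\[\P_{y_0}\pa{\tau>T_\ell}\leq C\exp\pa{-c\ell/(1+\delta)}.\]

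Combining this with the union bound over the $(1+\delta)\ell$ independent particles gives
\[\Q^{IRW}_{\Upsilon_0}\pa{T_\Upsilon>T_\ell}\leq (1+\delta)\ell\cdot C\exp\pa{-c\ell/(1+\delta)},\]
uniformly in $\Upsilon_0\in A_\ell$. For any $\varepsilon<1$, the polynomial prefactor and constant $C$ are absorbed for $\ell\geq\bar\ell(\varepsilon,\delta)$ large enough (since $\ell^\varepsilon\ll \ell$), so the right-hand side is bounded by $\exp(-\ell^\varepsilon)$, proving the claim. No real obstacle is anticipated here: independence of the walks eliminates any need for coupling arguments, and the substance of the proof is the classical one-dimensional exit-time bound; the estimate comes out so comfortably because the budget $T_\ell=\ell^4$ is of order $\ell$ times the natural $\ell^3$ timescale imposed by the slowed-down jump rates.
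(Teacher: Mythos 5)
Your proposal is correct and is essentially the paper's argument: a union bound over the $(1+\delta)\ell$ independent particles combined with the single-particle exit-time tail bound $\P^0(T_\ell^X>\ell^4)\leq e^{-C(\delta)\ell}$, which the paper cites from Appendix \ref{app:exit} (Lemma \ref{lemma:ExitTimeRW}, estimate \eqref{eq:ExitTimeCont}) while you derive it by a time change and a standard block-iteration argument. The only other difference is cosmetic: the paper writes the union bound in the complementary form $1-(1-e^{-C\ell})^{(1+\delta)\ell}$, which is asymptotically the same as your $(1+\delta)\ell\,Ce^{-c\ell/(1+\delta)}$.
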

\begin{lemma}\label{lemma:Tzeta}For any $\ell\geq 1$, and assuming that $\zeta_0=\Upsilon_0 \in A_\ell$,
\[\Q^{FZR}_{\zeta_0}\Big(T_\zeta> T_\ell\Big)\leq\Q^{IRW}_{\Upsilon_0}\Big(T_\Upsilon> T_\ell\Big).\]
\end{lemma}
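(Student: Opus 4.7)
The plan is to build a pathwise coupling of $(\zeta_t)_{t\geq 0}$ and $(\Upsilon_t)_{t\geq 0}$ on a common probability space so that $T_\zeta\leq T_\Upsilon$ almost surely; the desired inequality of tail probabilities then follows at once. The underlying intuition is that in FZR a particle at position $y_i$ inside a pile of size $\zeta(y_i)$ jumps at rate $2/\zeta(y_i)$, which is at least the IRW per-particle rate $2/((1+\delta)\ell)$, since at most $(1+\delta)\ell$ particles can occupy a single site.

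First, I would give a particle-labeled reformulation of FZR. Label the $(1+\delta)\ell$ particles by $i\in\{1,\dots,(1+\delta)\ell\}$ and observe that $\mc L_\ell^{FZR}$ is equivalent to the particle-centered rule ``each particle $i$ at $y_i\in\Lambda_\ell$ jumps in a uniformly chosen direction $\pm 1$ at rate $2/\zeta(y_i)$, and particles at $0$ or $\ell+1$ stay put'': indeed the pile at $y$ then fires at total rate $\zeta(y)\cdot 2/\zeta(y)=2$ with a uniformly selected moving particle. On a single probability space I would sample i.i.d.~families $(\xi_k^i)_{i,k\geq 1}\sim\mathrm{Exp}(1)$ and $(\epsilon_k^i)_{i,k\geq 1}$ uniform on $\{-1,+1\}$, mutually independent. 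IRW is realized by letting particle $i$'s $k$-th jump happen at time $t_k^i:=\tfrac{(1+\delta)\ell}{2}\sum_{j\leq k}\xi_j^i$ in direction $\epsilon_k^i$, while FZR is realized by letting its $k$-th jump happen at the time $s_k^i$ defined implicitly by
\[
\int_{s_{k-1}^i}^{s_k^i}\frac{2}{\zeta_u(y_i^\zeta(u))}\,du=\xi_k^i,
\]
again in direction $\epsilon_k^i$, with $\zeta_u$ updated after every jump. The random time-change theorem for counting processes with state-dependent intensity ensures that these recipes realize the correct laws $\Q^{IRW}_{\Upsilon_0}$ and $\Q^{FZR}_{\zeta_0}$.

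The conclusion then rests on a single monotone estimate. Conservation of mass yields $\zeta_u(y)\leq(1+\delta)\ell$ for all $u,y$, so $2/\zeta_u(y)\geq 2/((1+\delta)\ell)$, and the defining equation for $s_k^i$ forces $s_k^i-s_{k-1}^i\leq\tfrac{(1+\delta)\ell}{2}\xi_k^i$; summing in $k$ gives $s_k^i\leq t_k^i$ for every $i$ and $k$. Because the directions $\epsilon_k^i$ are shared, the embedded random walk $W_i(k):=y_i(0)+\sum_{j\leq k}\epsilon_j^i$, and hence its exit step $K_i:=\inf\{k:W_i(k)\in\{0,\ell+1\}\}$, coincide in both processes, so the individual exit times satisfy $\tau_i^\zeta=s_{K_i}^i\leq t_{K_i}^i=\tau_i^\Upsilon$ pathwise; taking maxima over $i$ delivers $T_\zeta\leq T_\Upsilon$ a.s. The main technical point, in my view, is the particle-centered reformulation of FZR: one must verify that the per-particle clocks may be taken independent across $i$ (not only across $k$) while still producing the correct joint law of $\zeta_t$, which rests on the equivalence of the particle- and bond-based generators together with a standard time-change argument.
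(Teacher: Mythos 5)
Your proof is correct, and it follows the same overall strategy as the paper (a pathwise coupling in which both processes use identical particle trajectories, combined with a rate comparison to get time domination), but the coupling itself is organized differently. The paper advances the two processes one \emph{combined} jump at a time: at the $i$-th step of the joint construction it samples the waiting time for $\zeta$ with parameter $n_i^\zeta$ (number of occupied sites, always $\geq 1$) and for $\Upsilon$ with parameter $n_i^\Upsilon/((1+\delta)\ell)$ (always $\leq 1$), couples these so that $\tau_{i+1}^\zeta\leq\tau_{i+1}^\Upsilon$, and then sums over the (common) total number of jumps $K$ to get $T_\zeta=t_K^\zeta\leq t_K^\Upsilon=T_\Upsilon$. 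You instead attach to each \emph{particle} $i$ its own unit-exponential clock sequence $(\xi^i_k)$ and direction sequence $(\epsilon^i_k)$, shared by both processes, and compare per-particle intensities: since at most $(1+\delta)\ell$ particles ever occupy a site, the FZR per-particle rate $2/\zeta_u(y_i(u))$ always dominates the IRW per-particle rate $2/((1+\delta)\ell)$, so the time-change sends each particle out faster in $\zeta$, yielding the stronger particle-wise bound $\tau_i^\zeta\leq\tau_i^\Upsilon$ and hence $T_\zeta=\max_i\tau_i^\zeta\leq\max_i\tau_i^\Upsilon=T_\Upsilon$. Both arguments deliver the same pathwise domination; your version buys the sharper per-particle statement at the price of invoking the random time-change construction for a system with state-dependent, mutually dependent intensities, which is standard (it is the usual realization of a Markov jump process from independent unit Poisson clocks per particle, valid because the exponential clocks are memoryless and the intensities are adapted) but does deserve the care you flag. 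The paper's step-by-step construction sidesteps that technical point by sampling a fresh exponential at each joint step, at the cost of only tracking the cumulative time $t_K$. Both are fine; yours is a legitimate and perhaps conceptually cleaner alternative.
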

\begin{lemma}\label{lemma:Txi}For any $\ell\geq 1$, and assuming that $\chi_0=\zeta_0 \in \N^{\bar\Lambda_\ell}$,
\[\Q^{SZR}_{\chi_0}\Big(T_\chi> T_\ell\Big)\leq\Q^{FZR}_{\zeta_0}\Big(T_\zeta> T_\ell\Big).\]
\end{lemma}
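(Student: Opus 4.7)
The plan is to couple $\chi$ and $\zeta$ on the same probability space using shared Poisson clocks, and to identify an inductive invariant which forces $T_\chi\leq T_\zeta$ pathwise. This will give $\{T_\chi>T_\ell\}\subseteq\{T_\zeta>T_\ell\}$, and the stated inequality follows immediately.

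Concretely, for each ordered pair $(y,z)$ of adjacent sites with $y\in\Lambda_\ell$ and $z\in\Lbar$, let $P_{y\to z}$ be an independent rate-$1$ Poisson process. Both $\chi_t$ and $\zeta_t$ are built from the same family of clocks via their own dynamics: when $P_{y\to z}$ rings, $\chi$ transitions to $\chi^{y,z}$ iff $\chi(y)\geq 2$, and $\zeta$ transitions to $\zeta^{y,z}$ iff $\zeta(y)\geq 1$. By construction the marginals under this coupling are exactly $\Q^{SZR}_{\chi_0}$ and $\Q^{FZR}_{\zeta_0}$.

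The key claim is the pointwise invariant
\[\chi_t(y)\leq \zeta_t(y)+1 \qquad \forall\; y\in\Lbar,\; \forall\; t\geq 0.\]
It holds at $t=0$ since $\chi_0=\zeta_0$, and I would check it is preserved at each clock ring by a short case analysis on $(\chi(y),\zeta(y))$. The only scenario in which $\chi$ moves while $\zeta$ does not is $\chi(y)\geq 2$ and $\zeta(y)=0$, but this directly contradicts the invariant $\chi(y)\leq \zeta(y)+1=1$ at site $y$ and therefore never occurs. In all remaining cases either both processes move (which preserves the difference $\chi-\zeta$ at both the source $y$ and the target $z$), only $\zeta$ moves (which relaxes the bound at $y$ and preserves it at $z$ with slack), or neither moves.

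Given the invariant, at time $T_\zeta$ we have $\zeta_{T_\zeta}(y)=0$ for every $y\in\Lambda_\ell$, whence $\chi_{T_\zeta}(y)\leq 1$ for every $y\in\Lambda_\ell$; that is, $\chi_{T_\zeta}$ is already frozen and $T_\chi\leq T_\zeta$ almost surely. The principal obstacle is finding the correct invariant: the naive $\chi\leq \zeta$ already fails as soon as a site with $\chi(y)=\zeta(y)=1$ fires, since then $\zeta$ moves but $\chi$ does not, yielding $\chi(y)=1>0=\zeta(y)$. The slack of $+1$ is exactly what is required to forbid the bad configuration $\chi(y)\geq 2$, $\zeta(y)=0$ that would otherwise break the coupling.
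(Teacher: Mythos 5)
Your argument is correct, but it follows a genuinely different route from the paper's.

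The paper couples $\chi$ and $\zeta$ via per-site Poisson clocks (decorated with Bernoulli direction marks) and introduces a \emph{two-color} version of $\zeta$: at time $0$ one particle per occupied site is painted red, the rest blue, and the construction maintains the global conserved quantity ``number of excess particles in $\chi$ = number of blue particles in $\zeta$,'' recoloring a blue particle red each time the matching excess particle in $\chi$ gets stuck. Then $T_\chi$ is the time at which no blue particle remains in $\Lambda_\ell$, which occurs no later than $T_\zeta$ since blue particles must either recolor or exit. You replace this bookkeeping by a per-bond clock coupling and a purely local, site-wise invariant $\chi_t(y)\leq\zeta_t(y)+1$, which at once rules out the bad event $\chi(y)\geq 2,\ \zeta(y)=0$ (the only way $\chi$ could move without $\zeta$) and immediately yields $\chi_{T_\zeta}\leq 1$ on $\Lambda_\ell$, hence $T_\chi\leq T_\zeta$. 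Both couplings produce the correct marginals (the per-bond construction is standard for these generators), and the invariants are consistent with one another, but yours is more elementary: there is no color to track, and the case analysis is shorter and entirely local. The paper's construction is slightly more informative, as it keeps explicit the correspondence between stuck particles and red particles, but that extra structure is not needed for the lemma. One small wording quibble: when only $\zeta$ moves, the constraint $\chi(y)\leq\zeta(y)+1$ is \emph{tightened} at the departure site $y$ (and only survives because $\chi(y)\leq 1\leq\zeta(y)$ there), while it is the arrival site $z$ where the bound relaxes; you seem to have swapped the two in your summary, though the underlying verification is right.
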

The proofs of Lemmas \ref{lemma:Tzeta} and \ref{lemma:Txi} are based on coupling arguments. Lemma \ref{lemma:Tsigma} uses classic estimates on simple random walks, which are stated in Appendix \ref{app:exit}.
Lemma \ref{lemma:Txi1} follows immediately from these three lemmas.

\subsubsection{Proof of Lemma \ref{lemma:Tsigma}}
Consider a single particle performing a symmetric random walk, jumping left and right at the same rate $1/((1+\delta)\ell)$. Thanks to the estimate \eqref{eq:ExitTimeCont} given in Appendix \ref{app:exit}, 
regardless of its initial position in $\Lambda_\ell$ the probability for this particle to remain in $\Lambda_\ell$  until time $T_\ell=\ell^4$ is bounded, for any $\ell>\bar\ell(\delta)$ by $e^{-C \ell}$ for some positive constant $C=C(\delta)$. Since $\Upsilon_0 \in A_\ell$, there are initially $(1+\delta)\ell$ such particles located in $\Lambda_\ell$.
Moreover, since in $\Upsilon$ these particles move independently, by definition of $T_{\Upsilon}$, it is straightforward to obtain as wanted, for any $\Upsilon_0\in A_\ell$,
\[\Q^{IRW}_{\Upsilon_0}\Big(T_\Upsilon > T_\ell\Big)\leq 1-\big(1-e^{-C \ell}\big)^{(1+\delta)\ell} \sim (1+\delta)\ell e^{-{\color{blue}C\ell}}.\]
Given $\varepsilon <1$, choosing $\bar\ell$ large enough such that $\big(1-\big(1-e^{-C \bar\ell}\big)^{(1+\delta)\bar\ell}\big)e^{\bar\ell^\varepsilon}\leq 1$ proves the lemma.

\subsubsection{Proof of Lemma \ref{lemma:Tzeta}}
Fix an initial configuration $\zeta_0=\Upsilon_0 \in A_\ell$. The difference between the processes $\Upsilon$ and $\zeta$ is that in the second one the jump rate of a given particle depends on the current configuration. However,  since initially the total number of particles in $\Lambda_\ell$ is $(1+\delta)\ell$,  there are certainly never more than $(1+\delta)\ell$ particles on a given site of $\Lambda_\ell$, and therefore the particles in $\zeta$ are faster than those in $\Upsilon$. For completeness we give a full construction of a coupling between the two processes.

The particles' trajectories will be the same for both $\zeta$ and $\Upsilon$, but the order and times of the jumps will change. 
Let us start from a configuration $\zeta_0=\Upsilon_0$ with $(1+\delta)\ell$ particles in $\Lambda_\ell$. 
For any $1\leq p\leq (1+\delta)\ell$, denote by $x_p$ the initial position of the $p$-th particle in $\zeta_0$. 
We define the random trajectory $X_p(0)=x_p,\dots, X_p(k_p)\in\{0,\ell+1\}$ of the $p$-th particle until it leaves $\Lambda_\ell$, where $k_p$ is the random number of jumps necessary to do so. In other words, we define $X_p(0)=x_p$ and for each $k\geq 1 $, we let 
\[X_p(k)=\begin{cases}
X_p(k-1)+1&\mbox{ with prob. }\frac12,\\
X_p(k-1)-1&\mbox{ with prob. }\frac12,\\
\end{cases}\]
and we define $k_p=\inf\big\{k\geq 1\; : \; X_p(k)\in \{0,\ell+1\}\big\}$.
We finally denote by \[ \bar X=\big(X_p(k)\big)_{1\leq p\leq (1+\delta)\ell, \; 0\leq k\leq k_p }{ \quad \mbox{ and } \quad\bar k =(k_p)_{1\leq p\leq (1+\delta)\ell} }\] the family of these trajectories and their lengths. 

\medskip

Given ${\bar k, \;}\bar X$, let us now define the order and times of the jumps in each process. Let
\[K=\sum_{p= 1}^{(1+\delta)\ell}k_p\] be
the total number of jumps to perform in the configuration in order to reach a frozen state, which is the same for $\zeta$ and $\Upsilon$ since they start from the same configuration. We are going to define update times  $t^\Upsilon_0=0<t^\Upsilon_1<\dots<t^\Upsilon_K=T_\Upsilon$ and $t^\zeta_0=0<t^\zeta_1<\dots<t^\zeta_K=T_\zeta$ for both processes, and build $\zeta$ and $\Upsilon$ in such a way that for any $i\leq K$, 
\[t_i^\zeta\leq t_i^\Upsilon.\]
At time $t^\Upsilon_i$, (resp. $t^\zeta_i$) we are going to choose a particle $p^\Upsilon_i$ (resp. $p^\zeta_i$)  which we will move in $\Upsilon$ (resp. $\zeta$) according to its predetermined trajectory $X_{p^\Upsilon_i}$ (resp. $X_{p^\zeta_i}$). 

\medskip

Assume that both $\zeta$ and $\Upsilon$ are defined respectively  until time $t_i^\zeta$ and $t_i^\Upsilon$. We denote by $ n_i^\Upsilon \in \{1,\dots, (1+\delta)\ell\}$ the number of \emph{particles} still in $\Lambda_\ell$ at time $t_i^\Upsilon$ and by $n_i^\zeta \in \{1,\dots,\ell\}$ the number of \emph{sites} in $\Lambda_\ell$ still occupied by at least one particle in $\zeta$ at time $t_i^\zeta$. We then sample two exponential times $\tau_{i+1}^\zeta$ and $\tau_{i+1}^\Upsilon$, with respective parameters $n_i^\zeta$ and $n_i^\Upsilon/((1+\delta)\ell)$. For any $i< K$, in $\Lambda_\ell$ there is at least one occupied site in $\zeta$, therefore $n_i^\zeta\geq 1$. Furthermore, there are in $\Lambda_\ell$ at most  $(1+\delta)\ell$  particles in $\Upsilon$, so that $n_i^\Upsilon/((1+\delta)\ell)\leq 1$. It is therefore possible to couple  $\tau_{i+1}^\zeta$ and $\tau_{i+1}^\Upsilon$ in such a way that 
\begin{equation}\label{eq:steptime}\tau_{i+1}^\zeta\leq \tau_{i+1}^\Upsilon.\end{equation}
We then let 
\[t^\Upsilon_{i+1}=t^\Upsilon_{i}+\tau_{i+1}^\Upsilon\leq t^\zeta_{i+1}=t^\zeta_{i}+\tau_{i+1}^\zeta.\]
We leave $\Upsilon$ (resp. $\zeta$) unchanged in the time segment $(t^\Upsilon_{i},t^\Upsilon_{i+1})$ $\big($resp. $(t^\zeta_{i},t^\zeta_{i+1})\big)$. In order to close the construction, we now need to build $\zeta_{t^\zeta_{i+1}}$ and $\Upsilon_{t^\Upsilon_{i+1}}$. To do so, we need to choose the particle to move, and then the chosen particle will move according to the pre-chosen trajectories $\bar X$, as we explain{ed above}.
\medskip

To choose the index of the particle to move in $\zeta$, we choose one site uniformly among the occupied sites in $\zeta_{t^\zeta_i}$, and then choose the particle to update uniformly among the particles present at this site. We then update the position of the particles according to the trajectories stored in $\bar X$. To choose the particle to move in $\Upsilon$, we simply choose the particle uniformly among the particles in $\Lambda_\ell$ in $\Upsilon_{t_i^\Upsilon}$, and update the position of this particle according to the fixed trajectories $\bar X$.   After the final time $t_K^\zeta=T_\zeta$ (resp. $t_K^{\Upsilon}=T_\Upsilon$), the process $ \zeta$  (resp. $\Upsilon$) remains frozen.

It is not difficult to see that under this construction, $\zeta$ and $\Upsilon$ are respectively distributed according to $\Q^{FZR}_{\zeta_0}$ and $\Q^{IRW}_{\Upsilon_0}$. Furthermore, by construction, the update times for each step satisfy \eqref{eq:steptime}, therefore
\[t_K^\zeta=T_\zeta\leq t_K^\Upsilon=T_\Upsilon,\]
thus concluding the proof of Lemma \ref{lemma:Tzeta}.

\subsubsection{Proof of Lemma \ref{lemma:Txi}}
Let us couple $\chi$ and $\zeta$ through the following graphical construction. Attach to every site in $\Lambda_\ell$ independently a Poisson process of parameter $1$ and think of it as a sequence of clock rings signalling possible update times for the process. Decorate the clock rings with independent $\mathrm{Ber}(\frac12)$ variables (also independent from the Poisson processes). In a given configuration of the process $\chi$, we call \emph{excess particle} any particle which is not alone on a site. The process $\chi$ (resp.\ $\zeta$) can be constructed by saying that when a clock rings at site $x$, one excess particle at $x$ (resp.\ one particle present at $x$) jumps right or left depending on the corresponding Bernoulli variable. Now in order to couple the two processes in a way to have $T_\chi\leq T_\zeta$, we turn $\zeta$ into a \emph{two-color process}. In the initial configuration, in $\zeta$, one particle per site is coloured red (corresponding to the particle{s} that will remain stuck in $\chi$) and the others blue. We build the processes in such a way that the following property is preserved: 
\begin{equation}\label{coupling-property}
\textit{The number of excess particles in $\chi$ is equal to the number of blue particles in $\zeta$.}
\end{equation} When a clock rings in a configuration satisfying this condition, three cases arise.
\begin{itemize}
\item If the ring occurs on a site with no particle in either process, nothing happens.
\item If the ring occurs on a site with only red particles in $\zeta$, one of them jumps according to the Bernoulli variable and nothing happens in $\chi$ (where there is either no particle or a single -- and therefore blocked -- one).
\item If the ring occurs on a site with excess particles in $\chi$, one of them jumps according to the Bernoulli variable. In $\zeta$, one of the blue particles makes the same jump. If the excess particle in $\chi$ becomes stuck after the jump, then color the corresponding particle in $\zeta$ red.
\end{itemize}
It is immediate to check that property \eqref{coupling-property} is preserved through any of these transitions. Also, $\chi$ has the desired law, as does the color-blind version of $\zeta$. Moreover, with this description, $T_\chi$ is the time at which all particles in $\zeta$ have either turned red or left the box, which clearly happens before they all leave the box. This ends the proof of Lemma \ref{lemma:Txi}.

Now it only remains to prove Lemma \ref{lemma:ExitProba}.

\subsubsection{Proof of Lemma \ref{lemma:ExitProba}}
For any $ \chi_0\in A_\ell$, we are going to estimate
\[\Q^{SZR}_{\chi_0} \pa{\chi_{T_\chi}(0)=0}.\]
The core idea to estimate the probability above is that in order for each particle that did not get stuck to leave $\Lambda_\ell$ at $x=\ell+1$ rather than at $x=0$, an unusual number of particle jumps to the right must have occurred, which happens with probability decaying exponentially fast in $\ell^{1/2}$.

For any realization of $\chi$, we denote by $L=L(\chi)$ the random number of jumps occurring before $T_{\chi}$, i.e. the number of particle jumps needed to reach a frozen state:
\[L(\chi)=\#\big\{{0\leq}t\leq T_{\chi}\; : \;  \chi_{t^-}\neq \chi_t\big\}.\]
As before, since $\chi_0\in A_\ell$, there are $  (1+\delta)\ell $ particles initially in $\Lambda_\ell$, and  each one of those typically requires $\mc O_{\ell}(\ell^2)$ jumps to either leave or get stuck. For any $\varepsilon<\frac12$, we claim that there must exist $\bar \ell(\varepsilon, \delta)$ such that for any $\ell\geq \bar \ell$
\begin{equation}
\label{eq:MajLxi}
\Q^{SZR}_{\chi_0}\Big(L(\chi)> \ell^{3+1/2}\Big)\leq \exp\pa{-\ell^{\varepsilon}}.
\end{equation}
To prove \eqref{eq:MajLxi}, we use the same coupling as in the proof of Lemma \ref{lemma:Txi}, where the process $\chi$ is compared with the process $\zeta$ where particles do not get stuck in empty sites. Then, denote by
\[L(\zeta)=\#\big\{t\leq T_{\zeta} \; \colon \; \zeta_{t^-}\neq \zeta_t\big\},\]
the random number of particle jumps which are necessary for each particle in $\zeta$ to leave $\Lambda_\ell$. Using the coupling described in the proof of Lemma \ref{lemma:Txi}, we obtain that \[L(\chi)\leq L(\zeta), \]
and thus we need to estimate $\Q^{FZR}_{\zeta_0}\big(L(\zeta)> \ell^{3+1/2}\big)$. 

Since in $\zeta$, the number $L_i$ of jumps which are necessary for the $i$-th particle to leave $\Lambda_\ell$ does not depend on the other trajectories (even though the time to leave does), we can now write (using the fact that  $\zeta_0=\chi_0$ is in $ A_\ell$)
\begin{align*}\Q^{FZR}_{\zeta_0}\bigg(L(\zeta)=\sum_{i=1}^{(1+\delta)\ell}L_i> \ell^{3+1/2}\bigg)&\leq\Q^{FZR}_{\zeta_0}\bigg( \exists\; i\in\big\{1,...,(1+\delta)\ell\big\}, \; L_i> \frac{\ell^{{2}+1/2}}{(1+\delta)}\bigg)\\
&\leq (1+\delta)\ell{\underset{i\in\{1,...,(1+\delta)\ell\}}{\sup}} \; \Q^{FZR}_{\zeta_0}\bigg(L_{i}> \frac{\ell^{2+1/2}}{(1+\delta)}\bigg).
\end{align*}
As a consequence of equation \eqref{eq:ExitTimeDisc} given in Lemma \ref{lemma:ExitTimeRW}, regardless of the starting point of each particle, there exists a universal constant $C>0$ such that for any $\ell$, and any $1\leq i\leq (1+\delta)\ell$,
\[\Q^{FZR}_{\zeta_0}\bigg(L_i> \frac{\ell^{2+1/2}}{(1+\delta)}\bigg)\leq \exp(-C\ell ^{1/2}),\]
thus proving \eqref{eq:MajLxi} for any $\varepsilon< \frac12$. Given \eqref{eq:MajLxi}, in order to prove Lemma \ref{lemma:ExitProba}, we now only  need to consider the realizations of $\chi$ for which $L(\chi)\leq  \ell^{3+1/2}$, and therefore  Lemma \ref{lemma:ExitProba} is a  consequence of Lemma \ref{lemma:Exitxi} below.

\begin{lemma}
\label{lemma:Exitxi}
For any $\varepsilon<\frac12$, there exists $\bar\ell(\varepsilon,\delta)$, such that for any $\ell\geq \bar \ell$ 
\begin{equation}
\label{eq:Exitxi2}
\sup_{\chi_0\in A_\ell} \Q^{SZR}_{\chi_0}\pa{\chi_{T_{\chi}}(0)=0\;\Big| \;L(\chi)\leq \ell^{3+1/2} }\leq \exp(-\ell^\varepsilon).\end{equation}
\end{lemma}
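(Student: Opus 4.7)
\textbf{Plan of proof for Lemma \ref{lemma:Exitxi}.}
The idea is that on the event $\{\chi_{T_\chi}(0)=0\}$, the jumps of $\chi$ must have produced a cumulative rightward drift of order $\ell^2$, whereas each jump is an independent fair coin flip in direction; so by Azuma--Hoeffding, such a drift is extremely unlikely when the total number of jumps is only of order $\ell^{3+1/2}$.

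\emph{Step 1 (unbiased direction of each jump).} Build $\chi$ from $2\ell$ independent rate-$1$ Poisson clocks, one for each oriented bond inside $\overline{\Lambda}_\ell$: a ring of the ``right clock'' at $y\in\Lambda_\ell$ produces a jump of an excess particle at $y$ to $y+1$ if $\chi(y)\geq 2$, otherwise it is suppressed; symmetrically for the left clocks. Let $(\sigma_i)_{i\geq 1}\in\{\pm 1\}$ be the directions of the successive actual jumps and set $S_k:=\sigma_1+\dots+\sigma_k$. A standard thinning argument shows that $(\sigma_i)$ is an i.i.d.\@ $\pm 1$ fair sequence, so that $(S_k)$ is a symmetric simple random walk.

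\emph{Step 2 (pathwise identity).} Let $W_t:=\sum_{y\in\overline{\Lambda}_\ell} y\,\chi_t(y)$. Since each jump changes $W_t$ by exactly $+1$ (right) or $-1$ (left), we have the pathwise identity
\[ S_{L(\chi)}\;=\;W_{T_\chi}-W_0\;=\;W_{T_\chi}-Z_\ell(\chi_0). \]

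\emph{Step 3 (a priori lower bound on $W_{T_\chi}$).} At time $T_\chi$, the initial $(1+\delta)\ell$ particles split into $m$ particles stuck in $\Lambda_\ell$ (necessarily at distinct sites, since a stuck particle is alone on its site), and $E=(1+\delta)\ell-m$ particles at the boundaries $\{0,\ell+1\}$. On $\{\chi_{T_\chi}(0)=0\}$, all $E$ boundary particles sit at $\ell+1$, hence
\[ W_{T_\chi}\;\geq\;\frac{m(m+1)}{2}+\bigl((1+\delta)\ell-m\bigr)(\ell+1).\]
The right-hand side, as a function of $m\in[0,\ell]$, has derivative $m-\ell-\tfrac12<0$ and is therefore minimized at $m=\ell$, giving $W_{T_\chi}\geq \ell(\ell+1)(\tfrac12+\delta)$. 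Combining with $Z_\ell(\chi_0)\leq (1+\delta/2)(1+\delta)\ell(\ell+1)/2$ from $\chi_0\in A_\ell$, a direct expansion yields
\[ S_{L(\chi)}\;\geq\;\ell(\ell+1)\Bigl[\tfrac12+\delta-\tfrac{(1+\delta/2)(1+\delta)}{2}\Bigr]\;=\;\frac{\delta(1-\delta)}{4}\,\ell(\ell+1)\;\geq\;c_\delta\,\ell^2 \]
for some $c_\delta>0$, provided one has imposed $\delta\in(0,1)$ when choosing $\delta$ in Lemma~\ref{lemma:probass} (only the existence of some $\delta>0$ is needed there).

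\emph{Step 4 (Hoeffding/Doob concentration).} On $\{\chi_{T_\chi}(0)=0\}\cap\{L(\chi)\leq \ell^{3+1/2}\}$, Step 3 forces $\max_{0\leq k\leq \ell^{3+1/2}}|S_k|\geq c_\delta\ell^2$. Applying Doob's maximal inequality to the exponential martingale $\exp(\lambda S_k-\lambda^2 k/2)$ and optimizing in $\lambda$ gives
\[ \Q^{SZR}_{\chi_0}\!\Bigl(\max_{0\leq k\leq \ell^{3+1/2}}|S_k|\geq c_\delta\ell^2\Bigr)\;\leq\;2\exp\!\Bigl(-\tfrac{c_\delta^{\,2}\ell^4}{2\,\ell^{3+1/2}}\Bigr)\;=\;2\exp\!\Bigl(-\tfrac{c_\delta^{\,2}}{2}\,\ell^{1/2}\Bigr).\]
By \eqref{eq:MajLxi}, the conditioning event $\{L(\chi)\leq \ell^{3+1/2}\}$ has probability at least $\tfrac12$ for $\ell$ large, so the conditional probability in \eqref{eq:Exitxi2} is at most $4\exp(-c_\delta^{\,2}\ell^{1/2}/2)$, which is $\leq \exp(-\ell^\varepsilon)$ for every $\varepsilon<\tfrac12$ as soon as $\ell\geq \bar\ell(\varepsilon,\delta)$.

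\emph{Main obstacle.} The crucial step is the quantitative bound in Step 3, which combines the pigeonhole constraint $m\leq\ell$ (only $\ell$ distinct sites are available to stuck particles) with the upper bound on the initial first moment built into the definition of $A_\ell$. The factor $\delta(1-\delta)/4$ makes the argument degenerate for $\delta\geq 1$, because the extremal configurations then allowed in $A_\ell$ are already so right-skewed that no drift is enforced; this is harmless here, since Lemma~\ref{lemma:probass} is free to select $\delta$ arbitrarily small.
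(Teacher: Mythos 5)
Your proposal is correct and follows essentially the same route as the paper: you reduce the event $\{\chi_{T_\chi}(0)=0\}$ (for $\chi_0\in A_\ell$) to a deterministic lower bound of order $\ell^2$ on the increment of the first-moment functional $\sum_y y\,\chi_t(y)$, identify that increment pathwise with an embedded symmetric simple random walk via the fact that each jump contributes $\pm1$ with fair and independent sign, and apply a Doob/exponential-martingale bound over $\ell^{3+1/2}$ steps. The minor differences (making the graphical construction explicit for the fair-coin claim, optimizing over the number $m$ of stuck particles rather than just invoking the extremal configuration, and explicitly dividing by $\Q(L(\chi)\le\ell^{3+1/2})\ge\frac12$ instead of absorbing the conditioning) are matters of exposition, not a different method; the assumption $\delta\in(0,1)$ you flag is indeed already imposed at the start of the section.
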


 \subsubsection{Proof of Lemma \ref{lemma:Exitxi}}
To prove this last result, we introduce a discrete time random walker $(X_k)_{k\leq L(\chi)}$ evolving on $\Z$,  starting from $X_0=0$ and jumping each time a particle of $\chi$ present in $\Lambda_\ell$ jumps, in the same direction as the particle. In other words, if the $k$--th particle jump in $\chi$ is to the left (resp.\ to the right), then $X_{k}=X_{k-1}-1$ (resp.\ $X_{k}=X_{k-1}+1$).

Let us define  
\[{Z(t,\chi)}:=\sum_{y=0}^{\ell+1} y\; \chi_t(y),\]
and let $0<\tau_1<...<\tau_{L(\chi)}=T_{\chi}$ be the successive jump times in $\chi$. 
Then, an elementary computation yields that 

\begin{equation}\label{eq:StXk}{Z(\tau_k,\chi)-Z(0,\chi)}=X_k,\end{equation}
because each jump to the right (resp.\ left) increases (resp.\ decreases) $Z(t,\chi)$ by $1$.  All the jumps occurring in $\chi$ being symmetric, the law of $(X_k)_{k\leq L(\chi)}$ is that of a symmetric random walker in discrete time. To avoid burdensome notations, we now assume that the random walk $X$ is defined for any time $k\in \N$ by completing the random walk after $L(\chi)$ independently from $\chi$.

Assuming that no particle in $\chi$ reaches $x=0$, we know that after $T_\chi$, each particle must have either gotten stuck in $\Lambda_\ell$, or reached the site $x=\ell+1$. Note that in order to minimize ${Z(T_{\chi},\chi)}$ in this case, there must be exactly one particle per site in $\Lambda_\ell$, and all the remaining particles must be at site $\ell+1$. In that case, since the initial number of particles in $\Lbar$ is $(1+\delta)\ell$,
\[ {Z(T_{\chi},\chi)}= \pa{2\delta+1}\frac{\ell(\ell+1)}{2}.\]
We can therefore write  
\begin{equation}
\label{eq:ZTxi}
\chi_{T_{\chi}}(0)=0\quad \Longrightarrow \quad {Z(T_{\chi},\chi)}\geq \pa{2\delta+1}\frac{\ell(\ell+1)}{2}.
\end{equation}
Recall that $\chi_0\in A_\ell$ and therefore by definition
\begin{equation}
\label{eq:Z0}
{Z(0,\chi)} \leq \Big(1+\frac{\delta}{2}\Big)(1+\delta)\frac{\ell(\ell+1)}{2}.
\end{equation}
Thanks to \eqref{eq:ZTxi} and \eqref{eq:Z0}, we obtain that 
\begin{equation*}
\big(\chi_0\in A_\ell \;\; \mbox{and}\;\; \chi_{T_{\chi}}(0)=0\big)\quad \Longrightarrow \quad {Z(T_{\chi},\chi)}-{Z(0,\chi)}\geq (\delta-\delta^2)\frac{\ell(\ell+1)}{4}\geq (\delta-\delta^2)\frac{\ell^2}{4}.
\end{equation*}
Recall that we assumed $0<\delta<1$. Letting $C=C(\delta):=(\delta-\delta^2)/4>0$, we can finally write for any $\chi_0\in A_\ell$ 
\begin{multline}
\label{eq:xi0X}
\Q_{\chi_0}^{SZR}\pa{\chi_{T_{\chi}}(0)=0\;\Big| \;L(\chi)\leq \ell^{3+1/2}}\\
\leq \Q_{\chi_0}^{SZR}\pa{{Z(T_{\chi},\chi)}-{Z(0,\chi)}\geq C\ell^2\;\Big| \;L(\chi)\leq \ell^{3+1/2}}. 
\end{multline}
Now, recall that by equation \eqref{eq:StXk}, ${Z(T_{\chi},\chi)}-{Z(0,\chi)}=X_{L(\chi)}$, and let $\P^0$ denote the distribution of a discrete time symmetric random walk on $\Z$ started from $0$. We get
\begin{align*}
\Q_{\chi_0}^{SZR}\bigg({Z(T_{\chi},\chi)}-{Z(0,\chi)}\geq C\ell^2\;\Big| \;L(\chi)\leq \ell^{3+1/2}\bigg)
\leq \;\P^0\bigg(\sup_{0\leq k\leq \ell^{3+1/2}}X_{k}\geq C\ell^2\bigg).\end{align*}
Since $X_k$ is a discrete time symmetric random walk, we use Doob's inequality on the positive sub-martingale $e^{\lambda X_k}$ for $\lambda=C\ell^{-\frac32}$. This yields 
\[\P^0\bigg(\sup_{0\leq k\leq \ell^{3+1/2}}X_{k}\geq C \ell^2\bigg)\leq \frac{\exp\cro{\ell^{3+1/2}\pa{\frac{C^2\ell^{-3}}{2}+\mc O_\ell\pa{\ell^{-6}}}}}{\exp\pa{C^2\sqrt{\ell}}}.\] 
Therefore, for any $\varepsilon < \frac12$, there exists an integer $\bar \ell(\varepsilon, \delta)$ such that for any $\ell\geq \bar \ell$
\[\P^0\bigg(\sup_{0\leq k\leq \ell^{3+1/2}}X_{k}\geq C \ell^2\bigg)\leq \exp\pa{-\ell^{\varepsilon}},\]
which concludes the proof of Lemma \ref{lemma:Exitxi}.

\subsection{Probability of the regular configurations}
\label{sec:probass}
In this section, we prove Lemma \ref{lemma:probass}, namely:
\[\lim_{N\to\infty}\mu_N\Big(K(\eta)>\ell_N \quad \mbox{and}\quad \Pi(\eta)\in B_{K(\eta),\ell_N} \Big)=1,\]
where we have already defined $\ell_N=(\log N)^8$. We start by proving that 
\begin{equation}
\label{eq:Kpetit}
\lim_{N\to\infty}\mu_N\big(K(\eta)>\ell_N \big)=1. 
\end{equation}
Recall that $K(\eta)$ is the number of empty sites in the exclusion configuration $\eta$, and that  $\rho_0(\cdot)\in (\frac12,1]$, and $\rho_0$ is smooth. We assume that $\rho_0$ is not identically equal to $1$, in which case the model is trivial. Then, since $\rho_0$ is smooth, there exist $\alpha>0$, $u\in \T$, and $\varepsilon>0$ such that uniformly in $[u, u+\varepsilon]$, we have $ \rho_0<1-\alpha$.  Since under $\mu_N$, the state of each site is chosen independently, we can use a standard large deviation estimate: for any $c<\alpha\varepsilon$, there exists a constant $C(c)$ such that
\[ \mu_N\big(K(\eta)\leq c N\big)\leq e^{-CN},\]
which proves \eqref{eq:Kpetit}.

\medskip

To close the proof of Lemma \ref{lemma:probass}, we now prove that 
\begin{equation}
\label{eq:PBKl}
\lim_{N\to\infty}\mu_N\Big(  {K(\eta)>\ell_N  \; \mbox{ and }\;\Pi(\eta)\notin B_{K(\eta),\ell_N}} \Big)=0.
\end{equation}
Because of the random nature of the number $K(\eta)$ of sites of $\Pi(\eta)$, and of the dependency between $K(\eta)$ and the number of particles in $\Pi(\eta)$, this proof, whose principle is rather simple, becomes quite technical. We first treat the case where the profile $\rho_0$ is homogeneous with density $\bar\rho\in (\frac12,1)$, and then prove that the case where $\rho_0$ is not constant follows, thus concluding the proof of Lemma \ref{lemma:probass}. Since we will need to differentiate the cases depending on the initial profile, we introduce explicitly $\rho_0$ in our notation, and denote by $\mu_{N,\rho_0}$ the initial product measure fitting the macroscopic profile $\rho_0$ (defined in \eqref{eq:DefmuN}).

\begin{lemma}
\label{lem:nlY}
For any $\bar\rho\in (\frac12,\frac23)$, there exists $\delta=\delta(\bar\rho)\in (0,1)$ such that
\begin{equation*}
\lim_{N\to\infty}\mu_{N,\bar \rho}\Big({K(\eta)>\ell_N  \; \mbox{ and }\;\Pi(\eta)\notin B_{K(\eta),\ell_N}}  \Big)=0
\end{equation*}
where $\mu_{N,\bar \rho}$ denotes the measure associated with the constant profile $\rho_0\equiv \bar\rho$.

\end{lemma}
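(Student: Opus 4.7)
The plan is a union bound over the empty-site positions in $\eta$, combined with Chernoff-type large deviation estimates that exploit the iid Bernoulli structure of $\mu_{N,\bar\rho}$. Let $\mu:=\bar\rho/(1-\bar\rho)$, which lies in $(1,2)$ for $\bar\rho\in(\tfrac12,\tfrac23)$. I claim that any $\delta\in(0,\mu-1)$ suffices; such a $\delta$ automatically satisfies $\delta\in(0,1)$ and the crucial inequality $(1-\bar\rho)(2+\delta)<1$.

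The first step is to translate the event $\omega^{\ell,x}\notin A_\ell$ into conditions on $\eta$. Let $y_1<\dots<y_{K(\eta)}$ denote the cyclically ordered empty sites of $\eta$, so that $\omega(i)=y_{i+1}-y_i-1$ and $n_\ell(\bar\omega^{\ell,x})=y_{x+\ell+1}-y_{x+1}-\ell$. On the event $\{n_\ell(\bar\omega^{\ell,x})\geq(1+\delta)\ell\}$, labelling the first $(1+\delta)\ell$ particles of $\eta$ to the right of $y_{x+1}$ by $k=1,\dots,(1+\delta)\ell$ and writing $p_k$ for the $\eta$-position of the $k$-th such particle, one easily checks
\[
Z_\ell(\omega^{\ell,x})=(1+\delta)\ell+\sum_{k=1}^{(1+\delta)\ell}j_k,\qquad j_k:=p_k-y_{x+1}-k.
\]
Setting $p_0:=y_{x+1}$, the increments $X_i:=p_i-p_{i-1}-1$ are iid with law $\P(X_i=m)=\bar\rho(1-\bar\rho)^m$ for $m\geq 0$, have mean $1/\mu$ and finite exponential moments in a fixed neighbourhood of $0$.

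A translation-invariance argument, bounding $\mathbf{1}_{\{\exists\, x:\,\omega^{\ell_N,x}\notin A_{\ell_N}\}}\leq\sum_{z\in\T_N}\mathbf{1}_{\{\eta(z)=0\}}\mathbf{1}_{G_z}$, where $G_z$ is the event that the zero-range sequence read off starting from the empty site at $z$ fails to lie in $A_{\ell_N}$, reduces the problem to bounding $N\cdot\mu_{N,\bar\rho}(G_0\mid\eta(0)=0)$. Under the conditional law the sites $\eta(z)$, $z\neq 0$, form an iid Bernoulli$(\bar\rho)$ family and we may identify $y_{x+1}=0$. I would split $G_0$ into (a) $n_{\ell_N}(\bar\omega^{\ell_N,0})<(1+\delta)\ell_N$, and (b) its complement combined with $Z_{\ell_N}(\omega^{\ell_N,0})>(1+\delta/2)(1+\delta)\ell_N(\ell_N+1)/2$. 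Event (a) amounts to the first $(2+\delta)\ell_N$ sites to the right of $0$ in $\eta$ containing at least $\ell_N+1$ empty sites, a Binomial$((2+\delta)\ell_N,1-\bar\rho)$ upward deviation from its mean $(1-\bar\rho)(2+\delta)\ell_N<\ell_N$; Chernoff yields $\exp(-c_1\ell_N)$.

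For event (b), the identity above rewrites $\sum_{k=1}^{(1+\delta)\ell_N}j_k$ as $\sum_{i=1}^{(1+\delta)\ell_N}((1+\delta)\ell_N-i+1)X_i$, a weighted sum of iid variables with weights bounded by $(1+\delta)\ell_N$ and mean $\mu^{-1}(1+\delta)\ell_N((1+\delta)\ell_N+1)/2$. Since $\delta<\mu-1$ and $\mu<2$, a quick algebraic manipulation gives $(1+\delta)/\mu<1+\delta/2$, so the relevant threshold exceeds this mean by $\Theta(\ell_N^2)$; a Chernoff bound with parameter $\lambda=\lambda_0/\ell_N$ for small fixed $\lambda_0>0$ yields $\exp(-c_2\ell_N)$. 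Combining, the target probability is at most $N\exp(-c\ell_N)$, which vanishes since $\ell_N=(\log N)^8$. I expect the main technical hurdle to be the concentration in (b): the $X_i$ are unbounded, so one must exploit that the geometric MGF is finite near $0$ and carefully track that $\lambda w_i\leq\lambda_0$ stays within its radius while extracting a $\Theta(\ell_N)$ rate decay from a $\Theta(\ell_N^2)$ deviation on top of a $\Theta(\ell_N^3)$ variance.
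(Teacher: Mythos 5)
Your proof is correct and follows the paper's high-level strategy (translation invariance, union bound over starting empty sites, reduction to i.i.d.\ geometric increments, separate large-deviation estimates for the two failure modes), but handles the upper tail of $Z_\ell$ by a genuinely different route. The paper bounds $Z_{\ell}(\omega^{\ell,x})\leq\sum_{y=1}^{\ell}y\,G_y$ with $G_y$ i.i.d.\ $\mathrm{Geom}(\bar\rho)$ (the \emph{uncropped} configuration), whose mean is $\tfrac{\bar\rho}{1-\bar\rho}\,\tfrac{\ell(\ell+1)}{2}$; this forces the two-sided constraint $1+\delta<\bar\rho/(1-\bar\rho)<(1+\delta)(1+\delta/2)$, i.e.\ a lower bound on $\delta$ as well as the upper one, and the paper then truncates $G_k\wedge\ell_N^{1/4}$ and invokes Azuma--Hoeffding, yielding decay $\exp(-c\sqrt{\ell_N})$. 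You instead compute $Z_\ell$ \emph{exactly} for the cropped configuration as $(1+\delta)\ell+\sum_i w_iX_i$ with $w_i=(1+\delta)\ell-i+1$ and $X_i$ the geometric gaps between consecutive particles (mean $(1-\bar\rho)/\bar\rho$). This cropped sum has the smaller mean $(1+\delta)\ell+\tfrac{1-\bar\rho}{\bar\rho}\tfrac{(1+\delta)\ell\left((1+\delta)\ell+1\right)}{2}$, so only the single constraint $\delta<\bar\rho/(1-\bar\rho)-1$ is needed, and as you correctly note, the inequality $(1+\delta)(1-\bar\rho)/\bar\rho<1+\delta/2$ is then automatic. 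For the concentration step you apply Chernoff directly, exploiting that the geometric MGF is finite on a neighbourhood of the origin: with $\lambda=\lambda_0/\ell_N$ each $\lambda w_i\leq\lambda_0(1+\delta)$ stays inside that neighbourhood, the first-order term $\lambda\cdot\Theta(\ell_N^2)=\Theta(\lambda_0\ell_N)$ dominates the second-order correction $\Theta\left(\lambda^2\sum_i w_i^2\right)=\Theta(\lambda_0^2\ell_N)$ for $\lambda_0$ small, and you obtain the stronger rate $\exp(-c\ell_N)$. Both rates kill the prefactor $N$ since $\ell_N=(\log N)^8$, and your treatment of event (a) agrees with the paper's. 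Net: your variant yields a cleaner, one-sided choice of $\delta$ and a sharper concentration bound, at the modest cost of a slightly more delicate (though standard) Chernoff argument for unbounded geometric summands, replacing the paper's truncation-plus-Azuma device.
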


\begin{corollary}
 \label{cor:IncrProfile}
Fix a non-constant smooth function $\rho_0:\T \to (\frac12,1]$, then, there exists $\delta \in (0,1)$ such that
\begin{equation*}
\lim_{N\to\infty}\mu_{N,\rho_0}\Big( {K(\eta)>\ell_N  \; \mbox{ and }\;\Pi(\eta)\notin B_{K(\eta),\ell_N}} \Big)=0.
\end{equation*}
\end{corollary}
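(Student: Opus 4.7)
The plan is to reduce the non-constant profile case to the constant-profile setting of Lemma~\ref{lem:nlY} via a monotone coupling. Since $\rho_0$ is smooth, $\rho_0(\T)\subset(\frac12,1]$, and $\T$ is compact, $\bar\rho_0:=\min_{\T}\rho_0>\frac12$. I would pick any $\bar\rho\in(\frac12,\min(\bar\rho_0,\frac23))$, so that $\bar\rho$ is admissible for Lemma~\ref{lem:nlY} while $\bar\rho\leq\rho_0(k/N)$ for every $k\in\T_N$. The product measures $\mu_{N,\bar\rho}$ and $\mu_{N,\rho_0}$ can then be coupled by independent Bernoulli fillings so that $\eta^{\bar\rho}(k)\leq\eta^{\rho_0}(k)$ for every $k$; in particular the empty sites of $\eta^{\rho_0}$ form a subset of those of $\eta^{\bar\rho}$, and $K(\eta^{\bar\rho})\geq K(\eta^{\rho_0})$.

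The core step is the following \emph{monotonicity of regularity}: on $\{K(\eta^{\rho_0})>\ell_N\}$, if $\Pi(\eta^{\bar\rho})\in B_{K(\eta^{\bar\rho}),\ell_N}(\delta)$ for the $\delta$ provided by Lemma~\ref{lem:nlY} applied to constant density $\bar\rho$, then also $\Pi(\eta^{\rho_0})\in B_{K(\eta^{\rho_0}),\ell_N}(\delta)$. Once this is established, taking probabilities under the coupling gives
\[\mu_{N,\rho_0}\big(K(\eta)>\ell_N,\ \Pi(\eta)\notin B_{K(\eta),\ell_N}(\delta)\big)\leq \mu_{N,\bar\rho}\big(K(\eta)>\ell_N,\ \Pi(\eta)\notin B_{K(\eta),\ell_N}(\delta)\big),\]
and the right-hand side vanishes by Lemma~\ref{lem:nlY}, concluding the proof.

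To justify the monotonicity I would fix an $\ell_N$-window of $\Pi(\eta^{\rho_0})$, that is, a block of $\ell_N+1$ consecutive empty sites $p_{x+1}<\dots<p_{x+\ell_N+1}$ of $\eta^{\rho_0}$, and compare it with the $\eta^{\bar\rho}$-window of the same ZR size starting at the index $x'$ for which $\tilde p_{x'+1}=p_{x+1}$. Empty-site inclusion forces $\tilde p_{x'+\ell_N+1}\leq p_{x+\ell_N+1}$, so the $\eta^{\bar\rho}$-window is embedded in the $\eta^{\rho_0}$-window as an EX sub-interval. For the condition $n_{\ell_N}=(1+\delta)\ell_N$: every particle of $\eta^{\bar\rho}$ in its window is a particle of $\eta^{\rho_0}$ in its window, so the raw count in the latter is at least that in the former. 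For the $Z_{\ell_N}$ condition, listing the particles of $\eta^{\rho_0}$ in its window as $q_1<q_2<\cdots$ and those of $\eta^{\bar\rho}$ in its window as $\tilde q_1<\tilde q_2<\cdots$, the inclusion $\{\tilde q_j\}\subset\{q_j\}$ yields $q_j\leq\tilde q_j$ for each $j$. Moreover the ZR index of an EX position $q$ within its window equals the number of preceding window empty sites, hence $\mathrm{ZR}_{\rho_0}(q)\leq\mathrm{ZR}_{\bar\rho}(q)$ by empty-site inclusion, and $\mathrm{ZR}_{\rho_0}$ is non-decreasing in $q$. Combining these, $\mathrm{ZR}_{\rho_0}(q_j)\leq \mathrm{ZR}_{\rho_0}(\tilde q_j)\leq \mathrm{ZR}_{\bar\rho}(\tilde q_j)$, and summing over the $(1+\delta)\ell_N$ leftmost particles gives $Z_{\ell_N}(\omega^{\ell_N,x}_{\rho_0})\leq Z_{\ell_N}(\omega^{\ell_N,x'}_{\bar\rho})$, so the threshold is inherited.

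The main obstacle is precisely this $Z_{\ell_N}$ monotonicity, since the two ZR configurations live on tori of different sizes with different empty-site labelings: the two-step chain $\mathrm{ZR}_{\rho_0}(q_j)\leq\mathrm{ZR}_{\rho_0}(\tilde q_j)\leq\mathrm{ZR}_{\bar\rho}(\tilde q_j)$ is what carefully transfers the bound from one labeling to the other, and making it rigorous is the only non-routine part of the argument.
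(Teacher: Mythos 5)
Your proposal is correct and follows the same strategy as the paper: fix a constant $\bar\rho$ below $\min\rho_0$ and in $(\tfrac12,\tfrac23)$, couple $\mu_{N,\bar\rho}$ and $\mu_{N,\rho_0}$ monotonically via a common family of uniforms, reduce to Lemma~\ref{lem:nlY} via $K(\eta^{\bar\rho})\ge K(\eta^{\rho_0})$ and \eqref{eq:Kpetit}, and prove ``monotonicity of regularity'' — that $\Pi(\eta^{\bar\rho})\in B_{K(\eta^{\bar\rho}),\ell_N}(\delta)$ forces $\Pi(\eta^{\rho_0})\in B_{K(\eta^{\rho_0}),\ell_N}(\delta)$. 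The only presentational difference is in how this monotonicity is established: the paper adds particles one at a time along the interpolating family $\eta^\lambda$ and tracks how each addition merges a ZR site into its left neighbor (hence can only shrink $Z_{\ell_N}$), while you argue in one shot by a set-inclusion comparison of empty sites and particles in matched EX windows, transferring $Z_{\ell_N}$ bounds via the chain $\mathrm{ZR}_{\rho_0}(q_j)\le\mathrm{ZR}_{\rho_0}(\tilde q_j)\le\mathrm{ZR}_{\bar\rho}(\tilde q_j)$; both yield the same conclusion with the same $\delta$.
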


We assume for now Lemma \ref{lem:nlY} in order to prove Corollary \ref{cor:IncrProfile}. 
\begin{proof}[Proof of Corollary \ref{cor:IncrProfile}]
Fix $\bar{\rho}\in\Big(\frac12,\frac23\wedge\underset{u\in\T}{\min}\ \rho_0(u)\Big)$.

We claim that in order to prove Corollary \ref{cor:IncrProfile}, it is enough to build on the same probability space an increasing family of configurations $\eta^\lambda$ on $\T_N$, for $\lambda\in [0,1]$, with joint distribution  $\nu_N$, such that $\eta^0$ has law $\mu_{N,\bar \rho}$, $\eta^1$ has law $\mu_{N,\rho_0}$ and
\begin{equation}
\label{eq:Monotony}
\nu_N\pa{{ \Pi(\eta^1)\notin B_{K(\eta^1),\ell_N}\left|\Pi(\eta^{0})\in B_{K(\eta^{0}),\ell_N},\  K(\eta^{0})>\ell_N \mbox{ and } K(\eta^1)>\ell_N\right.}}=0.
\end{equation}
Indeed, since $\eta^0$ is distributed according to $\mu_{N,\bar\rho}$,  according to Lemma \ref{lem:nlY} and equation \eqref{eq:Kpetit} (which holds for any density profile not identically equal to $1$), the probability of the conditioning event goes to $1$ as $N\to\infty$, therefore \eqref{eq:Monotony} proves Corollary \ref{cor:IncrProfile}.

To build the family $\eta^\lambda$, and prove \eqref{eq:Monotony}, consider a family of $N$ i.i.d.~uniform $\mc U([0,1])$ variables $U_1,\dots,U_N$. Define for any $\lambda\in[0,1]$ the function 
\[\rho^\lambda=\bar \rho+\lambda(\rho_0-\bar\rho),\]
which is an interpolation between $\bar\rho$ and $\rho_0$. We then write for any $x\in \T_N$
\[\eta^\lambda(x)={\bf 1}_{\big\{U_x\leq \rho^\lambda(x/N)\big\}}.\]
Then, as wanted, $\eta^0\sim \mu_{N,\bar \rho}$ and $\eta^1\sim\mu_{N,\rho_0}$. We now prove that \eqref{eq:Monotony} holds.
 
 \medskip
As $\lambda$ goes from $0$ to $1$, particles are added in $\eta^0$ until it becomes $\eta^1$. Fix an exclusion configuration $\eta$ with an empty site at $x$, and consider the configuration $\eta+\mathbf{1}_x$ where a particle has been added at site $x$ (namely, $\mathbf{1}_x$ is the configuration such that $\mathbf{1}_x(x)=1$ and $\mathbf{1}_x(y)=0$ for any $y\neq x$). Since $x$ is empty in $\eta$, it is associated with a site $\widetilde{x}$ in $\Pi(\eta)$. 
Then, the associated zero-range configuration $\Pi(\eta+\mathbf{1}_x)$ is obtained by deleting the site $\widetilde{x}$ in $\Pi(\eta)$, and putting all the particles that were present at $\widetilde{x}$ in $\Pi(\eta)$ on the previous site $\widetilde{x}-1$. 
Another particle is then added on the site $\widetilde{x}-1$ to represent the empty site that is now a particle. We  follow this construction, throughout the transition from $\eta^0$ to $\eta^1$. Assuming that $\Pi(\eta^{0})\in B_{K(\eta^{0}),\ell_N}$, then in $\Pi(\eta_0)$, all the boxes of size $\ell_N$ already contain $(1+\delta)\ell_N$ particles. 
Then, as $\lambda$ goes from $0$ to $1$, we add particles to $\eta^0$, and therefore delete sites in $\Pi(\eta^0)$, while placing all the particles on the deleted site on the left neighbor of the deleted site. 
In each box $\{x+1,\dots ,x+\ell_N \}$ in the smaller torus, the cumulated distance between the $(1+\delta)\ell_N$ left-most particles to $x$ therefore diminishes as we go from $\Pi(\eta^0)$ to $\Pi(\eta^1)$. This construction proves equation \eqref{eq:Monotony}, and concludes the proof of the corollary.
\end{proof}

\begin{proof}[Proof of Lemma \ref{lem:nlY}]
In this proof, we write $\mu_N$ for $\mu_{N,\bar{\rho}}$. One technical issue is that even though $\eta$ is now translation invariant under $\mu_N$, $\Pi(\eta)$ is not. To solve this, we map the exclusion configuration $\eta$ to a zero-range configuration on $\Lambda_\ell$ starting from an arbitrary point in $\T_N$. 

Fix $x\in\T_N$ and a positive integer $\ell$ such that $(2+\delta)\ell<N$. As represented in Figure \ref{fig:RegConfig}, let $\eta_{(x,\ell)}$ be the exclusion configuration on $\{0,1\}^{\T_N}$ obtained from $\eta$ by keeping at most the first $(1+\delta)\ell$ particles in $\{x+1,x+2,\ldots,x\}$ (in cyclic order). We define $\Pi_{(x,\ell)}(\eta)\in\N^{{\bar\Lambda_\ell}}$ as the zero-range configuration of size $\ell$ in which
\begin{itemize}
\item $\Pi_{(x,\ell)}(\eta)(0)=\Pi_{(x,\ell)}(\eta)(\ell+1)=0$,
\item $\Pi_{(x,\ell)}(\eta)(y)$ is the number of particles between the $y$--th and $(y+1)$--th zero in $\eta_{(x,\ell)}$, starting as if\footnote{Due to the condition $(2+\delta)\ell<N$, $\Pi_{(x,\ell)}(\eta)$ actually does not depend on $\eta(x)$.} the first zero were at $x$.
\end{itemize} 
Note that, if $K(\eta)>\ell$, for any $ y\in\T_{K(\eta)}$, there exists $x \in \T_N$ such that
\begin{equation}\label{eq:thereexistsx} \Pi(\eta)^{\ell,y}=\Pi_{(x,\ell)}(\eta) \quad \in\bb N^{\Lambda_\ell}
\end{equation}
(recall that for any zero-range configuration $\omega$, the configuration $\omega^{\ell,y}$ has been defined in \eqref{eq:DefOlx}).  
With \eqref{eq:thereexistsx}, we can write, for $N$ large enough to have $(2+\delta)\ell_N<N$
\begin{align*}
\mu_N\Big(K(\eta)>\ell_N\text{ and }\Pi(\eta)\notin &B_{K(\eta),\ell_N}\Big)\\
&\leq \mu_N\left(K(\eta)>\ell_N\text{ and }\exists\; y\in\T_{K(\eta)},\ \Pi(\eta)^{\ell_N,y}\notin{A}_{\ell_N}\right)\\
&\leq \mu_N\left(K(\eta){>}\ell_N\text{ and }\exists\; x\in\T_N,\ \Pi_{(x,\ell_N)}(\eta)\notin{A}_{\ell_N}\right)\\
&\leq N\max_{x\in\T_N}\mu_N\left(\Pi_{(x,\ell_N)}(\eta)\notin{A}_{\ell_N}\right)\\
&=N\mu_N\left(\Pi_{(1,\ell_N)}(\eta)\notin{A}_{\ell_N}\right),
\end{align*}
because $\mu_N$ is translation invariant.

We now argue that we can estimate the last probability via standard properties of i.i.d.\@ sequences of geometric random variables. 

\begin{figure}[h]
\centering
\includegraphics[width=10cm]{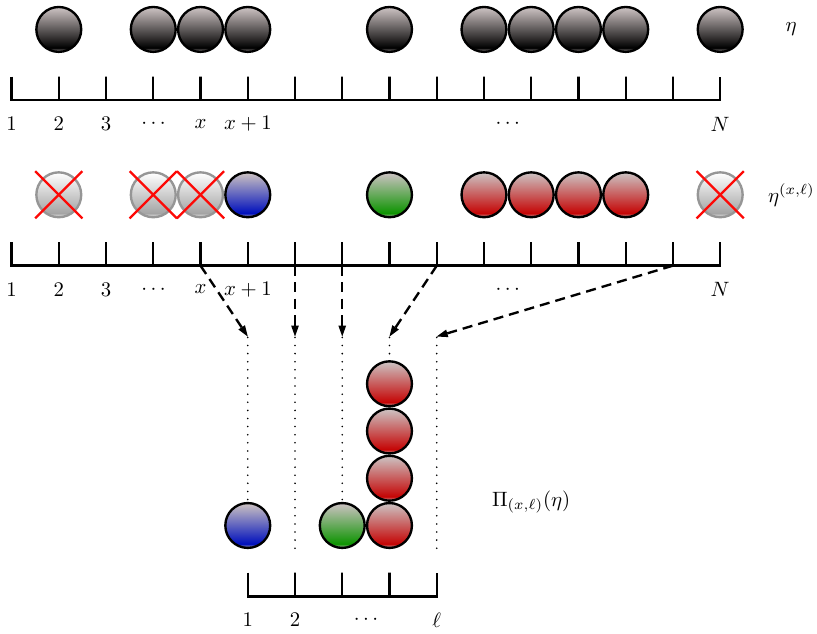}
\caption{Representation of $\eta$, $\eta_{(x,\ell)}$, $\Pi_{(x,\ell)}(\eta)$, for $\ell=5$ and $(1+\delta)\ell=6$. Starting from $\eta$, we obtain $\eta_{(x,\ell)}$ by keeping the first $(1+\delta)\ell$ particles to the right of $x$ ($x$ excluded). We then obtain $\Pi_{(x,\ell)}(\eta)$ by taking the zero-range configuration associated with the first $\ell$ empty sites in $\eta_{(x,\ell)}$, where $x$ is considered to be the first empty site.}
\label{fig:RegConfig}
\end{figure}

\medskip

Let us uncover one by one the variables given by the states of $\eta$ on sites $1,2,\ldots, (2+\delta)\ell_N$. Since $(2+\delta)\ell_N<N$, they are i.i.d. Then $\{n_{\ell_N}\left(\Pi_{(1,\ell_N)}(\eta)\right)<(1+\delta)\ell_N\}$ is the event that in this process we see $\ell_N+1$ zeros before seeing $(1+\delta)\ell_N$ particles and therefore
\begin{align}
\mu_N\Big(n_{\ell_N}\left(\Pi_{(1,\ell_N)}(\eta)\right)<(1+\delta)\ell_N\Big)
&=P\left(\sum_{k=1}^{\ell_N}G_k<(1+\delta)\ell_N\right),\label{eq:geom1}
\end{align}
where the $G_k$ are i.i.d.\@ geometric random variables of parameter $\bar{\rho}$, namely for $k\in\N$, $P(G_1=k)=(1-\bar{\rho})\bar{\rho}^{k}$.

For the same reason, $\Pi_{(1,\ell_N)}$ is dominated by a configuration given by i.i.d.\@ $\mathrm{Geom}(\bar\rho)$ variables on $\Lambda_{\ell_N}$. Consequently,
\begin{multline}
\mu_N{\bigg[}Z_{\ell_N}\left(\Pi_{(1,\ell_N)}(\eta)\right)> \Big(1+\frac\delta 2\Big)(1+\delta)\frac{\ell_N(\ell_N+1)}{2}{\bigg]}\\
\leq P\cro{\sum_{k=1}^{\ell_N}kG_k> \Big(1+\frac\delta 2\Big)(1+\delta)\frac{\ell_N(\ell_N+1)}{2}}.\label{eq:geom2}
\end{multline}
Now it only remains to show that the right hand side in \eqref{eq:geom1} and \eqref{eq:geom2} is $o_N(1/N)$ for a well-chosen $\delta(\bar{\rho})$. We choose $\delta\in (0,1)$ such that
\begin{equation}
1+\delta<\frac{\bar{\rho}}{1-\bar{\rho}}<(1+\delta)\Big(1+\frac\delta 2\Big).
\end{equation}
This is possible because $\bar{\rho}\in(\frac12,\frac23)$. By standard arguments, there exists $C=C(\delta,\bar{\rho})>0$ such that for $N$ large enough
\begin{equation}
P\left(\sum_{k=1}^{\ell_N}G_k<(1+\delta)\ell_N\right)\leq e^{-C\ell_N}.
\end{equation}
To estimate \eqref{eq:geom2}, we crop the variables $G_k$ so that they remain bounded, and then apply the Azuma-Hoeffding inequality to $\sum_{k=1}^{\ell_N}kG_k$.
For any integer $p_N$ to be determined later on, and for any $k=1,\ldots,\ell_N$, we define
\[\widehat{G_k}=G_k\wedge p_N.\]
For any given $k=1,\ldots,\ell_N$, we can write 
\[P\big(\widehat{G_k}\neq G_k\big)=\sum_{j=p_N+1}^{\infty}(1-\bar{\rho})\bar{\rho}^j\leq \bar{\rho}^{p_N+1}.\]
Thus, letting  $p_N=\ell_N^{1/4}$, we obtain by a simple union bound
\begin{equation}\label{eq:cropxi}P\big(\exists\ k\leq\ell_N,\ \widehat{G_k}\neq G_k\big)\leq \ell_N\bar{\rho}^{p_N+1}=\mc O_N\Big(\ell_Ne^{-C \ell_N^{1/4}}\Big).\end{equation}
Let us define the martingale\[M_K=\sum_{k=1}^K k \big(\widehat{G_k}-E[\widehat{G_k}]\big),\quad \text{ for } K=1,\ldots,\ell_N,\]
with $M_0=0$ and $M_K=M_{\ell_N}$ for $K\geq \ell_N$. Note that $E[\widehat{G_k}]=\frac{\bar{\rho}}{1-\bar{\rho}}+\mc O_N(p_N\bar{\rho}^{p_N})$.
For any $\alpha>\frac{\bar{\rho}}{1-\bar{\rho}}$,
\begin{multline*}
P\left(\sum_{k=1}^{\ell_N}kG_k>\alpha\frac{\ell_N(\ell_N+1)}{2}\right)\leq P\big(\exists\ k\leq\ell_N,\  \widehat{G_k}\neq G_k\big)\\
 +P\left(M_{\ell_N}> \big(\alpha-E[\widehat{G_1}]\big)\frac{\ell_N(\ell_N+1)}{2}\right).
\end{multline*}
Recall that we defined $\ell_N=(\log N)^8$. To prove that the right hand side of \eqref{eq:geom2} is $o_N(1/N)$, it is therefore sufficient to prove that for $\beta>0$ there exists $C(\beta)>0$ such that
\begin{equation*}
P \bigg(M_{\ell_N}> \frac{\beta \ell_N(\ell_N+1)}{2}\bigg)=\mc O_N\big(e^{-C\ell_N^{1/2}}\big).\end{equation*}
By definition, the martingale $(M_K)_K$ has increments bounded a.s. as follows: 
\[\mid M_{K}-M_{K-1}\mid \leq K p_N.\]
Therefore we can apply the Azuma-Hoeffding inequality to obtain that \[P\bigg(M_{\ell_N}>\frac{\beta \ell_N(\ell_N+1)}{2}\bigg)\leq \exp \pa{-\frac{\beta^2 \ell_N^2(\ell_N+1)^2}{8\sum_{K=1}^{\ell_N}K^2p_N^2}}=\mc O_N\pa{ e^{-\frac{3}{8}\beta^2\sqrt{\ell_N}}},\]
which concludes the proof.
\end{proof}

 \section{Proof of Theorem \ref{theo:hydro}: Hydrodynamic limits} 
\label{sec:hydroErgo}
Our proof of the hydrodynamic limit relies on the classical \emph{entropy method} (which is  explained in details for instance in \cite[Chapter 5]{KL}), but requires adaptations to 
solve the ergodicity issue, and to account for the non-product invariant measures. 
In Section \ref{sec:sketch}, we describe the main steps of the proof in the form of two distinct results: \begin{itemize} \item the first one states that after reaching the ergodic component, the macroscopic density profile of the system is very close to the 
initial density profile $\rho_0$. This result is proved in Section  \ref{sec:macrotransience};
\item the second result states that starting from the ergodic component, the hydrodynamic limit holds, and is proved in Section \ref{sec:hydroErgo1}. 
The latter requires the classical Replacement Lemma, which, because of the shape of our canonical and grand canonical measures, requires significant work. 
The proof of the Replacement Lemma is therefore postponed to Section \ref{app:replacement}, after we have proved the relevant properties for the canonical measures in Section \ref{sec:invariant}.
 \end{itemize}

\subsection{Sketch of the proof}\label{sec:sketch}

Recall that we defined in Theorem \ref{thm:transience} the time
$t_N:=\ccl{T_{N}}/N^2$ with $\ccl{T_{N}=(\log N)^{32}}$. 
We denote by \[\tau_N =\inf \big\{t>0\; : \; \eta_t\in \mathcal{E}_N\big\},\]
the hitting time of the ergodic component for the process rescaled diffusively in time. Recall that $\mathcal{E}_N$ has been defined in \eqref{eq:DefERGN}. 
We proved in Theorem \ref{thm:transience} that 
\[\limsup_{N\to\infty}\bb P_{\mu_N}(\tau_N>t_N)=0.\] 
Let $\tm$ denote the distribution of $\eta_{t_N}$ at time $t_N$, conditioned to have already reached the ergodic component, namely
\begin{equation}
\label{eq:Deftm}
\tm(\eta')=\bb P_{\mu_N}\Big(\eta_{t_N}=\eta'\mid \tau_N\leq t_N\Big), \qquad \eta'\in\mc E_N.
\end{equation}

\begin{proof}[Proof of Theorem \ref{theo:hydro}]
	Fix a time $t>t_N$ and $\delta>0$. We bound from above the probability in \eqref{eq:limith} by
	\begin{equation*} 
	\P_{\mu_N}\bigg(\left.\bigg|\frac1N\sum_{x\in\T_N}\varphi\Big(\frac x N\Big)\eta_{t}(x) - \int_{\T}\varphi(u)\rho(t,u)du\bigg|>\delta \;\; \right|\;\; \tau_N\leq t_N\bigg)+\bb P_{\mu_N}\pa{\tau_N>t_N}.
	\end{equation*}
	 As already pointed out, the second term above vanishes as a consequence of  Theorem \ref{thm:transience}. The first term can be rewritten for any $t>t_N$, by the Markov property, as
	\begin{align}
	\P_{\tm}\bigg(\bigg|\frac1N\sum_{x\in\T_N}\varphi&\Big(\frac x N\Big)\eta_{t-t_N}(x) - \int_{\T}\varphi(u)\rho({t},u)du\bigg|>\delta\bigg)\label{eq:t1}\\
	\leq\;&\; \P_{\tm}\bigg(\bigg|\frac1N\sum_{x\in\T_N}\varphi\Big(\frac x N\Big)\big(\eta_{t}(x)-\eta_{t-t_N}(x)\big)\bigg|>\frac \delta 3\bigg) \label{eq:t2}\\
	&+ \P_{\tm}\bigg(\bigg|\frac1N\sum_{x\in\T_N}\varphi\Big(\frac x N\Big)\eta_{t}(x) - \int_{\T}\varphi(u)\rho(t,u)du\bigg|>\frac \delta 3\bigg) \label{eq:t3}.
	\end{align}
	The first term in the right hand side \eqref{eq:t2} vanishes as a consequence of the Markov inequality and Lemma \ref{lem:MacroTransience} stated below, by taking for $\nu_N$ the distribution under $\bb P_{\tm}$ of $\eta_{t-t_N}$. The second term \eqref{eq:t3} vanishes as well according to Proposition \ref{lem:HydroErgo} stated below.

Fix $t>0$, then for some $N$ large enough, we have $t>t_N$, so that the bound \eqref{eq:t1} indeed holds. This proves the vanishing limit \eqref{eq:limith} for any positive time $t$. Since \ccl{it is} trivially true for $t=0$ as well, this proves Theorem \ref{theo:hydro}.
\end{proof}

\begin{lemma}[Empirical measure at the transience time]
\label{lem:MacroTransience}
For any  smooth test function $\varphi:\T \to \R$, and any initial measure $\nu_N$ on  $\{0,1\}^{\T_N}$, we have 
\[ \lim_{N\to\infty} \bb E_{\nu_N}\bigg[\bigg|\frac1N\sum_{x\in\T_N}\varphi\Big(\frac x N\Big)\big(\eta_{t_N}(x)-\eta_0(x)\big)\bigg|\bigg]=0.\]
\end{lemma}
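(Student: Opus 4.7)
The plan is to apply Dynkin's formula to the linear functional $f_N(\eta) := \frac{1}{N}\sum_{x\in\T_N}\varphi(x/N)\eta(x)$. This yields
\[ f_N(\eta_{t_N}) - f_N(\eta_0) = \int_0^{t_N} N^2\mathcal{L}_Nf_N(\eta_s)\,ds + M^N_{t_N}, \]
where $(M^N_t)_{t\geq 0}$ is a mean-zero martingale. Since the quantity to bound is exactly $\E_{\nu_N}[|f_N(\eta_{t_N})-f_N(\eta_0)|]$, it suffices to show that both the drift integral and $\E_{\nu_N}[|M^N_{t_N}|]$ vanish as $N\to\infty$, uniformly in the initial law $\nu_N$.

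To bound the drift, I would use the identity $\mathcal{L}_N\eta(x)=j_{x-1,x}(\eta)-j_{x,x+1}(\eta)$ together with the gradient decomposition $j_{x,x+1}=\tau_xh-\tau_{x+1}h$ from Section \ref{ssec:current}, so that $\mathcal{L}_N\eta(x)=\tau_{x-1}h(\eta)-2\tau_xh(\eta)+\tau_{x+1}h(\eta)$. A discrete summation by parts on the torus then gives
\[ N^2\mathcal{L}_Nf_N(\eta) = \frac{1}{N}\sum_{x\in\T_N}(\Delta_N\varphi)(x/N)\,\tau_xh(\eta), \]
where $\Delta_N\varphi(u):=N^2[\varphi(u+1/N)-2\varphi(u)+\varphi(u-1/N)]$ is the discrete Laplacian. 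Since $\varphi\in C^2(\T)$ and $0\leq h\leq 1$, the right-hand side is bounded uniformly in $\eta$ by $\|\varphi''\|_\infty+o_N(1)$, so the drift integral is bounded by $C\|\varphi''\|_\infty\,t_N=\mathcal{O}\bigl((\log N)^{32}/N^2\bigr)=o_N(1)$.

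For the martingale term, I would compute the carr\'e du champ associated with the accelerated generator $N^2\mc L_N$. A single jump across a bond $(x,x+1)$ changes $f_N$ by at most $\|\varphi'\|_\infty/N^2$, and the rates $c_{x,x+1}$ are bounded by $1$, so
\[ \E_{\nu_N}[\langle M^N\rangle_{t_N}]\leq \int_0^{t_N} N^2\sum_{x\in\T_N} c_{x,x+1}(\eta_s)\bigl(f_N(\eta_s^{x,x+1})-f_N(\eta_s)\bigr)^2\,ds \leq \frac{\|\varphi'\|_\infty^2\,t_N}{N}, \]
which is $\mathcal{O}\bigl((\log N)^{32}/N^3\bigr)=o_N(1)$. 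By Cauchy--Schwarz and the fact that $(M^N_t)^2-\langle M^N\rangle_t$ is a mean-zero martingale, $\E_{\nu_N}[|M^N_{t_N}|]\leq\sqrt{\E_{\nu_N}[\langle M^N\rangle_{t_N}]}=o_N(1)$.

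There is no genuine obstacle: the crucial observation is that the \emph{gradient} structure produces a \emph{second} discrete derivative of $\varphi$, which precisely compensates the diffusive factor $N^2$ in the accelerated generator. All bounds above are deterministic in $\eta$, hence independent of the choice of initial law $\nu_N$; this is what allows Lemma \ref{lem:MacroTransience} to be invoked later with $\nu_N$ equal to the (highly implicit) distribution of $\eta_{t-t_N}$ under $\P_{\tm}$.
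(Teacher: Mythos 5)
Your proof is correct, and it takes a genuinely different route from the paper. The paper's argument is a direct particle-tracking bound: it shows that in time $t_N=T_N/N^2$, with probability tending to $1$, no particle moves more than $T_N^\beta$ microscopic steps (a Poisson tail estimate on the number of clock rings, since jump rates are at most $N^2$), and then concludes using the uniform continuity of $\varphi$ together with the fact that $T_N^\beta/N=o_N(1)$. Your argument instead applies Dynkin's formula to the test functional $f_N$, absorbs the factor $N^2$ into the discrete Laplacian of $\varphi$ via the gradient identity $j_{x,x+1}=\tau_x h-\tau_{x+1}h$, and controls the martingale by its quadratic variation. Both approaches give bounds that are deterministic in $\eta$ (hence uniform over $\nu_N$), which is what the application to $\eta_{t-t_N}$ under $\bb P_{\tm}$ requires. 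The paper's proof is slightly more robust in that it does not use the gradient structure at all and would apply to any particle system with bounded jump rates to nearest neighbours; your proof is closer to the standard hydrodynamic machinery and gives the sharper drift bound $\mc O_N(t_N)=\mc O_N\big((\log N)^{32}/N^2\big)$ rather than merely $o_N(1)$. One small remark: for this lemma the gradient structure is convenient but not strictly essential — a single summation by parts on the current already reduces the drift to $\mc O_N(N)$ before integrating, and $N\cdot t_N=T_N/N\to 0$ would also do the job; the second summation by parts (which needs the gradient property) merely improves the rate.
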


\begin{proposition}[Hydrodynamic limit starting from the ergodic component]
	\label{lem:HydroErgo}
	For any $t \in [0,T]$, any $\delta >0$  and any smooth test function $\varphi:\T\to\R$, we have
	\begin{equation}\label{eq:limithErgo} \lim_{N\to\infty} \P_{\tm}\bigg(\bigg|\frac1N\sum_{x\in\T_N}\varphi\Big(\frac x N\Big)\eta_{t}(x) - \int_{\T}\varphi(u)\rho(t,u)du\bigg|>\delta\bigg)=0\end{equation}
	where $\rho(t,u)$ is the unique solution of the hydrodynamic equation \eqref{eq:hydro}.
\end{proposition}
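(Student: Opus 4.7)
The approach is the classical entropy method for gradient, reversible particle systems, adapted to the non-product invariant measures and the ergodic-component constraint of the FEP. The plan is to prove tightness of the laws of the empirical measure
\[ \pi^N_t(du) := \frac{1}{N}\sum_{x\in\T_N}\eta_t(x)\,\delta_{x/N}(du) \]
under $\P_{\tm}$, to identify every subsequential limit with a weak solution of \eqref{eq:hydro}, and to conclude via the uniqueness statement of Remark \ref{rem:solution}.

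Applying Dynkin's formula to $N^{-1}\sum_x\varphi(x/N)\eta_t(x)$, the gradient identity $j_{x,x+1}=\tau_x h-\tau_{x+1}h$ from Section \ref{ssec:current} combined with two discrete integrations by parts shows that
\[ M^{N,\varphi}_t := \frac{1}{N}\sum_{x\in\T_N}\varphi\big(\tfrac{x}{N}\big)\big(\eta_t(x)-\eta_0(x)\big) - \int_0^t \frac{1}{N}\sum_{x\in\T_N}(\Delta_N\varphi)\big(\tfrac{x}{N}\big)\,\tau_x h(\eta_s)\,ds \]
is a $\P_{\tm}$-martingale whose quadratic variation is of order $N^{-1}$, hence vanishing in probability. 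The core step is then a Replacement Lemma asserting that, for any $T>0$,
\[ \lim_{\varepsilon\to 0}\limsup_{N\to\infty}\E_{\tm}\bigg[\int_0^T\frac{1}{N}\sum_{x\in\T_N}\Big|\tau_x h(\eta_s)-\tilde H\big(\eta^{\varepsilon N}_s(x)\big)\Big|ds\bigg]=0, \]
where $\eta^\ell(x):=(2\ell+1)^{-1}\sum_{|y-x|\leq\ell}\eta(y)$ and $\tilde H(\rho):=\pi_\rho[h]$, with $\pi_\rho$ the grand canonical measure on the ergodic component introduced in Section \ref{sec:invariant}. Writing $h(\eta)=\eta(0)[1-(1-\eta(-1))(1-\eta(1))]$ and computing $\pi_\rho[\eta(-1)=\eta(1)=0]=(1-\rho)^2/\rho$ (either by a direct combinatorial count on block decompositions or via the zero-range correspondence of Section \ref{sec:Bijection}), one gets $\tilde H(\rho)=\rho-(1-\rho)^2/\rho=(2\rho-1)/\rho$, precisely the flux in \eqref{eq:hydro}.

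The Replacement Lemma follows from the usual one-block and two-block decomposition, once two inputs are secured: (i) an entropy bound $H(\tm\,|\,\bar\pi_N)=\mc O_N(N)$ against a suitable invariant reference $\bar\pi_N$; and (ii) fine ergodic properties (spectral gap, equivalence of ensembles) of the canonical measures $\pi_N^k$ on each hyperplane $\Omega_N^k$ with $k/N>\tfrac12$. For (i), a natural choice is to take $\bar\pi_N$ as a mixture of canonical measures $\pi_N^k$ with weights matching the global density of $\mu_N$: the bound $H(\mu_N\,|\,\bar\pi_N)=\mc O_N(N)$ is then standard since $\mu_N$ is a product measure fitting a smooth profile $\rho_0$ bounded away from $\tfrac12$; entropy dissipation along the semigroup of $N^2\mc L_N$ preserves the $\mc O_N(N)$ bound for the distribution of $\eta_{t_N}$; and conditioning on $\{\tau_N\leq t_N\}$ costs only an additive $o(N)$ correction because this event has probability $1-o(1)$ by Theorem \ref{thm:transience}. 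Input (ii) is the subject of Section \ref{sec:invariant} and feeds directly into Section \ref{app:replacement}.

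I expect the Replacement Lemma to be the main obstacle: because the measures $\pi_\rho$ and $\pi_N^k$ are \emph{not} product (the ergodic constraint forbidding adjacent empty sites produces genuine spatial correlations, especially near the critical density $\tfrac12$), the textbook one-block and, above all, two-block estimates do not apply verbatim and must instead be revisited using the detailed study of the non-product Gibbs measures and their conditional expectations announced in Section \ref{sec:invariant}. Once the Replacement Lemma is established, tightness of $\{\pi^N_\cdot\}_N$ under $\P_{\tm}$ follows from the Aldous--Rebolledo criterion using boundedness of $\eta$ and of $h$; combined with the vanishing of $M^{N,\varphi}_\cdot$, it identifies any limit point with a weak solution of $\partial_t\rho=\Delta\tilde H(\rho)$; Lemma \ref{lem:MacroTransience} fixes the initial datum as $\rho_0$; the maximum principle for \eqref{eq:hydro} keeps $\rho(t,\cdot)>\tfrac12$ so that $\tilde H(\rho)=(2\rho-1)/\rho$ throughout and the limit coincides with the classical solution discussed in Remark \ref{rem:solution}; finally, the uniqueness of that solution upgrades the convergence to the probability statement \eqref{eq:limithErgo} for every smooth $\varphi$.
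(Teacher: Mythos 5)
Your outline follows the paper's entropy-method strategy essentially verbatim: Dynkin's formula with the gradient identity, a Replacement Lemma reducing $\tau_x h$ to $\pi_{\rho^{\varepsilon N}(x)}(h)$, tightness of the empirical measure, identification of subsequential limits as weak solutions of $\partial_t\rho=\Delta\big((2\rho-1)/\rho\big)$, the maximum principle keeping $\rho>\tfrac12$, and uniqueness of the classical solution (Remark~\ref{rem:solution}). Your computation $\pi_\rho(h)=\rho-(1-\rho)^2/\rho=(2\rho-1)/\rho$ from the explicit formula for the non-product grand canonical measure also matches \eqref{eq:idh}.

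There is, however, a genuine gap in how you obtain the $\mc O_N(N)$ entropy bound feeding the Dirichlet estimate (the paper's Proposition~\ref{prop:DirEstimate}). You propose to bound $H(\mu_N\mid\bar\pi_N)$ for $\bar\pi_N$ a mixture of canonical measures $\pi_N^k$, then dissipate entropy along $e^{tN^2\mc L_N}$ and correct by $o(N)$ upon conditioning on $\{\tau_N\le t_N\}$. But every $\pi_N^k$ with $k>N/2$ (hence every such mixture) is supported on the ergodic component $\mathcal E_N$, whereas the product measure $\mu_N$ assigns positive probability to transient and absorbing configurations; so $H(\mu_N\mid\bar\pi_N)=+\infty$, and entropy dissipation cannot repair an initially infinite entropy — the chain never gets started. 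The paper sidesteps this by estimating the entropy of $\tm$ directly: since $\tm$ is supported on $\mathcal E_N$ by construction and, from the explicit formula \eqref{pirho} together with Lemma~\ref{lem:nuERG}, $\nu_{\rho,N}(\eta)\ge c_\rho^{\,N}$ uniformly on $\mathcal E_N$, the crude bound $H(\tm\mid\nu_{\rho,N})\le\sup_{\eta\in\mathcal E_N}\big(-\log\nu_{\rho,N}(\eta)\big)=\mc O_N(N)$ holds without ever passing through $\mu_N$. You need this direct route. A minor secondary remark: the Replacement Lemma in the paper does not rely on a spectral gap estimate; the step forcing a density with vanishing Dirichlet form to be constant on each hyperplane uses only irreducibility (Lemma~\ref{lem:irreducible}), and the quantitative input is the equivalence of ensembles (Corollary~\ref{cor:ensembles}) together with the decay of correlations of $\pi_\rho$ (Corollary~\ref{cor:correlations}).
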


Before proving these two results, we give the following corollary of Lemma~\ref{lem:MacroTransience}, which states that at time $t_N$, the macroscopic density profile has not changed. This will be useful to prove Proposition \ref{lem:HydroErgo} in Section \ref{sec:hydroErgo1}.
\begin{corollary}[Macroscopic profile after the transience time]
	\label{cor:MacroTransience}
	For any smooth function $\varphi:\T\to \R$, 
	\[ \lim_{N\to\infty} \bb E_{\mu_N}\bigg[\bigg|\frac1N\sum_{x\in\T_N}\varphi\Big(\frac x N\Big)\eta_{t_N}(x)-\int_{\T}\varphi(u)\rho_0(u)du\bigg|\bigg]=0.\]
\end{corollary}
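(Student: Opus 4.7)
The plan is to write
\[
\frac1N\sum_{x\in\T_N}\varphi\Big(\tfrac xN\Big)\eta_{t_N}(x)-\int_{\T}\varphi(u)\rho_0(u)\,du
= A_N + B_N + C_N,
\]
where $A_N := \frac1N\sum_x \varphi(x/N)(\eta_{t_N}(x)-\eta_0(x))$, $B_N := \frac1N\sum_x \varphi(x/N)(\eta_0(x)-\rho_0(x/N))$, and $C_N := \frac1N\sum_x \varphi(x/N)\rho_0(x/N) - \int_{\T}\varphi(u)\rho_0(u)\,du$, and then bound each $L^1(\bb P_{\mu_N})$ norm separately via the triangle inequality.

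First I would dispose of $C_N$: since $\varphi$ is smooth and $\rho_0$ is smooth, $C_N$ is simply the error in a Riemann sum approximation of the integral $\int_{\T}\varphi(u)\rho_0(u)\,du$ and hence is a deterministic quantity of order $\mc O(1/N)$, which vanishes as $N\to\infty$. Second, I would control $B_N$ using the fact that under $\mu_N$ the random variables $\{\eta_0(x)\}_{x\in\T_N}$ are independent Bernoulli with means $\rho_0(x/N)$. By Cauchy--Schwarz,
\[
\bb E_{\mu_N}\big[|B_N|\big]\leq \bb E_{\mu_N}\big[B_N^2\big]^{1/2} = \bigg(\frac1{N^2}\sum_{x\in\T_N}\varphi\Big(\tfrac xN\Big)^2\rho_0\Big(\tfrac xN\Big)\Big(1-\rho_0\Big(\tfrac xN\Big)\Big)\bigg)^{1/2},
\]
which is $\mc O(1/\sqrt{N})$ since $\varphi$ is bounded on $\T$.

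Finally, the term $A_N$ is precisely the quantity controlled by Lemma \ref{lem:MacroTransience} applied with $\nu_N = \mu_N$, which directly gives $\bb E_{\mu_N}[|A_N|]\to 0$ as $N\to\infty$. Summing the three bounds yields the claim. The only non-trivial input is Lemma \ref{lem:MacroTransience} itself; conditional on it, the corollary is a short triangle-inequality argument plus standard Riemann sum and variance estimates, so there is no real obstacle in this step.
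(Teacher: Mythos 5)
Your decomposition into $A_N$, $B_N$, $C_N$ and the ensuing bounds (Lemma~\ref{lem:MacroTransience} for $A_N$, the variance of independent Bernoulli variables for $B_N$, Riemann-sum error for $C_N$) is exactly what the paper means when it says the corollary is ``an immediate consequence of the definition of $\mu_N$, together with the law of large numbers, and Lemma~\ref{lem:MacroTransience}''. Your proof is correct and takes essentially the same approach, just written out in full.
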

This corollary is an immediate consequence of the definition of $\mu_N$, together with the law of large numbers, and Lemma \ref{lem:MacroTransience}.

\medskip

In the last two paragraphs of this section we prove Lemma \ref{lem:MacroTransience} and Proposition \ref{lem:HydroErgo}.

\subsection{Proof of Lemma \ref{lem:MacroTransience}}
\label{sec:macrotransience}
	This proof is pretty elementary, since in a time $t_N=o_N(N)$, no particle in the system has time to travel on a microscopic distance much larger than ${T_N=(\log N)^{32}}$, and in particular no particle travels on a macroscopic (of order $N$) distance. Let us denote by $k=k(\eta)=\sum_{x\in \bb T_N} \eta_0(x)$ the random number of particles initially in $\bb T_N$. Let us also denote by $x_1(t),\ldots,x_k(t)\in \T_N$ the \emph{microscopic} positions of the particles at time $t$. 
	The main result we need to prove is the following: for some well chosen $\beta>0$
	\begin{equation}
	\label{eq:TraveledDistance}
	\bb P_{\nu_N}\pa{\sup_{0\leq i\leq k}\abs{x_i(0)-x_i(t_N)}\geq T_N^{\beta}}\underset{N\to \infty}{\longrightarrow}0.
	\end{equation} 
Before briefly proving this bound, let us show that Lemma \ref{lem:MacroTransience} follows from it. By reorganizing the terms, we can rewrite the expectation in Lemma \ref{lem:MacroTransience} as
\begin{align*}
\bb  E_{\nu_N}&\bigg[\bigg|\frac1N\sum_{i=1}^k\Big(\varphi\Big(\frac{x_i(t_N)}{N}\Big)-\varphi\Big(\frac{x_i(0)}{N}\Big)\Big)\bigg|\bigg]\\
\leq&\sup_{\substack{x,y\in \bb T_N\\ \abss{x-y}\leq {T}_N^{\beta}}}\abs{\varphi\pa{\frac xN}-\varphi\pa{\frac yN}}+2\|\varphi\|_{\infty}{\bb P_{\nu_N}}\bigg(\sup_{1\leq i\leq k}\abs{x_i(t_N)-x_i(0)}\geq {T}_N^{\beta}\bigg) 
.\end{align*} 
In the bound above, the first term vanishes because $\varphi$ is a smooth function and because ${T}^{\beta}_N/N=o_N(1)$, whereas the second term vanishes according to  \eqref{eq:TraveledDistance}.

We therefore only need to prove \eqref{eq:TraveledDistance} to conclude the proof of Lemma \ref{lem:MacroTransience}. Since there are at most $N$ particles in the system, the probability in \eqref{eq:TraveledDistance} is less than 
\[N\sum_{j\leq N}\bb P_{\nu_N}(k(\eta)=j)\sup_{i\leq j}\bb P_{\nu_N}\pa{|x_i(0)-x_i(t_N)|\geq {T}_N^{\beta}\mid k(\eta)=j}.\]
Since the particles jump to a neighboring site  at rate at most $N^2$, the probability above is less than the probability that a random walker jumping to its neighbors at rate $N^2$ jumped $X\geq{T}_N^{\beta}$ times in a time $t_N$, so that we only have to prove that 
\begin{equation}
\label{eq:probaLD}
N \bb P\big(X\geq {T}_N^{\beta}\big) \xrightarrow[N\to\infty]{}0.
\end{equation}
This number of jumps $X$ is distributed according to a Poisson law with mean ${T}_N$, so that by a standard argument of large deviations, one obtains that from some constant $C$,
\[\bb P\big(X\geq {T}_N^{\beta}\big)\leq e^{-C{T}_N^{\beta-1}}. \]
We now only need to choose $\beta$ {large enough} to obtain \eqref{eq:probaLD} and then conclude the proof of Lemma \ref{lem:MacroTransience}. \qed

\subsection{Proof of Proposition \ref{lem:HydroErgo}}
\label{sec:hydroErgo1}
In this section, we prove Proposition \ref{lem:HydroErgo} by adapting the classical \emph{entropy method} (cf. \cite[Chapter 5]{KL}). Let us consider the process $\{(\eta_t(x))_{x\in\T_N} \; : \; t \in [0,T]\}$ started from the ergodic component under the measure  $\tm$ defined in \eqref{eq:Deftm}.
For $s\in[0,T]$, we denote by $<\cdot, m_s^N>$ the integral on $\{0,1\}^{\bb T_N}$ w.r.t.~the empirical measure at time $s$ denoted by $m^N_s$, namely
\[<\psi,m^N_s>=\frac{1}{N}\sum_{x\in \bb T_N}\psi\Big(\frac x N\Big)\eta_s(x), \qquad \text{with } \psi:\T \to \R.\]
Fix a smooth function \[\begin{array}{ccccc}\varphi:&  [0,T]\times \bb T &\to &\bb R\\
&(s,u)& \mapsto &\varphi_s(u).\end{array}\]
 The evolution of the process between time $0$ and time $T$ is described by the following Dynkin's formula:
\begin{align*} <\varphi_T,m_T^N>=<\varphi_0,m_0^N>& +\int_0^T ds\; \frac{1}{N}\sum_{x\in \bb T _N}\varphi_s\Big(\frac x N\Big) N^2\mathcal{L}_N\eta_s(x) \\
& +\int_0^T ds <\partial_s\varphi_s,m_s^N> + M_T^N(\varphi),\end{align*} 
where $(M_T^N(\varphi))$ is a $\P_{\tm}$--martingale which vanishes\footnote{To see this, one can easily compute its quadratic variation, and shows that its expectation vanishes. See \cite[Chapter 4, Page 76]{KL} for instance.} in $\mathbf{L}^2$, as $N\to \infty$. Furthermore, by summation by parts, we can also rewrite (thanks to the gradient property of our model, see Section \ref{ssec:current}) the first integrand above as
\begin{equation}\label{eq:ipp}\frac{1}{N}\sum_{x\in \bb T _N}\varphi_s\Big(\frac x N\Big)N^2 \mathcal{L}_N\eta(x)=\frac{1}{N}\sum_{x\in \bb T _N}\Delta_x^N\varphi_s\;\tau_x h(\eta),\end{equation}
where $h$ was defined in \eqref{eq:h} and $\Delta^N\varphi$ denotes the discrete approximation of the continuous Laplacian defined as usual as
\[\Delta_x^N\varphi:=N^2\big(\varphi\big(\tfrac {x+1} N\big)+\varphi\big(\tfrac {x-1} N\big)-2\varphi\big(\tfrac x N\big)\big).\]
The next step consists in replacing in the right hand side of \eqref{eq:ipp}  the local function $\tau_x h$ by its 
{expectation with respect to the invariant measure parametrized by the empirical density in a mesoscopic box around $x$}. 
For that purpose, let us define the empirical density of the configuration $\eta$ in the box $B_\ell$ (of size $2\ell+1$ symmetric around $0$) as
\[\rho^\ell(0)(\eta):=\frac{1}{2\ell+1}\sum_{y \in B_\ell} \eta(y),\]
and around $x$ as $\rho^\ell(x)(\eta):=\rho^\ell(0)(\tau_x\eta)$.
In what follows we will also use a simplified notation when looking at {the} macroscopic time $s$: \[ \rho_s^{\ell}(x):=\rho^\ell(x)(\eta_s).\]
 The replacement in \eqref{eq:ipp} is then done thanks to the \emph{Replacement Lemma}, which in our case is stated as follows:
\begin{lemma}[Replacement Lemma]
\label{lem:RL}
Recall that $h$ has been defined in \eqref{eq:h}, and denote $F(\rho)=\max\{0,(2\rho-1)/\rho\}$. We have for any $T>0$
\begin{equation}\label{eq:repl} \limsup_{\varepsilon \to 0}\limsup_{N\to\infty} \bb E_{\tm}\bigg[\int_{0}^Tds\frac{1}{N}\sum_{x\in \bb T_N}\tau_x\big|V_{\varepsilon N}(h,\eta_{s}) \big|\; \bigg]=0,\end{equation}
where, for any $k \in\N$, 
\begin{equation}
\label{eq:DefV}
V_{k}(h,\eta):=\frac{1}{2{k}+1}\sum_{{y\in B_{k}}}\tau_y h(\eta) - F\big(\rho^{k}(0)(\eta)\big).
\end{equation}
\end{lemma}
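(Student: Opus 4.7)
The plan is to follow the classical one-block / two-block scheme of the entropy method (as in \cite[Chapter 5]{KL}), with adaptations required by the non-product nature of the invariant measures and the confinement of the dynamics to the ergodic component $\mathcal{E}_N$. Fix as a reference an equilibrium measure $\pi_\alpha$ (the grand canonical measure at some density $\alpha\in(\tfrac12,1)$, to be introduced in Section~\ref{sec:invariant}). A direct computation using the explicit form of $\pi_\alpha$, together with the fact that $\tm$ is close to the product measure $\mu_N$, will show that $H(\tm\mid\pi_\alpha)=\mathcal{O}(N)$. The entropy inequality combined with a Feynman--Kac estimate (using reversibility of $\mathcal{L}_N$ with respect to $\pi_\alpha$) then bounds the expectation in \eqref{eq:repl}, up to negligible terms, by
\begin{equation*}
T\sup_f\bigg\{\int \frac{1}{N}\sum_{x\in\T_N}\tau_x|V_{\varepsilon N}(h,\eta)|\,f(\eta)\,d\pi_\alpha(\eta)\;-\;CN\,\mathcal{D}_N(\sqrt f)\bigg\},
\end{equation*}
where $f$ ranges over $\pi_\alpha$--densities on $\mathcal{E}_N$ and $\mathcal{D}_N$ is the Dirichlet form of $\mathcal{L}_N$ with respect to $\pi_\alpha$. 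The problem is then reduced to showing that this supremum vanishes in the iterated limit $N\to\infty$ followed by $\varepsilon\to 0$.

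This is carried out in two steps. The \emph{one-block estimate} replaces, at a fixed microscopic scale $k$, the block average $\frac{1}{2k+1}\sum_{y\in B_k}\tau_y h(\eta)$ by $F(\rho^k(0)(\eta))$. By a cut-and-paste localization based on the convexity of the Dirichlet form, this reduces to the following finite-box statement: on the ergodic component of $B_k$ with $n$ particles, the canonical expectation of $\frac{1}{2k+1}\sum_{y\in B_k}\tau_y h$ converges, as $k\to\infty$ with $n/(2k+1)\to\rho\in(\tfrac12,1)$, to $F(\rho)=\E_{\pi_\rho}[h]$. The identity $\E_{\pi_\rho}[h]=\rho-(1-\rho)^2/\rho=(2\rho-1)/\rho$ follows directly from the geometric (pile) structure of $\pi_\rho$ obtained via the zero-range correspondence of Section~\ref{sec:Bijection}, and the convergence to it is the \emph{equivalence of ensembles} for the family $(\pi_\rho)$, to be proved in Section~\ref{sec:invariant}. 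The \emph{two-block estimate} then replaces $F(\rho^k(0))$ by $F(\rho^{\varepsilon N}(0))$; since $F$ is Lipschitz on $[\tfrac12,1]$, this reduces to the standard control of $|\rho^k(x)-\rho^k(y)|$ for $|x-y|\leq \varepsilon N$ via the Dirichlet-form bound, again up to a spectral-gap-type input on the ergodic components.

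The main obstacle is the combination of two degeneracies. First, because the canonical and grand canonical measures are not product, both the equivalence of ensembles and the quantitative spectral-gap bounds on ergodic components must be established from scratch; this is the role of Section~\ref{sec:invariant}, where the usual direct Bernoulli-product computations are replaced by the explicit geometric description of $\pi_\rho$ coming from the zero-range correspondence. Second, $F$ has a corner at $\rho=\tfrac12$ and the function $h$ effectively vanishes on absorbing configurations, so the replacement is only meaningful in the active regime $\rho>\tfrac12$; to guarantee that the relevant empirical densities stay above $\tfrac12$, we combine the assumption $\rho_0>\tfrac12$, Corollary~\ref{cor:MacroTransience}, and entropy-based a priori bounds. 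Handling these two issues simultaneously is what necessitates the preparatory work of Section~\ref{sec:invariant} and the deferral of the detailed proof to Section~\ref{app:replacement}.
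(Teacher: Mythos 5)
Your overall scheme is sound and correctly identifies the key difficulties — the one-block/two-block decomposition, a Dirichlet-form bound of order $C/N$, and an equivalence of ensembles adapted to the non-product invariant measures — but there are two points where the proposal either diverges from the paper or leaves a genuine gap.

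First, a route difference that is not a gap: you invoke the entropy inequality plus a Feynman--Kac variational formula to reduce to the Dirichlet-form variational problem. The paper does not do this. Since $\nu_{\rho,N}$ is reversible for $\mathcal{L}_N$, the paper simply writes $\bb E_{\tm}\big[\int_0^T V\,ds\big] = T\,\nu_{\rho,N}(\overline{f}_T^N V)$ with $\overline{f}_T^N=\frac1T\int_0^T f_s^N\,ds$, and bounds $\mathcal{D}_N(\overline{f}_T^N)\leq C/N$ directly from the entropy production inequality $\partial_s H(f_s^N)\leq -N^2\mathcal{D}_N(f_s^N)$ together with $H(f_0^N)\leq C_\rho N$ and convexity of $\mathcal{D}_N$. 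Both routes land on the same supremum over densities with small Dirichlet form; the paper's is lighter because it avoids establishing a Feynman--Kac representation for the degenerate generator on the ergodic component.

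Second, a genuine gap: you take the reference measure to be $\pi_\alpha$, but $\pi_\alpha$ is the \emph{infinite-volume} measure on $\{0,1\}^{\Z}$, and you cannot take a density of $\tm$ with respect to it on the torus. Resolving this requires the periodic grand canonical measure $\nu_{\rho,N}$ constructed in Section~\ref{sec:defnuN}, which is supported on $\mathcal{E}_N$, is translation invariant, satisfies detailed balance for $\mathcal{L}_N$, and — via Lemma~\ref{lem:aux2}, itself resting on the correlation-decay estimates of Section~\ref{sec:LLN} — is locally close to $\pi_\rho$ as $N\to\infty$. Constructing $\nu_{\rho,N}$ and proving that its conditionings on finite boxes converge to those of $\pi_\rho$ (including the nontrivial fact that $\widetilde\nu_{\rho,N}(\mathcal{E}_N)\to\rho(2-\rho)>0$, Lemma~\ref{lem:nuERG}) is a substantial part of the work, and the proposal does not seem aware that this step is needed. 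Relatedly, in the two-block estimate you appeal to a ``spectral-gap-type input on the ergodic components''; the paper uses only irreducibility (a zero Dirichlet form implies $f$ is constant on each fixed-particle hyperplane, Lemma~\ref{lem:irreducible}), not a quantitative spectral gap, so this is an overstatement of what is needed. Finally, the concern that one must ``guarantee that the relevant empirical densities stay above $\tfrac12$'' via Corollary~\ref{cor:MacroTransience} and ``entropy-based a priori bounds'' is misplaced: since $\nu_{\rho,N}$ is supported on $\mathcal{E}_N$, local empirical densities are automatically at least about $\tfrac12$, and the remaining delicacy near $\tfrac12$ and $1$ is handled by the two-regime statement of the equivalence of ensembles (Proposition~\ref{prop:ensembles}, with the $\delta_1$-cutoff) and the split of the one-block supremum over $\alpha\in[\tfrac12,\alpha_0]$ versus $\alpha\in(\alpha_0,1]$, not by a macroscopic-profile input.
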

The standard proof of this lemma (see for example \cite[Lemma 5.7 p.97 and Chapter 5, Section 3]{KL}) has to be adapted very carefully to our case. In particular, the grand canonical measures are different in the periodic and non-periodic setting, so that the former cannot be easily restricted to finite volume. Since the proof is quite long and technical, we postpone it to Section \ref{app:replacement}.

Thanks to Lemma \ref{lem:RL}, since $\varphi$ is smooth,
and since the martingale $(M_t^N(\varphi))$ vanishes in probability, we can write for any $\delta>0$ 
\begin{multline}\label{eq:estim1}
\limsup_{\varepsilon \to 0}\limsup_{N\to\infty}\bb P_{\tm}\bigg(\bigg|<\varphi_T,m_T^N>-<\varphi_0,m_0^N>\\
-\int_0^T ds\Big[ <\partial_s\varphi_s,m_s^N>+\frac{1}{N}\sum_{x\in \bb T _N}\Delta_x^N\varphi_s \; F\big(\rho_s^{\varepsilon N}(x)\big)\Big]\bigg|>\delta\bigg) =0.
\end{multline}
The quantity inside absolute values is in fact a function of the empirical measure process $\{m_s^N\;; \;s\in[0,T]\}$: for any measure $m$ on $\bb T$ and any $u \in \T$, we set 
\[\widetilde{F}^{{\varepsilon }}_u(m)=F\big(<\delta_u^{{\varepsilon }}, m>\big),\]
where $\delta_u^{{\varepsilon }}$ is defined as:
\[\delta_u^{{\varepsilon }}(v)=\frac{1}{2{\varepsilon }}{\bf 1}_{\{u-{\varepsilon }\leq v \leq u+{\varepsilon }\}}, \qquad \text{for any } v \in \T.\] 
Note that, when $u=x/N$, since $<\delta_u^{{\varepsilon }}, m^N_s>=\rho^{\varepsilon N}_{s}(x)+o_N(1)$, from \eqref{eq:idh} we have (uniformly in $s\in[0,T]$):
\[\widetilde{F}^{{\varepsilon }}_{x/N}({m^N_{s}})=\max\left\{0,\frac{2\rho^{\varepsilon N}_{s}(x)-1}{\rho^{\varepsilon N}_{s}(x)}\right\}+o_N(1).\]
Let us denote by $Q_N$ the distribution of the empirical measure process $\{m_s^N\;; \;s\in[0,T]\}$ when $\{\eta_s\; ; \; s\in[0,T]\}$ is distributed according to $\P_{\tm}$.
We can now rewrite the previous identity \eqref{eq:estim1} as
\begin{multline}\label{eq:estim2}
\limsup_{\varepsilon \to 0}\limsup_{N\to\infty}Q_N\bigg(\bigg|<\varphi_T,\ccl{m_T}>-<\varphi_0,\ccl{m_0}>\\
-\int_0^T ds\Big[ <\partial_s\varphi_s,\ccl{m_s}>+\frac{1}{N}\sum_{x\in \bb T _N}\Delta_x^N\varphi_s \; \widetilde{F}^{\varepsilon}_{x/N}(\ccl{m_s})\Big] \bigg|>\delta\bigg)=0.
\end{multline}
It is straightforward, using the classical tools, to prove that the sequence $(Q_N)$ is relatively compact for the weak topology so that for any of its limit points $Q^*$, we can write 
\begin{multline*}
\limsup_{\varepsilon \to 0}Q^*\bigg(\bigg|<\varphi_T,m_T>-<\varphi_0,m_0>\\
-\int_0^T ds\Big[ <\partial_s\varphi_s,m_s>+\int_{\bb T} du\Delta\varphi_s(u) \widetilde{F}^{\varepsilon}_u(m_s)\Big] \bigg|>\delta\bigg)=0.
\end{multline*}
Since at most one particle is allowed at any site, one can prove that any limit point $Q^*$ is concentrated on measures $m$ which admit at any time $s$ a density $\rho_s$ w.r.t.~the Lebesgue measure on $\bb T$, so that we can let $\varepsilon$ go to $0$ in the last equality, to obtain thanks to Corollary \ref{cor:MacroTransience} the following: for any $\delta>0$,
\begin{multline*}
Q^*\bigg(\bigg|\int_{\bb T}\varphi_T(u)\rho_T(u)du-\int_{\bb T}\varphi_0(u)\rho_0(u)du\\
-\int_0^T ds\int_{\bb T}du\bigg[\partial_s\varphi_s(u)\rho_s(u)+\Delta\varphi_s(u) \max\left\{0,\frac{2\rho_s(u)-1}{\rho_s(u)}\right\}\bigg] \bigg|>\delta\bigg)=0.
\end{multline*}
Since a maximum principle holds for the hydrodynamic equation \eqref{eq:hydro}, letting $\delta$ go to $0$ proves that $Q^*$ is concentrated on trajectories whose density w.r.t.~the Lebesgue measure is a weak solution of  \eqref{eq:hydro}. The uniqueness of such solutions concludes the proof.

\section{Invariant measures and equivalence of ensembles} \label{sec:invariant}

\subsection{Canonical measures for the exclusion dynamics in periodic setting}
\label{sec:MCM}

\begin{definition}
For $k>N/2$, $\pi_N^k$ is the uniform measure on $\Omega_N^k$ (recall \eqref{def:omegaNk}).
\end{definition}
We know from \cite{GKR} that $\pi_N^k$ is the invariant measure for the exclusion process on $\T_N$ with $k$ particles. 
It  is translation invariant and satisfies the  \emph{detailed balance condition}:  for any $x\in\T_N$ and $\eta\in\Omega_N^k$,
\begin{align}
\pi_N^k(\eta)\eta(x-1)&\eta(x)(1-\eta(x+1)) \notag \\ & =\pi_N^k(\eta)\eta(x-1)\eta(x)(1-\eta(x+1))\eta(x+2)\notag \\
&=\pi_N^k(\eta^{x,x+1})\eta^{x,x+1}(x-1)(1-\eta^{x,x+1}(x))\eta^{x,x+1}(x+1)\eta(x+2)\notag\\
&=\pi_N^k(\eta^{x,x+1})(1-\eta^{x,x+1}(x))\eta^{x,x+1}(x+1)\ccl{\eta^{x,x+1}}(x+2),\label{eq:detailed}
\end{align}
where we used the fact that the holes are isolated on $\Omega_N^k$ in the first and last equalities. Let us now characterize the marginals of $\pi^k_N$. 
\begin{lemma}\label{lem:ConfCount}
Let $ k \in \{1,\ldots, N-1\}$ and $m=N-k$. We have the identity
\begin{equation*}
\label{eq:cardOkn}
\big|\Omega_N^k\big|=\binom{k}{m}+\binom{k-1}{m-1}=\frac{N}{k}\binom{k}{N-k}
\end{equation*}
Furthermore, fix $\ell \leqslant N$ and a local ergodic configuration $\sigma\in \ERGt_{{\Lambda_\ell}}$. We define \begin{itemize}
\item [\textbullet] $p:=\sum_{x\in{ \Lambda_\ell}}\sigma(x) \in \{0,\ldots,{\ell}\}$ its number of particles,
\smallskip
\item [\textbullet] $z:={\ell}-p$ its number of holes,
\end{itemize}
and, assume $p\leqslant k$ and $z\leqslant m$. Then we have
\begin{equation*}
\label{pikN}
\Big|\Big\lbrace \eta\in\Omega_N^k\ : \ \eta_{|{\Lambda_\ell}}=\sigma\Big\rbrace\Big| = \binom{k-p+{\sigma(1)}+\sigma(\ell)-1}{m-z}. 
\end{equation*}
where by convention $\binom{n}{n'}=0$ if $n'>n$.
\end{lemma}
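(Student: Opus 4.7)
The plan is to do a direct combinatorial argument in two stages: first count $|\Omega_N^k|$ by conditioning on the state of a single site, and second count configurations extending a given boundary piece $\sigma$ by treating the complement as a linear segment.

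For the cardinality $|\Omega_N^k|$, I would condition on $\eta(1)$. If $\eta(1)=1$, the constraint ``holes are isolated on the cycle'' reduces exactly to the constraint on the linear segment $\{2,\dots,N\}$: there are $k-1$ particles and $m$ holes to place in a row with no two holes adjacent. The classical gap bijection (lay down the $k-1$ particles, creating $k$ gaps including both ends, and choose $m$ of these gaps to hold a single hole each) yields $\binom{k}{m}$ configurations. If $\eta(1)=0$, both cyclic neighbors $\eta(2)$ and $\eta(N)$ must be particles, leaving $k-2$ particles and $m-1$ holes to be placed on the line $\{3,\dots,N-1\}$ with no two holes adjacent; the same bijection gives $\binom{k-1}{m-1}$. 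Summing gives $\binom{k}{m}+\binom{k-1}{m-1}$, and a one-line Pascal computation
\[
\binom{k}{m}+\binom{k-1}{m-1}=\binom{k}{m}\Big(1+\tfrac{m}{k}\Big)=\tfrac{N}{k}\binom{k}{m}
\]
identifies this with the second expression.

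For the number of extensions of $\sigma\in\ERGt_{\Lambda_\ell}$, I would apply the same idea to the arc $\{\ell+1,\dots,N\}$, which is a linear segment once $\sigma$ is fixed on $\Lambda_\ell$. The only subtlety is the boundary: if $\sigma(\ell)=0$, then $\eta(\ell+1)$ is forced to be a particle; similarly $\sigma(1)=0$ forces $\eta(N)=1$. I would split into the four cases according to $(\sigma(1),\sigma(\ell))\in\{0,1\}^2$. In each case, after subtracting the forced particles at the endpoints, the remaining free positions form a shorter linear segment with $k-p-(\text{forced})$ particles and $m-z$ non-adjacent holes, and the gap bijection gives respectively $\binom{k-p-1}{m-z}$, $\binom{k-p}{m-z}$, $\binom{k-p}{m-z}$, $\binom{k-p+1}{m-z}$. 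All four cases are captured uniformly by $\binom{k-p+\sigma(1)+\sigma(\ell)-1}{m-z}$, since the number of forced particles at each endpoint is exactly $1-\sigma(\ell)$ and $1-\sigma(1)$.

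I do not expect any conceptual obstacle here; the lemma is pure enumeration. The only place where one must be careful is the case analysis for the endpoints of the arc $\{\ell+1,\dots,N\}$, where one must consistently count both the positions that are frozen to a particle by the isolation constraint at the boundary of $\sigma$ and the number of gaps available for holes. The convention $\binom{n}{n'}=0$ for $n'>n$ conveniently takes care of the extremal subcases (for instance when $\sigma$ already uses up all particles or all holes, or when the arc is too short to accommodate the remaining holes). Finally, the hypotheses $p\le k$ and $z\le m$ ensure the count is nontrivial and that one is not counting in an empty regime.
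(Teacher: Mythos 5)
Your proof is correct, and it follows essentially the same route as the paper's: condition on $\eta(1)$, use the gap bijection to count ergodic configurations, and count extensions of $\sigma$ by the same gap argument after accounting for particles forced at $\ell+1$ and $N$ by the boundary values $\sigma(\ell)$ and $\sigma(1)$. The only cosmetic difference is that you phrase the gap bijection on a linear segment after freezing one site, while the paper phrases it directly on the torus; both yield identical counts.
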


\begin{proof}[Proof of Lemma \ref{lem:ConfCount}]
The proof is contained in \cite{GKR}. We reproduce it here for completeness.

Split the configurations in $\Omega_N^k$ into those with a particle at $1$ and those with a hole at $1$. The number of the first kind is given by the number of ways one can insert the $m$ holes between two particles. Since there are $k$ particles which are situated on the torus, there are $k$ possible positions for the holes and we must choose $m$ of those. When there is a hole at $0$, it remains to insert $m-1$ holes in the remaining $k-1$ positions. 

To compute the cardinality of the second set, we consider that after fixing the configuration  $\sigma$ on ${\Lambda_\ell}$, it remains to insert $m-z$ holes. The number of available positions is given by the remaining number of particles $k-p$ minus one if ${\sigma(1)}=\sigma(\ell)=0$ (in that case the constraint imposes particles at $\ell+1$ and $0=N$), and similarly for the other cases.
\end{proof}

\subsection{Grand canonical measures}
\label{sec:GCM}

\subsubsection{In infinite volume} 

\begin{definition}\label{def:pirho}
Define the grand canonical measure $\pi_\rho$ as the translation invariant measure on $\{0,1\}^{\Z}$ such that
\begin{itemize}
\item  for $\rho>\frac12$, $\ell\geq 1$, 
\begin{equation}\label{pirho2}
\pi_\rho\Big( \xi_{|\Lambda_\ell}={\sigma}\Big)={\bf{1}}_{\{\sigma\in \ERGt_{\Lambda_\ell}\}}
(1-\rho)\left(\frac{1-\rho}{\rho}\right)^{\ell-1-p}\left(\frac{2\rho-1}{\rho}\right)^{2p-\ell+1-{\sigma(1)-\sigma(\ell)}},
\end{equation}
where $p=\sum_{x\in \Lambda_\ell}\sigma(x)\in \{0,\dots,\ell\}$ is the number of particles in $\sigma$.
\medskip
\item  for $\rho\leq \frac12$,
\begin{equation}
\pi_\rho=\frac12\delta_{\circ\bullet}+\frac12\delta_{\bullet\circ},
\end{equation}
where $\circ\bullet$ (resp. $\bullet\circ$) is the configuration in which there is a particle at $x$ iff $x$ is odd (resp. even), and $\delta_\eta$ is the usual Dirac measure concentrated on the configuration $\eta$\footnote{\ccl{We define in this fashion the grand canonical measure for $\rho< \frac12$, in order to be able to write the equivalence of ensembles stated in Proposition \ref{prop:ensembles} and Corollary \ref{cor:ensembles} ( i.e. to be able to write approximations of the canonical measures $\pi^k_N $ by the grand canonical measures $\pi_{k/N}$ even in the pathological cases where $N$ is odd and $k=(N-1)/2<N/2$).}}.
\end{itemize}
\end{definition}

Note that, by periodizing the configurations, we can see the measures $\pi_N^k$ as measures on $\lbrace 0,1\rbrace^\Z$. In that case, since the state space is compact, the sequence $(\pi_N^k)_{N}$ is tight. If $k/N\rightarrow\rho>\frac12$, \ccl{using Lemma~\ref{lem:ConfCount}, one can check that there is a unique limit point, and that this limit point is} the measure $\pi_\rho$ defined in Definition \ref{def:pirho}.

The formula \eqref{pirho2} can be rewritten as
\begin{equation}
\label{pirho}
\pi_\rho\Big(\xi_{|{\Lambda_\ell}}={\sigma}\Big)={\bf{1}}_{\{\sigma\in \ERGt_{\Lambda_\ell}\}}\kappa\;\alpha^p\;\beta^{\ell}\;\gamma^{{\sigma(1)+\sigma(\ell)}},
\end{equation}
where the four constants $\kappa$, $\alpha$, $\beta$, $\gamma$ all depend on $\rho\in (\frac12,1)$, and are given by 
\begin{equation}
\kappa(\rho):=2\rho-1, \quad \alpha(\rho):=\frac{(2\rho-1)^2}{\rho(1-\rho)},\quad  \beta(\rho):=\frac{1-\rho}{2\rho-1} \quad \mbox{and}\quad \gamma(\rho):=\frac{\rho}{2\rho-1}\label{eq:constants2}.
\end{equation}
Whenever no confusion arises, we will omit their dependence on $\rho$. 

\begin{remark}\label{rem:h2}
Fix $\rho>\frac12$. Note that $\pi_\rho( \xi(0))=\rho$. Moreover, if one assumes $\ell\geqslant 1$ and takes $\sigma=(0,1,\dots,1)$, by applying the above formula one finds that
\begin{equation}
\pi_\rho\Big(\xi_{|{\Lambda_\ell}}=(0,1,\dots,1)\Big)=(1-\rho)\left(\frac{2\rho-1}{\rho}\right)^{{\ell-2}},
\end{equation}
which, since $\frac{1-\rho}{\rho}=1-\frac{2\rho-1}{\rho}$, is nothing but \[\pi_\rho(\xi(0)=0)\times\P\Big(\mathrm{Geom}\Big(\frac{1-\rho}{\rho}\Big)\geqslant {\ell-2}\Big).\] In fact, we can understand $\pi_\rho$ as the measure which puts a hole in some position with probability $1-\rho$, then puts one plus a random geometric number of parameter $\frac{1-\rho}{\rho}$ particles on its right, then a hole, then starts again and so on. 
\end{remark}

The properties of the (non-product) measure $\pi_\rho$ will be further investigated in later sections. In particular, we prove in Section \ref{sec:LLN} that for any density $\rho\in (\frac12,1)$, under $\pi_\rho$, the correlations between two boxes at a distance of order $\ell$ decay as $\exp(-C \ell)$. We also prove the \emph{equivalence of ensembles}, stated in Corollary \ref{cor:ensembles}, which is a key ingredient to prove the Replacement Lemma, namely Lemma \ref{lem:RL}.

\begin{remark}\label{rem:h1}
As a consequence of equation \eqref{pirho2}, for any $\rho\in (\frac12,1)$, there exist two constants $C=C(\rho)$ and $a=a(\rho)<1$, such that for any configuration $\sigma\in \{0,1\}^{\Lambda_\ell}$, $\pi_\rho\big( \xi_{|\Lambda_\ell}=\sigma\big)\leq C a^\ell$.
\end{remark}

\begin{remark}\label{rem:h3}
Recall that we introduced the local function $h$ in \eqref{eq:h} as \[h(\eta)=\eta(-1)\eta(0)+\eta(0)\eta(1)-\eta(-1)\eta(0)\eta(1),\]
and that we introduced in the Replacement Lemma \ref{lem:RL} the function $F(\rho)=\ccl{\max}\{0,(2\rho-1)/\rho\}.$ Then, one easily checks that for any $\rho\in [\frac12,1]$
\begin{equation}\label{eq:idh}
\pi_\rho(h)=F(\rho),
\end{equation}
which is, as expected, the quantity appearing in the Laplacian in \eqref{eq:hydro}.
\end{remark}

For $\rho>\frac12$, we now introduce a {periodic} variant of the grand canonical measure $\pi_\rho$,which is defined on $\T_N$, is translation invariant, and is locally close in the limit $N\to\infty$ to $\pi_\rho$.

\subsubsection{In the periodic setting} 
\label{sec:defnuN}

Let us fix $\rho>\frac12$. For any periodic configuration $\eta \in \{0,1\}^{\T_N}$, let us first define
\[\widetilde{\nu}_{\rho,N}(\eta):= \frac1N \sum_{x\in\T_N}\pi_\rho\Big(\xi_{|\{1,\ldots,N\}} = \tau_x\eta\Big). \]  
{Note that, since to define the measure above we break the periodicity of the configuration $\eta$, one can find a non-ergodic periodic configuration $\eta\notin \mathcal{E}_N$ such that $\widetilde{\nu}_{\rho,N}(\eta)>0$, because, for some $x$, the non-periodic configuration $\sigma^x:= \tau_x\eta$ belongs to $\widehat{ \mathcal{E}}_{\Lambda_N}$.  We therefore need to restrict}
 $\widetilde\nu_{\rho,N}$ to the periodic ergodic component $\mathcal{E}_N$ as follows. For completeness, we also give a definition of the periodic grand canonical measure for densities below $\frac12$.
\begin{definition}\label{def:nurhoN}
 For $\rho>\frac12$, $\eta\in\T_N$, 
\begin{equation}
\label{eq:aux} \nu_{\rho,N}(\eta):= \frac{\mathbf{1}_{\{\eta \in \mathcal{E}_N\}}}{\widetilde\nu_{\rho,N}(\mathcal{E}_N)} \widetilde\nu_{\rho,N}(\eta).
\end{equation}
For $\rho\leq \frac12$: if $N$ is even, $\nu_{\rho,N}$ is the uniform measure on the set of the two alternate configurations and if $N$ is odd, $\nu_{\rho,N}$ is the uniform measure on the set (of cardinal $N$) of configurations with $(N-1)/2$ isolated holes.
 \end{definition}

We let the reader check the following straightforward properties:
\begin{itemize}
\item [--] the probability measure $\nu_{\rho,N}$ is translation invariant;
\item [--] the probability measure $\nu_{\rho,N}$ is invariant for the dynamics generated by $\mathcal{L}_N$, since it satisfies the detailed balance condition (see \eqref{eq:detailed});
\item [--] for $\rho>\frac12$, we prove in Lemma \ref{lem:nuERG} below that $\widetilde\nu_{\rho,N}(\mathcal{E}_N)=\rho(2-\rho)+\mc O_N(e^{-CN})$. In particular, according to Remark \ref{rem:h1}, there exist two constants $C$ and $ 0<a<1$  depending only on $\rho $ such that for any $N\geq N_0(\rho)$ and any configuration $ \eta\in \{0,1\}^{\T_N}$ 
\begin{equation}
\label{eq:nuNmax}
\nu_{\rho,N}(\eta)\leq C a^N; 
\end{equation}
\item [--] for $\eta,\eta'\in\mathcal{E}_N$ two configurations with the same number of particles, we have $\nu_{\rho,N}(\eta)=\nu_{\rho,N}(\eta')$. In particular, for any $\rho\in(\frac12,1)$, any integer $k$ satisfying $\frac N 2<k<N$, we have $\nu_{\rho,N}(\,\cdot\, \mid \eta\in\Omega_N^k)=\pi^k_N(\,\cdot\,)$ .
\end{itemize}

The rest of this section is dedicated to stating and proving several properties of the grand canonical measures (GCM) defined above. In Section \ref{sec:LLN}, we prove that for any non-degenerate $\rho\in (\frac12,1]$, the correlations under the grand canonical measures $\pi_\rho$ of two boxes at distance $\ell$ decay as $\exp(-C\ell)$ regardless of the size of the boxes. In Section \ref{sec:meserg}, we show  that the measure of the ergodic set $\widetilde\nu_{\rho,N}(\mathcal{E}_N)$ converges exponentially fast to $\rho(2-\rho)$.
In Section \ref{sec:limnuN}, we show that as $N$ diverges, the periodic GCM $\nu_{\rho,N}$ locally resembles the infinite volume GCM $\pi_\rho$. Finally, the main result of this section is the so-called \emph{equivalence of ensembles}, stated in Corollary \ref{cor:ensembles} and proved in Section \ref{sec:ensembles}. These results are quite technical, we therefore elected to present a rather detailed proof in order to be as clear as possible.

\subsection{Decay of correlations for the GCM $\pi_{\rho}$} 
\label{sec:LLN}
We start by investigating the correlations of the infinite volume measure $\pi_\rho$.
Let us define \[P_\ell:=\pi_{\rho}\Big(\xi(0)=\xi(\ell)=1\Big).\]
{We first prove that the two-points correlations under the invariant measure $\pi_{\rho}$ decay exponentially, which in particular yields a law of large numbers for the measure $\pi_\rho$.}
\begin{lemma}
	\label{lem:correlations}
	For any $\rho \in (\frac12,1)$, there exist two constants $C=C(\rho)>0$ and $q=q(\rho)>0$ such that 
	\[\abs{P_\ell-\rho^2}\leq q e^{-C\ell}.\]
\end{lemma}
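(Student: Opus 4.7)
The plan is to identify $\pi_\rho$ as the law of a stationary two-state Markov chain on $\{0,1\}^{\bb Z}$ and then to deduce the bound from the spectral gap of its transition matrix. The first step will be to verify the Markov property directly from \eqref{pirho}: for any $\sigma \in \hat{\mc E}_{\Lambda_\ell}$ and $b \in \{0,1\}$ such that $(\sigma, b) \in \hat{\mc E}_{\Lambda_{\ell+1}}$, I take the ratio of $\pi_\rho(\xi_{|\Lambda_{\ell+1}} = (\sigma, b))$ to $\pi_\rho(\xi_{|\Lambda_\ell} = \sigma)$ using the factorized form $\kappa \alpha^p \beta^\ell \gamma^{\sigma(1)+\sigma(\ell)}$. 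The factor $\gamma^{\sigma(1)}$ cancels and the remaining expression depends only on $\sigma(\ell)$ and $b$, producing the transition matrix
\[
P := \begin{pmatrix} P_{00} & P_{01} \\ P_{10} & P_{11} \end{pmatrix} = \begin{pmatrix} 0 & 1 \\ \tfrac{1-\rho}{\rho} & \tfrac{2\rho-1}{\rho} \end{pmatrix},
\]
whose first row encodes the exclusion constraint (no two consecutive holes), and whose unique stationary distribution is $(1-\rho, \rho)$, consistent with the known single-site marginal of $\pi_\rho$.

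Once the Markov property is in hand, stationarity yields
\[
P_\ell = \pi_\rho(\xi(0) = 1)\, P^\ell_{11} = \rho\, P^\ell_{11}.
\]
The characteristic polynomial of $P$ is $\lambda^2 - \tfrac{2\rho-1}{\rho}\,\lambda - \tfrac{1-\rho}{\rho} = 0$, with roots $1$ and $\lambda := -(1-\rho)/\rho$, the latter satisfying $|\lambda| < 1$ since $\rho > \tfrac12$. A standard spectral decomposition, using the right eigenvectors $(1,1)^{T}$ and $(\rho, -(1-\rho))^{T}$ together with the left eigenvectors $(1-\rho, \rho)$ and $(1, -1)$ normalized so that biorthogonal pairings equal $1$, gives
\[
P^\ell_{11} = \rho + (1-\rho)\lambda^\ell,
\]
so that $P_\ell - \rho^2 = \rho(1-\rho)\lambda^\ell$. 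Setting $q := \rho(1-\rho)$ and $C := \log\!\big(\rho/(1-\rho)\big) > 0$ then gives the claimed bound $|P_\ell - \rho^2| \leq q\, e^{-C\ell}$.

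There is no serious obstacle. The only mildly delicate point is the direct verification of the Markov structure from \eqref{pirho}, which however reduces to a one-line cancellation; the rest is diagonalization of a $2 \times 2$ matrix with strict spectral gap $1 - |\lambda| = (2\rho-1)/\rho > 0$ for every $\rho \in (\tfrac12, 1)$.
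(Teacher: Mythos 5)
Your proof is correct, and the underlying idea coincides with the paper's: both exploit the fact that the explicit factorized form \eqref{pirho} makes $\pi_\rho$ a stationary two-state Markov chain along contiguous sites. The paper implements this implicitly, by conditioning on $\xi(\ell-1)$ to derive the linear recursion $P_\ell=\tfrac{2\rho-1}{\rho}P_{\ell-1}+(\rho-P_{\ell-1})$ and solving it (with $P_1=2\rho-1$); you instead make the Markov structure explicit, write down the $2\times 2$ transition matrix, and diagonalize. The two routes produce the same closed form: the paper's $P_\ell=\rho^2+(P_1-\rho^2)(1-\tfrac1\rho)^{\ell-1}$ coincides with your $P_\ell=\rho^2+\rho(1-\rho)\lambda^\ell$ after noting $(P_1-\rho^2)=-(1-\rho)^2$ and $\lambda=1-\tfrac1\rho$. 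Your version is slightly more conceptual (it isolates the spectral gap $1-|\lambda|=(2\rho-1)/\rho$ and makes Remark~\ref{rem:h2}'s renewal picture precise as a Markov chain), whereas the paper's is slightly more elementary (a one-step decomposition and a first-order recurrence, with no need to verify the full Markov property). Both correctly exploit translation invariance of $\pi_\rho$ so that \eqref{pirho}, stated on $\Lambda_\ell$, can be read along $\{0,\dots,\ell\}$.
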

\begin{proof}[Proof of Lemma \ref{lem:correlations}]
To prove this result, we use equation \eqref{pirho}. Let us recall the notation  $\Lambda_\ell=\{1,\dots,\ell\}$ and rewrite 
\begin{align*}
P_\ell&=\sum_{\sigma \in \{0,1\}^{\Lambda_{\ell-1}}}\pi_\rho\Big(\xi(0)=1,\;\xi_{\mid \Lambda_{\ell-1}}=\sigma,\;\xi(\ell)=1\Big)\\
&=\sum_{\sigma \in  \{0,1\}^{\Lambda_{\ell-2}}}\pi_\rho\Big(\xi(0)=1,\;\xi_{\mid \Lambda_{\ell-2}}=\sigma,\;\xi(\ell-1)=1,\;\xi(\ell)=1\Big)\\
&\quad +\sum_{\sigma \in  \{0,1\}^{\Lambda_{\ell-2}}}\pi_\rho\Big(\xi(0)=1,\;\xi_{\mid \Lambda_{\ell-2}}=\sigma,\;\xi(\ell-1)=0,\;\xi(\ell)=1\Big)\\
&=\alpha(\rho)\beta(\rho)\sum_{\sigma \in  \{0,1\}^{\Lambda_{\ell-2}}}\pi_\rho\Big(\xi(0)=1,\;\xi_{\mid \Lambda_{\ell-2}}=\sigma,\;\xi(\ell-1)=1\Big)\\
&\quad +\alpha(\rho)\beta(\rho)\gamma(\rho) \sum_{\sigma \in  \{0,1\}^{\Lambda_{\ell-2}}}\pi_\rho\Big(\xi(0)=1,\;\xi_{\mid \Lambda_{\ell-2}}=\sigma,\;\xi(\ell-1)=0\Big)\\
&=\frac{2\rho-1}{\rho}\cro{P_{\ell-1}+\frac{\rho}{2\rho-1}(\rho-P_{\ell-1})}.\end{align*}
This in turn yields \[P_\ell=\rho^2+(P_1-\rho^2)\Big(1-\frac{1}{\rho}\Big)^{\ell-1},\]
with $P_1=\pi_\rho\big(\xi(0){\xi}(1)\big)=2\rho-1$.
This proves Lemma \ref{lem:correlations}, since $\rho\in(\frac12,1)$.
\end{proof}
We now use this lemma to state that the correlations under $\pi_{\rho}$ of two boxes at distance $\ell$ from one another decays as $e^{-C\ell}$. 
\begin{corollary}[Correlation decay]
	\label{cor:correlations}
	Fix $k\geq 1$, and shorten 
	\[
	A=A(k):=\{-k+1,\dots,0\}=\tau_{-k}\Lambda_k\quad \mbox{ and }\quad  {B=B(\ell,k)}=\{\ell+1, \dots,\ell+k\}=\tau_{\ell}\Lambda_k.          
	\]
For any two configurations $\sigma_1$, $\sigma_2$ in $\{0,1\}^{A}$,  $\{0,1\}^{{B}}$, any $\rho \in (\frac12,1)$, there exists $C:=C(\rho)>0$ such that 
	\[\frac{\pi_{\rho}\big(\xi_{\mid A}=\sigma_1, \; \xi_{\mid {B}}=\sigma_2\big)}{\pi_{\rho}\big(\xi_{\mid A}=\sigma_1\big)\pi_{\rho}\big(\xi_{\mid{B}}=\sigma_2\big)}=1+ \mc O_{\ell}( e^{-C\ell}),\]
where the $\mc O_{\ell}( e^{-C\ell})$ depends on $\rho$, but can be bounded uniformly in $k$ and $ \sigma_1$, $\sigma_2$.
\end{corollary}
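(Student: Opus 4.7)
My plan is to exploit that, under $\pi_\rho$, the process $(\xi(x))_{x\in\bb Z}$ is a stationary Markov chain on $\{0,1\}$. To see this, form the ratio of $\pi_\rho(\xi_{\mid \Lambda_\ell}=\sigma)$ and $\pi_\rho(\xi_{\mid \Lambda_{\ell-1}}=\sigma')$ for configurations agreeing on $\{1,\dots,\ell-1\}$: in \eqref{pirho} the boundary factors $\gamma^{\sigma(1)+\sigma(\ell)}$ combine with $\alpha^{|\sigma|}\beta^\ell$ into an expression depending only on $(\sigma(\ell-1),\sigma(\ell))$. The resulting transition kernel is
\[P(0,0)=0,\quad P(0,1)=1,\quad P(1,0)=\tfrac{1-\rho}{\rho},\quad P(1,1)=\tfrac{2\rho-1}{\rho},\]
with stationary distribution $\pi=(1-\rho,\rho)$ and eigenvalues $1$ and $\lambda_2=-\tfrac{1-\rho}{\rho}$, the second satisfying $|\lambda_2|<1$ for $\rho\in(\tfrac12,1)$.

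With that in hand, the corollary reduces to a single-site estimate. Summing out the configuration on the gap $\{1,\dots,\ell\}$ between $A$ and $B$, the Markov property yields
\[\pi_\rho\big(\xi_{\mid A}=\sigma_1,\,\xi_{\mid B}=\sigma_2\big)=\pi_\rho\big(\xi_{\mid A}=\sigma_1\big)\,P^{\ell+1}(a,b)\prod_{i=\ell+2}^{\ell+k}P(\sigma_2(i-1),\sigma_2(i)),\]
with $a=\sigma_1(0)$, $b=\sigma_2(\ell+1)$, while the analogous factorization $\pi_\rho(\xi_{\mid B}=\sigma_2)=\pi(b)\prod_{i=\ell+2}^{\ell+k}P(\sigma_2(i-1),\sigma_2(i))$ holds by stationarity. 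The common product over $B$ cancels, so that the ratio in the statement collapses to $P^{\ell+1}(a,b)/\pi(b)$, independent of $k$ and of the rest of $\sigma_1,\sigma_2$.

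Finally, the spectral decomposition $P^{\ell+1}(a,b)=\pi(b)+\lambda_2^{\,\ell+1}R(a)L(b)$, where $R,L$ are the right and left eigenvectors associated with $\lambda_2$, combined with $\pi(b)\geq\min(\rho,1-\rho)>0$, gives
\[\frac{P^{\ell+1}(a,b)}{\pi(b)}=1+\mathcal{O}_\ell\!\left(\Big(\tfrac{1-\rho}{\rho}\Big)^{\ell}\right)\]
uniformly in $a,b\in\{0,1\}$, which is the claim with $C=\log\tfrac{\rho}{1-\rho}>0$. The only delicate point in the argument is the identification of $P$ from \eqref{pirho}, which hinges on the bookkeeping identities $\alpha\beta\gamma=1$ and $\alpha\beta=(2\rho-1)/\rho$ among the constants of \eqref{eq:constants2}; everything after that is standard two-state Markov chain theory.
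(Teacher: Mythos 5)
Your argument is correct, and it takes a genuinely different (and arguably cleaner) route than the paper's. The paper first proves Lemma~\ref{lem:correlations} by deriving a scalar recursion for $P_\ell=\pi_\rho(\xi(0)=\xi(\ell)=1)$, then obtains the corollary from the factorization identity
\[\pi_{\rho}\big(\xi_{\mid A_1}=\sigma_1,\; \xi_{\mid A_2}=\sigma_2\big)=\frac{{\bf 1}_{\{\sigma_1(b)+\sigma_2(b+1)\geq 1\}}}{\kappa\,\gamma^{\sigma_1(b)+\sigma_2(b+1)}}\,\pi_{\rho}\big(\xi_{\mid A_1}=\sigma_1\big)\,\pi_{\rho}\big(\xi_{\mid A_2}=\sigma_2\big)\]
for adjacent boxes, applied twice, followed by a case analysis on $(\sigma_1(0),\sigma_2(\ell+1))$ that reduces the target ratio to a two-point correlation. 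You instead recognize the explicit formula \eqref{pirho} as the law of a stationary two-state Markov chain and apply the spectral decomposition of $P^{\ell+1}$. The mathematical content is the same---the decay rate $1-\tfrac1\rho$ appearing in the paper's solution of the recursion is exactly your $\lambda_2$, and Remark~\ref{rem:h2} is the renewal description of your chain---but your packaging is more conceptual and absorbs both the auxiliary Lemma~\ref{lem:correlations} and the corollary in one step. It also buys uniformity in $k$ and in $\sigma_1,\sigma_2$ essentially for free: once the common product over $B$ cancels, the ratio is literally $P^{\ell+1}(a,b)/\pi(b)$ for $a,b\in\{0,1\}$, and the sharp constant $C=\log\tfrac{\rho}{1-\rho}$ falls out. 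The one step you should make fully explicit in a written proof is the verification from \eqref{pirho} that the conditional law of $\xi(\ell+1)$ given $\xi_{\mid\Lambda_\ell}$ depends only on $\xi(\ell)$ and equals your $P$; as you note this hinges on $\alpha\beta=\tfrac{2\rho-1}{\rho}$ and $\alpha\beta\gamma=1$, and it does check out, with the hard-core constraint automatically encoded by $P(0,0)=0$.
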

\begin{proof}[Proof of Corollary \ref{cor:correlations}]
To prove this result, first notice that for any two neighboring sets $A_1=\{a,\dots, b\}$, $A_2=\{b+1, \dots,c \}$, and any given two configurations $\sigma_1$, $\sigma_2$ on these sets, 
we can write thanks to the explicit formula \eqref{pirho}
\[\pi_{\rho}\Big(\xi_{\mid A_1}=\sigma_1,\; \xi_{\mid A_2}=\sigma_2\Big)=\frac{{\bf 1}_{\{\sigma_1(b)+\sigma_2(b+1)\geq 1\}}}{\kappa\gamma^{\sigma_1(b)+\sigma_2(b+1)}}\pi_{\rho}\Big(\xi_{\mid A_1}=\sigma_1\Big)\pi_{\rho}\Big(\xi_{\mid A_2}=\sigma_2\Big).\]
Note in particular that if the configuration $(\sigma_1, \sigma_2)$ is not ergodic, then either $\sigma_1$ is not, or $\sigma_2$ is not, or $\sigma_1(b)+\sigma_2(b+1)=0$, so that the right hand side above vanishes. 

Getting back to our problem, summing over all the possible configurations $\sigma$ in $\Lambda_\ell$, and using the identity above (since $A,\Lambda_\ell$ and $B$ are neighboring sets), we can write for any $\sigma_1$, $\sigma_2$ respectively in 
$\{0,1\}^A$, $\{0,1\}^B$,
\begin{multline*}
\pi_{\rho}\Big(\xi_{\mid A}=\sigma_1, \; \xi_{\mid {B}}=\sigma_2\Big)=\pi_{\rho}\Big(\xi_{\mid A}=\sigma_1\Big)\pi_{\rho}\Big(\xi_{\mid {B}}=\sigma_2\Big)\\
\times \sum_{\sigma\in \{0,1\}^{\Lambda_\ell}}\frac{{\bf 1}_{\{\sigma_1(0)+\sigma(1)\geq 1\}}{\bf 1}_{\{\sigma(\ell)+\sigma_2(\ell+1)\geq 1\}}}{\kappa^2\gamma^{\sigma_1(0)+\sigma(1)+\sigma(\ell)+\sigma_2(\ell+1)}}\pi_{\rho}\Big(\xi_{\mid \Lambda_\ell}=\sigma\Big).
\end{multline*}
Thanks to this identity, proving Corollary \ref{cor:correlations} is a matter of verification. 

Start for example by assuming that $\sigma_1(0)=\sigma_2(\ell+1)=1,$ both indicator functions in the sum of the right hand side are always $1$, and since $\kappa\gamma= \rho$, we can rewrite the last identity as 
\begin{equation*}
\pi_{\rho}\Big(\xi_{\mid A}=\sigma_1, \; \xi_{\mid{B}}=\sigma_2\Big)=\frac{\pi_{\rho}\Big(\xi_{\mid {A}}=\sigma_1\Big)\pi_{\rho}\Big(\xi_{\mid {B}}=\sigma_2\Big)}{\rho^2}\pi_{\rho}\Big(\gamma^{-\xi(1)-\xi(\ell)}\Big).
\end{equation*}
Elementary computations and Lemma \ref{lem:correlations} yield $\pi_{\rho}\big(\gamma^{-\xi(1)-\xi(\ell)}\big)=\rho^2+\mc O_{\ell}(e^{-C\ell})$, thus concluding this case.
Since it is a little different, we now consider the case $\sigma_1(0)=\sigma_2(\ell+1)=0.$ In this case, for the indicator functions not to vanish, we must have a particle at each extremity of $\Lambda_\ell$, so that 
\begin{align*}
\pi_{\rho}\Big(\xi_{\mid A}=\sigma_1, \; \xi_{\mid {B}}=\sigma_2\Big)& =\pi_{\rho}\Big(\xi_{\mid {A}}=\sigma_1\Big)\pi_{\rho}\Big(\xi_{\mid {B}}=\sigma_2\Big)\\
& \qquad\qquad \times \sum_{\sigma\in \{0,1\}^{\Lambda_\ell\setminus\{1,\ell\}}}\frac{1}{\kappa^2\gamma^{2}}\pi_{\rho}\Big(\xi_{\mid \Lambda_\ell}=(1,\sigma,1)\Big)\\
& =\pi_{\rho}\Big(\xi_{\mid {A}}=\sigma_1\Big)\pi_{\rho}\Big(\xi_{\mid {B}}=\sigma_2\Big)\frac{\pi_{\rho}\big(\xi(1)=\xi(\ell)=1\big)}{\rho^2}.
\end{align*}
Therefore, Lemma \ref{lem:correlations} proves this case as well. The last case $\sigma_1(0)\neq \sigma_2(\ell+1)$ is treated in the same way.
\end{proof}

\subsection{Measure of the ergodic set}
{We now prove that the measure of the ergodic set $\widetilde \nu_{\rho,N}(\mathcal{E}_N)$ is exponentially close to $\rho({2}-\rho)$.}
\label{sec:meserg}
\begin{lemma}
\label{lem:nuERG}
Fix $\rho \in (\frac12,1]$. Then there exists $C=C(\rho)>0$ such that
\begin{equation}\label{eq:nuERGexpl}
\widetilde \nu_{\rho,N}(\mathcal{E}_N) =\pi_\rho\Big((\xi(1),\xi(N))\neq (0,0)\Big)+\mc O_N(e^{-CN})=\rho(2-\rho)+\mc O_N(e^{-CN}).
\end{equation}
Note that $\rho(2-\rho)>\frac34$.

\end{lemma}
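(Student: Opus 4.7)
The plan is to reduce $\widetilde{\nu}_{\rho,N}(\mathcal{E}_N)$ to a single probability under $\pi_\rho$ and then handle the boundary and the density conditions separately. First, swapping the order of summation in the definition of $\widetilde{\nu}_{\rho,N}$ and using that $\mathcal{E}_N$ is invariant under the shifts $\tau_x$, the map $\eta \mapsto \tau_x\eta$ is a bijection of $\mathcal{E}_N$ onto itself, so each of the $N$ inner sums $\sum_{\eta \in \mathcal{E}_N} \pi_\rho(\xi_{|\{1,\ldots,N\}} = \tau_x \eta)$ is independent of $x$ and equals $\pi_\rho(\xi_{|\{1,\ldots,N\}} \in \mathcal{E}_N)$, where on the right we identify the periodic set $\mathcal{E}_N$ with its image in $\{0,1\}^{\{1,\ldots,N\}}$. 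Since $\pi_\rho$ is supported on configurations without two adjacent zeros in any finite interval, this reduces to requiring (i) the \emph{wrap-around} condition $(\xi(1),\xi(N)) \neq (0,0)$ and (ii) the density condition $\sum_{y=1}^{N} \xi(y) > N/2$.

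Next, I would show that the density condition is satisfied with probability $1 - \mc O_N(e^{-CN})$. By Remark \ref{rem:h2}, under $\pi_\rho$ the configuration has a renewal structure whose inter-hole spacings are i.i.d.\ of the form $2+G$ with $G \sim \mathrm{Geom}((1-\rho)/\rho)$, hence of mean $1/(1-\rho) > 2$ and with a moment generating function finite in a neighborhood of zero. A direct Chernoff / Cramér bound then yields exponential concentration of the number of holes in $\{1,\ldots,N\}$ around $(1-\rho) N < N/2$, so that $\pi_\rho\bigl(\sum_{y=1}^N \xi(y) \leq N/2\bigr) = \mc O_N(e^{-CN})$. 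The two incomplete blocks at the boundary of $\{1,\ldots,N\}$ only affect the bound by a polynomial prefactor.

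For the wrap-around term, I would apply Corollary \ref{cor:correlations} to the singleton sets $A = \{1\}$ and $B = \{N\}$, separated by a gap of size $N-2$. Together with the translation invariance of $\pi_\rho$ and the marginal $\pi_\rho(\xi(0)=0) = 1-\rho$ from Remark \ref{rem:h2}, this yields
\[
\pi_\rho(\xi(1) = \xi(N) = 0) = (1-\rho)^2 \bigl(1 + \mc O_N(e^{-CN})\bigr),
\]
and therefore $\pi_\rho\bigl((\xi(1),\xi(N)) \neq (0,0)\bigr) = 1 - (1-\rho)^2 + \mc O_N(e^{-CN}) = \rho(2-\rho) + \mc O_N(e^{-CN})$. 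Combining with the density estimate from the previous paragraph proves the first equality in \eqref{eq:nuERGexpl}, and the inequality $\rho(2-\rho) > \tfrac{3}{4}$ follows from $(1-\rho)^2 < \tfrac{1}{4}$ for $\rho \in (\tfrac12,1]$.

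The main technical obstacle is the exponential rate in the density estimate: the two-point bound of Lemma \ref{lem:correlations} alone gives, via Chebyshev, only an $\mc O_N(1/N)$ bound on $\pi_\rho(\sum_y \xi(y) \leq N/2)$. Genuine large-deviation input — most naturally through the i.i.d.\ geometric renewal representation of $\pi_\rho$ — is needed to upgrade this to the required $e^{-CN}$ rate.
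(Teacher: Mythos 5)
Your reduction to the two conditions (wrap-around plus density) and the evaluation of the wrap-around probability via decay of correlations both match the paper's approach in substance. But you treat the density condition with noticeably heavier machinery than the paper does, and it is worth flagging why. You propose a genuine large-deviation (Chernoff/Cramér) estimate for $\pi_\rho\big(\sum_{y=1}^N\xi(y)\leq N/2\big)$ via the geometric renewal representation. This is correct, but it requires you to justify the moment generating function bound, control the two incomplete boundary blocks, and translate a renewal counting process bound into the final estimate — steps you only sketch. The paper instead exploits a purely \emph{structural} fact: under $\pi_\rho$, zeros are a.s.\ isolated in $\{1,\dots,N\}$, and once the wrap-around condition $(\xi(1),\xi(N))\neq(0,0)$ is also imposed, the density of $\xi$ on $\{1,\dots,N\}$ is automatically $>1/2$ \emph{except} if $\xi$ is exactly one of the two alternating configurations (possible only for $N$ even). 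Thus failure of the density condition is not a rare fluctuation to be bounded by Cramér but a deterministic impossibility modulo this single alternating exception, whose probability is $\mathcal O_N(a^N)$ directly from the explicit formula \eqref{pirho2} (Remark~\ref{rem:h1}). This sidesteps the concern you raise in your last paragraph entirely: no large-deviation input for $\pi_\rho$ is needed. The paper then finishes by computing $\pi_\rho\big((\xi(1),\xi(N))\neq(0,0)\big)=P_{N-1}+2(\rho-P_{N-1})$ and invoking Lemma~\ref{lem:correlations}, whereas you use Corollary~\ref{cor:correlations} to approximate $\pi_\rho(\xi(1)=\xi(N)=0)$ by $(1-\rho)^2$; these two routes to $\rho(2-\rho)$ are essentially equivalent.

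In short: your argument is sound but uses a concentration estimate where the paper's proof uses a combinatorial observation that avoids it. If you do keep the Cramér route, you should spell out the renewal-to-count translation (e.g.\ $\{m\geq N/2\}\subset\{L_1+\cdots+L_{\lceil N/2\rceil}\leq N+\text{boundary correction}\}$) and make the exponential moment bound explicit, rather than appealing to it by name. But the cleaner path is the paper's: observe that holes isolated plus wrap-around forces $\#\{\text{holes}\}\leq \lfloor N/2\rfloor$ deterministically, with equality only for the two alternating configurations, and bound the latter's probability directly.
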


\begin{proof}
For any $1\leq x\leq N$ and any configuration $\xi$ in $\Lambda_N$, denote by $\xi^x$ the \emph{periodic} configuration on $\T_N$ obtained by injecting $\Lambda_N$ in $\T_N$, and then translating by $x\in\T_N$. By definition,
\begin{align*}\widetilde\nu_{\rho,N}(\mathcal{E}_N)&=\sum_{\eta\in\mathcal{E}_N}\frac{1}{N}\sum_{x\in\T_N}\pi_\rho\Bigl(\xi_{|\{1,\ldots,N\}}=\tau_x\eta\Bigr)\\
&=\frac{1}{N}\sum_{x\in\T_N}\pi_\rho\Bigl({\xi^x_{|\{1,\ldots,N\}}}\in\mathcal{E}_N\Bigr)\\
&=\pi_\rho\Bigl({(\xi(1),\xi(N))\neq (0,0)}, \sum_{y=1}^N\xi(y)>\frac N 2\Bigr)\\
&=\pi_\rho\Big({(\xi(1),\xi(N))\neq (0,0)}\Big) \\ 
& \qquad -2\pi_\rho\Bigl(\forall\; i\in\{1,\ldots,N\},\ \xi(i)=0\text{ iff }i\text{ even and } \xi(1)\xi(N)\neq 0\Bigr),
\end{align*}
where the last equality follows from the fact that a.s.\@ under $\pi_\rho$ holes are isolated. What this says is that the only way for $\xi_{|\{1,\ldots,N\}}$ not to be in $\mathcal{E}_N$ is to be one of the two configurations alternating holes and particles (which may only happen if $N$ is even), or to have $\xi(1)=\xi(N)=0$. 
Therefore, using the notations introduced in Sections \ref{sec:GCM} and \ref{sec:LLN} and recalling Remark~\ref{rem:h1}, there exists $a<1$ such that
\begin{align*}
\widetilde\nu_{\rho,N}(\mathcal{E}_N)&=\pi_\rho\Big(\xi(1)(1-\xi(N))+\xi(N)(1-\xi(1))+\xi(1)\xi(N)\Big){+\mc O_N({a^N})}\\
&= P_{N-1}+2{(\rho-P_{N-1})}{+\mc O_N({a^N})}
\end{align*}
The conclusion follows from Lemma \ref{lem:correlations}. 
\end{proof}

\subsection{Local convergence of the periodic GCM $\nu_{\rho,N} $ to the infinite volume GCM $\pi_{\rho}$} 
\label{sec:limnuN}
{In this section, we show that as $N$ diverges, the periodic GCM $\nu_{\rho,N} $ is locally close to the infinite volume GCM $\pi_{\rho}$.
More precisely, we have the following result:}
\begin{lemma}\label{lem:aux2}
Let $f:\{0,1\}^\mathbb{Z} \to \bb R$ be a local function and assume without any loss of generality that its support is included in $B_\ell$, for some fixed $\ell \in \mathbb{N}$. Then for any sequence $(a_N)$ such that $\log N=o_N(a_N)$ and $a_N = o_N(N)$, there exists $C=C(\ell{,\rho})>0$ such that 
\begin{equation}\label{eq:localconvergence} \Big| \nu_{\rho,N}(f) - \pi_\rho(f)\Big| \leqslant {C\|f\|_\infty\frac{a_N}{ N}}.  \end{equation}
Furthermore, if $\rho>\frac12$, for any integer $\ell\geq 1$, and any two configurations $\sigma_1$ and $\sigma_2$ in $\{0,1\}^{B_\ell}$,
\begin{multline}
\label{eq:TBEmesure}
\limsup_{\varepsilon\to 0}\limsup_{N\to\infty}\sup_{\sqrt{N}<|y|<\varepsilon N}\Big|\nu_{\rho,N}\Big(\xi_{\mid B_\ell}=\sigma_1, \; \xi_{\mid \tau_y B_\ell}=\sigma_2\Big)\\
-\pi_{\rho}\Big(\xi_{\mid B_\ell}=\sigma_1\Big)\pi_{\rho}\Big( \xi_{\mid { B_\ell}}=\sigma_2\Big)\Big|=0,
\end{multline}
uniformly in $\sigma_1$, $\sigma_2$.
\end{lemma}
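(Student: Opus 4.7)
The strategy is to rewrite $\nu_{\rho,N}$ as a mixture of translates of the restriction of $\pi_\rho$ to $\{1,\ldots,N\}$, conditioned on the translation-invariant set $\mathcal{E}_N$, and then exploit the exponential decay of correlations of $\pi_\rho$ from Corollary~\ref{cor:correlations} together with the normalization of Lemma~\ref{lem:nuERG}. We focus on the non-trivial case $\rho\in(\tfrac12,1)$. Denoting by $\sigma$ the random configuration $\xi_{|\{1,\ldots,N\}}$ viewed as an element of $\{0,1\}^{\T_N}$, the change of variable $\sigma=\tau_x\eta$ in the definition of $\widetilde\nu_{\rho,N}$ combined with the translation invariance of $\mathcal{E}_N$ yields, for any bounded $g:\{0,1\}^{\T_N}\to\bb R$,
\[
\nu_{\rho,N}(g) \;=\; \frac{1}{\widetilde \nu_{\rho,N}(\mathcal{E}_N)}\cdot \frac{1}{N}\sum_{y\in\T_N} \pi_\rho\bigl(g(\tau_y\sigma)\,\mathbf{1}_{\{\sigma\in\mathcal{E}_N\}}\bigr),
\]
with denominator $\rho(2-\rho)+\mc O_N(e^{-CN})$ by Lemma~\ref{lem:nuERG}.

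To prove \eqref{eq:localconvergence}, I would split the sum over $y$ into \emph{good} indices $G_N:=\{a_N+\ell+1,\ldots,N-a_N-\ell\}$ and \emph{bad} indices of cardinality $\mc O(a_N+\ell)$. The bad contribution is at most $\|f\|_\infty\cdot\mc O((a_N+\ell)/N)=\mc O(\|f\|_\infty a_N/N)$. For $y\in G_N$, the support $y+B_\ell$ of $f(\tau_y\sigma)$ lies in $\{1,\ldots,N\}$ without wrap-around, at distance at least $a_N$ from both sites $1$ and $N$. Since holes are $\pi_\rho$-almost surely isolated on $\bb Z$, the event $\{\sigma\in\mathcal{E}_N\}$ differs from the strictly local two-site event $\{(\sigma(1),\sigma(N))\neq(0,0)\}$ only by the density violation appearing in~\eqref{eq:DefERGN}, which has $\pi_\rho$-probability $\mc O(e^{-CN})$ by a standard large-deviations estimate. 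An iterated application of Corollary~\ref{cor:correlations} then factors
\[\pi_\rho\bigl(f(\tau_y\sigma)\,\mathbf{1}_{(\sigma(1),\sigma(N))\neq(0,0)}\bigr) = \pi_\rho(f)\,\pi_\rho\bigl((\sigma(1),\sigma(N))\neq(0,0)\bigr) + \mc O(\|f\|_\infty e^{-c a_N}).\]
Summing over $y\in G_N$, dividing by $\widetilde\nu_{\rho,N}(\mathcal{E}_N)$, and invoking $\log N=o(a_N)$ (which makes $e^{-c a_N}=o(1/N)$) yields the claimed bound $\mc O(\|f\|_\infty a_N/N)$.

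The estimate \eqref{eq:TBEmesure} would proceed in the same vein with $g=\mathbf{1}_{\{\xi_{|B_\ell}=\sigma_1,\,\xi_{|\tau_y B_\ell}=\sigma_2\}}$: the \emph{good} indices $z$ are those for which both windows $z+B_\ell$ and $z+y+B_\ell$ lie inside $\{1,\ldots,N\}$ at distance at least $|y|^{1/2}$ from $\{1,N\}$, and the \emph{bad} $z$ (numbering $\mc O(|y|^{1/2})$) contribute uniformly at most $\mc O(|y|^{1/2}/N)\leq \mc O(\varepsilon^{1/2})$. For good $z$, two iterated uses of Corollary~\ref{cor:correlations} first decouple the two windows $z+B_\ell$ and $z+y+B_\ell$ (with error $\mc O(e^{-c|y|})\leq\mc O(e^{-c\sqrt{N}})$, since they are separated by at least $|y|-2\ell\geq\sqrt{N}-2\ell$), and then decouple each of them from the boundary event encoding $\mathcal{E}_N$, producing the factorization $\pi_\rho(\xi_{|B_\ell}=\sigma_1)\,\pi_\rho(\xi_{|B_\ell}=\sigma_2)$. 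Taking $\limsup_{N\to\infty}$ kills the exponential errors, and the subsequent $\limsup_{\varepsilon\to 0}$ kills the $\mc O(\varepsilon^{1/2})$ contribution.

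The main technical obstacle is the non-local nature of the conditioning on $\mathcal{E}_N$, which a priori introduces long-range dependencies between the boundary sites $1,N$ and the local windows of interest. The crucial observation dissolving this difficulty is that, under $\pi_\rho$, the global event $\mathcal{E}_N$ reduces -- modulo an exponentially small density-fluctuation event -- to the strictly local two-site event $\{(\sigma(1),\sigma(N))\neq(0,0)\}$, so that everything collapses to iterated uses of the two-block correlation estimate of Corollary~\ref{cor:correlations}.
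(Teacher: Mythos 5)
Your proof follows essentially the same route as the paper: rewrite $\nu_{\rho,N}$ as a translation-averaged conditional of $\pi_\rho$ on the interval $\{1,\ldots,N\}$, split the translation index into a bulk set and a boundary set of size $\mc O(a_N)$, replace the global constraint $\mathcal{E}_N$ by the local two-site event $\{(\sigma(1),\sigma(N))\neq(0,0)\}$ modulo an exponentially small correction, decouple via Corollary~\ref{cor:correlations}, and divide by the normalization from Lemma~\ref{lem:nuERG}; the hypothesis $\log N = o_N(a_N)$ then absorbs the exponential errors into the $\mc O(a_N/N)$ bound.

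One small slip in your treatment of \eqref{eq:TBEmesure}: if both windows $z+B_\ell$ and $z+y+B_\ell$ must lie in $\{1,\ldots,N\}$ at distance at least $|y|^{1/2}$ from the endpoints, the complementary set of $z$ has size of order $|y|+2|y|^{1/2}+\mc O(\ell)$, which is $\mc O(|y|)$ and not $\mc O(|y|^{1/2})$. The bad contribution is therefore $\mc O(|y|/N)\le\mc O(\varepsilon)$ rather than $\mc O(\varepsilon^{1/2})$, but this still vanishes in the $\limsup_{\varepsilon\to 0}$, so the conclusion is unaffected; the paper sidesteps this bookkeeping by simply taking $a_N=3\varepsilon N$ and repeating the first argument verbatim.
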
\begin{proof}[Proof of Lemma \ref{lem:aux2}]          
{We start by proving \eqref{eq:localconvergence} for $\rho> \frac12$, \eqref{eq:TBEmesure} is proved using the same tools.} Fix a local function $f$ with support included in $B_\ell$, for some fixed $\ell \in \mathbb{N}$. One can write $f$ as a linear combination of functions of the form $f(\xi)=\mathbf{1}_{\{\xi_{|B_\ell}=\sigma\}}$, where $\sigma \in \{0,1\}^{B_\ell}$, {therefore we only need to prove the result for functions of this form.}
For such a function, by definition
\begin{align*}
\nu_{\rho,N}(f) & = \sum_{\substack{\eta \in \mathcal{E}_N\\\eta_{|B_\ell}=\sigma}}\nu_{\rho,N}(\eta) \\
& = \frac{1}{\widetilde\nu_{\rho,N}(\mathcal{E}_N)} \sum_{\substack{\eta \in \mathcal{E}_N\\\eta_{|B_\ell}=\sigma}} \frac{1}{N}\sum_{x=1}^N \pi_\rho\Big( \xi_{\big|\{1,\dots,N\}}=\tau_x\eta\Big). 
\end{align*}
{To proceed, we} use the decoupling property of $\pi_\rho$ (cf. Corollary~\ref{cor:correlations}). We divide the last sum that appears above into two sums: \begin{itemize}
\item [--] one over $A_N:=\big\{a_N,\dots, N-a_N\big\}$, 
\item [--] and the other one over the complement, namely $\{1,\dots,N\}\setminus A_N$. 
\end{itemize} From now on we assume $N$ sufficiently large such that 
$\ell<a_N$. Let us treat the first sum: for any 
$x \in A_N$, 
\begin{align*}
\sum_{\substack{\eta \in \mathcal{E}_N\\\eta_{|B_\ell}=\sigma}} & \pi_\rho\Big( \xi_{\big|\{1,\dots,N\}}=\tau_x\eta\Big)\\ & = \pi_\rho \Big( \xi_{\big|\{x-\ell,\dots,x+\ell\}} = \sigma \quad \text{and}\quad (\xi(1),\xi(N))\neq (0,0)\Big) +\mc O_N(e^{-CN})\\
& = \pi_\rho\Big(\xi_{\big|\{x-\ell,\dots,x+\ell\}} = \sigma\Big) \pi_\rho\Big((\xi(1),\xi(N))\neq (0,0)\Big) + \mc O_N(e^{-CN}+e^{-Ca_N})\\
& = \pi_\rho\Big( \xi_{\big|\{x-\ell,\dots,x+\ell\}} = \sigma\Big) \widetilde\nu_{\rho,N}(\mathcal{E}_N)+\mc O_N(e^{-CN}+e^{-Ca_N}), 
\end{align*}
where $C=C(\rho,\ell)>0$. {In the identity above, the term $ \mc O_N(e^{-CN})$ comes from the fact that alternate configurations like $(1,0,\dots,1,0)$ and $(0,1,\dots,0,1)$ have positive probability under $\pi_{\rho}$ but do not appear in $\mathcal{E}_N$, which excludes configurations with density $\frac12$ or less.} The first equality follows from the isolation of holes in $\pi_\rho$ (recall the proof of Lemma~\ref{lem:nuERG}) and the last equality from {Corollary \ref{cor:correlations}}. Indeed, for any $x\in A_N$, the points belonging to $\{x-\ell,\dots x+\ell\}$ are at distance at least $a_N-\ell-1$ from both points $1$ and $N$. Therefore, one can write the first sum over $A_N$ as (changing the constants in the exponentials)
\begin{align}
\frac{1}{N\widetilde\nu_{\rho,N}(\mathcal{E}_N)}&\sum_{x\in A_N}\sum_{\substack{\eta \in \mathcal{E}_N\\\eta_{|B_\ell}=\sigma}}  \pi_\rho\Big( \xi_{\big|\{1,\dots,N\}}=\tau_x\eta\Big)\notag\\
& = \frac{1}{N}\sum_{x\in A_N} \pi_\rho\Big( \xi_{\big|\{x-\ell,\dots,x+\ell\}}=\sigma\Big) + \mc O_N\big(e^{-CN}+e^{-Ca_N}\big)\notag\\
&= \pi_\rho(f)+\mc O_N\Big(e^{-CN}+e^{-Ca_N}+\frac{a_N}{N}\Big).\label{eq:transl2}
\end{align}
Note that we used the fact that $\widetilde\nu_{\rho,N}(\mathcal{E}_N)$ is bounded away from $0$ (from Lemma~\ref{lem:nuERG}) and the translation invariance of $\pi_\rho$. 

Let us now turn to the second sum over $\{1,\dots,N\}\setminus A_N$: for any $x$, we can bound
\begin{align*}
\sum_{\substack{\eta \in \mathcal{E}_N\\\eta_{|B_\ell}=\sigma}}  \pi_\rho\Big(\xi_{\big|\{1,\dots,N\}}=\tau_x\eta\Big)
& \leqslant \sum_{\eta \in \mathcal{E}_N} \pi_\rho\Big(\xi_{\big|\{1,\dots,N\}}=\tau_x\eta\Big) = \widetilde\nu_{\rho,N}(\mathcal{E}_N).
\end{align*}
Consequently, the second sum can be easily bounded as follows:
\[
\frac{1}{N\widetilde\nu_{\rho,N}(\mathcal{E}_N)}\sum_{x\in\{1,\dots,N\}\setminus A_N}\sum_{\substack{\eta \in \mathcal{E}_N\\\eta_{|B_\ell}=\sigma}}  \pi_\rho\Big( \xi_{\big|\{1,\dots,N\}}=\tau_x\eta\Big) \leqslant \frac{2a_N}{N}.
\]
This proves \eqref{eq:localconvergence}.

We simply sketch the proof of the second identity \eqref{eq:TBEmesure}, which relies on the same steps, together with the decoupling property for $\pi_\rho$ (cf. Corollary \ref{cor:correlations}). For any given $\varepsilon>0$ (which will tend to 0), we can set $a_N=3\varepsilon N$, and repeat the same proof as before. We obtain for any fixed configurations $ \sigma^1$, and $\sigma^2$ on $B_\ell$, and for any $ 2\ell+2<|y|<\varepsilon N$, that
\begin{equation*}
\nu_{\rho,N}\Big(\xi_{\mid B_\ell}=\sigma^1,\;  \xi_{\mid \tau_y B_\ell}=\sigma^2\Big)=\pi_{\rho}\Big(\xi_{\mid B_\ell}=\sigma^1,\; \xi_{\mid \tau_y B_\ell}=\sigma^2\Big)+o_{\varepsilon}(1)+o_N(1).
\end{equation*}
We can then use the decay of correlations proved for $\pi_{\rho}$ in  Corollary \ref{cor:correlations} and translation invariance of $\pi_\rho$, to obtain that for any fixed configurations $\sigma^1$, $\sigma^2$ on $B_\ell$,
\begin{multline*}
\limsup_{\varepsilon\to 0}\limsup_{N\to\infty}\sup_{\sqrt{N}<|y|<\varepsilon N}\Big|\nu_{\rho,N}\Big(\xi_{\mid B_\ell}=\sigma^1, \; \xi_{\mid \tau_y B_\ell}=\sigma^2\Big)\\
-\pi_{\rho}\Big(\xi_{\mid B_\ell}=\sigma^1\Big)\pi_{\rho}\Big( \xi_{\mid {B_\ell}}=\sigma^2\Big)\Big|=0,
\end{multline*}
uniformly in $\sigma^1$, $\sigma^2$ as wanted.

To conclude the proof of Lemma~\ref{lem:aux2}, recall that for $\rho\leq \frac12$, we defined $\nu_{\rho,N}$ as the uniform measure on the set of the two alternate configurations if $N$ is even; and on the set of configurations with $(N-1)/2$ isolated holes if $N$ is odd. With this observation, \eqref{eq:localconvergence} is a matter of simple verification.
\end{proof}

\subsection{Measures on a finite box}
We now define the projections of the measures $\pi_\rho,\nu_{\rho,N}$ on a finite box $B_\ell$.

Recall the explicit expression \eqref{pirho} for the GCM $\pi_\rho$, which straightforwardly translates to $B_\ell$ instead of $\Lambda_\ell$, by translation invariance . 
For any $\ell \in \bb N$, let us define the probability measure $\hat\pi_{\rho}^{\ell}$ on $\ERGt_{B_\ell}$ as

\begin{equation}
\label{eq:hatpiell} 
\hat\pi_{\rho}^{\ell}({\sigma}):=\pi_\rho\Big(\xi_{|B_\ell}={\sigma}\Big)=\kappa(\rho)\;\alpha(\rho)^p\;\beta(\rho)^{2\ell+1}\;\gamma(\rho)^{\sigma(-\ell)+\sigma(\ell)},
\end{equation}
where $p=\sum_{x\in B_\ell}\sigma(x)$. Then, for any integer $j \in \{{\ell},\dots,2\ell+1\}$ we denote by $\hat{\pi}_{\rho}^{\ell,j}$ the measure $\hat\pi_{\rho}^{\ell}$ conditioned on having $j$ particles. Namely, if we denote by $\hat\Omega_{\ell}^j$ the hyperplane \[\hat\Omega_{\ell}^j:=\Big\{\sigma \in \ERGt_{B_\ell}\; : \; \sum_{x\in B_\ell}\sigma(x)=j\Big\},\] then $\hat{\pi}_{\rho}^{\ell,j}$ is the probability measure on $\hat\Omega_{\ell}^j$ such that 
\begin{equation}\label{eq:hyp}\hat{\pi}_{\rho}^{\ell,j}(\sigma) = \frac{\hat\pi_{\rho}^{\ell}(\sigma)}{\hat\pi_{\rho}^{\ell}(\hat\Omega_{\ell}^j)}, \qquad \text{for any } \sigma \in\hat\Omega_{\ell}^j.\end{equation}
Similarly, we define the probability measure ${\hat\nu_{\rho,N}^\ell}$ on $\ERGt_{B_\ell}$ as
\begin{equation}
\label{eq:cms}
\hat\nu_{\rho,N}^{\ell}(\sigma)=\nu_{\rho,N}\big(\eta_{|B_\ell}=\sigma\big).
\end{equation}
Note in particular that according to Lemma \ref{lem:aux2}, for any sequence $(a_N)$ such that $\log N =o_N(a_N)$ and $a_N=o_N(N)$, one can write
\begin{equation}\label{eq:localconvfinitebox}
\sup_{\sigma\in\hat{\mathcal{E}}_{B_\ell}}\left|\hat\nu_{\rho,N}^\ell(\sigma)-\hat\pi_{\rho}^{\ell}(\sigma)\right|\leq \frac{C(\ell,\rho)a_N}{N}.
\end{equation}

\subsection{Equivalence of ensembles}
\label{sec:ensembles}
We now state and prove the main result for this section, namely the equivalence of ensembles, which states that far enough from the extremities $-\ell$ and $\ell$, the canonical measure $\hat{\pi}_{\rho}^{\ell,j}$ defined in the previous paragraph is locally close to the GCM $\pi_{j/(2\ell+1)}$ with parameter the empirical density $j/(2\ell +1)$.

Let $k \in \bb N$ be fixed, define $B_k(x)=\{x-k,\dots,x+k\}$ (in particular $B_k=B_k(0)$) and
\[\rho_\ell=\rho_\ell(j):=\frac{j}{2\ell+1},\]
the density in $B_\ell$ under $\hat{\pi}_\rho^{\ell,j}$.
Furthermore, let us introduce 
\[E_\ell =\{{\ell},\dots, 2\ell+1\}, \quad \mbox{ and }\quad E_\ell(\delta)=\big\{\lceil{\ell}(1+\delta)\rceil,\dots, \lfloor(1-\delta)(2\ell+1)\rfloor\big\},\]
which are respectively the set of possible particle numbers $j$ in $B_\ell$, and the same set, excluding the densities $\rho_\ell(j)$ which are at distance less than $\delta$ to the critical densities $\frac12$ and $1$. Note that $E_\ell=E_\ell(0)$ and $E_\ell(\delta) \subset E_\ell$ for any $\delta >0$.
\begin{proposition} 
\label{prop:ensembles}
 Fix a configuration $\sigma\in \{0,1\}^{B_{k}}$. Then, for any $\rho \in (\frac12,1)$, any $\delta_1, \delta_2>0$, there exists a constant $C_0:=C_0(k,\sigma,\delta_1,\delta_2)$ such that for any  $\ell>0$,
\begin{equation}
\label{eq:ensembles1}
\max_{\substack{j\in E_\ell(\delta_1)\\
x\in B_{(1-\delta_2)\ell-k}}}\Big|\hat{\pi}_{\rho}^{\ell,j}\big({\varsigma}_{|B_k(x)}=\sigma\big)-\pi_{\rho_\ell(j)}\big({\xi}_{|B_k}=\sigma\big)\Big| \leq \frac{C_0}{\ell^{1/4}}.
\end{equation}
Furthermore, for some constant $C_1:=C_1(k, \delta_2)$,
\begin{equation}
\label{eq:ensembles2}
\limsup_{\ell\to\infty}\max_{\substack{j\in E_\ell\setminus E_\ell(\delta_1)\\
x\in B_{(1-\delta_2)\ell-k}}}\Big|\hat{\pi}_{\rho}^{\ell,j}\big({\varsigma}_{|B_k(x)}=\sigma\big)-\pi_{\rho_\ell(j)}\big({\xi}_{|B_k}=\sigma\big)\Big| \leq C_1\delta_1.
\end{equation}
\end{proposition}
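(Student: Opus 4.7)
The plan is to reduce the claim to a local central limit theorem (LCLT) for sums of i.i.d.\ geometric variables, via the zero-range-type structure of $\pi_\rho$. First, I peel off the $\rho$-dependence of the canonical measure. From \eqref{eq:hatpiell}--\eqref{eq:hyp}, $\hat\pi_\rho^{\ell,j}(\sigma)$ is proportional to $\gamma(\rho)^{\sigma(-\ell)+\sigma(\ell)}\mathbf{1}_{\sigma\in\hat\Omega_\ell^j}$. Since $B_k(x)\subset B_{(1-\delta_2)\ell}$ is disjoint from $\{\pm\ell\}$, decomposing by the boundary values gives
\[
\hat\pi_\rho^{\ell,j}\bigl(\varsigma_{|B_k(x)}=\sigma\bigr)=\sum_{a,b\in\{0,1\}}\hat\pi_\rho^{\ell,j}\bigl(\varsigma(-\ell)=a,\varsigma(\ell)=b\bigr)\,U_\ell^{j,a,b}\bigl(\varsigma_{|B_k(x)}=\sigma\bigr),
\]
where $U_\ell^{j,a,b}$ denotes the uniform measure on $\{\tau\in\hat\Omega_\ell^j\colon\tau(\pm\ell)=(a,b)\}$ and does not depend on $\rho$. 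It therefore suffices to show, for each $(a,b)$ and with $\rho^*:=\rho_\ell(j)$,
\[
\bigl|U_\ell^{j,a,b}\bigl(\varsigma_{|B_k(x)}=\sigma\bigr)-\pi_{\rho^*}\bigl(\xi_{|B_k}=\sigma\bigr)\bigr|\leq C\,\ell^{-1/4}.
\]

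Second, I identify $U_\ell^{j,a,b}$ with a product geometric law conditioned on its sum. Using the bijection of Section~\ref{sec:Bijection} adapted to $B_\ell$, a configuration $\tau$ with $z=2\ell+1-j$ holes at $h_1<\cdots<h_z$ is encoded by the pile sizes $(P_0,\ldots,P_z)$ defined by $P_0=h_1+\ell$, $P_z=\ell-h_z$ and $P_i=h_{i+1}-h_i-1$, subject to $P_i\geq 1$ for $1\leq i\leq z-1$, $P_0=0$ if $a=0$ and $P_0\geq 1$ if $a=1$, similarly for $P_z$ and $b$, and $\sum_{i=0}^zP_i=j$. By Remark~\ref{rem:h2} and the Markov-chain representation of $\pi_{\rho^*}$, the non-zero piles among $(P_0,\ldots,P_z)$ are i.i.d.\ $1+\mathrm{Geom}((1-\rho^*)/\rho^*)$. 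Moreover \eqref{pirho} shows that $\pi_{\rho^*}(\xi_{|B_\ell}=\tau)$ depends on $\tau\in\hat{\mathcal{E}}_{B_\ell}$ only through $p(\tau)$ and $(\tau(-\ell),\tau(\ell))$, so conditioning $\pi_{\rho^*}$ on $\{p=j,\,\tau(\pm\ell)=(a,b)\}$ returns exactly $U_\ell^{j,a,b}$. Hence $U_\ell^{j,a,b}$ is the i.i.d.\ geometric law conditioned on the total sum of the piles.

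Third, I apply the LCLT and handle the critical bands. For $j\in E_\ell(\delta_1)$, $\rho^*$ lies in a compact sub-interval of $(\tfrac12,1)$, the number of interior piles $z-1=\Theta(\ell)$, and the geometric mean and variance are bounded away from $0$ and $\infty$. The event $\{\varsigma_{|B_k(x)}=\sigma\}$ translates, via the bijection, into a prescription of $O(k)$ consecutive pile sizes in a window located $\Theta(\delta_2\ell)$ away from both endpoints of $\{1,\ldots,z-1\}$. A standard LCLT (Fourier inversion of the geometric characteristic function) then yields, uniformly in $(a,b)$,
\[
U_\ell^{j,a,b}\bigl(\varsigma_{|B_k(x)}=\sigma\bigr)=\pi_{\rho^*}\bigl(\xi_{|B_k}=\sigma\bigr)\bigl(1+O(\ell^{-1/2})\bigr),
\]
which implies \eqref{eq:ensembles1} with the stronger rate $\ell^{-1/2}$. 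For \eqref{eq:ensembles2} the geometric variance degenerates as $\rho^*\to\tfrac12$ or $1$, so I argue directly: when $\rho^*\geq 1-\delta_1$ both $\hat\pi_\rho^{\ell,j}$ and $\pi_{\rho^*}$ concentrate on the all-ones configuration in $B_k$ with mass $\geq 1-C(k)\delta_1$; when $\rho^*\leq\tfrac12+O(\delta_1)$ the isolated-hole constraint forces both measures to lie within $O(\delta_1)$ of one of the two alternating patterns. In either case the marginals on $B_k$ differ by at most $C\delta_1$. The principal obstacle is the uniform control of the LCLT constants over the compact range $E_\ell(\delta_1)$ and the four choices of $(a,b)$; this is what dictates the restriction $x\in B_{(1-\delta_2)\ell-k}$, ensuring the ZR window stays well inside the interior where the i.i.d.\ structure is clean, and motivates treating the critical bands separately.
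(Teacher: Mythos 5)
Your strategy --- condition on the boundary values $(a,b)$ to strip off the $\rho$-dependence of $\hat\pi_\rho^{\ell,j}$, then exploit the zero-range / geometric-pile structure and prove a local CLT for the conditioned piles --- is a recognisable and in principle sound route to equivalence of ensembles. It genuinely differs from the paper's proof, which avoids the LCLT altogether and instead parametrises configurations by the number $n$ of particles in the left cluster $\{-\ell,\dots,-\ell+\ell_1-1\}$, writes $\hat\pi_{\rho,\varepsilon_1,\varepsilon_2}^{\ell,j}(\varsigma_{|B_k(x)}=\sigma)$ as a ratio of sums $S(\sigma)=\sum_n a_n(\sigma)$ of explicit binomial products, and establishes concentration of those sums around the mode $n_*$ via Sondow's quantitative Stirling bound and an entropy functional (Lemmas \ref{lem:equiv4}--\ref{lem:concentration2}). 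Your handling of the critical bands $j\in E_\ell\setminus E_\ell(\delta_1)$ (concentration on the all-ones or alternating patterns) is essentially the content of the paper's Lemma \ref{lem:rhogrand}, so that part lines up.

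There is, however, a genuine gap in the middle of your argument. You assert that ``the event $\{\varsigma_{|B_k(x)}=\sigma\}$ translates, via the bijection, into a prescription of $O(k)$ consecutive pile sizes,'' but this is not what the EX-to-ZR map produces. The ZR coordinates index piles by hole-rank, not by spatial position; which piles fall inside the spatial window $B_k(x)$ depends on the rest of the configuration through the cumulative sums $P_0+\cdots+P_{m-1}+m$. Consequently $\{\varsigma_{|B_k(x)}=\sigma\}$ is a \emph{union} over the random rank $m$ of the first hole in $B_k(x)$, and each term of the union fixes some consecutive pile values \emph{and} constrains the cumulative sum up to $m$ to land at the prescribed position. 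A plain univariate LCLT for $\sum_i P_i$ conditioned on its total does not directly yield the marginal of such an event; you would need either a bivariate LCLT (for the pair consisting of the partial sum up to the window and the total sum), a renewal-theoretic refinement, or a change of parametrisation to a spatial split of $B_\ell$ --- which is precisely what the paper does by summing over the particle number $n$ in the left cluster and estimating the binomial coefficients $a_n(\sigma)$ directly. As written, your reduction silently assumes the pile indices in the window are deterministic, and that is the missing step. Once fixed, the LCLT route should indeed give an error of order $\ell^{-1/2}$ (improving on the paper's $\ell^{-1/4}$), but the needed bivariate local limit theorem is more delicate than the standard one you invoke and would require a separate argument for uniformity over $j\in E_\ell(\delta_1)$ and the four boundary types.
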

Note that $B_k(x)$ splits $B_\ell$ in three parts: a left cluster, $B_k(x)$ itself, and a right cluster. The condition $x\in B_{(1-\delta_2)\ell-k}$ ensures that both  left and right clusters are of size at least $\delta_2\ell$. 
Because we need an estimate which is uniform in the number of particles and in the positions at which the box is taken, the proof of this result is quite technical. To prove the Replacement Lemma \ref{lem:RL}, we will use the following corollary of Proposition \ref{prop:ensembles}.

\begin{corollary}[Equivalence of ensembles]
\label{cor:ensembles}
For any local function $f:\{0,1\}^{\bb Z}\to\bb R$, any $\rho \in (\frac12,1]$, and any $\delta>0$, we have
\begin{equation}
\label{eq:ensembles3}
\lim_{\ell\to\infty}\;\;\max_{\substack{j\in E_\ell\\
x\in B_{(1-\delta)\ell}}} \Big|\hat{\pi}_{\rho}^{\ell,j}(\tau_x f)-\pi_{\rho_\ell(j)}(f)\Big|=0.
\end{equation}
\end{corollary}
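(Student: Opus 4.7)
The plan is to reduce to cylinder indicator functions and then apply the quantitative equivalence from Proposition \ref{prop:ensembles} by splitting the range of densities $\rho_\ell(j)$ into a ``bulk'' region, where the density is bounded away from the critical values $\tfrac12$ and $1$, and a ``near-critical'' region handled separately.

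First I would reduce to the case where $f$ is a cylinder indicator of the form $f = \mathbf{1}_{\{\cdot_{|B_k} = \sigma\}}$, for some integer $k$ large enough to contain the support of $f$ and some $\sigma \in \{0,1\}^{B_k}$. Indeed, any local function decomposes as a finite linear combination of such indicators, so by linearity and the triangle inequality it suffices to establish \eqref{eq:ensembles3} for each cylinder indicator. Fix such an $f$ and $\delta > 0$, and set $\delta_2 := \delta/2$ in Proposition \ref{prop:ensembles}. For all $\ell$ large enough (depending on $k$ and $\delta$, namely $\ell \geq 2k/\delta$), one has $B_{(1-\delta)\ell} \subset B_{(1-\delta_2)\ell - k}$, so the positional range over $x$ in \eqref{eq:ensembles3} is covered by the range in \eqref{eq:ensembles1}--\eqref{eq:ensembles2}.

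Next, introduce an auxiliary parameter $\delta_1 > 0$ and split the maximum over $j \in E_\ell$ into contributions from $j \in E_\ell(\delta_1)$ and $j \in E_\ell \setminus E_\ell(\delta_1)$. By \eqref{eq:ensembles1}, the bulk contribution is bounded by $C_0(k,\sigma,\delta_1,\delta/2)\,\ell^{-1/4}$, which vanishes as $\ell \to \infty$ with $\delta_1$ held fixed. By \eqref{eq:ensembles2}, the limsup of the near-critical contribution is at most $C_1(k,\delta/2)\,\delta_1$. Combining both estimates yields
\[\limsup_{\ell \to \infty} \max_{\substack{j \in E_\ell \\ x \in B_{(1-\delta)\ell}}} \Big| \hat\pi_\rho^{\ell,j}(\tau_x f) - \pi_{\rho_\ell(j)}(f) \Big| \;\leq\; C_1(k,\delta/2)\,\delta_1,\]
and since $\delta_1 > 0$ is arbitrary, letting $\delta_1 \to 0$ concludes the proof.

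There is no genuine obstacle in the corollary itself: it is a clean consequence of Proposition \ref{prop:ensembles}. The real work lies in that proposition, which must compare the (non-product, boundary-dependent) canonical measure $\hat\pi_\rho^{\ell,j}$ with the grand canonical measure $\pi_{\rho_\ell(j)}$ both in the bulk (where the proposition gives a quantitative $\ell^{-1/4}$ rate) and in the delicate near-critical regime (where only the weaker bound controlled by $\delta_1$ is available). The role of the two-regime splitting above is precisely to route around the lack of a uniform rate near the critical densities $\tfrac12$ and $1$, while still producing a vanishing upper bound in the limit.
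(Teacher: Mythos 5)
Your proof is correct and follows the same route as the paper: choose $\delta_2=\delta/2$, verify $B_{(1-\delta)\ell}\subset B_{(1-\delta_2)\ell-k}$ for $\ell$ large, split $E_\ell$ into $E_\ell(\delta_1)$ and its complement, apply the two bounds of Proposition \ref{prop:ensembles}, and let $\ell\to\infty$ then $\delta_1\to 0$. The only cosmetic difference is that you reduce to cylinder indicators at the outset, whereas the paper instead inserts the factor $2^{2k+1}\|f\|_\infty$ directly, which amounts to the same decomposition.
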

We start by briefly proving that Proposition \ref{prop:ensembles} yields Corollary \ref{cor:ensembles}.
\begin{proof}[Proof of Corollary \ref{cor:ensembles}]
Fix $\rho\in (\frac12,1)$ and a local function $f$. There exists an integer $k$ such that $f$ only depends on sites in $B_k$. Then, if we choose $ \delta_2={\delta/2}${, for any $\ell>2k/\delta$, we have $B_{{(1-\delta)\ell}}\subset B_{(1-\delta_2)\ell-k}$.} Then, for $\ell$ large enough, we can
 use the triangular inequality and  Proposition \ref{prop:ensembles} to write for any $\delta_1>0$, $x\in B_{(1-\delta)\ell}$, $j\in E_\ell$,

\begin{align*}
\Big|\hat{\pi}_{\rho}^{\ell,j}&(\tau_x f)-\pi_{\rho_\ell(j)}(f)\Big|\\
&\leq 2^{2k+1}\|f\|_{\infty}\max_{\substack{j\in E_{\ell}\\
{\sigma}\in \{0,1\}^{B_k}}}\Big|\hat{\pi}_{\rho}^{\ell,j}\big({\varsigma}_{|B_k(x)}=\sigma\big)-\pi_{\rho_\ell(j)}\big({\xi}_{|B_k}=\sigma\big)\Big|\\
&\leq 2^{2k+1}\|f\|_{\infty}\bigg(\ell^{-1/4}\max_{\sigma\in \{0,1\}^{B_k}}\big\{C_0(k,{\sigma},\delta_1,\delta_2)\big\}+(1+o_\ell(1))\delta_1C_1(k,\delta_2)\bigg).
\end{align*} 
The last estimate has been obtained after writing $E_\ell=E_\ell(\delta_1) \cup (E_\ell \setminus E_\ell(\delta_1))$ and applying both results of Proposition \ref{prop:ensembles}. Since $k$ is fixed, we then let $\ell\to\infty$, then $\delta_1\to 0$ to prove Corollary \ref{cor:ensembles}.
\end{proof}
We now prove Proposition \ref{prop:ensembles}.
\begin{proof}[Proof of Proposition \ref{prop:ensembles}]
We start by the case where the fixed density $\rho_\ell(j)$ is close to the extreme values $\frac12$ or $1$ (i.e. $j \in E_\ell \setminus E_\ell(\delta_1)$).
\medskip
 
\paragraph{\textsc{Proof of \eqref{eq:ensembles2}}} We will only detail the case $\rho_\ell(j)\geq 1 -\delta_1$, since the case $\rho_\ell(j)\leq (1+\delta_1)/2$ is treated in the same way. 
Denote by $ {\bf 1}_{k}$ the constant configuration on $B_k$ with one particle at each site, and no empty site. To prove \eqref{eq:ensembles2}, it is sufficient to show that for some constant $C_2:=C_2(k, \, \delta_2)$, we have for any $j \in E_\ell \setminus E_\ell(\delta_1)$  satisfying  $\rho_\ell(j)\geq 1 -\delta_1$,
\[\pi_{\rho_\ell(j)}\big({\xi}_{|B_k}= {\bf 1}_{k} \big)\geq 1-C_2 \delta_1,\]
and  for any $x \in B_{(1-\delta_2)\ell-k}$,
\begin{equation}
\label{eq:rhogrand}
\hat{\pi}_{\rho}^{\ell,j}\big({\varsigma}_{|B_k(x)}= {\bf 1}_{k}\big)\geq 1-C_2\delta_1,
\end{equation}
and then let $C_1(k, \delta_2):=2C_2(k, \delta_2).$
The first bound is an immediate consequence of the explicit formula \eqref{pirho2} for $\pi_\rho$.   
From now on, for $\varepsilon_1, \varepsilon_2\in \{0,1\}$, we denote by 
\[\hat{\pi}_{\rho,\varepsilon_1, \varepsilon_2}^{\ell,j}(\cdot)=\hat{\pi}_{\rho}^{\ell,j}\Big(\cdot\mid {\varsigma}(-\ell)=\varepsilon_1, {\varsigma}(\ell)=\varepsilon_2\Big),\]
the measure $\hat{\pi}_{\rho}^{\ell,j}$ conditioned to be in the state $\varepsilon_1$ (resp. $\varepsilon_2$) at site $ -\ell$ (resp. $\ell$). Then, to prove \eqref{eq:rhogrand}, it is clearly sufficient to prove for any $\varepsilon_1,\varepsilon_2\in\{0,1\}$
\[\hat{\pi}_{\rho, \varepsilon_1, \varepsilon_2}^{\ell,j}\big({\varsigma}_{|B_k(x)}= {\bf 1}_{k}\big)\geq 1-C_2\delta_1.\]
Furthermore, to prove the latter, it is sufficient to prove that for some constant $C(k,  \delta_2)$, for any $\sigma\not= {\bf 1}_{k}\in\{0,1\}^{B_k}$ 
\[\frac{\hat{\pi}_{\rho, \varepsilon_1, \varepsilon_2}^{\ell,j}\big({\varsigma}_{|B_k(x)}=\sigma\big)}{\hat{\pi}_{\rho, \varepsilon_1, \varepsilon_2}^{\ell,j}\big({\varsigma}_{|B_k(x)}= {\bf 1}_{k}\big)}\leq C(k,  \delta_2)\delta_1.\]
Indeed, summing this bound above over $\sigma\in\{0,1\}^{B_k}\setminus\{{\bf 1}_{k}\}$, yields
\[\hat{\pi}_{\rho, \varepsilon_1, \varepsilon_2}^{\ell,j}\big({\varsigma}_{|B_k(x)}= {\bf 1}_{k}\big)\geq \frac{1}{1+2^{|B_k|}C(k,  \delta_2)\delta_1}\geq 1-C_{2}(k,  \delta_2)\delta_1,\]
as wanted. We therefore state the result we need as a separate lemma, the proof of which requires elements of the proof of  \eqref{eq:ensembles1}.

\begin{lemma}
\label{lem:rhogrand}
There exists a constant $C=C(k, \delta_2)$ such that for any non-full configuration $\sigma\in\{0,1\}^{B_k}\setminus \{{\bf 1}_{k}\}$,  any $j\geq (1-\delta_1)(2\ell+1)$ and any $\varepsilon_1,\varepsilon_2\in\{0,1\}$
 \begin{equation}
 \label{eq:rhograndlem}
\limsup_{\ell\to\infty}\frac{\hat{\pi}_{\rho, \varepsilon_1, \varepsilon_2}^{\ell,j}\big({\varsigma}_{|B_k(x)}=\sigma\big)}{\hat{\pi}_{\rho, \varepsilon_1, \varepsilon_2}^{\ell,j}\big({\varsigma}_{|B_k(x)}= {\bf 1}_{k}\big)}\leq C(k, \delta_2)\delta_1.  
 \end{equation}
The same is true if   $j\leq (1+\delta_1){\ell}$, and with $\{{\varsigma}_{|B_k(x)}= {\bf 1}_{k}\}$ replaced by $\{{\varsigma}_{|B_k(x)}\in {\bf 1/2}_{k}\}$, where ${\bf 1/2}_{k}$ represents the set which contains the two alternate configurations $\circ\bullet$ and $\bullet\circ$ with respectively $k$, $k+1$ particles.
\end{lemma}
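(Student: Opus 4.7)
The plan is to reduce the ratio in \eqref{eq:rhograndlem} to a purely combinatorial pair-counting problem on $B_\ell$. First, observe from the explicit formula \eqref{pirho} that $\hat\pi_\rho^{\ell}(\varsigma)$ depends on $\varsigma$ only through its particle count $p(\varsigma)$ and its two boundary occupations $\varsigma(\pm\ell)$. Consequently, after conditioning on $p(\varsigma)=j$ and on $\varsigma(-\ell)=\varepsilon_1,\ \varsigma(\ell)=\varepsilon_2$, the conditional law $\hat\pi^{\ell,j}_{\rho,\varepsilon_1,\varepsilon_2}$ is \emph{uniform} on $\tilde\Omega_\ell^{j,\varepsilon_1,\varepsilon_2}:=\{\varsigma\in\hat\Omega_\ell^j:\varsigma(-\ell)=\varepsilon_1,\ \varsigma(\ell)=\varepsilon_2\}$, so the ratio of interest equals $N(\sigma)/N(\mathbf{1}_k)$, where $N(\sigma')$ denotes the number of ergodic completions $\varsigma\in\tilde\Omega_\ell^{j,\varepsilon_1,\varepsilon_2}$ satisfying $\varsigma_{|B_k(x)}=\sigma'$. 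I will treat the case $j\geq(1-\delta_1)(2\ell+1)$ in detail; the dual case $j\leq(1+\delta_1)\ell$ follows by an analogous argument sketched at the end.

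Decompose $B_\ell\setminus B_k(x)=L\sqcup R$, so that $|L|,|R|\geq\delta_2\ell-k$ by the hypothesis on $x$. Set $h:=(2\ell+1)-j\leq 3\delta_1\ell$ and let $h_\sigma\geq 1$ be the number of holes in $\sigma$. I count pairs $(\varsigma,\varsigma')$ for which $\varsigma_{|B_k(x)}=\sigma$, $\varsigma'_{|B_k(x)}=\mathbf{1}_k$, and $\varsigma'_{|L\cup R}$ is obtained from $\varsigma_{|L\cup R}$ by adding $h_\sigma$ non-adjacent holes (while preserving ergodicity). Given $\varsigma$ in the source, the at most $h-h_\sigma\leq 3\delta_1\ell$ existing holes in $L\cup R$, together with their neighbours and the outer-boundary constraints, block at most $3(h-h_\sigma)+O(1)\leq 10\delta_1\ell$ sites; partitioning $L\cup R$ into consecutive triples and keeping only the ``middle-site'' of each triple yields at least $\delta_2\ell/6$ admissible, pairwise non-adjacent positions, hence at least $c(k,\delta_2)\,\ell^{h_\sigma}$ valid images $\varsigma'$ for $\delta_1\leq\delta_2/20$ and $\ell$ large. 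Conversely, given $\varsigma'$, a preimage $\varsigma$ is determined by choosing which $h_\sigma$ among the $h$ holes of $\varsigma'_{|L\cup R}$ are ``moved'' into the prescribed hole positions of $\sigma$, so there are at most $\binom{h}{h_\sigma}\leq(3\delta_1\ell)^{h_\sigma}$ preimages. Double counting yields
\[
|\{\varsigma:\varsigma_{|B_k(x)}=\sigma\}|\cdot c(k,\delta_2)\,\ell^{h_\sigma}\leq|\{\varsigma':\varsigma'_{|B_k(x)}=\mathbf{1}_k\}|\cdot(3\delta_1\ell)^{h_\sigma},
\]
so the ratio is bounded by $(C(k,\delta_2)\delta_1)^{h_\sigma}\leq C(k,\delta_2)\delta_1$ since $h_\sigma\geq 1$.

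In the dual case $j\leq(1+\delta_1)\ell$, the configuration on $L\cup R$ is near-alternating and the roles of adding and removing holes reverse. For $\sigma\notin\mathbf{1/2}_k$ with $h_\sigma<k$, the pair-counting is performed by \emph{removing} $k-h_\sigma$ holes from $\varsigma_{|L\cup R}$ to land on the alternate profile $\bullet\circ\cdots\bullet$; this gives at least $\binom{h-h_\sigma}{k-h_\sigma}\geq c\,\ell^{k-h_\sigma}$ images, whereas the reverse insertion of $k-h_\sigma$ holes requires a block of three or more consecutive particles, and the total number of such blocks in an ergodic configuration is bounded by $O(\delta_1\ell)$ via the block decomposition $\bullet^{a_0}\circ\bullet^{a_1}\circ\cdots\bullet^{a_{h}}$ (since $\sum_i(a_i-\mathbf 1_{1\leq i\leq h-1})=|L\cup R|-2h_{L\cup R}+1=O(\delta_1\ell)$ and each block with $a_i\geq 3$ contributes at least $2$ to this sum). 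The residual sub-case $h_\sigma=k$ with $\sigma$ non-alternating is treated by noting that at least one of $\sigma(\pm k)$ must then disagree with the corresponding alternate value, imposing an extra ``fixed-end'' constraint on $L$ or $R$; writing the resulting count with the four-case formula $M(n,h,\varepsilon,\varepsilon')=\binom{n-h-1}{h-\mathbf 1_{\varepsilon=0}-\mathbf 1_{\varepsilon'=0}}$ and applying Pascal's identity $\binom{a+1}{b}=\binom{a}{b}+\binom{a}{b-1}$ produces an extra factor $(|L|-2h_L)/(|L|-h_L)$, which at the typical value $h_L\sim|L|/2$ is of order $\delta_1$.

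The main obstacle is establishing the lower bound on the number of valid pairs in both regimes, which requires carefully accounting for the interface constraints between $\sigma$ and $L,R$ as well as for the outer-boundary conditions to guarantee enough room to add (respectively remove) the new holes. The near-$\tfrac12$ regime is the more delicate one: there the dominant scale is the subleading quantity $(2\ell+1)-2h=O(\delta_1\ell)$ rather than $\ell$ itself, and the $\delta_1$-smallness appears only after the block decomposition is used to count the rare ``insertable'' sites.
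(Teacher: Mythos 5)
Your proof is correct in its broad structure and it genuinely departs from the paper's argument. The paper, having already developed the notation $a_n(\sigma)$ (the number of ergodic completions with $n$ particles in the left cluster), shows the \emph{termwise} bound $a_n(\sigma)\leq C(k,\delta_2)\,\delta_1\,a_n(\mathbf 1_k)$ for every $n$, by extracting the excess factors of the binomial coefficients and using that $\hat n/\ell_2\geq 1-4\delta_1/\delta_2$ forces those factors to be $\mc O(\delta_1)$; summing over $n$ gives the lemma. You instead exploit that $\hat\pi^{\ell,j}_{\rho,\varepsilon_1,\varepsilon_2}$ is uniform on its support (an observation the paper uses only implicitly) to reduce to a pure counting problem, and then bound the count ratio by a many-to-many pairing: relocate the $h_\sigma$ holes of $\sigma$ from $B_k(x)$ into $L\cup R$ and double-count pairs. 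The two approaches buy different things: the paper's termwise comparison is a \emph{stronger} statement (it controls each $a_n$ individually), and it slots directly into the $a_n$ machinery used elsewhere in Lemmas \ref{lem:equiv4}--\ref{lem:concentration2}; your pairing argument is more structural and makes transparent \emph{why} the factor $\delta_1^{h_\sigma}$ appears (only $\mc O(\delta_1\ell)$ preimages but $\Omega(\ell)$ images per deleted/inserted hole). You also sketch the low-density case $j\leq(1+\delta_1)\ell$ in some detail, whereas the paper simply declares it analogous, so if anything you go further than the written proof.

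Two small cautions, neither a fatal gap. First, both your argument and the paper's implicitly require $\delta_1$ small compared to $\delta_2$ (you need it for the forward image count; the paper needs $4\delta_1/\delta_2<1$). Since the lemma is only ever used in the limit $\delta_1\to 0$, this is harmless, but it should be stated. Second, in your $h_\sigma=k$ subcase you bound the extra factor $(|L|-2h_L)/(|L|-h_L)$ ``at the typical value $h_L\sim|L|/2$''; what is actually needed, and true, is that this factor is $\mc O(\delta_1/\delta_2)$ \emph{uniformly} over the set of $h_L$ for which both cluster counts are nonzero. That set is forced, by the ergodicity constraints on $L$ and $R$ (each cluster can hold at most about half its sites as holes, and the two cluster counts sum to $h-k$), to lie in a window of width $\mc O(\delta_1\ell)$ around $|L|/2$. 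Replacing ``typical'' by ``on the entire support of the sum'' closes the argument rigorously; as written it reads as a heuristic.
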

In order to introduce the notations and estimates needed to prove this result, we postpone the proof of Lemma \ref{lem:rhogrand} to the end of the section, and for now prove the first estimate \eqref{eq:ensembles1}:

\medskip

\paragraph{\textsc{Proof of \eqref{eq:ensembles1}}} 
Fix some particle number $j\in E_\ell(\delta_1)$, we drop to simplify our notation the dependency in $j$ and simply write  $\rho_\ell\in [\frac12+\delta_1, 1-\delta_1]$ for the density in $B_\ell$.
For any $\ell>\ell_0$ , (where $\ell_0$ only depends  on  $k$) we must have for $\sigma\in\hat{\mc E}_{B_k}$
\begin{equation}\label{eq:pro}\hat{\pi}_{\rho, \varepsilon_1, \varepsilon_2}^{\ell,j}\big({\varsigma}_{|B_k(x)}=\sigma\big)> 0\quad \mbox{ and }\quad \pi_{\rho_\ell}\big({\xi}_{|B_k}=\sigma\big)> 0.\end{equation}
Indeed, the first statement holds because, since $\rho_\ell$ is bounded away from $1$ and $\frac12$, for any $j\in E_\ell(\delta_1)$, one will always be able to find a  configuration ${\varsigma} $ in $\Omega^j_\ell$ such that ${\varsigma} _{|B_k(x)}=\sigma$. The second statement holds because  $\rho_{\ell}$ is neither $ \frac12$ nor $1$.

\medskip

From \eqref{eq:pro} we can write
\begin{align*}\Big|\hat{\pi}_{\rho, \varepsilon_1, \varepsilon_2}^{\ell,j}&\big({\varsigma}_{|B_k(x)}=\sigma\big)-\pi_{\rho_\ell}\big({\xi}_{|B_k}=\sigma\big)\Big|\\
&\leq \frac{\Big|\hat{\pi}_{\rho, \varepsilon_1, \varepsilon_2}^{\ell,j}\big({\varsigma}_{|B_k(x)}=\sigma\big)-\pi_{\rho_\ell}\big({\xi}_{|B_k}=\sigma\big)\Big|}{\hat{\pi}_{\rho, \varepsilon_1, \varepsilon_2}^{\ell,j}\big({\varsigma}_{|B_k(x)}=\sigma\big)\pi_{\rho_\ell}\big({\xi}_{|B_k}=\sigma\big)}\hat{\pi}_{\rho, \varepsilon_1, \varepsilon_2}^{\ell,j}\big({\varsigma}_{|B_k(x)}=\sigma\big)\\
&\leq \Bigg|\frac{1}{\hat{\pi}_{\rho, \varepsilon_1, \varepsilon_2}^{\ell,j}\big({\varsigma}_{|B_k(x)}=\sigma\big)}-\frac{1}{\pi_{\rho_\ell}\big({\xi}_{|B_k}=\sigma\big)}\Bigg|\hat{\pi}_{\rho, \varepsilon_1, \varepsilon_2}^{\ell,j}\big({\varsigma}_{|B_k(x)}=\sigma\big)\\
&= \Bigg|\sum_{\sigma'\in\{0,1\}^{B_k}}\frac{\hat{\pi}_{\rho, \varepsilon_1, \varepsilon_2}^{\ell,j}\big({\varsigma}_{|B_k(x)}=\sigma'\big)}{\hat{\pi}_{\rho, \varepsilon_1, \varepsilon_2}^{\ell,j}\big({\varsigma}_{|B_k(x)}=\sigma\big)}-\frac{\pi_{\rho_\ell}\big({\xi}_{|B_k}=\sigma'\big)}{\pi_{\rho_\ell}\big({\xi}_{|B_k}=\sigma\big)}\Bigg|\hat{\pi}_{\rho, \varepsilon_1, \varepsilon_2}^{\ell,j}\big({\varsigma}_{|B_k(x)}=\sigma\big)\\
&\leq  2^{2k+1}\max_{\sigma'\in\{0,1\}^{B_k}}\Bigg|\frac{\hat{\pi}_{\rho, \varepsilon_1, \varepsilon_2}^{\ell,j}\big({\varsigma}_{|B_k(x)}=\sigma'\big)}{\hat{\pi}_{\rho, \varepsilon_1, \varepsilon_2}^{\ell,j}\big({\varsigma}_{|B_k(x)}=\sigma\big)}-\frac{\pi_{\rho_\ell}\big({\xi}_{|B_k}=\sigma'\big)}{\pi_{\rho_\ell}\big({\xi}_{|B_k}=\sigma\big)}\Bigg|\hat{\pi}_{\rho, \varepsilon_1, \varepsilon_2}^{\ell,j}\big({\varsigma}_{|B_k(x)}=\sigma\big). 
\end{align*}
The interest of this elementary computation is that the quotient of the $\hat{\pi}_{\rho, \varepsilon_1, \varepsilon_2}^{\ell,j}({\varsigma}_{|B_k(x)}=\sigma)$'s is far easier to estimate than the $\hat{\pi}_{\rho, \varepsilon_1, \varepsilon_2}^{\ell,j}({\varsigma}_{|B_k(x)}=\sigma)$'s themselves, because the normalizing constants 
\[\pi_{\rho}\bigg(\sum_{x\in B_\ell}{\xi}(x)=j,\; {\xi}(-\ell)=\varepsilon_1,\;{\xi}(\ell)=\varepsilon_2\bigg)\] 
cancel out.
The first estimate \eqref{eq:ensembles1} is therefore a consequence of Lemma \ref{lem:ensembles3} below, by defining 
\[C_0(k,\sigma,\delta_1, \delta_2)=2^{2k+1}\max_{\sigma'\in\{0,1\}^{B_k}}C_3(k,\sigma,\sigma', \delta_1, \delta_2),\]
thus proving Proposition \ref{prop:ensembles}.
\end{proof}

\begin{lemma}
\label{lem:ensembles3}
Fix $k \in \bb N$ as well as two configurations $\sigma, \sigma'\in \{0,1\}^{B_{k}}$. For any $\rho \in (\frac12,1)$, any $\varepsilon_1,\varepsilon_2 \in \{0,1\}$, and any positive $\delta_1, \;\delta_2$, there exists a constant $C_3:=C_3(k, \sigma,\sigma',\delta_1, \delta_2)$ such that 
\begin{equation}
\label{eq:ensembles4}
\max_{\substack{j\in E_\ell(\delta_1)\\
x\in B_{(1-\delta_2)\ell-k}}}\Bigg|\frac{\hat{\pi}_{\rho, \varepsilon_1, \varepsilon_2}^{\ell,j}\big({\varsigma}_{|B_k(x)}=\sigma'\big)}{\hat{\pi}_{\rho, \varepsilon_1, \varepsilon_2}^{\ell,j}\big({\varsigma}_{|B_k(x)}=\sigma\big)}-\frac{\pi_{\rho_\ell}\big({\xi}_{|B_k}=\sigma'\big)}{\pi_{\rho_\ell}\big({\xi}_{|B_k}=\sigma\big)}\Bigg| \leq \frac{C_3}{\hat{\pi}_{\rho, \varepsilon_1, \varepsilon_2}^{\ell,j}\big({\varsigma}_{|B_k(x)}=\sigma\big)\ell^{1/4}}.
\end{equation}
\end{lemma}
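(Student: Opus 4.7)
My plan is a combinatorial analysis combined with Stirling/local-CLT asymptotics. First observe that by \eqref{eq:hatpiell} the measure $\hat{\pi}^{\ell,j}_{\rho,\varepsilon_1,\varepsilon_2}$ is simply the \emph{uniform} measure on the set of ergodic configurations $\varsigma \in \hat{\mathcal{E}}_{B_\ell}$ satisfying $\sum \varsigma = j$ and $\varsigma(-\ell) = \varepsilon_1,\ \varsigma(\ell)=\varepsilon_2$, since the $\hat{\pi}_\rho^\ell$-weight depends on the configuration only through these three global quantities. Hence, if we set $C(\sigma)$ equal to the number of ergodic extensions of $\sigma$ to $B_\ell$ compatible with $j, \varepsilon_1, \varepsilon_2$,
\[\hat{\pi}^{\ell,j}_{\rho,\varepsilon_1,\varepsilon_2}\bigl(\varsigma_{|B_k(x)} = \sigma\bigr) = \frac{C(\sigma)}{\sum_{\sigma''} C(\sigma'')}.\]
Splitting $B_\ell\setminus B_k(x) = L \sqcup R$ with $L = \{-\ell,\ldots,x-k-1\}$ and $R = \{x+k+1,\ldots,\ell\}$ of respective lengths $n_L, n_R \geq \delta_2 \ell$, and writing $p = \sum_{B_k(x)} \sigma$, $a=\sigma(x-k)$, $b=\sigma(x+k)$, ergodicity is local and the extensions factorize across $L$ and $R$:
\[C(\sigma) = \mathbf{1}_{\{\sigma\in \hat{\mathcal{E}}_{B_k(x)}\}} \sum_{j_L+j_R = j-p} \tilde{M}(n_L, j_L; \varepsilon_1, a)\,\tilde{M}(n_R, j_R; b, \varepsilon_2),\]
where $\tilde{M}(n,q;\varepsilon_l,\varepsilon_r)$ denotes the number of ergodic configurations on an interval of length $n$ with $q$ particles, leftmost value $\varepsilon_l$, and rightmost value compatible with being followed by $\varepsilon_r$ (i.e.\ not both zero).

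A straightforward refinement of the counting argument in Lemma \ref{lem:ConfCount} expresses each $\tilde{M}(n,q;\varepsilon_l,\varepsilon_r)$ as a sum of at most two binomials of the shape $\binom{q-c_1}{n-q-c_2}$ with $c_1,c_2\in\{0,1,2\}$ determined by the boundary data. In the regime $n \geq \delta_2 \ell$ and $q/n$ staying in a compact subset of $(\tfrac12,1)$, Stirling's formula yields, for bounded integer shifts $\Delta$ and boundary-value changes,
\[\frac{\tilde{M}(n,q+\Delta;\varepsilon_l',\varepsilon_r')}{\tilde{M}(n,q;\varepsilon_l,\varepsilon_r)} = \alpha(q/n)^{\Delta}\,\gamma(q/n)^{(\varepsilon_l'+\varepsilon_r')-(\varepsilon_l+\varepsilon_r)}\bigl(1+O(\ell^{-1/2})\bigr),\]
with $\alpha,\gamma$ the functions from \eqref{eq:constants2}. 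This combinatorial identity is the dual of the Gibbs formula \eqref{pirho}: each particle contributes a factor $\alpha(\rho)$ and each boundary value contributes $\gamma(\rho)$ at density $\rho = q/n$.

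Applying this termwise in $C(\sigma')/C(\sigma)$ with $\Delta = p'-p$ and boundary shifts $a'-a, b'-b$ all of absolute value at most $k$, and using standard Gaussian concentration of the binomial to restrict the sum over $j_L$ to an $O(\sqrt{\ell})$ window around $\rho_\ell n_L$ (where $\rho_\ell = j/(2\ell+1)$ lies in $[\tfrac12+\delta_1, 1-\delta_1]$ by $j\in E_\ell(\delta_1)$), and finally replacing $j_L/n_L, j_R/n_R$ by $\rho_\ell$ on that window (an $O(\ell^{-1/2})$ perturbation of the continuous functions $\alpha,\gamma$), I obtain
\[\frac{C(\sigma')}{C(\sigma)} = \alpha(\rho_\ell)^{p'-p}\,\gamma(\rho_\ell)^{(a'+b')-(a+b)}\bigl(1+O(\ell^{-1/2})\bigr).\]
By \eqref{pirho} the leading factor is exactly $\pi_{\rho_\ell}(\xi_{|B_k}=\sigma')/\pi_{\rho_\ell}(\xi_{|B_k}=\sigma)$. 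Multiplying through by $\hat{\pi}^{\ell,j}_{\rho,\varepsilon_1,\varepsilon_2}(\varsigma_{|B_k(x)}=\sigma)\leq 1$ and bounding the GCM ratio uniformly in terms of $k,\sigma,\sigma',\delta_1$ (legitimate because $\rho_\ell$ stays in a compact subset of $(\tfrac12,1)$) gives \eqref{eq:ensembles4} with the stronger exponent $\ell^{-1/2}$; the weaker $\ell^{-1/4}$ in the statement leaves slack for the technicalities.

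The main obstacle is making the Stirling/LCLT expansion uniform in $j \in E_\ell(\delta_1)$ and $x \in B_{(1-\delta_2)\ell-k}$, jointly in the two block sizes $n_L, n_R$ which may independently range from $\delta_2 \ell$ up to order $\ell$. The hypotheses $\delta_1 > 0$ and $\delta_2 > 0$ are precisely what keeps the densities $j_L/n_L, j_R/n_R$ in a common compact subinterval of $(\tfrac12, 1)$ on the dominant part of the sum, so that Stirling provides uniform constants and the Gaussian concentration is nondegenerate; one must also carefully handle the boundary cases $a,b\in\{0,1\}$ for which $\tilde M$ is the sum of two binomials rather than one, without losing uniformity in the ratio identity above.
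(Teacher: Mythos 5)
Your approach matches the paper's. You factorize the conditional count across the left and right clusters $L$ and $R$, control the resulting binomial ratios via Stirling asymptotics, and invoke concentration of the convolution over the particle split $j_L + j_R = j - p$. Your $\tilde M(n,q;\varepsilon_l,\varepsilon_r)$ plays exactly the role of the paper's $a_n(\sigma)$ defined in \eqref{eq:Defan}, and your final ratio identity $C(\sigma')/C(\sigma) = \alpha(\rho_\ell)^{p'-p}\gamma(\rho_\ell)^{\cdots}(1+o(1))$ is precisely what the paper packages as Lemma \ref{lem:equiv4} through Lemma \ref{lem:estan} and Lemma \ref{lem:concentration2}, normalizing by the reference sequence $a_n^*$ rather than working with $C(\sigma')/C(\sigma)$ directly. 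Your opening observation that $\hat\pi^{\ell,j}_{\rho,\varepsilon_1,\varepsilon_2}$ is uniform given the particle count and boundary values is correct and is the same fact the paper exploits (implicitly, since the normalizing constant cancels in the ratio \eqref{eq:quotjlrho}).

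One imprecision worth flagging: truncating the convolution to an $O(\sqrt{\ell})$ window around $\rho_\ell n_L$ and then bounding the interior error as $O(\ell^{-1/2})$ does not close as stated. At distance $c\sqrt\ell$ from $n_*$ the Gaussian weight $\exp(-K_1(n-n_*)^2/\ell)$ is still $\Theta(1)$, so the discarded tail carries a constant fraction of the total mass and is not negligible compared to $\ell^{-1/2}$. The paper truncates at width $\ell^{3/4}$ (the set $I_\ell(\sigma)$), where the tail has dropped to $\exp(-c\sqrt\ell)$, and accepts the weaker $O(\ell^{-1/4})$ bound on the ratio error inside the window — this is what produces the $\ell^{-1/4}$ in the lemma. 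A window of width $\ell^{1/2+\gamma}$ for any small $\gamma>0$ would give exponent $1/2-\gamma$ with superpolynomially small tails, so your $\ell^{-1/2}$ is essentially the right rate, but the specific $\sqrt\ell$-window scheme you describe would require a separate argument to control the tail contribution rather than simply dropping it. Since the lemma only asks for $\ell^{-1/4}$, this does not affect the correctness of your plan.
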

\begin{proof}[Proof of Lemma \ref{lem:ensembles3}]
 Denote by $p(\sigma)$ the number of particles in $\sigma$. Then, thanks to the explicit formula \eqref{pirho2} for $\pi_\rho$, we can rewrite the second term in the left hand side of \eqref{eq:ensembles4} as
\begin{equation}
\label{eq:quotient2} 
\frac{\pi_{\rho_\ell}\big({\xi}_{|B_k}=\sigma'\big)}{\pi_{\rho_\ell}\big({\xi}_{|B_k}=\sigma\big)}=\alpha\pa{\rho_\ell}^{p(\sigma)-p(\sigma')}\gamma\pa{\rho_\ell}^{\sigma'(-k)+\sigma'(k)-\sigma(-k)-\sigma(k)}, 
\end{equation}
where $\alpha$ and $\gamma$ were introduced before in \eqref{eq:constants2}, recall 
\[\alpha(\rho)=\frac{(2\rho-1)^2}{\rho(1-\rho)}\quad \mbox{ and }\quad \gamma(\rho)=\frac{\rho}{2\rho-1}.\]
We now estimate the first part in the left hand side of \eqref{eq:ensembles4}. Once again, we will only write the case $\varepsilon_1=\varepsilon_2=1$, the others can be treated similarly. Recall that $x\in B_{(1-\delta_2)\ell-k}$, and define 
\[\ell_1:=\ell_1(x,\ell,k)=\ell-k+x\geq \delta_2\ell,\quad  \ell_2:=\ell_1(x,\ell,k)=\ell-k-x\geq \delta_2\ell,\] 
which are the respective sizes of the clusters to the left and right of $B_k(x)$ in $B_\ell$.
We denote by $n$ the number of particles in the  cluster $\{-\ell,\dots,-\ell+\ell_1-1\}$ to the left of $B_k(x)$ in $B_\ell$, which yields the decomposition 
\begin{equation}
\label{eq:quotjlrho}
\frac{\hat{\pi}_{\rho,1,1}^{\ell,j}\big({\varsigma}_{\mid B_k(x)}=\sigma'\big)}{\hat{\pi}_{\rho,1,1}^{\ell,j}\big({\varsigma}_{\mid B_k(x)}=\sigma\big)}=\frac{\sum_{n=1}^{j}a_n(\sigma')}{\sum_{n=1}^{j}a_n(\sigma)}. 
\end{equation}
where 
\begin{equation}
\label{eq:Defan}
a_n(\sigma)=a_n(\sigma, j,\ell_1, \ell_2):=\binom{n-1+\sigma(-k)}{\ell_1-n}\binom{j- n-p(\sigma)-1+\sigma(k)}{\ell_2-(j- n-p(\sigma))}.
\end{equation}
The first binomial is the number of ergodic configuration\ccl{s} on the left cluster with $n$ particles and compatible with the left boundary of $\sigma$, whereas the second binomial coefficient is the number of ergodic configurations on the right cluster compatible with the right boundary of $\sigma$.
The quantity $a_n(\sigma)$ is therefore  the number of ergodic configurations on $B_\ell$ with $j$ particles, such that the configuration in $B_k(x)$ is given by $\sigma$, and such that the number of particles in $\{-\ell,\dots,-\ell+\ell_1-1\}$ (resp. $\{\ell-\ell_2+1,\dots, \ell\}$)  is $n$ (resp. $j-p(\sigma)-n$).

Note that in both sums in the right hand side of \eqref{eq:quotjlrho}, some terms vanish (using the convention $\binom{a}{b}=0$ if $b\notin \{0,\dots, a\}$) therefore we do not need to be too specific with the index limits.
Let us shorten 
\begin{equation}
\label{eq:DefSxi}
S(\sigma)=\sum_{n=1}^{j}a_n(\sigma),\quad\mbox{ and }\quad S^*=\sum_{n=1}^{j}a_n^*,
\end{equation}
where
\[a_{n}^*=\binom{n}{\ell_1-n}\binom{j-n}{\ell_2-(j-n)}.\]
Given these notations, we are going to prove that Lemma \ref{lem:ensembles3} is a consequence of Lemma \ref{lem:equiv4} below.
\begin{lemma}
\label{lem:equiv4}
For any configuration $\sigma\in \{0,1\}^{B_{k}}$, and any positive $\delta_1, \;\delta_2$, there exists a constant $C_4:={C_4(k,\delta_1, \delta_2)}$ such that for all $j\in E_\ell(\delta_1)$
\begin{equation*} 
\max_{{x\in B_{(1-\delta_2)\ell-k}}}\abs{S(\sigma)-F(\rho_\ell, \sigma)S^*}\leq \frac{C_4}{\ell^{1/4}}S^*
\end{equation*}
where $F$ is the function
\begin{equation}
\label{eq:deff}
F(\rho,\sigma):=\frac{(2\rho-1)^{2p(\sigma)+2-\sigma(-k)-\sigma(k)}}{\rho^{p(\sigma)+2-\sigma(-k)-\sigma(k)}(1-\rho)^{p(\sigma)}}=\frac{(2\rho-1)^2}{\rho^2}\alpha\pa{\rho}^{p(\sigma)}\gamma\pa{\rho}^{\sigma(-k)+\sigma(k)}.
\end{equation}
\end{lemma}
Before proving this lemma, we show that it is enough to conclude the proof of Lemma \ref{lem:ensembles3}.
For that purpose, write 
\[\frac{\hat{\pi}_{\rho,1,1}^{\ell,j}({\varsigma}_{\mid B_k(x)}=\sigma')}{\hat{\pi}_{\rho,1,1}^{\ell,j}({\varsigma}_{\mid B_k(x)}=\sigma)}=\frac{S(\sigma')}{S(\sigma)}=\frac{F(\rho_\ell, \sigma')}{F(\rho_\ell, \sigma)}+\frac{F(\rho_\ell, \sigma)S^*S(\sigma')-F(\rho_\ell, \sigma')S^*S(\sigma)}{F(\rho_\ell, \sigma)S^*S(\sigma)},\]
so that, thanks to Lemma \ref{lem:equiv4},
\[\abs{\frac{S(\sigma')}{S(\sigma)}-\frac{F(\rho_\ell, \sigma')}{F(\rho_\ell, \sigma)}}\leq \frac{C_5}{\ell^{1/4}}\frac{S(\sigma')+S(\sigma)}{F(\rho_\ell, \sigma)S(\sigma)},\]
where we defined $C_5=C_5(k,\delta_1, \delta_2)=2C_4(k,\delta_1, \delta_2)$.

\medskip

By definition of $F$ and equation \eqref{eq:quotient2}, \[\frac{\pi_{\rho}\big({\xi}_{|B_k}=\sigma'\big)}{\pi_{\rho}\big({\xi}_{|B_k}=\sigma\big)}=\frac{F(\rho,\sigma')}{F(\rho,\sigma)},\] and since $j\in E_\ell(\delta_1)$, $F(\rho_\ell,\sigma)$ is bounded away from 0 by some constant $c_F:=c_F(k,\delta_1 )>0$. The  bound above can therefore be rewritten for any $j\in E_\ell(\delta_1)$
\begin{multline*}
\Bigg|\frac{\hat{\pi}_{\rho,1,1}^{\ell,j}\big({\varsigma}_{\mid B_k(x)}=\sigma'\big)}{\hat{\pi}_{\rho,1,1}^{\ell,j}\big({\varsigma}_{\mid B_k(x)}=\sigma\big)}-\frac{\pi_{\rho_\ell}\big({\xi}_{|B_k}=\sigma'\big)}{\pi_{\rho_\ell}\big({\xi}_{|B_k}=\sigma\big)}\Bigg|\\
\leq \frac{C_5}{c_F\; \ell^{1/4}}\pa{1+\frac{\hat{\pi}_{\rho,1,1}^{\ell,j}\big({\varsigma}_{\mid B_k(x)}=\sigma'\big)}{\hat{\pi}_{\rho,1,1}^{\ell,j}\big({\varsigma}_{\mid B_k(x)}=\sigma\big)}} \leq \frac{C_3}{\ell^{1/4}\;\hat{\pi}_{\rho,1,1}^{\ell,j}\big({\varsigma}_{\mid B_k(x)}=\sigma\big)} ,
\end{multline*}
where $C_3:=C_3(k, \delta_1, \delta_2)=2C_5/c_F$, which proves Lemma \ref{lem:ensembles3}.
\end{proof}
We now prove Lemma \ref{lem:equiv4}.
\begin{proof}[Proof of Lemma \ref{lem:equiv4}]
Let us define
\begin{align*}&n^\sigma=n-1+\sigma(-k),\qquad \qquad  j^\sigma=j-{p(\sigma)}-2+\sigma(k)+\sigma(-k),\\
&\ell_1^\sigma=\ell_1-1+\sigma(-k),\qquad \qquad  \ell_2^\sigma=\ell_2-1+\sigma(k),\end{align*}
in order to rewrite
\[a_{n}(\sigma)=\binom{n^\sigma}{\ell_1^\sigma-n^\sigma}\binom{j^\sigma-n^\sigma}{\ell_2^\sigma-(j^\sigma-n^\sigma)}.\]
Note that the difference between $n^\sigma$ (resp. $j^\sigma$, $\ell_1^\sigma$, $\ell_2^\sigma$) and $n$ (resp. $j$, $\ell_1$, $\ell_2$) is of order $ {k}$.
Define 
\[n_*^\sigma=\frac{j^\sigma\ell_1^\sigma}{\ell_1^\sigma+\ell_2^\sigma},\]
which satisfies 
\begin{equation}
\label{eq:dnn*}
|n_*^\sigma-\ell_1\rho_\ell|\leq {C(k,\delta_2)}.
\end{equation}
Denote by $I_\ell(\sigma)$ the set of $n$'s in $\{0,\dots,j\}$ such that $|n^\sigma-n_*^\sigma|\leq \ell^{3/4}$. 
We can now write by triangular inequality
\begin{equation}
\label{eq:SmfS*}
\abs{S(\sigma)-F(\rho_\ell, \sigma)S^*}\leq\sum_{n\in I_\ell(\sigma)}\abs{\frac{a_n(\sigma)}{a_n^*}-F(\rho_\ell, \sigma)}a_n^*+ \sum_{n\notin I_\ell(\sigma)} {\big(}a_n(\sigma)+F(\rho_\ell, \sigma) a_n^*{\big)}.
\end{equation}
Before carrying on, let us state the following lemma, that we will prove after concluding the proof of Lemma \ref{lem:equiv4}.
\begin{lemma}
\label{lem:estan}
There exists a constant $C_7:={C_7(k,\delta_1,\delta_2)}$ such that
\[ \max_{x\in B_{(1-\delta_2)\ell-k}}\max_{n\in I_\ell(\sigma)}\abs{\frac{a_n(\sigma)}{a_n^*}-F(\rho_\ell, \sigma) }\leq \frac{C_7}{\ell^{1/4}}.\]
Moreover, letting $n_0=\lfloor j\ell_1/(\ell_1+\ell_2)\rfloor$, there also exist two constants  $ C_8:=C_8(k,\delta_1,\delta_2)$ and $C_9:=C_9(k,\delta_2)$ such that
\begin{equation}
\label{eq:eq:petitstermes}
\max_{x\in B_{(1-\delta_2)\ell-k}}\max_{n\notin I_\ell(\sigma)}\frac{\max\{a_{n}(\sigma),a^*_{n}\}}{a^*_{n_0}}\leq C_8 e^{-C_9\sqrt{\ell}}.
\end{equation}
\end{lemma}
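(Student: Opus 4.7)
The plan is to prove the two bounds of Lemma~\ref{lem:estan} separately, both by direct manipulation of the binomial coefficients \eqref{eq:Defan}.

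\textbf{First bound.} The idea is to compute $a_n(\sigma)/a_n^*$ exactly. Since $\ell_1^\sigma-n^\sigma=\ell_1-n$, the first quotient of binomials simplifies at once to
\[
\frac{\binom{n^\sigma}{\ell_1^\sigma-n^\sigma}}{\binom{n}{\ell_1-n}}=\left(\frac{2n-\ell_1}{n}\right)^{1-\sigma(-k)}.
\]
An analogous computation reduces the second quotient to a product of at most $3p(\sigma)+3$ ratios of linear factors in $j-n$, $\ell_2-(j-n)$ and $2(j-n)-\ell_2$. When each of the four quantities $n$, $\ell_1-n$, $j-n$, $\ell_2-(j-n)$ is of order $\ell$ --- which is guaranteed for $n\in I_\ell(\sigma)$ by the definition of that set, the bound \eqref{eq:dnn*}, the hypothesis $j\in E_\ell(\delta_1)$, and the condition $\ell_1,\ell_2\geqslant\delta_2\ell$ --- the leading asymptotics of this product is the substitution $n/\ell_1\to \rho_\ell$, $(j-n)/\ell_2\to \rho_\ell$, which produces exactly $F(\rho_\ell,\sigma)$ as defined in \eqref{eq:deff}. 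The two sources of error are then: a finite-$\ell$ correction of order $1/\ell$ per factor (hence $O_k(1/\ell)$ in total) coming from replacing each $(m-i)$ by $m$, and a density substitution error which by $|n-n_*^\sigma|\leqslant\ell^{3/4}+C(k)$ is $O(\ell^{-1/4})$. Since $\rho\mapsto F(\rho,\sigma)$ has bounded derivatives on $[\tfrac12+\delta_1,1-\delta_1]$ (a compact set thanks to $j\in E_\ell(\delta_1)$), the total error is $O(\ell^{-1/4})$ with a constant $C_7$ depending on $k,\delta_1,\delta_2$.

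\textbf{Tail bound.} The plan is to establish a Gaussian-type decay
\[
a_n^*/a_{n_0}^*\leqslant \exp\!\bigl(-c(\delta_1,\delta_2)(n-n_0)^2/\ell\bigr)
\]
valid on the range where the four quantities $n,\ell_1-n,j-n,\ell_2-(j-n)$ are all of order $\ell$ (outside that range, $a_n^*$ vanishes). To this end, one computes
\[
\frac{a_{n+1}^*}{a_n^*}=\frac{(n+1)(\ell_1-n)}{(2n-\ell_1+1)(2n-\ell_1+2)}\cdot\frac{(2(j-n)-\ell_2)(2(j-n)-\ell_2-1)}{(j-n)(\ell_2-(j-n)+1)},
\]
which is a strictly decreasing rational function of $n$ with derivative of $\log$ bounded above by $-c/\ell$ uniformly on that range; hence $\log a_n^*$ is concave with second difference at most $-c/\ell$, and a second-order Taylor expansion around the mode $n_0$ yields the claimed bound. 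Since $|n_0-n_*^\sigma|\leqslant C(k)$ by elementary algebra, $n\notin I_\ell(\sigma)$ forces $|n-n_0|\geqslant\tfrac12\ell^{3/4}$ for $\ell$ large, so $(n-n_0)^2/\ell\geqslant\tfrac14\sqrt\ell$ and the desired estimate for $a_n^*/a_{n_0}^*$ follows. The corresponding bound for $a_n(\sigma)/a_{n_0}^*$ then follows from the pointwise estimate $a_n(\sigma)\leqslant C(k,\delta_1)\,a_n^*$, provable by the same ratio computation as in the first part applied uniformly on the support.

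\textbf{Main obstacle.} I expect the delicate point to be bookkeeping: carrying the constant shifts $(\sigma(-k),\sigma(k),p(\sigma))$ through all the factorial ratios while retaining the error $O(\ell^{-1/4})$ uniformly in $x\in B_{(1-\delta_2)\ell-k}$ and $n$, and establishing the log-concavity second-derivative bound uniformly in the parameters. The hypothesis $j\in E_\ell(\delta_1)$ plays the crucial role of keeping both $\rho_\ell$ and the empirical densities $n/\ell_1,(j-n)/\ell_2$ bounded away from $\tfrac12$ and $1$, which prevents the denominators in $F(\rho,\sigma)$ and in the log-concavity estimate from degenerating.
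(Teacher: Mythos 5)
Your proof of the first bound is essentially the same as the paper's: extract the excess factors from the binomial ratio $a_n(\sigma)/a_n^*$ (exactly the identity \eqref{eq:binom1}), observe that for $n\in I_\ell(\sigma)$ the cluster densities $n/\ell_1$ and $(j-n)/\ell_2$ deviate from $\rho_\ell$ by at most $O(\ell^{-1/4})$ (this is \eqref{eq:densboites}, using $\ell_1,\ell_2\geqslant\delta_2\ell$ and $|n^\sigma-n_*^\sigma|\leqslant\ell^{3/4}$), and conclude by the Lipschitz continuity of $\rho\mapsto F(\rho,\sigma)$ on $[\tfrac12+\delta_1,1-\delta_1]$. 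For the tail bound, your plan of establishing log-concavity of $n\mapsto\log a_n^*$ via the ratio $a_{n+1}^*/a_n^*$ is a legitimate alternative to the paper's Sondow--Stirling entropy argument; the logarithmic derivative of your ratio is, up to lower order, $-\tfrac{1}{\ell_1}H''(n/\ell_1)-\tfrac{1}{\ell_2}H''((j-n)/\ell_2)\leqslant -c/\ell$ with $H$ as in \eqref{eq:DefH}, so the Gaussian decay of $a_n^*/a_{n_0}^*$ can indeed be obtained that way.

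The gap is in the very last step. The asserted pointwise estimate $a_n(\sigma)\leqslant C(k,\delta_1)\,a_n^*$ is false, for two reasons. First, the two quantities have slightly different supports: writing $\hat n=j-n$, the condition for $a_n^*\neq0$ forces $\hat n\leqslant\ell_2$, whereas $a_n(\sigma)\neq0$ allows $\hat n$ up to $\ell_2+p(\sigma)$ (compare the second binomials $\binom{\hat n}{\ell_2-\hat n}$ and $\binom{\hat n-p+\sigma(k)-1}{\ell_2-\hat n+p}$); so for $p(\sigma)\geqslant1$ there are $n$ with $a_n(\sigma)>0$ and $a_n^*=0$, and no pointwise constant exists. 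Second, even where both are nonzero, the identity \eqref{eq:binom1} shows that when $\hat n$ is near $\ell_2$ the denominator contains the factor $(\ell_2+1-\hat n)$, which can equal $1$, while the corresponding numerator factors are of order $\ell$; the ratio then grows like $\ell^{p(\sigma)}$, not like a constant depending only on $(k,\delta_1)$. The paper avoids this entirely by applying the concentration bound (Lemma \ref{lem:concentration2}) directly to $a_n(\sigma)$, obtaining $a_n(\sigma)\leqslant K_*(\sigma)\exp(-K_1(n^\sigma-n_*^\sigma)^2/\ell)$, and then separately controlling the ratio of the two peak constants $K_*(\sigma)/K_*\leqslant C(k,\delta_1)$ using $|x_*^\sigma-x_*|=O(1/\ell)$ and $|\ell_1^\sigma+\ell_2^\sigma-(\ell_1+\ell_2)|\leqslant2$. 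Your log-concavity route could be repaired the same way --- apply it to $a_n(\sigma)$ directly and compare peaks, rather than domination term by term --- but as written the appeal to a uniform pointwise bound $a_n(\sigma)\leqslant C(k,\delta_1)\,a_n^*$ is a genuine error.
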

According to Lemma \ref{lem:estan} above, \eqref{eq:SmfS*} becomes 
\begin{align*}
\abs{S(\sigma)-F(\rho_\ell, \sigma)S^*}\leq&  \sum_{n\in I_\ell(\sigma)}\frac{C_7}{\ell^{1/4}}a_n^*+|I_\ell(\sigma)^c|\big(1+F(\rho_\ell, \sigma)\big)C_8e^{-C_9\sqrt{\ell}}a_{n_0}^*\\
\leq &\pa{\frac{C_7}{\ell^{1/4}}+(2\ell+1) C'_8e^{-C_9\sqrt{\ell}} }S^*,\end{align*}
where we used that $a^*_{n_0}\leq S^*$ and that $(1+F(\rho_\ell, \sigma))C_8\leq {C'_8(k,\delta_1, \delta_2)}$, which holds since by assumption $\rho_\ell\in [(1+\delta_1)/2,1-\delta_1]$. We therefore obtain as wanted that for some constant ${C_{4}=C_{4}(k,  \delta_1, \delta_2)}$
\[\abs{S(\sigma)-F(\rho_\ell, \sigma)S^*}\leq \frac{{C_{4}}S^*}{\ell^{1/4}},\]
which proves Lemma \ref{lem:equiv4}.
\end{proof}
We now prove Lemma \ref{lem:estan}.
\begin{proof}[Proof of Lemma \ref{lem:estan}]
We start by proving the first identity. Shortening $\hat n =j-n$, and extracting the excess terms yield that 
\begin{equation}
\label{eq:binom1}
\frac{a_n(\sigma)}{a_n^*}=\frac{(2n-\ell_1)^{1-\sigma(-k)}}{n^{1-\sigma(-k)}}\frac{(2\hat n-\ell_2)\cdots(2\hat n-2p+\sigma(k)-\ell_2)}{\hat n\cdots(\hat n-p+\sigma(k))(\ell_2+p-\hat n)\cdots(\ell_2+1-\hat n)}. 
\end{equation}
Denote by $x_n=n/\ell_1$ the density in the left cluster, and $g(x_n)=(j- n)/\ell_2=(j- \ell_1x_n)/\ell_2$, which is close to the density in the right cluster. 
We start by proving that there exists a constant ${C_{10}(k,  \delta_1, \delta_2)}$ such that 
\begin{equation}
\label{eq:densboites}
\max_{n\in I_\ell(\sigma)}\big\{\abs{x_n-\rho_\ell}+\abs{g(x_n)-\rho_\ell}\big\}\leq \frac{{C_{10}}}{\ell^{1/4}}.
\end{equation} 
To prove this bound, one merely has to split by the triangular inequality
\[|x_n-\rho_\ell|\leq \frac{\abs{n-n^\sigma}}{\ell_1}+\frac{\abs{n^\sigma-n_*^\sigma}}{\ell_1}+\abs{\frac{n_*^\sigma}{\ell_1}-\rho_\ell}.\]
Since $n^\sigma=n-1+\sigma(-k)$, and thanks to \eqref{eq:dnn*} the first and third terms in the right hand side above are less than ${C(k, \delta_2)}/\ell$ because $\ell_1\geq \delta_2\ell$. The second term is also less than $\ell^{-1/4}/\delta_2$ by definition of $I_\ell(\sigma)$. 
The estimation of $\abs{g(x_n)-\rho_\ell}$ is obtained analogously, thus proving \eqref{eq:densboites}.

Since $\ell_1, \;\ell_2\geq \delta_2\ell$, \eqref{eq:binom1} and \eqref{eq:densboites} yield after elementary computations that 
\[\max_{n\in I_\ell(\sigma)}\abs{\frac{a_n(\sigma)}{a_n^*}-F(\rho_{\ell}, \sigma)}\leq \frac{{C_7(k,\delta_1, \delta_2)}}{\ell^{1/4}},\]
where $F$ is the function defined in \eqref{eq:deff}, as wanted.

\bigskip

Before proving the second identity \eqref{eq:eq:petitstermes}, we state that $a_n$ is relatively small as soon as the density $n/\ell_1$ in the first cluster is far away from the expected density $\rho_\ell$.
\begin{lemma}
\label{lem:concentration2}
For any $\delta_2>0$, there exists a constant $K_1:=K_1(\delta_2)$ such that for any $\ell\geq 1$,  $\delta_2\ell\leq \ell_1, \ell_2 \leq 2\ell+1$, any $ \ell+1 \leq j\leq 2\ell+1$,
and any integer $n$ such that 
\[\frac{\ell_1}{2}  < \;n \;\leq \; \ell_1, \quad \frac{\ell_2}{2}\;<\;j-n\;\leq \;\ell_2,\]
we have 
\begin{equation}
\label{eq:concentration1}
\binom{n}{\ell_1-n}\binom{j-n}{\ell_2-(j-n)}\leq {K}_* \exp\pa{-K_1\frac{(n-n_*)^2}{\ell}},
\end{equation}
where  $n_*=j\ell_1/(\ell_1+\ell_2)$, and $K_*$ is a large constant depending only on $j$, $\ell_1$, and $\ell_2$. Note that $n_*$ is not a priori an integer. 
Furthermore, there exists a constant $K_2:=K_2({\delta_1,}\delta_2)$ such that, letting $n_0=\lfloor n_*\rfloor$
\begin{equation}
\label{eq:concentration2}
 \binom{n_0}{\ell_1-n_0}\binom{j-n_0}{\ell_2-(j-n_0)}\geq \frac{K_*}{2^8\ell^2}\exp\pa{-\frac{K_2}{\ell}}.
\end{equation}

\end{lemma}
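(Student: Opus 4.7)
The plan is to prove both bounds via Stirling's formula combined with the concavity of an entropy-like function. Define $\alpha_1 := n/\ell_1$ and $\alpha_2 := (j-n)/\ell_2$, both in $(1/2,1]$ by hypothesis, and $\rho := j/(\ell_1+\ell_2) > 1/2$ (using $j \ge \ell+1$). Introduce the entropy function
\[\phi(\alpha) := \alpha\log\alpha - (1-\alpha)\log(1-\alpha) - (2\alpha-1)\log(2\alpha-1), \quad \alpha \in (1/2,1).\]
A direct computation gives $\phi'(\alpha) = \log\bigl(\alpha(1-\alpha)/(2\alpha-1)^2\bigr)$ and $\phi''(\alpha) = -1/(\alpha(1-\alpha)(2\alpha-1))$. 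The key observation is that $\alpha(1-\alpha)(2\alpha-1)$ attains its maximum on $(1/2,1)$ at $\alpha_{\max} = 1/2 + \sqrt{3}/6$ with value $\sqrt{3}/18$, so $|\phi''(\alpha)| \ge 6\sqrt{3}$ uniformly on $(1/2,1)$.

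\textbf{Step 1 (Stirling).} Apply $\log k! = k\log k - k + \frac{1}{2}\log(2\pi k) + O(1/k)$ to each factorial in
$a_n^* = \tfrac{n!(j-n)!}{(\ell_1-n)!(2n-\ell_1)!(\ell_2-(j-n))!(2(j-n)-\ell_2)!}$ to rewrite
\[\log a_n^* = F(n) - \tfrac{1}{2}\log\ell_1 - \tfrac{1}{2}\log\ell_2 + R(n) + O(1/\ell), \]
where $F(n) := \ell_1\phi(\alpha_1) + \ell_2\phi(\alpha_2)$ and $R(n)$ is a bounded $\log$-correction involving $\alpha_i(1-\alpha_i)/(2\alpha_i-1)$. (The apparent singularities at $\alpha_i \to 1/2^+$ in $R$ cancel upon inspection with the rest; alternatively one checks that the Stirling correction $\log a_n^* - F(n)$ is of order $\log\ell$ for any admissible $n$, including at the boundary $n=\lceil\ell_1/2\rceil+1$.)

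\textbf{Step 2 (Critical point and concavity).} The function $F$ is a sum of two concave functions of $n$, hence strictly concave. Its critical point solves $\phi'(\alpha_1) = \phi'(\alpha_2)$; by strict monotonicity of $\phi'$, this forces $\alpha_1 = \alpha_2$, and since $\alpha_1\ell_1 + \alpha_2\ell_2 = j$, this yields $\alpha_1 = \alpha_2 = \rho$, i.e.\ $n = n_*$. Moreover, for every admissible $n$,
\[|F''(n)| = \frac{|\phi''(\alpha_1)|}{\ell_1} + \frac{|\phi''(\alpha_2)|}{\ell_2} \ge 6\sqrt{3}\,\frac{\ell_1+\ell_2}{\ell_1\ell_2} \ge \frac{6\sqrt{3}}{2\ell+1} \ge \frac{c_0}{\ell}\]
for a universal $c_0>0$, using $\ell_1,\ell_2 \le 2\ell+1$.

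\textbf{Step 3 (Upper bound \eqref{eq:concentration1}).} Since $F'(n_*)=0$, Taylor's formula gives $F(n) - F(n_*) = \int_{n_*}^n\int_{n_*}^s F''(u)\,du\,ds$, which combined with Step 2 yields $F(n) - F(n_*) \le -\tfrac{c_0}{2\ell}(n-n_*)^2$. Combining with Step 1,
\[\log a_n^* \le F(n_*) + C\log\ell - \tfrac{c_0}{2\ell}(n-n_*)^2.\]
Define $K_* := \exp\bigl(F(n_*) + C\log\ell\bigr)$, which depends only on $j,\ell_1,\ell_2$, and set $K_1 := c_0/2 = K_1(\delta_2)$; this proves the upper bound.

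\textbf{Step 4 (Lower bound \eqref{eq:concentration2}).} Since $|n_0 - n_*| \le 1$, Taylor's theorem and the uniform bound $|F''(u)| \le C'/\ell$ obtained from $\alpha(1-\alpha)(2\alpha-1) \ge c(\delta_2)$ on the relevant range (using $2\rho-1 \ge 1/(2\ell+1)$, which contributes only a polynomial factor absorbed in $K_*$) give $|F(n_0) - F(n_*)| \le K_2/\ell$. Using the lower-bound version of Stirling,
\[\log a_{n_0}^* \ge F(n_0) - C'\log\ell - O(1/\ell) \ge F(n_*) - \tfrac{K_2}{\ell} - C'\log\ell,\]
so enlarging $C$ in Step 3 if needed (by at most an additive constant) so that $C' + \log(2^8) \le C + 2$ gives $a_{n_0}^* \ge K_*\,2^{-8}\ell^{-2}\exp(-K_2/\ell)$.

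\textbf{Main obstacle.} The delicate point is maintaining a \emph{uniform} constant $K_1$ independent of $\rho$ while $\rho$ is allowed to approach both $1/2$ and $1$. This works because $|\phi''|$ is large precisely where the density is near these critical values, so the Gaussian decay never degrades; but bookkeeping the Stirling sub-leading corrections (which do degrade as $2\rho-1 \to 0$) into the single constant $K_*$ (depending only on $j,\ell_1,\ell_2$) and showing both the upper and lower bounds are compatible with the same $K_*$ (up to the explicit $2^{-8}\ell^{-2}$ slack) requires care. The $\ell^{-2}$ factor in \eqref{eq:concentration2} exactly absorbs the polynomial Stirling prefactor; the $2^{-8}$ gives slack for rounding $n_0 = \lfloor n_*\rfloor$ and other constants.
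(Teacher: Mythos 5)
Your approach is essentially the paper's: you work with the same entropy function (your $\phi$ equals $-H$ for the paper's $H$ from \eqref{eq:DefH}), identify the same critical point $n_*$ where $\alpha_1=\alpha_2=\rho$, and use the same uniform lower bound on $|\phi''|$ to obtain the Gaussian tail. Your second-derivative computation $|F''(n)|=|\phi''(\alpha_1)|/\ell_1+|\phi''(\alpha_2)|/\ell_2$ is exactly the integrand of the paper's double integral $\Phi(x)-\Phi(x_*)=\ell_1\int_{x_*}^{x}\int_{g(u)}^{u}H''$, just written in the variable $n$ rather than $x=n/\ell_1$. You also correctly note the key point that $|\phi''|$ \emph{grows} as $\rho\to\tfrac12^+$ or $\rho\to1^-$, which is why $K_1$ can be chosen uniformly in $\rho$.

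The one real gap is in making the constant $K_*$ work simultaneously for \eqref{eq:concentration1} and \eqref{eq:concentration2}. You set $K_*=\exp(F(n_*)+C\log\ell)$ with a non-negative $C$ absorbing the Stirling sub-leading terms. But for \eqref{eq:concentration2} to read $a_{n_0}^*\geq K_*\,2^{-8}\ell^{-2}\exp(-K_2/\ell)$, you would need $\log a_{n_0}^*\geq F(n_*)+(C-2)\log\ell-8\log 2-K_2/\ell$; comparing with your Step~4 bound $\log a_{n_0}^*\geq F(n_*)-C'\log\ell-K_2/\ell$ forces $C+C'\leq2$. The inequality you wrote, $C'+\log(2^8)\leq C+2$, points the wrong way: enlarging $C$ enlarges $K_*$ and makes the lower bound \emph{harder}, not easier. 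The clean resolution — and what the paper does — is to observe that the Stirling correction to $\log a_n^*$ is non-positive, so one can take $C=0$, i.e.\ $K_*=\exp(F(n_*))$. The paper achieves this by invoking Sondow's two-sided non-asymptotic bound $\frac{1}{4\ell}\,\ell^\ell/(k^k(\ell-k)^{\ell-k})\leq\binom{\ell}{k}\leq\ell^\ell/(k^k(\ell-k)^{\ell-k})$: the upper half gives $\log a_n^*\leq F(n)$ with no correction whatsoever, while the lower half produces the $1/(16n(j-n))\geq 2^{-8}\ell^{-2}$ prefactor visible in \eqref{eq:concentration2}. You can also reach the non-positivity directly from the $\tfrac12\log(2\pi k)$ terms: writing $a=\ell_1-n$, $b=2n-\ell_1$ (so $a+b=n$), the per-binomial correction is $\tfrac12\log\bigl(\tfrac{1}{2\pi}(\tfrac1a+\tfrac1b)\bigr)\leq\tfrac12\log\tfrac1\pi<0$ whenever $a,b\geq1$, with the boundary cases $a=0$ or $b=0$ checked by hand. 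With this fix your Steps~3--4 close exactly as the paper's argument.
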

Before proving this lemma, we conclude the proof of \eqref{eq:eq:petitstermes} and Lemma \ref{lem:estan}. Recall that we introduced $n_0=\lfloor j\ell_1/(\ell_1+\ell_2)\rfloor$ the approximate index of the largest term in $S^*$. We are going to prove that for any $n\notin I_\ell(\sigma)$, we have 
\[\frac{\max\{a_n(\sigma),a_n^*\}}{a_{n_0}^*}\leq C_8\exp(-C_9\sqrt{\ell}).\]
This bound is a direct consequence of Lemma \ref{lem:concentration2}, since according to \eqref{eq:concentration1}, we have \footnote{The dependency of $K_1(\delta_2/2)$ comes from the crude bound $\ell^\sigma_1, \ell^\sigma_2\geq \delta_2\ell/2$.
}
\[a_n(\sigma)\leq  K_*(\sigma) \exp\pa{-K_1(\delta_2/2)\frac{(n^{\sigma}-n^{\sigma}_*)^2}{\ell}}\]
and
\[a_n^*\leq  K_* \exp\pa{-K_1(\delta_2)\frac{(n-n_*)^2}{\ell}},\]
and \eqref{eq:concentration2} yields
\[a_{n_0}^*\geq  \frac{K_*}{2^8\ell^2} \exp\pa{-\frac{K_2({\delta_1,}\delta_2)}{\ell}}.\]
According to the proof of Lemma \ref{lem:concentration2} the constants $K_*(\sigma)$ and $K_*$ are respectively given by 
\[K_*(\sigma)=\exp\big(-(\ell_1^\sigma+\ell_2^\sigma)H(x_*^\sigma)\big)\quad \mbox{ and }K_*=\exp\big(-(\ell_1+\ell_2)H(x_*)\big)\]
with $x_*^\sigma=j^\sigma/(\ell_1^\sigma+\ell_2^\sigma)$ and $x_*=j/(\ell_1+\ell_2)$, and the entropy functional $H$ is given below by \eqref{eq:DefH}. In particular, $x_*^\sigma-x_*^\sigma=\mc O_{\ell}(1/\ell)$, and both are bounded away from $\frac12$ and $1$ by some constant depending only on $k$ and $\delta_1 $, therefore  
$|H(x_*^\sigma)-H(x_*^\sigma)|\leq C(k,\delta_1)/\ell$, and since $|\ell_1^\sigma+\ell_2^\sigma- (\ell_1+\ell_2)| \leq 2$, we finally obtain 
$K_*(\sigma)/K_*\leq C(k,\delta_1)$. It is easily checked that for any $n \notin I_\ell(\sigma)$, we have $|n-n_*|{\wedge|n^\sigma-n^\sigma_*|}\geq {c}\ell^{3/4}$ for some constant ${c(k)>0}$ depending only on $k$. We finally obtain 
\begin{multline*}
\frac{\max\{a_n(\sigma),a_n^*\}}{a_{n_0}^*}\\
{\leq 2^8\ell^2 \max\{1, C(k,\delta_1)\}\exp\pa{\frac{K_2({\delta_1,}\delta_2)}{\ell}-\min\{K_1(\delta_2),K_1(\delta_2/2)\}{c^2}\sqrt{\ell}}},
\end{multline*}
which is what we wanted to prove.
\end{proof}

We now prove the technical concentration lemma we just used.
\begin{proof}[Proof of Lemma \ref{lem:concentration2}]
Sondow's bound \cite{Sondow}, which is a quantitative extension of Stirling's formula, can be written for any integer $0\leq k\leq \ell$ as 
\[\frac{1}{4\max(k, \ell-k)}\frac{\ell^\ell}{k^k(\ell-k)^{\ell-k}}\leq \binom{\ell}{k}\leq \frac{\ell^\ell}{k^k(\ell-k)^{\ell-k}},\]
which immediately yields
\[\frac{1}{4\ell}\frac{\ell^{\ell}}{k^k(\ell-k)^{\ell-k}}\leq \binom{\ell}{k}\leq \frac{\ell^\ell}{k^k(\ell-k)^{\ell-k}},\]
Using both parts of this bound yields 
\begin{multline*} \frac{1}{16j^2}\frac{n^n}{(\ell_1-n)^{\ell_1-n}(2n-\ell_1)^{2n-\ell_1}}\frac{(j- n)^{j- n}}{(\ell_2-(j-n))^{\ell_2-(j-n)}(2(j-n)-\ell_2)^{2(j-n)-\ell_2}}\\
\leq \binom{n}{\ell_1-n}\binom{j-n}{\ell_2-(j-n)} \leq \\
 \frac{n^n}{(\ell_1-n)^{\ell_1-n}(2n-\ell_1)^{2n-\ell_1}}\frac{(j- n)^{j- n}}{(\ell_2-(j-n))^{\ell_2-(j-n)}(2(j-n)-\ell_2)^{2(j-n)-\ell_2}}.
\end{multline*}
Denoting $x_n=n/\ell_1$, and $g(x_n)=(j-n)/\ell_2=(j-\ell_1x_n)/\ell_2$,
and introducing the entropy functional  
\begin{equation}
\label{eq:DefH}
H(x)=(1-x)\log\Big(\frac{1-x}{x}\Big)+(2x-1)\log\Big(\frac{2x-1}{x}\Big), 
\end{equation}
the previous bound rewrites
\begin{multline}
 \label{eq:entropy1Phi}
\frac{1}{16j^2}\exp\big(-\ell_1H(x_n)-\ell_2H(g(x_n))\big)\\
\leq \binom{n}{\ell_1-n}\binom{j-n}{\ell_2-(j-n)} \leq \\
\exp\big(-\ell_1H(x_n)-\ell_2H(g(x_n))\big) .
\end{multline}
Let us shorten $\Phi=\ell_1H+\ell_2H\circ g$. We now take a look at the quantity inside the exponential. Since $g'=-\ell_1/\ell_2$,
\[\Phi'(u)=\ell_1\big(H'(u)-H'(g(u))\big)=\ell_1\int_{g(u)}^{u}dvH''(v).\]
Denote by
\[x_*=\frac{j}{\ell_1+\ell_2}=\frac{n_*}{\ell_1}\]
the only fixed point for $g$, so that 
\[\Phi(x)=\Phi(x_*)+\ell_1\int_{x_*}^{x}du\int_{g(u)}^{u}dvH''(v),\]
where \[H''(u)=\frac{4}{2u-1}-\frac{1}{u}+\frac{1}{1-u}\]
is positive and  bounded from below on $(\frac12,1)$ by some fixed constant $c_0$, and uniformly bounded from above on any segment $ [a,b]\subset(\frac12,1)$. Define 
\[C_H(x)=\max\big\{H''(u), \;u\in [x,x_*] \mbox{ or }[x_*,x]\big\}.\]
Since $u-g(u)=(\ell_1+\ell_2)(u-x_*)/\ell_2$,  it is then a matter of simple verification to check that 
\[\Phi(x_*)+c_0\frac{\ell_1(\ell_1+\ell_2)(x-x_*)^2}{2\ell_2}\leq \Phi(x)\leq \Phi(x_*)+C_H(x)\frac{\ell_1(\ell_1+\ell_2)(x-x_*)^2}{2\ell_2},\]
which can be rewritten
\[\Phi(x_*)+c_0\frac{(n-n_*)^2}{2\max(\ell_1,\ell_2)}\leq \Phi(x)\leq \Phi(x_*)+C_H(x)\frac{(n-n_*)^2}{\min(\ell_1,\ell_2)}.\]
We inject this bound in \eqref{eq:entropy1Phi}, to finally obtain that for any $[a,b]\subset (\frac12,1)$ and for any  $n$, 
\begin{multline*} 
\frac{K_*}{16j^2}\exp\pa{-C_H(n/\ell_1)\frac{(n-n_*)^2}{\min\{\ell_1, \ell_2\}}}\\
\leq \binom{n}{\ell_1-n}\binom{j-n}{\ell_2-(j-n)}\leq K_* \exp\pa{-c_0\frac{(n-n_*)^2}{2\max\{\ell_1, \ell_2\}}},
\end{multline*}
where we defined $K_*=\exp(-\Phi(x_*))$. Letting $n_0=\lfloor n_*\rfloor$, and assuming that $\ell_1\geq \delta_2 \ell$, one can verify that $C_H(n_0/\ell_1)\leq C({\delta_1,}\delta_2)$, so that the left part of the last estimation rewrites  
\begin{equation*} 
 \binom{n_0}{\ell_1-n_0}\binom{j-n_0}{\ell_2-(j-n_0)}\geq \frac{K_*}{16j^2}\exp\pa{-\frac{\delta_2C({\delta_1,}\delta_2)}{\ell}},
\end{equation*}
thus concluding the proof.
\end{proof}

To conclude this section, we now give the proof of Lemma \ref{lem:rhogrand}, which we postponed early in the section, and prove that the canonical measure $\hat{\pi}_{\rho,1,1}^{\ell,j}({\varsigma}_{\mid B_k(x)}=\sigma)$ is of order $ \delta_1$ if $j$ is larger than $(1-\delta_1)(2\ell+1)$.

\begin{proof}[Proof of Lemma \ref{lem:rhogrand}]
We will once again only prove \eqref{eq:rhograndlem} in the case where $\varepsilon_1=\varepsilon_2=1$, the other cases being analogous. To do so, denote by ${\bf 1}_k$ the full configuration on $B_k$, using the same notations as in \eqref{eq:quotjlrho}, rewrite the left hand side of \eqref{eq:rhograndlem} as
 \begin{equation}
\label{eq:quotjlrho2}
\frac{\hat{\pi}_{\rho,1,1}^{\ell,j}\big({\varsigma}_{\mid B_k(x)}=\sigma\big)}{\hat{\pi}_{\rho,1,1}^{\ell,j}\big({\varsigma}_{\mid B_k(x)}= {\bf 1}_{k}\big)}=\frac{\sum_{n=1}^{j}a_n(\sigma)}{\sum_{n=1}^{j}a_n({\bf 1}_{k})}. 
\end{equation}
We are going to show that assuming that the configuration $\sigma$ is not full, there exists a constant $ C(k, \delta_2)$ such that for any $1\leq n\leq j$, 
\begin{equation}
\label{eq:anxian1}
a_n(\sigma)\leq C(k, \delta_2)\delta_1a_n({\bf 1}_{k}).
\end{equation}
We first prove that if
$\hat n=j-n< \big(1-4\delta_1/\delta_2\big)\ell_2$
the two sides above vanish, because there can be no associated configuration. Indeed, by assumption, 
\[j\geq (1-\delta_1)(2\ell+1).\]
Furthermore, we can safely assume that $n\leq \ell_1$ since if not, we are trying to put more particles than there are holes in the first box, and the binomial coefficients obviously vanish. Therefore, 
\[\hat n\geq (1-\delta_1)(2\ell+1)-\ell_1=\ell_2{+}(2k+1)-\delta_1(2\ell+1)\geq \ell_2\Big(1-\frac{4\delta_1}{\delta_2}\Big) \]
where we used that by assumption, $\ell_2\geq \delta_2\ell\geq \delta_2(2\ell+1)/4$. 
In particular, for any $n$ such that $\hat n /\ell_2< 1-{4\delta_1/\delta_2}$, \eqref{eq:anxian1} obviously holds since both sides vanish. 

\medskip

We now assume that $\hat n/\ell_2 \geq 1-{4\delta_1/\delta_2}$.
To prove \eqref{eq:anxian1} in this case, by extracting the excess terms, as we did in \eqref{eq:binom1}, we rewrite
\begin{multline*}
a_n(\sigma)=a_n({\bf 1}_{k})\frac{(2n-\ell_1)^{1-\sigma(-k)}}{n^{1-\sigma(-k)}}\\
\times\frac{(\hat n-p(\sigma)+\sigma(k)-1)\cdots(\hat n-2k)(\ell_2+2k+1-\hat n)\cdots(\ell_2+{p(\sigma)}+1-\hat n)}{(2\hat n-2p(\sigma)+\sigma(k)-\ell_2-1)\cdots(2\hat n-4k+1-\ell_2)}.
\end{multline*}
The first part of the quotient is less than one since $(2n-\ell_1)/n\leq 1$. Divide each term in the second part of the right hand side by $\ell_2$, underestimate the powers in the numerator and use that $x_n\leq 1$ to obtain that
\[a_n(\sigma)\leq a_n({\bf 1}_{k})\frac{x_n(1-x_n+\varepsilon_{\ell})}{(2x_n-1{-}\varepsilon_\ell)^{4k-2p(\sigma)}}, \]
where $\varepsilon_{\ell}=4k/\ell_2$, and $x_n=\hat n/\ell_2\geq 1-{4\delta_1/\delta_2}.$
The configuration $\sigma$ contains at least one empty site, so that $2k+1\geq p+ 1$. We now use that  $x_n\geq  1-C\delta_1$, therefore $\varepsilon_\ell$ is small w.r.t.\@ $x_n$ and $2x_n-1$. Recall that $\ell$ goes to infinity before $\delta_1$ goes to $0$. We obtain as wanted
\[a_n(\sigma)\leq a_n({\bf 1}_{k})\big(o_\ell(1)+C(k, \delta_2)\delta_1\big), \]
which concludes the proof.

\end{proof}

\section{Proof of the Replacement Lemma \ref{lem:RL}}
\label{app:replacement}
\subsection{Strategy of the proof}

Recall that $\tilde\mu_N$ has been defined in \eqref{eq:Deftm}. 
Let ${\widetilde{\mu}^N_s:=\widetilde{\mu}_Ne^{sN^2\mathcal{L}_N}}$ be the measure at macroscopic time $s$ of the process started from $\tm$.
{Fix $\rho\in (\frac12,1),$ and l}et us introduce, for any $s\geq 0$
\[f_s^N=\frac{d\widetilde\mu^N_s}{d\nu_{\rho,N}}, \quad \text{ and } \quad \overline{f}_T^N= \frac1T\int_0^T f_s^N ds{,}\]
{where $\nu_{\rho,N}$ is the translation invariant, periodic grand canonical measure defined in Definition~\ref{def:nurhoN}}.
This density $f_s^N$ is well defined: $\nu_{\rho,N}(\eta)=0$ \ccl{only} if $\eta\notin\mathcal{E}_N$, and $\ccl{\tilde{\mu}^N_s}$ has support in $\mathcal{E}_N$ by definition.

Denote $\mc D_N$ the Dirichlet form, defined as
\begin{align*}
\mc D_N(g):&=-\nu_{\rho,N}\big(\sqrt{g} \mathcal{L}_N \sqrt{g}\big)\\
&=\frac{1}{2}\nu_{\rho,N}\bigg(\sum_{x\in\T_N}{\eta(x+2)\eta(x-1)}\big(\sqrt g(\eta^{x,x+1})-\sqrt g(\eta)\big)^2\bigg).\end{align*}
It is straightforward to prove that $\mc D_N$ thus defined is non-negative and convex. To obtain the second identity, we used that $\nu_{\rho,N}$--a.s., any configuration such that $\eta(x)\neq \eta(x+1)$ and such that both $\eta$ and $\eta^{x,x+1}$ are ergodic, we must have $\eta(x+2)=\eta(x-1)= 1$, because two empty sites cannot be neighbors under $\nu_{\rho,N}$.
In Proposition \ref{prop:DirEstimate} below we will prove that the Dirichlet form $\mathcal{D}_N$ evaluated at $\overline{f}_T^N$ satisfies: 
\begin{equation}\mathcal{D}_N(\overline{f}_T^N) \leqslant \frac{C}{N}.\label{eq:diri}\end{equation}
Let us turn now to the proof of Lemma \ref{lem:RL}. We rewrite the expression under the limit in \eqref{eq:repl} as: 
\[\nu_{\rho,N}\bigg(\overline{f}_T^N \times \frac{T}{N}\sum_{x\in\T_N} \tau_x \big| V_{\varepsilon N}(h,\cdot) \big|\bigg).\]
Therefore, in order to prove \eqref{eq:repl} it is enough to prove the following: 
\begin{equation}
\label{eq:repl2}
\limsup_{\varepsilon\to 0}\limsup_{N\to\infty} \sup_{f \; : \; \mathcal{D}_N(f)\leqslant \frac{C}{N}} \nu_{\rho,N}\bigg(f\times \frac{1}{N}\sum_{x\in\T_N}\tau_x\big|V_{\varepsilon N}(h,\cdot)\big|\bigg) = 0,
\end{equation}
where the supremum is taken over all densities $f$ w.r.t.~ $\nu_{\rho,N}$ which satisfy $\mathcal{D}_N(f)\leqslant \frac{C}{N}$. 
Take $\ell \in \mathbb{N}$. 
Inside the absolute values in \eqref{eq:repl2} we {add} and subtract 
\[\frac{1}{2\varepsilon N+1} \sum_{|y|\leqslant \varepsilon N} \bigg(\frac{1}{2{\ell'} +1} \sum_{|z-y|\leqslant {\ell'}} \tau_z h(\eta)  - \pi_{\rho^\ell(y)(\eta)}(h)\bigg),\]
{where we shortened $\ell'=\ell-1$, to make the sum measurable w.r.t. the sites in $\tau_yB_{\ell}.$}
Thanks to the triangular inequality, we are reduced to estimate three terms:
\begin{align}
I_{\ell,\varepsilon N}^1 & := \nu_{\rho,N}\bigg( f\times \frac{1}{N}\sum_{x\in\T_N} \tau_x\bigg| \frac{1}{2\varepsilon N+1} \sum_{|y|\leqslant\varepsilon N} \Big(\tau_y h - \frac{1}{2{\ell'}+1} \sum_{|z-y|\leqslant {\ell'}} \tau_z h\Big)\bigg|\bigg), \notag \\
I_{\ell,\varepsilon N}^2 & := \nu_{\rho,N}\bigg( f\times \frac{1}{N}\sum_{x\in\T_N} \tau_x\bigg|\frac{1}{2{\ell'}+1} \sum_{|y|\leqslant{\ell'}} \tau_y h-\pi_{\rho^\ell(0)}(h)\bigg|\bigg), \label{eq:secondone}\\
I_{\ell,\varepsilon N}^3 & := \nu_{\rho,N}\bigg( f\times \frac{1}{N}\sum_{x\in\T_N} \tau_x\bigg|\frac{1}{2\varepsilon N+1} \sum_{|y|\leqslant\varepsilon N} \pi_{\rho^{\ell}(y)}(h)-\pi_{\rho^{\varepsilon N}(0)}(h)\bigg|\bigg).\label{eq:thirdone}
\end{align}
We are going to show that each one of these terms vanishes, as first $N\to \infty$ then $\varepsilon \to 0$ and then $\ell \to \infty$. 
We treat them separately: the first one can be easily bounded as follows~; there exists $C>0$ such that 
\[I_{\ell,\varepsilon N}^1 \leqslant \frac{C\ell}{\varepsilon N} \nu_{\rho,N}\bigg( f\times\frac{1}{N} \sum_{x\in\T_N} \tau_x h \bigg)\leqslant \frac{C\ell}{\varepsilon N} \xrightarrow[N\to\infty]{} 0,\]
since $h$ is bounded by $1$.
{Let us slightly change our notation, and define 
\[\widetilde{V}_\ell(h,\eta)=\frac{1}{2{\ell'}+1} \sum_{|y|\leqslant{\ell'}} \tau_y h(\eta)-\pi_{\rho^\ell(0)(\eta)}(h),\]
whose only difference with $V_\ell(h,\eta)$ is that the first average is taken on a slightly reduced box to make the sum measurable w.r.t. the sites in $B_\ell$. Then, the second term \eqref{eq:secondone} rewrites with this notation as}
\[I_{\ell,\varepsilon N}^2 = \nu_{\rho,N}\bigg( f\times \frac{1}{N} \sum_{x\in\T_N} \tau_x\big| \widetilde{V}_\ell(h,\cdot)\big|\bigg),\]
and it vanishes thanks to the \emph{one-block estimate} stated and proved in Lemma \ref{lem:one-block} below. We finally bound the third one \eqref{eq:thirdone} as 
\begin{align}
I_{\ell,\varepsilon N}^3 & \leqslant \sup_{|y|\leqslant \varepsilon N}\nu_{\rho,N}\bigg( f\times\frac{1}{N}\sum_{x\in\T_N} \tau_x \Big| \pi_{\rho^{\varepsilon N}(0)}(h) - \pi_{\rho^\ell(y)}(h)\Big| \bigg)\notag\\
& \leqslant {4} \sup_{|y|\leqslant \varepsilon N}\nu_{\rho,N}\bigg( f\times\frac{1}{N}\sum_{x\in\T_N}  \Big|\rho^{\varepsilon N}(x) - \rho^\ell(x+y)\Big| \bigg), \label{eq:repl3}
\end{align}
the last inequality coming from the fact that $|\pi_{\rho_\star}(h)-\pi_\rho(h)|\leqslant {4} |\rho_\star - \rho|$ for any $\rho,\rho_\star \in {[0,1]}$ (after using the identity \eqref{eq:idh}).
The left hand side of \eqref{eq:repl3} vanishes according to the \emph{two-blocks estimate} stated and proved in Lemma \ref{lem:two-blocks} below.

\subsection{One-block estimate}

\begin{lemma}
\label{lem:one-block}
Fix $\rho\in(\frac12,1]$. For any finite constant $C>0$,
\begin{equation}
\label{eq:oneb}
\limsup_{\ell \to\infty}\limsup_{N\to\infty} \sup_{f \; : \; \mathcal{D}_N(f)\leqslant \frac{C}{N}} \nu_{\rho,N}\bigg(f\times \frac{1}{N} \sum_{x\in\T_N} \tau_x\big| {\widetilde{V}}_\ell(h,\cdot)\big|\bigg)=0.
\end{equation}
\end{lemma}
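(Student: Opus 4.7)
The plan is to adapt the classical one-block strategy from the entropy method (see \cite[Chapter 5]{KL}) to our non-product invariant measures, using as the main new ingredients the equivalence of ensembles (Corollary~\ref{cor:ensembles}) and the exponential correlation decay (Corollary~\ref{cor:correlations}) established in Section~\ref{sec:invariant}.

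By translation invariance of $\nu_{\rho,N}$,
\[\nu_{\rho,N}\Big(f\cdot \tfrac{1}{N}\sum_{x\in\T_N}\tau_x|\widetilde V_\ell(h,\cdot)|\Big)=\nu_{\rho,N}\big(\bar f\cdot|\widetilde V_\ell(h,\cdot)|\big),\]
where $\bar f=\tfrac{1}{N}\sum_x\tau_x f$ still satisfies $\mc D_N(\bar f)\le C/N$ by convexity of $\mc D_N$. Since $|\widetilde V_\ell|$ depends only on $\eta_{|B_\ell}$, one may further replace $\bar f$ by its conditional expectation $f_\ell:=\nu_{\rho,N}[\bar f\mid\eta_{|B_\ell}]$; by the standard contraction of the Dirichlet form under conditional expectation, the restricted Dirichlet form of $f_\ell$ (summing only over interior bond jumps in $B_\ell$) is still bounded by $C/N$. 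Decomposing according to the particle number $j=\sum_{y\in B_\ell}\eta(y)$ yields
\[\nu_{\rho,N}(f_\ell|\widetilde V_\ell|)=\sum_{j\in E_\ell}c_{j,N}\,\hat\nu_{\rho,N}^{\ell,j}\big(f_\ell^{(j)}|\widetilde V_\ell|\big),\]
with $c_{j,N}=\hat\nu_{\rho,N}^\ell(\sum_{B_\ell}\eta=j)$ and $f_\ell^{(j)}$ the corresponding conditional density with respect to $\hat\nu_{\rho,N}^{\ell,j}$.

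For each fixed $\ell$ and $j>\ell$, the hyperplane $\hat\Omega_\ell^j$ is a finite irreducible state space for the exchange dynamics restricted to $B_\ell$ (by the argument of Lemma~\ref{lem:irreducible}), so the only density with vanishing Dirichlet form on it is the constant $1$. Since the restricted Dirichlet form of $f_\ell^{(j)}$ vanishes as $N\to\infty$, compactness and lower-semicontinuity force $f_\ell^{(j)}\to 1$ pointwise for each fixed $\ell,j$. Combined with~\eqref{eq:localconvfinitebox}, which yields $\hat\nu_{\rho,N}^{\ell,j}\to\hat\pi_\rho^{\ell,j}$ in total variation, and dominated convergence, one obtains
\[\limsup_{N\to\infty}\nu_{\rho,N}(f_\ell|\widetilde V_\ell|)\le \sum_{j\in E_\ell}\hat\pi_\rho^\ell\Big(\sum_{B_\ell}\eta=j\Big)\,\hat\pi_\rho^{\ell,j}\big(|\widetilde V_\ell(h,\cdot)|\big),\]
so it suffices to prove $\max_{j\in E_\ell}\hat\pi_\rho^{\ell,j}(|\widetilde V_\ell(h,\cdot)|)\to 0$ as $\ell\to\infty$.

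Under $\hat\pi_\rho^{\ell,j}$ the density $\rho^\ell(0)=j/(2\ell+1)=:\rho_\ell$ is deterministic, hence $\widetilde V_\ell(h,\eta)=\tfrac{1}{2\ell'+1}\sum_{|y|\le \ell'}\tau_y h(\eta)-F(\rho_\ell)$. Split $E_\ell=E_\ell(\delta_1)\cup(E_\ell\setminus E_\ell(\delta_1))$. For $j\in E_\ell\setminus E_\ell(\delta_1)$, Lemma~\ref{lem:rhogrand} shows that $\hat\pi_\rho^{\ell,j}$ concentrates on the full or alternating configurations, on which $\widetilde V_\ell=0$, so the contribution is $\mc O(\delta_1)$. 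For $j\in E_\ell(\delta_1)$, apply Cauchy--Schwarz and expand $\hat\pi_\rho^{\ell,j}(\widetilde V_\ell^2)$ as a double sum over $y,z\in B_{\ell'}$: use Corollary~\ref{cor:ensembles} on the local function $h\cdot \tau_{z-y}h$ to replace the canonical two-point function by $\pi_{\rho_\ell}(h\cdot \tau_{z-y}h)$ when both $y,z$ are at distance at least $\delta_2\ell$ from the boundary of $B_\ell$, and then invoke Corollary~\ref{cor:correlations} to bound $\pi_{\rho_\ell}(h\cdot\tau_{z-y}h)-F(\rho_\ell)^2$ by $\mc O(e^{-c|z-y|})$; pairs close to the boundary or with $|y-z|$ large contribute $o_\ell(1)$. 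Letting $\ell\to\infty$ then $\delta_1\to 0$ closes the argument. The main obstacle throughout, compared to the SSEP case with product Bernoulli measure, is precisely this last step: the law of large numbers for $(2\ell'+1)^{-1}\sum_y\tau_y h$ under $\hat\pi_\rho^{\ell,j}$ is not elementary and must be pieced together from the equivalence of ensembles and the exponential decay of two-point correlations of Section~\ref{sec:invariant}, with extra care needed near the densities $\tfrac12$ and $1$ where both estimates degrade.
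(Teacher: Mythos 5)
Your Steps 1--4 (translation average, conditioning to $B_\ell$, contraction of the Dirichlet form, passing from $\hat\nu_{\rho,N}^{\ell}$ to $\hat\pi_\rho^{\ell}$, and reduction to showing $\max_{j\in E_\ell}\hat\pi_\rho^{\ell,j}(|\widetilde V_\ell|)\to 0$) are essentially identical to the paper's proof. Your handling of the degenerate densities $j\in E_\ell\setminus E_\ell(\delta_1)$ via Lemma~\ref{lem:rhogrand} is also in the spirit of the paper's argument (which bounds the low-density contribution crudely by $4(2\alpha_0-1)$ and handles high density via the LLN), so that piece is fine.

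The gap is in your final step for $j\in E_\ell(\delta_1)$. You propose to apply Cauchy--Schwarz and expand the canonical variance $\hat\pi_\rho^{\ell,j}(\widetilde V_\ell^2)$ into a double sum over $y,z$, invoking Corollary~\ref{cor:ensembles} on the local function $h\cdot\tau_{z-y}h$ to pass to $\pi_{\rho_\ell}(h\cdot\tau_{z-y}h)$, and then using Corollary~\ref{cor:correlations} to get the exponential decay. The problem is that Corollary~\ref{cor:ensembles} (and the underlying Proposition~\ref{prop:ensembles}) is only uniform over local functions whose support has a \emph{fixed} size $2k+1$: the error constant is $C_0(k,\sigma,\delta_1,\delta_2)/\ell^{1/4}$ and the paper gives no control on the growth of $C_0$ in $k$ (and to pass from the conditional marginal to an expectation one also sums over the $2^{2k+1}$ configurations $\sigma$ of $B_k$). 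Pairs $(y,z)$ with $|y-z|$ in the range $K<|y-z|\le(1-2\delta_2)\cdot 2\ell'$ constitute a positive fraction of all $\Theta(\ell^2)$ pairs, and for these $h\cdot\tau_{z-y}h$ has a support of size $\Theta(|y-z|)$, which grows with $\ell$; the equivalence of ensembles as stated does not apply to them. Your remark that ``pairs close to the boundary or with $|y-z|$ large contribute $o_\ell(1)$'' only disposes of the $O(\ell)$ pairs with $|y-z|$ within distance $o(\ell)$ of the extremes, not of this bulk middle range. In short, you would need a \emph{canonical} decoupling estimate, uniform in $j$, over the full mesoscopic range of distances, and the paper does not establish one.

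The paper sidesteps this entirely: it tiles $B_{\ell'}$ with disjoint sub-boxes $\mathcal{B}_k(i)$ of a \emph{fixed} size $2k+1$, uses the triangle inequality to reduce the average over $B_{\ell'}$ to averages over the $\mathcal{B}_k(i)$ (at the cost of a manageable boundary term of size $O(\delta)$), applies Corollary~\ref{cor:ensembles} to the fixed-size local observable $|\frac{1}{2k+1}\sum_{x\in B_k}\tau_x h-\pi_{\rho_\ell}(h)|$ on each sub-box, and only \emph{then} invokes a strong law of large numbers under the grand canonical $\pi_{\rho_\ell}$, which is where the exponential decay of correlations (Corollary~\ref{cor:correlations}) is actually used, via~\cite{russell}. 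Your instinct to use the decay of correlations for the final LLN is correct; what is missing is the intermediate ``projection to fixed mesoscopic scale'' that keeps every application of the equivalence of ensembles within its domain of validity.
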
 

\begin{proof} Following \cite[Chapter 5, Section 4]{KL}, we divide the proof into several steps listed below. 

\paragraph{\sc Step 1 -- Reduction to microscopic blocks }

Since the measure $\nu_{\rho,N}$ is translation invariant, we can rewrite the {expectation} that appears in \eqref{eq:oneb} as
\[\nu_{\rho,N}\Big( \widetilde{f}\;  \big|{\tilde V}_\ell(h,\cdot)\big|\Big), \qquad \text{where } \widetilde f:=\frac{1}{N}\sum_{x\in\T_N}\tau_xf.\]
Note that ${\widetilde{V}}_\ell(h,\eta)$ depends on $\eta$ only through its values on the box $B_\ell=\{-\ell,\dots,\ell\}\subset \T_N$. 

We now define the conditional expectation of $\widetilde{f}$ with respect to the coordinates on $B_\ell$, namely: 

\[\widetilde f_\ell(\sigma)={\left\{
\begin{array}{ll} \displaystyle
\frac{1}{{\hat\nu_{\rho,N}^\ell}(\sigma)}\sum_{\substack{\eta \in\{0,1\}^{\T_N} \\ \eta_{|B_\ell}=\sigma}}\widetilde f(\eta) \nu_{\rho,N}(\eta)&\text{for any } \sigma \in\ERGt_{B_\ell},\\
0& \text{else.}
\end{array}\right.}\]
\begin{lemma}\label{lem:density}
The function
$\widetilde f_\ell$ is a density with respect to ${\hat\nu_{\rho,N}^\ell}$, and
 \begin{equation}\label{eq:infinitenormdensity}
\sup_{\substack{f \text{density w.r.t.\ } \hat\nu_{\rho,N}^\ell,\\ f_{|\ERGt_{B_\ell}^c}\equiv0}}\|f\|_\infty\leq C(\ell,\rho). 
\end{equation}
 
\end{lemma}

\begin{proof}[Proof of Lemma~\ref{lem:density}]
The first point is clear. For the second, notice that if $f$ is a density w.r.t.\ $\hat\nu_{\rho,N}^\ell$, then
$
\sum_{\sigma\in\ERGt_{B_\ell}}\hat\nu_{\rho,N}^\ell(\sigma)f(\sigma)=1,
$
and therefore $f(\sigma)\leq \hat\nu_{\rho,N}^\ell(\sigma)^{-1}$ for all $\sigma\in\ERGt_{B_\ell}$. By formula \eqref{eq:hatpiell} and estimate \eqref{eq:localconvfinitebox}, for any $\sigma\in\ERGt_{B_\ell}$, we have \[\hat\nu_{\rho,N}^\ell(\sigma)\geq \kappa\min(\alpha^{2\ell+1},1)\beta^{2\ell+1}-\frac{C(\ell,\rho)}{\sqrt{N}}\geq C(\ell,\rho)>0\] for large enough $N$, and Lemma \ref{lem:density} is proved.
\end{proof}
Going back to Lemma \ref{lem:one-block}, we finally rewrite the {expectation} in \eqref{eq:oneb} as 
\begin{equation}
\label{eq:oneb2}
{\hat\nu_{\rho,N}^\ell}\Big(\widetilde f_\ell\; \big|{\widetilde{V}}_\ell(h,\cdot)\big| \Big).
\end{equation}
 As a consequence of Lemma \ref{lem:aux2} and Lemma~\ref{lem:density}, we can replace in the large $N$ limit ${\hat\nu_{\rho,N}^\ell}$ with $\hat\pi_{\rho}^{\ell}$, 
and to prove \eqref{eq:oneb} we are reduced to prove that
\begin{equation}
\label{eq:oneb3}
\limsup_{\ell \to\infty}\limsup_{N\to\infty} \sup_{f \; : \; \mathcal{D}_N(f)\leqslant \frac{C}{N}}\hat\pi_{\rho}^{\ell}\Big(\widetilde f_\ell \; \big|{\widetilde{V}}_\ell(h,\cdot)\big|\Big)= 0.
\end{equation}

\paragraph{\sc Step 2 -- Estimates on the Dirichlet form of $\widetilde f_\ell$ }

Let us first introduce some notations: for any  $y \in\T_N$, and positive function $g$, define
\begin{equation}
\label{eq:DefIy}
I_{y}(g,\eta)=\frac{1}{2} {\eta(y+2)\eta(y-1)} \big(\sqrt g(\eta^{y,y+1})-\sqrt g(\eta)\big)^2,
\end{equation}
and consider two Dirichlet forms restricted to bonds in $B_\ell$ defined for all densities ${f}:\{0,1\}^{B_\ell}\to \mathbb{R}_+$ w.r.t.\ $\hat\nu_{\rho,N}^\ell$ which vanish on $\ERGt_{B_\ell}^c$ by 
\begin{align*}
{\mathcal{D}^{\ell}_N(f)} &: =\sum_{y \; : \; \{y,y+1\} \subset B_\ell} {\hat\nu_{\rho,N}^{\ell+1}}\big(I_y({f},\cdot)\big) \\
\mathcal{D}^\ell({f}) &: =\sum_{y \; : \; \{y,y+1\} \subset B_\ell} \hat\pi_{\rho}^{\ell+1}\big( I_y({f},\cdot)\big).
\end{align*}
From Lemma \ref{lem:aux2} and Lemma~\ref{lem:density}, these two Dirichlet forms are pretty close as $N\to\infty$. Recalling the definition \eqref{eq:diri} we also have, for any density $f:\{0,1\}^{\T_N}\to\mathbb{R}_+$,
\[\mathcal{D}_N(f)=\sum_{x\in\T_N} \nu_{\rho,N}\big(I_x(f,\cdot)\big).\]
The exact same proof as in \cite[Step 3 p.86]{KL} shows that
\begin{equation}\label{eq:claim}\text{if } \quad \mathcal{D}_N(f) \leqslant \frac{C}{N} \quad \text{ then } \quad  {\mathcal D^{\ell}_N}(\widetilde f_\ell) \leqslant \frac{C(\ell)}{N^2},\end{equation}
where $C(\ell)$ is a positive constant. From Lemma \ref{lem:aux2}, this implies
\[\mathcal{D}^\ell(\widetilde f_\ell) \leqslant \frac{C'(\ell)}{\sqrt N},\] where $C'(\ell)$ is another positive constant.
For the sake of completeness we expose quickly the arguments which prove \eqref{eq:claim}:
\begin{itemize}
\item first, by the property of the marginal distribution, we have: 
\[{\hat\nu_{\rho,N}^{\ell+1}}\big( I_y(\widetilde f_\ell,\cdot)\big) {=} \nu_{\rho,N}\big( I_y(\widetilde f_\ell,\cdot)\big), \]
for any $y$  such that ${\{y,y+1\} \subset B_\ell,}$ which implies
\[{\mathcal{D}^{\ell}_N}(\widetilde f_\ell) {=} \sum_{y \; : \;{ \{y,y+1\} }\subset B_\ell} \nu_{\rho,N}\big(I_y(\widetilde f_\ell,\cdot)\big)\; ; \]
\item second, since $\widetilde f$ and $\nu_{\rho,N}$ are translation invariant, 
\[\sum_{y \; : \; {\{y,y+1\} \subset B_\ell}} \nu_{\rho,N}\big(I_y(\widetilde f_\ell,\cdot)\big) = \frac{2\ell}{N}\mathcal{D}_N(\widetilde f_\ell)\; ;\]
\item finally, by convexity of the Dirichlet form,
$\mathcal{D}_N(\widetilde f_\ell) \leqslant \mathcal D_N(f)$.

\end{itemize}
Putting all these statements together proves the claim \eqref{eq:claim}. 
 As a result, to prove \eqref{eq:oneb3} we are reduced to prove that
\begin{equation}
\label{eq:oneb4}
\limsup_{\ell \to\infty}\limsup_{N\to\infty} \sup_{f \in {\mathcal A}_{\ell,N}} \hat\pi_{\rho}^{\ell}\Big(f\; \big|{\widetilde{V}}_\ell(h,\cdot)\big|  \Big)= 0,
\end{equation}
where we denoted 
\[{\mathcal A}_{\ell,N}=\left\{f: \hat{\mathcal{E}}_{B_{\ell}} \to \R_+ \ : \  \hat\nu_{\rho,N}^{\ell}(f)=1, \quad \mathcal{D}^\ell(f)\leq\frac{C'(\ell)}{\sqrt N} \right\}.\]
Note that for any $f \in {\mathcal A}_{\ell,N},$ we have $\|f\|_{\infty} \le C(\ell,\rho)$, where \[C(\ell,\rho)^{-1}=\liminf_{N \to \infty}\inf_{\sigma \in\hat{\mathcal{E}}_{B_{\ell}}}  \hat\nu_{\rho,N}^{\ell}(\sigma) >0.\]

\paragraph{\sc Step 3 -- Limit as $N\to\infty$. } This step is exactly identical to \cite[Chapter 5, Section 4, Step 4]{KL}, and mainly relies on the lower semicontinuity of the Dirichlet form and the continuity of $\hat\pi_{\rho}^{\ell}(f)$ with respect to $f$.
Letting
\[{\mathcal A}^*_{\ell}=\left\{f: \hat{\mathcal{E}}_{B_{\ell}} \to\R_+ \ : \ \hat\pi_{\rho}^{\ell}(f)=1, \quad \mathcal{D}^\ell(f)=0 \right\},\]
thanks to Lemma \ref{lem:aux2}, this reduces the proof of \eqref{eq:oneb4} to the proof of
\begin{equation}
\label{eq:oneb5}
\limsup_{\ell \to\infty} \sup_{f\in{\mathcal A}^*_{\ell}} \hat\pi_{\rho}^{\ell}\Big( f\; \big|{\widetilde{V}}_\ell(h,\cdot)\big|  \Big)= 0.
\end{equation}

\paragraph{\sc Step 4 -- Decomposition along hyperplanes with a fixed number of particles. } Note that a density which has a vanishing Dirichlet form is constant on each hyperplane with a fixed total number of particles (recall Lemma~\ref{lem:irreducible}). 
Therefore, the proof of \eqref{eq:oneb5} reduces to the proof of
\begin{equation}
\label{eq:oneb6}
\limsup_{\ell \to\infty} \sup_{j \in \{{\ell},\dots,2\ell+1\}} \hat\pi_{\rho}^{\ell,j}\Big( \big|{\widetilde{V}}_\ell(h,\cdot)\big|  \Big)= 0.
\end{equation}

\paragraph{\sc Step 5 -- An application of the equivalence of ensembles. }

Recall that we defined ${\ell'}=\ell-1$, and note that
\begin{equation}\label{eq:lastin}\hat\pi_{\rho}^{\ell,j}\Big(\big|{\widetilde{V}}_\ell(h,\cdot)\big|   \Big)=  \hat\pi_{\rho}^{\ell,j}\bigg( \bigg| \frac{1}{2{\ell'}+1} \sum_{{x\in B_{\ell'}}} \tau_xh - \pi_{j/(2\ell+1)}(h)\bigg|\bigg).
\end{equation}
Let us fix $k \in \bb N$ and decompose $B_{\ell'}$ into boxes of length $2k+1$, as follows: {first, fix some $\delta>0$, and  define $q=\lfloor {\ell'}(1-\delta)/(2k+1)\rfloor$}{. We partition}
\[ B_{\ell'} = { \mathcal{A}_\ell\sqcup \bigsqcup_{i={{-q}}}^q \mathcal{B}_k(i)}, \qquad \text{where } \mathcal{B}_k(i):=\Big\{(2k+1)i-k,\dots, (2k+1)i{+k}\Big\},\] so that $\mathcal{B}_k(i)$ is the box of size $2k+1$ centered in the site $(2k+1)i$ {and} $\mathcal{A}_\ell$ its complementary set in $B_{\ell'}$, namely 
\[ \mathcal{A}_\ell = B_{\ell'} \setminus {\bigsqcup_{i=-q}^q} \mathcal{B}_k(i){=\{-\ell',\dots,-(2k+1)q-k-1\}\cup\{(2k+1)q+k+1,\dots,\ell'\}}.\]
By construction, the size of $\mathcal{A}_\ell$ is \[|\mathcal{A}_\ell|=2(\ell'-(2k+1)q-k){\leq 2(\delta\ell'+k+1)\leq 3\delta\ell'}\] for $\ell$ large enough. 
Then, in \eqref{eq:lastin}, the total contribution of the sites $x\in \mathcal{A}_\ell$ is less than ${3}\|h\|_{\infty}\delta$, so that \eqref{eq:lastin} is bounded from above by 
\begin{equation}
\label{eq:lastin2}
{{3}\delta\|h\|_{\infty}+}\frac{(2k+1)}{(2{\ell'}+1)} \sum_{i={-q}}^q  \hat\pi_{\rho}^{\ell,j}\bigg( \bigg| \frac{1}{2k+1} \sum_{x \in \mc B_k(i)} \tau_xh-\pi_{j/(2\ell+1)}(h)\bigg| \bigg).
\end{equation}
Recall that $\delta$ and $k$ are fixed, and that for $i\in \{-q,\dots,q\}$, we have $(2k+1)i\in B_{(1-\delta)\ell}$. We can therefore apply Corollary \ref{cor:ensembles} to the translations $\tau_{(2k+1)i}f$, of the local function 
\[f =\bigg|\frac{1}{2k+1} \sum_{x \in B_k} \tau_xh-\pi_{j/(2\ell+1)}(h)\bigg|,\] 
for any fixed $\delta$, $k$: uniformly in $i\in  \{-q,\dots,q\}$, the {expectation} converges  as $\ell\to \infty$ towards 
\[ \pi_{j/(2\ell+1)}\bigg(\bigg| \frac{1}{2k+1} \sum_{x \in \mc B_k(i)} \tau_xh-\pi_{j/(2\ell+1)}(h)\bigg|\bigg).\]
Therefore, \eqref{eq:lastin2} can be rewritten 
\begin{multline}
\label{eq:lastin3}
o_\ell(1)+{{3}\delta\|h\|_{\infty}+}\frac{(2k+1)}{(2{\ell'}+1)} \sum_{i={-q}}^q \pi_{j/(2\ell+1)}\bigg( \bigg| \frac{1}{2k+1} \sum_{x \in \mc B_k(i)} \tau_xh(\sigma)-\pi_{j/(2\ell+1)}(h)\bigg| \bigg) \\
\underset{\ell\to\infty}{\leq} {3}\delta\|h\|_{\infty}+ \sup_{\alpha\in{[}\frac12,1]}\pi_{\alpha}\bigg(\bigg| \frac{1}{2k+1} \sum_{x \in B_k} \tau_xh-\pi_{\alpha}(h)\bigg| \bigg),
\end{multline}
because $\pi_{\alpha}$ is translation invariant and $(2k+1)(2q+1)/(2{\ell'}+1)\leq 1$ for $\ell$ large enough. We now need to eliminate the low densities for which we do not have exponential decay of the correlations for $\pi_{\alpha}$. 

Fix $\alpha_0>\frac12$, in the limit $\ell\to\infty$ then $\delta\to0$ we can therefore bound \eqref{eq:lastin2} by  
\begin{multline}
\label{eq:lastin4}
\sup_{\alpha\in{[}\frac12,\alpha_0]} \pi_{\alpha}\bigg( \bigg| \frac{1}{2k+1} \sum_{x \in B_k} \tau_xh-\pi_{\alpha}(h)\bigg| \bigg)\\
+\sup_{\alpha\in(\alpha_0,1]} \pi_{\alpha}\bigg( \bigg| \frac{1}{2k+1} \sum_{x \in B_k} \tau_xh-\pi_{\alpha}(h)\bigg|  \bigg),
\end{multline}
Since $h$ is a non-negative function, we can crudely bound the first member of \eqref{eq:lastin4} from the triangular inequality by 
\[\sup_{\alpha\in{[}\frac12,\alpha_0]}2\pi_{\alpha}(h)=\sup_{\alpha\in{[}\frac12,\alpha_0]}2\frac{2\alpha-1}{\alpha}\leq 4(2\alpha_0-1),\]
which vanishes as $\alpha_0\to\frac12$ uniformly in $k$.
To estimate the second member of \eqref{eq:lastin4}, we apply the law of large numbers for the grand canonical measure $\pi_\alpha$. The variables $\{{\sigma}(x)\; : \; x \in \bb Z\}$ are not independent. Therefore, we need here to use the exponential decay of correlations proved in Corollary \ref{cor:correlations}, which is uniform on $(\alpha_0,1]$ for any fixed $\alpha_0>\frac12$.  The strong law of large numbers has been proven in this case for instance in \cite{russell}. As a result, for any fixed $\frac12<\alpha_0\leq 1$, the {second line of} \eqref{eq:lastin4} vanishes as $k\to\infty$. 
We then let $\alpha_0\to\frac12 $ to end the proof of Lemma \ref{lem:one-block}.

\end{proof}

\subsection{Two-blocks estimate}

\begin{lemma}
\label{lem:two-blocks} 
Fix $\rho\in(\frac12,1]$. For every finite constant $C>0$, 
\begin{multline}
\label{eq:twob}
\limsup_{\ell \to\infty}\limsup_{\varepsilon \to 0}\limsup_{N\to\infty} \sup_{f \; : \; \mathcal{D}_N(f)\leqslant \frac{C}{N}}\; \sup_{|y|\leqslant \varepsilon N}\\  \nu_{\rho,N}\bigg(f\times \frac{1}{N}\sum_{x\in\T_N}  \Big|\rho^{\varepsilon N}(x)- \rho^\ell(x+y)\Big|\bigg)=0.
\end{multline}
\end{lemma}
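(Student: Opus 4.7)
The plan is to adapt the classical two-blocks strategy of \cite[Chapter 5, Section 5]{KL}, working around the two features that distinguish our setting: the non-product grand canonical measure $\nu_{\rho,N}$ and the degenerate jump rates. First I would reduce \eqref{eq:twob} by the triangular inequality: since $\rho^{\varepsilon N}(x)$ equals, up to an additive error of order $\ell/(\varepsilon N)=o_N(1)$, the average of $\rho^\ell(x+y')$ over $|y'|\leq \varepsilon N$, and since $\nu_{\rho,N}$ is translation invariant, it suffices to prove
\[ \limsup_{\ell\to\infty}\limsup_{\varepsilon\to 0}\limsup_{N\to\infty}\ \sup_{\mathcal{D}_N(f)\leq C/N}\ \sup_{|y|\leq 2\varepsilon N}\ \nu_{\rho,N}\Big(\widetilde f\, \big|\rho^\ell(0)(\eta)-\rho^\ell(y)(\eta)\big|\Big)=0, \]
where $\widetilde f=N^{-1}\sum_{x\in\T_N}\tau_x f$. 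The range $|y|\leq 2\ell$ is trivial; the range $2\ell< |y|\leq \sqrt N$ can be handled by a telescoping in nearest-neighbor steps, each bounded by $\mathcal{D}_N(\widetilde f)^{1/2}$.

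Next I would project $\widetilde f$ onto the joint coordinates in $B_\ell\cup\tau_y B_\ell$: let $\widetilde f^y_\ell(\sigma,\tau)$ denote the conditional expectation of $\widetilde f$ under $\nu_{\rho,N}$ given $\eta_{|B_\ell}=\sigma$ and $\eta_{|\tau_y B_\ell}=\tau$. This is a density with respect to the two-box marginal $\nu^{\ell,y}_{\rho,N}$. By Lemma \ref{lem:aux2} (estimate \eqref{eq:TBEmesure}), when $\sqrt N<|y|<\varepsilon N$ this marginal converges as $N\to\infty$ to the product $\hat{\pi}_\rho^{\ell}\otimes \hat{\pi}_\rho^{\ell}$ uniformly in $y$, so the non-product character of $\nu_{\rho,N}$ costs only a vanishing error. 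The same convexity and translation argument as in \eqref{eq:claim} bounds the internal Dirichlet form of $\widetilde f^y_\ell$ (for exchanges strictly inside each of the two boxes) by $C(\ell)/N^2$. The crucial addition is a \emph{bridge} Dirichlet form controlling exchanges between the two boxes, built by concatenating nearest-neighbor exchanges along a path of length $|y|$ from $B_\ell$ to $\tau_y B_\ell$; the usual telescoping inequality yields a bound of order $|y|\,\mathcal{D}_N(\widetilde f)\leq C\varepsilon$.

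Once both Dirichlet forms are controlled, one lets $N\to\infty$, $\varepsilon\to 0$, $\ell\to\infty$ in this order and invokes lower semicontinuity: the limiting density w.r.t.\ $\hat{\pi}_\rho^{\ell}\otimes\hat{\pi}_\rho^{\ell}$ has zero total Dirichlet form, hence (by the irreducibility of the two-box joint dynamics at fixed total particle number, which follows exactly as in Lemma \ref{lem:irreducible}) depends only on the combined particle count $j_0+j_y$ in the two boxes. Applying the equivalence of ensembles (Corollary \ref{cor:ensembles}) to each box separately shows that $\rho^\ell(0)$ and $\rho^\ell(y)$ are both asymptotically equal to $(j_0+j_y)/(4\ell+2)$, so their difference tends to zero, concluding the proof.

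The main obstacle will be the bridge Dirichlet form, because particle jumps in our system are degenerate: the rate $c_{x,x+1}(\eta)=\eta(x-1)\eta(x+2)$ can vanish, so an arbitrary sequence of nearest-neighbor exchanges is not always realized by the dynamics. The way out is that on the ergodic component $\mathcal{E}_N$, which is where $\nu_{\rho,N}$ is supported, empty sites are isolated; hence whenever $\eta(x)\neq\eta(x+1)$ one automatically has $\eta(x-1)=\eta(x+2)=1$ and the exchange rate equals $1$. Thus restricted to $\mathcal{E}_N$, the Dirichlet form $\mathcal{D}_N$ dominates the ordinary simple-exclusion Dirichlet form, and the KL path argument applies after controlling boundary effects via \eqref{eq:nuNmax} and the decay of correlations from Corollary \ref{cor:correlations}.
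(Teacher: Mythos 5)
Your overall architecture matches the paper's: reduce to $\sqrt N\leq|y|\leq 2\varepsilon N$, project $\widetilde f$ onto $B_\ell\cup\tau_y B_\ell$, control an internal and a bridge Dirichlet form, pass to the limit, and use irreducibility to conclude the limit density depends only on the joint particle count $j_0+j_y$. Two places, however, are wrong as stated. First, your claim that on $\mathcal{E}_N$ ``whenever $\eta(x)\neq\eta(x+1)$ one automatically has $\eta(x-1)=\eta(x+2)=1$ and the exchange rate equals $1$,'' and hence that $\mathcal{D}_N$ dominates the SSEP Dirichlet form, is false. If $\eta(x)=1,\eta(x+1)=0$, isolation of holes only forces $\eta(x+2)=1$; $\eta(x-1)$ can perfectly well be $0$ (as in $\cdots\bullet\circ\bullet\circ\bullet\cdots$), in which case $c_{x,x+1}(\eta)=0$. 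On $\mathcal{E}_N$ the effective rate is $\eta(x-1)\eta(x+2)\leq 1$, so the inequality goes the \emph{other} way: the FEP Dirichlet form is dominated by the SSEP one, not vice versa. The moving-particle argument cannot simply import the SSEP path bound; one must choose paths along which each intermediate exchange stays in $\mathcal{E}_N$ (equivalently, has $\eta(x-1)=\eta(x+2)=1$), which is what the paper's bridge generator $J_{0,y}$ with rate $\eta(-1)\eta(0)\eta(1)+\eta(y-1)\eta(y)\eta(y+1)$ engineers directly.

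Second, your final step misidentifies the tool. After the limit density $f^*(\sigma_1,\sigma_2)$ is shown to depend only on $k=j_0+j_y$, the remaining quantity is $\hat\pi_\rho^\ell\otimes\hat\pi_\rho^\ell\bigl(f^*\,\bigl|\rho^\ell(0)-\rho^\ell(y)\bigr|\bigr)$, i.e.\ an expectation of $|j_0-j_y|/(2\ell+1)$ under a canonical (conditioned) measure. Corollary \ref{cor:ensembles} controls expectations of \emph{local} functions away from the boundary; $\rho^\ell(0)$ is a box-average, not a local function, and what is actually needed is a concentration statement for the conditional law of $j_0$ given $j_0+j_y=k$. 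The paper obtains this from the combinatorial Lemma \ref{lem:concentration2} (Stirling-type bounds on the number of ergodic configurations with prescribed particle counts), which shows $\nu_{k,\ell}$ concentrates on $|j_0-k/2|\lesssim\ell^{\delta}$ with $\delta<1$. Your argument needs this concentration estimate in place of the invocation of equivalence of ensembles.
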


\begin{proof}[Proof of Lemma \ref{lem:two-blocks}]
Most of the difficulties to prove the two-blocks estimate have been treated in the one-block estimate, so that we will merely sketch the proof. 
As in the one-block estimate, define $\widetilde{f}=N^{-1}\sum \tau_x f$. To prove Lemma \ref{lem:two-blocks}, it is enough to prove that 
\begin{equation*}
\limsup_{\ell \to\infty}\limsup_{\varepsilon \to 0}\limsup_{N\to\infty} \sup_{f \; : \; \mathcal{D}_N(f)\leqslant \frac{C}{N}}\; \sup_{|y|\leqslant \varepsilon N}\nu_{\rho,N}\Big(\widetilde{f}  \;\Big|\rho^{\varepsilon N}(0) - \rho^\ell(y)\Big|\Big)=0.
\end{equation*}
Similarly to \cite[Section 5.5]{KL}, by triangular inequality, to prove Lemma \ref{lem:two-blocks}, it is enough to prove that 
\begin{equation}
\label{eq:TBE1}
\limsup_{\ell \to\infty}\limsup_{\varepsilon \to 0}\limsup_{N\to\infty} \sup_{f \; : \; \mathcal{D}_N(f)\leqslant \frac{C}{N}}\; \sup_{\sqrt{N} \leqslant |y|\leqslant 2\varepsilon N}\nu_{\rho,N}\Big(\widetilde{f}  \;\Big|\rho^{\ell}(0) - \rho^\ell(y)\Big|\Big)=0.
\end{equation}
Fix $|y|\geq \sqrt{N}$ and  define
\begin{align*}
{\mathcal{D}^{\ell}_{N,y,1}}(h) &: =\sum_{x \; : \; \{x,x+1\} \subset B_\ell\cup\tau_y B_{\ell}} {\nu_{\rho,N}}\big(I_x(h,\cdot) \big) \\
{\mathcal{D}_{N,y,2}}(h) &: =  {\nu_{\rho,N}}\big(J_{0,y}(h,\cdot) \big)\end{align*}
where ${I_x} $ was defined in \eqref{eq:DefIy} and 
\begin{equation*}
J_{0,y}(g,\eta):=\pa{\eta(-1)\eta(0) \eta(1)+\eta(y-1)\eta(y) \eta(y+1)} \big(\sqrt g(\eta^{0,y})-\sqrt g(\eta)\big)^2.
\end{equation*}
This last element allows jumps from $0$ to $y$ and vice-versa, while ensuring that one never leaves the ergodic component. 
We now condition the density $f$ to $B_\ell \cup \tau_yB_\ell$, by letting, for any pair of configurations ${(\sigma_1, \sigma_2)\in \hat{\mathcal{E}}_{B_\ell}\times\hat{\mathcal{E}}_{\tau_yB_\ell}}$
\[\widetilde f_\ell^y(\sigma_1,\sigma_2):=\frac{1}{{\nu_{\rho,N}}(\sigma_1, \sigma_2)}\sum_{\substack{\eta \in\{0,1\}^{\T_N} \\ \eta_{|B_\ell}=\sigma_1\mbox{ and } \eta_{{|\tau_yB_\ell}}=\sigma_2}}\widetilde f(\eta) \nu_{\rho,N}(\eta).
\]
Following the same steps as in \cite[Section 5.5]{KL}, we obtain that, since $\mathcal{D}_N(f)\leq C/N$,
\begin{equation}
\label{eq:DirTBE} 
\max_{\sqrt{N}\leq |y|\leq 2\varepsilon N}\mathcal{D}^{\ell}_{N,y,1}(\widetilde f_\ell^y)+\mathcal{D}^{\ell}_{N,y,2}(\widetilde f_\ell^y)\leq C(\ell)o_{\varepsilon}(1).
\end{equation}
For any  pair of configurations ${(\sigma_1, \sigma_2)\in \hat{\mathcal{E}}_{B_\ell}\times\hat{\mathcal{E}}_{\tau_yB_\ell}}$, denote $j(\sigma_1, \sigma_2)$ their total number of particles 
\[j(\sigma_1, \sigma_2)=\sum_{x\in B_\ell}\sigma_1(x)+\sigma_2(y+x).\]
We claim that for any $\widetilde f_\ell^y$ satisfying \eqref{eq:DirTBE}, we can write for ${(\sigma_1,\sigma'_1, \sigma_2,\sigma'_2)\in \hat{\mathcal{E}}_{B_\ell}^2\times\hat{\mathcal{E}}_{\tau_yB_\ell}^2}$ such that $j(\sigma_1,\sigma_2)=j(\sigma'_1,\sigma'_2)$,
\begin{equation}
\label{eq:varTBE}
|\widetilde f_\ell^y(\sigma_1,\sigma_2)-\widetilde f_\ell^y(\sigma'_1,\sigma'_2)|\leq C'(\ell)o_{\varepsilon}(1), 
\end{equation}
for some large constant $C'(\ell)$ which does not depend neither on $\sqrt{N}\leq |y|\leq 2\varepsilon N$ nor on the chosen configurations $(\sigma_1,\sigma'_1, \sigma_2,\sigma'_2)$. This bound is proved straightforwardly by the  following three steps: 
\begin{itemize}
\item [--] Choose a sequence of licit jumps either in $B_\ell$, in $\tau_y B_\ell$, or between $0$ and $y$, allowing to reach $(\sigma'_1,\sigma'_2)$ from $(\sigma_1,\sigma_2)$. Uniformly in $(\sigma_1,\sigma_2)$ and $(\sigma'_1,\sigma'_2)$, the number $n:=n(\sigma_1,\sigma_2,\sigma'_1,\sigma'_2)$ of jumps necessary for the construction of this path can be crudely bounded from above by some constant $K_\ell$ independent of the configurations and of $y$. Denote $(\sigma_1,\sigma_2)=(\sigma_1^0,\sigma_2^0), (\sigma_1^1,\sigma_2^1),\dots,(\sigma_1^n,\sigma_2^n)=(\sigma_1',\sigma_2')$ such a path.
\item [--] Then,
\begin{equation}
\label{eq:dir1TBE} 
\big|\widetilde f_\ell^y(\sigma_1,\sigma_2)-\widetilde f_\ell^y(\sigma'_1,\sigma'_2)\big|\leq \sum_{k=0}^{n-1}\big|\widetilde f_\ell^y(\sigma^{k+1}_1,\sigma^{k+1}_2)-\widetilde f_\ell^y(\sigma^k_1,\sigma^k_2)\big|.
\end{equation}
\item [--] Since all the jumps from $(\sigma^k_1,\sigma^k_2)$ to $(\sigma^{k+1}_1,\sigma^{k+1}_2)$ are by assumption allowed for our non-ergodic dynamics, each of the terms in the sum in the right hand side above can be crudely bounded from above by
\begin{multline}
\label{eq:dir2TBE}
 2\cro{\|f_\ell^y\|_{\infty}\pa{ \sqrt{\widetilde f_\ell^y}(\sigma^{k+1}_1,\sigma^{k+1}_2)-\sqrt{\widetilde f_\ell^y}(\sigma^k_1,\sigma^k_2)}^2}^{1/2}\\
 \leq 2\cro{\frac{\|f_\ell^y\|_{\infty}\pa{ \mathcal{D}^{\ell}_{N,y,1}(\widetilde f_\ell^y)+\mathcal{D}^{\ell}_{N,y,2}(\widetilde f_\ell^y)}}{ \nu_{\rho,N}(\eta_{|B_\ell}=\sigma_1^k, \eta_{{|\tau_yB_\ell}}=\sigma_2^k)}}^{1/2}\leq \widetilde{C}(\ell)o_{\varepsilon}(1),
\end{multline}
where the last bound comes from a straightforward adaptation of Lemma \ref{lem:density}. 
\end{itemize}
Together, the two bounds \eqref{eq:dir1TBE} and \eqref{eq:dir2TBE} prove \eqref{eq:varTBE}, by letting $C'_\ell=K_\ell \widetilde{C}_\ell$.

\bigskip

We can now project $\nu_{\rho, N}$ as in the previous section on $B_\ell \cup\tau_y B_\ell$, and obtain that in the limit $N\to\infty$ and then $\varepsilon\to 0$, $f$ is ultimately constant on the sets with fixed number of particles in $ B_\ell\cup\tau_y B_\ell$. Note that the number of particles is not fixed in each of those two boxes, because we allowed jumps from $0$ to $y$. 
Specifically, denote \[\hat{\mathcal{E}}_{\ell,y}^k=\left\{(\sigma_1, \sigma_2)\in \hat{\mathcal{E}}_{B_\ell}\times\hat{\mathcal{E}}_{\tau_yB_\ell},\quad j(\sigma_1, \sigma_2)=k\right\}\]
the set of ergodic configurations on $B_\ell\cup\tau_yB_\ell$ with $k$ particles, and denote  $\nu_{\rho,N}^{y,\ell,k}$ the measure $\nu_{\rho,N}$ conditioned to $\hat{\mathcal{E}}_{\ell,y}^k$, defined for any $(\sigma_1, \sigma_2)\in\hat{\mathcal{E}}_{\ell,y}^k$ by
\[ \nu_{\rho,N}^{y,\ell,k}(\sigma_1, \sigma_2):= \frac{\nu_{\rho,N}(\eta_{|B_\ell}=\sigma_1, \; \eta_{\tau_y|B_\ell}=\sigma_2)}{\nu_{\rho,N}(j(\eta_{|B_\ell},\eta_{\tau_y|B_\ell})=k)}.\]
To prove \eqref{eq:TBE1}, thanks to \eqref{eq:varTBE}, it is therefore enough to prove that
\begin{multline}
\label{eq:TBE2}
\limsup_{\ell \to\infty}\limsup_{\varepsilon \to 0}\limsup_{N\to\infty} \sup_{\sqrt{N} \leqslant |y|\leqslant 2\varepsilon N}  \sup_{k}\;\\  
{\sum_{(\sigma_1,\sigma_2)\in\hat{\mathcal{E}}_{\ell,y}^k}}   \Big|\rho^{\ell}(0)(\sigma_1) - \rho^\ell(y)(\sigma_2)\Big| \nu_{\rho,N}^{y,\ell,k}(\sigma_1, \sigma_2)=0.
\end{multline}
Thanks to \eqref{eq:TBEmesure} and Corollary \ref{cor:correlations}, (recall formula \eqref{pirho} for $\pi_\rho(\xi_{|B_\ell}=\sigma_1)$), we can substitute for any $(\sigma_1,\sigma_2)\in\hat{\mathcal{E}}_{\ell,y}^k$
\begin{multline}
\frac{\pi_\rho(\xi_{|B_\ell}=\sigma_1)\pi_\rho(\xi_{|B_\ell}=\sigma_2)}{\sum_{(\sigma_1',\sigma_2')\in\hat{\mathcal{E}}_{\ell,y}^k}\pi_\rho(\xi_{|B_\ell}=\sigma_1)\pi_\rho(\xi_{|B_\ell}=\sigma_2)}\\
=\frac{\gamma^{\sigma_1(1)+\sigma_1(\ell)+\sigma_2(y+1)+\sigma_2(y+\ell)}}{\sum_{(\sigma_1',\sigma_2')\in\hat{\mathcal{E}}_{\ell,y}^k}\gamma^{\sigma'_1(1)+\sigma'_1(\ell)+\sigma'_2(y+1)+\sigma'_2(y+\ell)}}\leq \frac{C(\rho)}{\#\{(\sigma_1',\sigma_2')\in\hat{\mathcal{E}}_{\ell,y}^k\}} 
\end{multline}
to $ \nu_{\rho,N}^{y,\ell,k}(\sigma_1, \sigma_2)$ in equation \eqref{eq:TBE2}. 
For $k\leq 2|B_\ell|$, let us introduce the following measure  on $\{0,1,...,k\}$:
\[\nu_{k,\ell}(j)=\frac{\#\{(\sigma_1',\sigma_2')\in\hat{\mathcal{E}}_{\ell,y}^k, \quad \sum_{x\in B_\ell}\sigma_1(x)=j\}}{\#\{(\sigma_1',\sigma_2')\in\hat{\mathcal{E}}_{\ell,y}^k\}},\]
which does not depend on $y\geq \sqrt{N}$. All quantities are now independent from $N$,  $\varepsilon$ and $y$, therefore to prove \eqref{eq:TBE2} it is sufficient to show that 
\begin{equation}
\label{eq:TBE3}
\limsup_{\ell \to\infty} \sup_{k }\;\\  
\sum_{j\leq k} \frac{|2j-k|}{|B_\ell|}\nu_{k,\ell}(j)=0.
\end{equation}
It is now straightforward to project on the possible values at the border of the two configurations, and use Lemma \ref{lem:concentration2} to prove that the measure $\nu_{k,\ell}$ is concentrated as $\ell\to\infty$ on $j$ which is of order $k/2,$ in the sense that for any $\delta>1/2$, 
\[\inf_{k}\nu_{k,\ell}\big(\big|j-k/2\big|\leq\ell^{\delta}\big)\xrightarrow[\ell\to\infty]{} 1, \]
which proves \eqref{eq:TBE3} and concludes the proof of the two-blocks estimate.
\end{proof}

\subsection{Dirichlet estimates} We prove here an estimate for the Dirichlet form. 
Recall that we denote by $\mc D_N$ the Dirichlet form 
\begin{align*}
\mc D_N(g)&=-\nu_{\rho,N}\big(\sqrt{g} \mathcal{L}_N \sqrt{g}\big)\\
&=\frac{1}{2}\nu_{\rho,N}\bigg(\sum_{x\in\T_N}{\eta(x+2)\eta(x-1)}\big(\sqrt g(\eta^{x,x+1})-\sqrt g(\eta)\big)^2\bigg).\end{align*}

\begin{proposition}[Dirichlet form estimate]
\label{prop:DirEstimate}
For any time $T$, there exists a constant $C=C(T)$ such that
\[\mc D_N\pa{\frac{1}{T}\int_{0}^Tf_s^N ds}\leq \frac{C}{N}.\]
\end{proposition}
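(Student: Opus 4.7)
The plan is to derive the Dirichlet bound by the classical entropy production argument, since $\nu_{\rho,N}$ is reversible for the generator $\mc L_N$ (detailed balance \eqref{eq:detailed} was already checked) and the Dirichlet form $\mc D_N$ introduced in this section is precisely the one naturally associated with this reversible dynamics. Writing $f_s^N$ for the density at macroscopic time $s$ of $\tilde\mu_s^N$ with respect to $\nu_{\rho,N}$, the Kolmogorov forward equation $\partial_s f_s^N = N^2 \mc L_N f_s^N$ combined with the identity $\int \log f\, (\mc L_N f)\, d\nu_{\rho,N} \leq -2 \nu_{\rho,N}(\sqrt{f}(-\mc L_N)\sqrt{f})=-2\mc D_N(f)$, valid for reversible Markov generators, yields the standard entropy production estimate
\[
H(\tilde\mu_T^N \mid \nu_{\rho,N}) + 2N^2 \int_0^T \mc D_N(f_s^N)\, ds \leq H(\tilde\mu_N \mid \nu_{\rho,N}).
\]
Since relative entropy is non-negative, this bounds the time-integral of $\mc D_N(f_s^N)$ in terms of the initial entropy only.

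Second, I would invoke the convexity of $\mc D_N$ (which is clear from its expression as a sum of squares against positive weights, as was pointed out just above \eqref{eq:diri}) together with Jensen's inequality to obtain
\[
\mc D_N\!\left(\frac{1}{T}\int_0^T f_s^N\, ds\right) \leq \frac{1}{T}\int_0^T \mc D_N(f_s^N)\, ds \leq \frac{H(\tilde\mu_N \mid \nu_{\rho,N})}{2 N^2 T}.
\]

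Finally, I need only check that $H(\tilde\mu_N \mid \nu_{\rho,N}) = \mc O_N(N)$, which plugged in above gives the conclusion with $C(T) = C/(2T)$ for some $C = C(\rho)$. This is the only arithmetic step and is very soft: since $\tilde\mu_N$ has support in $\mc E_N$, and since the explicit formula \eqref{pirho2} together with the definition of $\nu_{\rho,N}$ (recall also $\tilde\nu_{\rho,N}(\mc E_N) \to \rho(2-\rho) > 0$ by Lemma \ref{lem:nuERG}) shows that $\nu_{\rho,N}(\eta) \geq c(\rho)\, a(\rho)^N$ on $\mc E_N$ for some constants $c(\rho) > 0$ and $a(\rho) \in (0,1)$, one has
\[
H(\tilde\mu_N \mid \nu_{\rho,N}) \leq -\sum_{\eta \in \mc E_N}\tilde\mu_N(\eta)\log \nu_{\rho,N}(\eta) \leq \log\!\left(\sup_{\eta \in \mc E_N}\frac{1}{\nu_{\rho,N}(\eta)}\right) \leq C N,
\]
where we used $\sum \tilde\mu_N(\eta)\log \tilde\mu_N(\eta)\leq 0$. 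There is no genuine obstacle here: all three ingredients (reversibility, convexity of the Dirichlet form, pointwise lower bound on $\nu_{\rho,N}$) have been set up earlier in the paper; the only mild subtlety worth flagging is that this argument uses the fact that $\tilde\mu_N$ is supported on $\mc E_N$ (so that absolute continuity with respect to $\nu_{\rho,N}$ holds and the initial entropy is finite), which is exactly why the conditioning step producing $\tilde\mu_N$ was performed in \eqref{eq:Deftm}.
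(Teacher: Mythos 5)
Your argument is correct and follows the same route as the paper: the entropy-production bound coming from reversibility of $\nu_{\rho,N}$, Jensen/convexity of $\mc D_N$, and a linear-in-$N$ estimate for the initial relative entropy. If anything you are a shade more careful at the last step, where the paper cites the upper bound \eqref{eq:nuNmax} to assert $-\log\nu_{\rho,N}(\eta)\leq C_\rho N$, but the entropy estimate actually requires (and you correctly state) the matching exponential \emph{lower} bound $\nu_{\rho,N}(\eta)\geq c(\rho)\,b(\rho)^N$ on $\mathcal{E}_N$, which follows directly from the explicit formula \eqref{pirho2} and Lemma \ref{lem:nuERG}.
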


\begin{proof}[Proof of Proposition \ref{prop:DirEstimate}]
Let us define the usual relative entropy with respect to $\nu_{\rho,N}$ as 
\[\mc H_\rho^N(f)=\nu_{\rho,N}(f\log f),\] 
we have for any $s\in[0,T]$
\[\partial_s\mc H^N_\rho(f_s^N)=N^2\nu_{\rho,N}(f_s^N \mathcal{L}_N \log f_s^N)\leq -N^2\mc D_N(f_s^N),\]
so that 
\[0\leq\mc H_\rho^N(f_{T}^N)+N^2\int_{0}^T\mc D_N(f_s^N)ds\leq \mc H^N_\rho(f_{0}^N).\]
Thanks to \eqref{eq:nuNmax}, one obtains that for any fixed configuration $\eta$ in the ergodic component $\mathcal{E}_N$, we have $-\log\nu_{\rho,N}(\eta)\leq C_{\rho}N$, so that by convexity of the entropy \[\mc H^N_\rho(f_{0}^N)\leq C_{\rho}N,\]
and therefore
\[0\leq \int_{0}^T\mc D_N(f_s^N) ds\leq \frac CN.\]
The convexity of the Dirichlet form $\mc D_N$ concludes the proof. \end{proof}

\appendix 

\section{Exit times for random walkers} \label{app:exit}

In this section we \ccl{state} technical estimates related to exit times for random walkers, which we used in the proof of Lemma \ref{lemma:Tsigma} and Lemma \ref{lemma:ExitProba}.

\begin{lemma}\label{lemma:ExitTimeRW}
Let us denote by $(X_t)_{t\in \R_+}$ (resp.\ $(Y_k)_{k\in \N}$) the trajectory of a continuous (resp.\ discrete) time symmetric random walker on $\Z$, jumping left and right at the same rate $1/{(1+\delta)\ell}$ (resp.\ with the same probability $\frac12$). We denote by $\P^x$ the probability laws of both these processes starting from  site $x$. For any integer $\ell$, we denote by $T_\ell^{X}$ (resp.\ $T_\ell^Y$) the exit time of $B_\ell=\{-\ell,...,\ell\}$.

Then, there exist two constants $C:=C(\delta)>0$ and  $\bar\ell:=\bar \ell( \delta)>0$ such that for any $\ell>\bar \ell$, 
\begin{equation}
\label{eq:ExitTimeCont}
\P^0\big(T_\ell^X>\ell^4\big)\leq \exp\pa{-C \ell}.\end{equation}
Furthermore, for any $\varepsilon >0$, there exist two universal constants $C'>0$ and $\bar \ell'$ such that for any $\ell\geq\bar \ell'$
 \begin{equation}
 \label{eq:ExitTimeDisc}
 \P^0\big(T_\ell^Y>\ell^{2+\varepsilon}\big)\leq\exp\pa{-C' \ell^{\varepsilon}}.\end{equation}
\end{lemma}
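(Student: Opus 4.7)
\medskip

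\noindent\textbf{Proof plan for Lemma \ref{lemma:ExitTimeRW}.} My strategy is to first establish the discrete-time estimate \eqref{eq:ExitTimeDisc} by iterating a uniform one-step Markov bound on the exit probability, and then to deduce the continuous-time estimate \eqref{eq:ExitTimeCont} by realizing $X$ as a time change of $Y$ through a Poisson clock combined with a concentration estimate on the number of jumps.

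For the discrete part, I would first use the optional stopping theorem applied to the martingale $(Y_k^2-k)_k$ (equivalently, a classical gambler's-ruin computation) to obtain a uniform first-moment bound of the form
\[
\sup_{x \in B_\ell}\E^x\cro{T_\ell^Y} \leq C_0\ell^2
\]
for some universal constant $C_0$. Markov's inequality then yields $q:=\sup_{x\in B_\ell}\P^x\pa{T_\ell^Y>2C_0\ell^2}\leq 1/2$. An iterated application of the strong Markov property at times $2C_0\ell^2,\,4C_0\ell^2,\ldots,\,2C_0k\ell^2$, combined with this uniform one-step bound, gives
\begin{equation}\label{eq:iterbound}
\sup_{x\in B_\ell}\P^x\pa{T_\ell^Y>2C_0k\ell^2}\leq 2^{-k} \qquad \text{for every integer }k\geq 1.
\end{equation}
Taking $k=\lfloor \ell^\varepsilon/(2C_0)\rfloor$ so that $2C_0k\ell^2\leq \ell^{2+\varepsilon}$ for $\ell$ large enough, \eqref{eq:iterbound} yields \eqref{eq:ExitTimeDisc} with, say, $C'=\log(2)/(3C_0)$.

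For the continuous part, I would construct $X$ from $Y$ by subordination: letting $(N_t)_{t\geq0}$ be a Poisson process of rate $2/((1+\delta)\ell)$ independent of $Y$, the process $t\mapsto Y_{N_t}$ has the law of $X$, so that $\{T_\ell^X>\ell^4\}=\{T_\ell^Y>N_{\ell^4}\}$. The random variable $N_{\ell^4}$ has mean $2\ell^3/(1+\delta)$, and the standard Chernoff bound for the Poisson distribution gives
\[
\P\pa{N_{\ell^4}\leq \tfrac{\ell^3}{1+\delta}}\leq e^{-c_1\ell^3}
\]
for some $c_1=c_1(\delta)>0$. On the complementary event, I would apply \eqref{eq:iterbound} with $k=\lfloor \ell/(2C_0(1+\delta))\rfloor$, chosen so that $2C_0k\ell^2\leq \ell^3/(1+\delta)\leq N_{\ell^4}$, which yields $\P^0\pa{T_\ell^Y>N_{\ell^4}}\leq 2^{-k}\leq e^{-c_2\ell}$ for some $c_2=c_2(\delta)>0$. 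Summing both contributions produces $\P^0(T_\ell^X>\ell^4)\leq e^{-c_1\ell^3}+e^{-c_2\ell}\leq e^{-C\ell}$ for $\ell$ large enough, as wanted.

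The argument is essentially routine and carries no deep obstacle. The only point requiring some care is to make sure that the one-step bound leading to \eqref{eq:iterbound} is \emph{uniform} in the starting point $x\in B_\ell$, since that uniformity is precisely what makes the iteration through the strong Markov property possible. All other ingredients -- the gambler's-ruin first-moment estimate, Markov's inequality, and the Chernoff bound for Poisson random variables -- are standard and require no special treatment.
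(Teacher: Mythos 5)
Your argument is correct and self-contained. Note, though, that the paper itself does not prove Lemma \ref{lemma:ExitTimeRW}: it declares the estimate standard and refers the reader to Varadhan's lecture notes \cite{VarT}, p.~173, so there is no in-paper proof to compare against; your write-up simply supplies what the authors chose to omit.

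On the substance: the discrete-time part is the standard route. Optional stopping for $Y_k^2-k$ gives $\E^x[T_\ell^Y]=(\ell+1)^2-x^2$, hence a uniform bound $\sup_{x\in B_\ell}\E^x[T_\ell^Y]\leq C_0\ell^2$ (with $C_0=4$, say, for $\ell\geq 1$), and Markov's inequality plus iteration at the deterministic times $2C_0\ell^2, 4C_0\ell^2,\dots$ produces the geometric tail, from which \eqref{eq:ExitTimeDisc} follows with a universal $C'$ and an $\varepsilon$-dependent $\bar\ell'$ as required. One small terminological point: because the iteration is at deterministic times, the ordinary Markov property suffices -- invoking the strong Markov property is unnecessary (though harmless). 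The continuous-time part is also standard and correct: writing $X_t=Y_{N_t}$ with $N$ an independent Poisson process of total rate $2/((1+\delta)\ell)$, splitting on whether $N_{\ell^4}$ is at least half its mean $2\ell^3/(1+\delta)$, applying the Poisson lower-tail Chernoff bound on the bad event and the discrete geometric tail on the good one gives \eqref{eq:ExitTimeCont} with $C=C(\delta)$ coming from the $(1+\delta)^{-1}$ factor in the rate. No gaps.
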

\ccl{This is  a standard random walks estimate, we omit its proof and refer the interested reader for example to \cite{VarT}, p.173}.

\end{document}